\DeclareFontFamily{U}{mathx}{}
\DeclareFontShape{U}{mathx}{m}{n}{<-> mathx10}{}
\DeclareSymbolFont{mathx}{U}{mathx}{m}{n}
\DeclareMathAccent{\widehat}{0}{mathx}{"70}
\DeclareMathAccent{\widecheck}{0}{mathx}{"71}
\numberwithin{equation}{section}
\pgfplotsset{compat=newest}
\tikzset{%
glow/.style={%
preaction={#1, draw, line join=round, line width=0.5pt, opacity=0.04,
preaction={#1, draw, line join=round, line width=1.0pt, opacity=0.04,
preaction={#1, draw, line join=round, line width=1.5pt, opacity=0.04,
preaction={#1, draw, line join=round, line width=2.0pt, opacity=0.04,
preaction={#1, draw, line join=round, line width=2.5pt, opacity=0.04,
preaction={#1, draw, line join=round, line width=3.0pt, opacity=0.04,
preaction={#1, draw, line join=round, line width=3.5pt, opacity=0.04,
preaction={#1, draw, line join=round, line width=4.0pt, opacity=0.04,
preaction={#1, draw, line join=round, line width=4.5pt, opacity=0.04,
preaction={#1, draw, line join=round, line width=5.0pt, opacity=0.04,
preaction={#1, draw, line join=round, line width=5.5pt, opacity=0.04,
preaction={#1, draw, line join=round, line width=6.0pt, opacity=0.04,
}}}}}}}}}}}}}}
\newtheorem{theorem}{Theorem}[section]
\newtheorem{definition}[theorem]{Definition}
\newtheorem{lemma}[theorem]{Lemma}
\newtheorem{corollary}[theorem]{Corollary}
\newtheorem{claim}[theorem]{Claim}
\newtheorem{remark}[theorem]{Remark}
\newtheorem{proposition}[theorem]{Proposition}
\newtheorem{conjecture}[theorem]{Conjecture}
\newtheorem{assumption}[theorem]{Assumption}
\newtheorem{problem}[theorem]{Problem}
\newcommand{\cC}{\mathcal{C}}
\newcommand{\cG}{\mathcal{G}}
\newcommand{\cM}{\mathcal{M}}
\newcommand{\cN}{\mathcal{N}}
\newcommand{\cP}{\mathcal{P}}
\newcommand{\cQ}{\mathcal{Q}}
\newcommand{\cT}{\mathcal{T}}
\newcommand{\EE}{\mathbb{E}}
\newcommand{\PP}{\mathbb{P}}
\newcommand{\RR}{\mathbb{R}}
\newcommand{\R}{\mathbbm{R}}
\DeclareRobustCommand\Equiv{\mathrel{%
  \mathchoice
    {\Equiv@\textfont\displaystyle{.45}}
    {\Equiv@\textfont\textstyle{.45}}
    {\Equiv@\scriptfont\scriptstyle{.5}}
    {\Equiv@\scriptscriptfont\scriptscriptstyle{.55}}
}}
\newcommand{\Equiv@}[3]{%
  \rlap{\raisebox{#3\fontdimen5#12}{$\m@th#2 = $}}%
  \raisebox{-#3\fontdimen5#12}{$\m@th#2 = $}%
}
\newcommand{\KL}{\operatorname{\mathsf{KL}}}
\newcommand{\TV}{\operatorname{\mathsf{TV}}}
\newcommand{\ER}{Erd\H{o}s--R\'enyi}
\newcommand{\card}{\operatorname{\mathsf{card}}}
\newcommand{\aut}{\operatorname{\mathsf{aut}}}
\newcommand{\ud}{\mathrm{d}}
\newcommand{\Proj}{\operatorname{\mathrm{Proj}}}
\newcommand{\Tutte}{\operatorname{\mathsf{T}}}
\newcommand{\Cov}{\operatorname{\mathsf{Cov}}}
\newcommand{\Corr}{\operatorname{\mathsf{Corr}}}
\newcommand{\Var}{\operatorname{\mathsf{Var}}}
\newcommand{\notecm}[1]{{\sf \color{blue}[CM: #1]}}
\newcommand{\notetim}[1]{{\sf \color{red}[Tim: #1]}}
\newcommand{\bitm}{\begin{itemize}[leftmargin=*]}
\newcommand{\eitm}{\end{itemize}}
\newcommand{\benm}{\begin{enumerate}[leftmargin=*]}
\newcommand{\eenm}{\end{enumerate}}
\definecolor{forestgreen}{rgb}{0.13, 0.55, 0.13}
\def\abs#1{\left| #1 \right|}
\newcommand{\inparen}[1]{\left(#1\right)}             
\newcommand{\inbraces}[1]{\left\{#1\right\}}           
\newcommand{\insquare}[1]{\left[#1\right]}             
\newcommand{\leqnomode}{\tagsleft@true\let\veqno\@@leqno}
\newcommand{\reqnomode}{\tagsleft@false\let\veqno\@@eqno}
\newcommand{\DrawKtwo}{
    \begin{tikzpicture}[scale=1]
        \SetGraphUnit{1}
        \GraphInit[vstyle=Hasse]
        \SetVertexSimple[MinSize=2pt]
        \Vertex{A}\EA(A){B}\Edge(A)(B)
    \end{tikzpicture}
}
\newcommand{\DrawPtwo}{
    \begin{tikzpicture}
        \SetGraphUnit{1}
        \GraphInit[vstyle=Hasse]
        \SetVertexSimple[MinSize=2pt]
        \Vertex[x = -0.5, y = 0.1]{A}
        \Vertex[x=0,y=0]{B}
        \Vertex[x = 0.5, y = 0.1]{C}
        \Edge(B)(C)
        \Edge(A)(B)
    \end{tikzpicture}
}
\newcommand{\DrawKtwoRep}{
    \begin{tikzpicture}
        \SetGraphUnit{1}
        \GraphInit[vstyle=Hasse]
        \SetVertexSimple[MinSize=2pt]
        \Vertex{A}\EA(A){B}
        \Edge[style={bend left = 10}](A)(B)
        \Edge[style={bend left = 10}](B)(A)
    \end{tikzpicture} 
}
\newcommand{\DrawKtwoRepRep}{
    \begin{tikzpicture}
        \SetGraphUnit{1}
        \GraphInit[vstyle=Hasse]
        \SetVertexSimple[MinSize=2pt]
        \Vertex{A}\EA(A){B}
        \Edge(A)(B)
        \Edge[style={bend left = 15}](A)(B)
        \Edge[style={bend left = 15}](B)(A)
    \end{tikzpicture}
}
\newcommand{\DrawKtwoRTripleRep}{
    \begin{tikzpicture}
        \SetGraphUnit{1}
        \GraphInit[vstyle=Hasse]
        \SetVertexSimple[MinSize=2pt]
        \Vertex{A}\EA(A){B}
        \Edge(A)(B)
        \Edge[style={bend left = 15}](A)(B)
        \Edge[style={bend left = 15}](B)(A)
        \Edge[style={bend left = 30}](A)(B)
    \end{tikzpicture}
}
\newcommand{\DrawPtwoRep}{
    \begin{tikzpicture}
        \SetGraphUnit{1}
        \GraphInit[vstyle=Hasse]
        \SetVertexSimple[MinSize=2pt]
        \Vertex[x = -0.5, y = 0.1]{A}
        \Vertex[x=0,y=0]{B}
        \Vertex[x = 0.5, y = 0.1]{C}
        \Edge(B)(C)
        \Edge[style={bend left = 15}](A)(B)
        \Edge[style={bend left = 15}](B)(A)
    \end{tikzpicture}
}
\newcommand{\DrawKthree}{
    \begin{tikzpicture}
        \SetGraphUnit{1}
        \GraphInit[vstyle=Hasse]
        \SetVertexSimple[MinSize=2pt]
        \Vertex[x = 0, y = 0]{A}
        \Vertex[x=0.5,y=0]{B}
        \Vertex[x = 0.25, y = 0.4]{C}
        \Edge(B)(C)
        \Edge(A)(B)
        \Edge(A)(C)
    \end{tikzpicture}
}
\newcommand{\DrawSthree}{
    \begin{tikzpicture}
        \SetGraphUnit{1}
        \GraphInit[vstyle=Hasse]
        \SetVertexSimple[MinSize=2pt]
        \Vertex[x = 0, y = 0]{A}
        \Vertex[x=0.5,y=0]{B}
        \Vertex[x = 1, y = 0.3]{C}
        \Vertex[x = 1, y = -0.3]{D}
        \Edge(B)(C)
        \Edge(A)(B)
        \Edge(B)(D)
    \end{tikzpicture}
}
\newcommand{\DrawPthree}{
    \begin{tikzpicture}
        \SetGraphUnit{1}
        \GraphInit[vstyle=Hasse]
        \SetVertexSimple[MinSize=2pt]
        \Vertex[x = 0, y = 0]{A}
        \Vertex[x=0.5,y=0]{B}
        \Vertex[x = 1, y = 0]{C}
        \Vertex[x = 1.5, y = 0]{D}
        \Edge(B)(C)
        \Edge(A)(B)
        \Edge(C)(D)
    \end{tikzpicture}
}
\newcommand{\DrawPtwoAdjRepRep}{
    \begin{tikzpicture}
        \SetGraphUnit{1}
        \GraphInit[vstyle=Hasse]
        \SetVertexSimple[MinSize=2pt]
        \Vertex[x = -0.5, y = 0.1]{A}
        \Vertex[x=0,y=0]{B}
        \Vertex[x = 0.5, y = 0.1]{C}
        \Edge[style={bend left = 15}](B)(C)
        \Edge[style={bend left = 15}](C)(B)
        \Edge[style={bend left = 15}](A)(B)
        \Edge[style={bend left = 15}](B)(A)
    \end{tikzpicture}
}
\newcommand{\DrawPtwoTripleRepRep}{
    \begin{tikzpicture}
        \SetGraphUnit{1}
        \GraphInit[vstyle=Hasse]
        \SetVertexSimple[MinSize=2pt]
        \Vertex[x = -0.5, y = 0.1]{A}
        \Vertex[x=0,y=0]{B}
        \Vertex[x = 0.5, y = 0.1]{C}
        \Edge[style={bend left = 0}](B)(C)
        \Edge[style={bend left = 0}](A)(B)
        \Edge[style={bend left = 30}](A)(B)
        \Edge[style={bend left = 30}](B)(A)
    \end{tikzpicture}   
}
\newcommand{\DrawSthreeRep}{
    \begin{tikzpicture}
        \SetGraphUnit{1}
        \GraphInit[vstyle=Hasse]
        \SetVertexSimple[MinSize=2pt]
        \Vertex[x = 0, y = 0]{A}
        \Vertex[x=0.5,y=0]{B}
        \Vertex[x = 1, y = 0.3]{C}
        \Vertex[x = 1, y = -0.3]{D}
        \Edge[style={bend left = 15}](A)(B)
        \Edge[style={bend left = 15}](B)(A)
        \Edge(B)(C)
        \Edge(B)(D)
    \end{tikzpicture}   
}
\newcommand{\DrawPthreeMiddleRep}{
    \begin{tikzpicture}
        \SetGraphUnit{1}
        \GraphInit[vstyle=Hasse]
        \SetVertexSimple[MinSize=2pt]
        \Vertex[x = 0, y = 0]{A}
        \Vertex[x=0.5,y=0]{B}
        \Vertex[x = 1, y = 0]{C}
        \Vertex[x = 1.5, y = 0]{D}
        \Edge[style={bend left = 15}](B)(C)
        \Edge[style={bend left = 15}](C)(B)
        \Edge(A)(B)
        \Edge(C)(D)
    \end{tikzpicture}   
}
\newcommand{\DrawPthreeLeavesRep}{
    \begin{tikzpicture}
        \SetGraphUnit{1}
        \GraphInit[vstyle=Hasse]
        \SetVertexSimple[MinSize=2pt]
        \Vertex[x = 0, y = 0]{A}
        \Vertex[x=0.5,y=0]{B}
        \Vertex[x = 1, y = 0]{C}
        \Vertex[x = 1.5, y = 0]{D}
        \Edge[style={bend left = 15}](A)(B)
        \Edge[style={bend left = 15}](B)(A)
        \Edge(B)(C)
        \Edge(C)(D)
    \end{tikzpicture}   
}
\newcommand{\DrawSfour}{
    \begin{tikzpicture}
        \SetGraphUnit{1}
        \GraphInit[vstyle=Hasse]
        \SetVertexSimple[MinSize=2pt]
        \Vertex[x = 0, y = 0.3]{A}
        \Vertex[x = 0, y = -0.3]{B}
        \Vertex[x=0.5,y=0]{C}
        \Vertex[x = 1, y = 0.3]{D}
        \Vertex[x = 1, y = -0.3]{E}
        \Edge(A)(C)
        \Edge(B)(C)
        \Edge(D)(C)
        \Edge(E)(C)
    \end{tikzpicture}   
}
\newcommand{\DrawSthreeWithTail}{
    \begin{tikzpicture}
        \SetGraphUnit{1}
        \GraphInit[vstyle=Hasse]
        \SetVertexSimple[MinSize=2pt]
        \Vertex[x = 0, y = 0]{A}
        \Vertex[x=0.5,y=0]{B}
        \Vertex[x=1,y=0]{C}
        \Vertex[x = 1.5, y = 0.3]{D}
        \Vertex[x = 1.5, y = -0.3]{E}
        \Edge(A)(B)
        \Edge(B)(C)
        \Edge(C)(D)
        \Edge(C)(E)
    \end{tikzpicture}  
}
\newcommand{\DrawPfour}{
    \begin{tikzpicture}
        \SetGraphUnit{1}
        \GraphInit[vstyle=Hasse]
        \SetVertexSimple[MinSize=2pt]
        \Vertex[x = 0, y = 0]{A}
        \Vertex[x=0.5,y=0]{B}
        \Vertex[x=1,y=0]{C}
        \Vertex[x = 1.5, y = 0]{D}
        \Vertex[x = 2, y = 0]{E}
        \Edge(A)(B)
        \Edge(B)(C)
        \Edge(C)(D)
        \Edge(D)(E)
    \end{tikzpicture}  
}
\newcommand{\simpleTrees}{\operatorname{\mathsf{simpleTrees}}}
\newcommand{\oneRepTrees}{\operatorname{\mathsf{oneRepTrees}}}
\newcommand{\TmRep}{\ensuremath{T_m^{\text{rep}}}}
\newcommand{\twoRepTrees}{\operatorname{\mathsf{twoRepTrees}}}
\newcommand{\tripleEdge}{\operatorname{\mathsf{tripleEdge}}}
\newcommand{\adjDD}{\operatorname{\mathsf{adjDD}}}
\newcommand{\sepDD}{\operatorname{\mathsf{sepDD}}}
\newcommand{\TmTwoRep}{\ensuremath{T_m^{=,=}}}
\newcommand{\TmTripleEdge}{\ensuremath{T_m^{\equiv}}}
\newcommand{\TmAdjDD}{\ensuremath{T_m^{==}}}
\newcommand{\TmSepDD}{\ensuremath{T_m^{= \cdots =}}}
\newcommand{\TmLabRed}{\ensuremath{\widetilde{T}_\text{red}}}
\newcommand{\TmLabBlue}{\ensuremath{\widetilde{T}_\text{blue}}}
\newcommand{\combined}{\operatorname{\mathsf{combined}}}
\newcommand{\combAdjDD}{\operatorname{\mathsf{combAdjDD}}}
\newcommand{\combSepDD}{\operatorname{\mathsf{combSepDD}}}
\newcommand{\signedKtwo}{\ensuremath{\widecheck{K_2}}}
\newcommand{\signedPtwo}{\ensuremath{\widecheck{P_2}}}
\title{Cluster expansion of the log-likelihood ratio: Optimal detection of planted matchings}
\renewcommand\AB@affilsepx{, \@gobble} 
\author{Timothy L.~H.~Wee\thanks{Email: \url{timothy.wee@gatech.edu}} \ and Cheng Mao\thanks{Email: \url{cheng.mao@math.gatech.edu}}}
\affil{School of Mathematics, Georgia Institute of Technology}
\date{}
\begin{document}

\maketitle

\begin{abstract}
To understand how hidden information can be extracted from statistical networks, planted models in random graphs have been the focus of intensive study in recent years. In this work, we consider the detection of a planted matching, i.e., an independent edge set, hidden in an Erd\H{o}s--R\'enyi random graph, which is formulated as a hypothesis testing problem. We identify the critical regime for this testing problem and prove that the log-likelihood ratio is asymptotically normal. Via analyses of computationally efficient edge or wedge count test statistics that attain the optimal limits of detection, our results also reveal the absence of a statistical-to-computational gap. Our main technical tool is the cluster expansion from statistical physics, which allows us to prove a precise, non-asymptotic characterization of the log-likelihood ratio. Our analyses rely on a careful reorganization and cancellation of terms that occur in the difference between monomer-dimer log partition functions on the complete and \ER~graphs. This combinatorial and statistical physics approach represents a significant departure from the more established methods such as orthogonal decompositions, and positions the cluster expansion as a viable technique in the study of log-likelihood ratios for planted models in general.
\end{abstract}

\tableofcontents

\section{Introduction}
Finding hidden information in networks is a central task in the study of statistical networks. In recent years, \emph{planted models} in random graphs have received considerable attention and resulted in a plethora of theoretical and algorithmic innovations. The most well-known of these is the planted clique problem \cite{jerrum1992large,alon1998finding}, which presents a celebrated statistical-to-computational gap whose full resolution remains elusive. 
Other related examples include 
planted dense subgraphs \cite{bhaskara2010detecting} or community detection \cite{arias2014community}, and planted partitions or stochastic block models \cite{abbe2018community}. 
Unlike the above models that possess low-rank structures, other planted combinatorial structures have also been studied more recently, such as planted Hamiltonian cycles \cite{bagaria2020hidden,ding2020consistent} or small-world networks \cite{mao2023detection}, planted trees \cite{massoulie2019planting,moharrami2025planted}, and planted $k$-factors \cite{gaudio2025finding,gaudio2025all}.

\subsection{Planted matching}
This work primarily focuses on the \emph{planted matching} model, which belongs to the latter class where the planted subgraph is characterized by local combinatorial constraints. 
More specifically, a matching refers to an \emph{independent edge set} consisting of edges that are not adjacent to each other, and it is planted in an otherwise random \ER\ graph. 
A weighted bipartite version of this model was considered by \cite{chertkov2010inference} to study tracking mobile objects such as particles in turbulent flows. 
The task was to recover the latent matching between two sets of spatial points, representing two consecutive snapshots of a random dynamical system of particles. 
This task corresponds to the \emph{recovery} problem for the planted matching model, that is, to estimate the hidden matching given the graph. 
Towards this end, there has been a line of research \cite{semerjian2020recovery,moharrami2021planted,ding2023planted} in recent years studying information-theoretic thresholds and algorithms for planted matching.

We instead consider the \emph{detection} problem for the planted matching model, formulated as hypothesis testing: given a graph $A$ on $n$ vertices, we test the null hypothesis that $A$ is a purely random \ER\ graph $G(n,q)$ against the alternative hypothesis that $A$ is an \ER\ graph $G(n,p)$ containing a hidden planted matching $M$. 
To ease the discussion, let us consider the case where the planted matching $M$ contains $\Theta(n)$ edges (note that the maximum size of a matching is $\lfloor n/2 \rfloor$), and where $p$ and $q$ are defined so that the two models have the same average edge densities. 

The detection problem turns out to be significantly different from the recovery problem in terms of the critical thresholds. 
In view of \cite[Remark 2]{ding2023planted}, the threshold for (almost exact and partial) recovery occurs at the order $p = \Theta(1/n)$. 
Moreover, it is a classical result \cite{erdHos1966existence,frieze2015introduction} that $q=(\log n)/n$ is the threshold above which a perfect matching exists (for $n$ even) with high probability in the null model $G(n,q)$. 
However, for the detection problem, we show that the critical threshold is $p = \Theta(1/\sqrt{n})$. 
In particular, if $(\log n)/n \ll p \ll 1/\sqrt{n}$, there exist many matchings of size $\Theta(n)$ in both the null and alternative models, but we are still able to test consistently whether one additional matching $M$ is planted or not. 
To the best of our knowledge, the testing threshold $p = \Theta(1/\sqrt{n})$ has not been identified in the literature on planted models, except when $n$ is even and $M$ is a perfect matching. In that setting, the threshold appears implicitly as an intermediate result in \cite{janson1994numbers}, which studies the number of perfect matchings in an \ER\ graph. 
Crucially, our main technique differs fundamentally from that in \cite{janson1994numbers}, and we discuss the connection in more detail in Section~\ref{sec:perfect-matching-results}.

Furthermore, in the critical regime $p = \Theta(1/\sqrt{n})$, we study the log-likelihood ratio $\log \frac{\ud \cP}{\ud \cQ}$ (with $\cQ$ denoting the null and $\cP$ denoting the alternative) and show that it is dominated by a simple statistic---the \emph{signed wedge\footnote{A wedge refers to a path of length two, denoted by $P_2$.} count} $\signedPtwo(A) := \sum_{j \in [n]} \sum_{\inbraces{i,k} \in \binom{[n] \setminus \{j\}}{2} } (A_{ij} - q) (A_{jk} - q)$. 
Our main result in this regime states that, for $A \sim \cQ$, the log-likelihood ratio satisfies 
\begin{equation}
\log \frac{\ud \cP}{\ud \cQ}(A) \approx - \frac{\sigma^2}{2} + \sigma \frac{\signedPtwo(A)}{\sqrt{\Var \signedPtwo(A)}} ,
\label{eq:log-likelihood-ratio-signed-wedge-count-informal}
\end{equation}
where $\sigma \approx \frac{1}{\sqrt{2n} q} \big(\frac{2 \abs{M}}{n}\big)^2$ and an $O_\PP \big( \frac{1}{\sqrt{n p}} \big)$ lower-order term is omitted for brevity. 
Since the likelihood ratio test is statistically optimal by the Neyman--Pearson lemma, the above approximation has several important consequences for our testing problem:
\begin{itemize}
\item 
The signed wedge count is a degree-two polynomial in $(A_{ij})$ and can be efficiently computed, so there is no statistical-to-computational gap for this detection problem. 

\item
The standardized statistic $\frac{\signedPtwo(A)}{\sqrt{\Var \signedPtwo(A)}}$ is asymptotically $\cN(0,1)$ by \cite{janson1994orthogonal}, from which it follows that the log-likelihood ratio is asymptotically $\cN(-\sigma^2/2, \sigma^2)$ for $A \sim \cQ$. 
The relation that the mean is $-1/2$ of the variance is the special condition that gives mutual \emph{contiguity} between $\cQ$ and $\cP$ in Le Cam's framework of local asymptotic normality \cite{cam1960locally,le2000asymptotics}. 
By Le Cam's third lemma \cite[Example~6.7]{van2000asymptotic}, we then see that the log-likelihood ratio is also asymptotically normal for $A \sim \cP$. 

\item
As a result of the asymptotic normality of the likelihood ratio, we can derive the precise asymptotic testing error, or, equivalently, the asymptotic total variation distance between $\cQ$ and $\cP$ with sharp constants in the critical regime.
\end{itemize}

\subsection{Cluster expansion}
To prove the approximation of the likelihood ratio \eqref{eq:log-likelihood-ratio-signed-wedge-count-informal}, we use the \emph{cluster expansion} technique from statistical physics. 
Briefly, the cluster expansion is a \emph{formal} series expansion of the logarithm of a partition function.
It is particularly useful when the partition function can be expressed as a sum over geometrical objects, abstractly called \emph{polymers}, whose interactions can be described in a pairwise manner. We refer to \cite[Chap.~5]{friedli2017statistical} and \cite{brydges1984short, faris2010combinatoricsAndCE} for general references on cluster expansions, and to \cite{gruber1971general, kotecky1986cluster} for the polymer formulation. 
While the cluster expansion has been applied to study statistical physics models on random graphs \cite{helmuth2023finite}, and to analyze certain signed subgraph counts \cite{bangachev2024detection}, we are not aware of any previous use of it to study the log-likelihood ratio for a planted model. 
We believe that applying the cluster expansion in statistical analysis is interesting in its own right and has the potential to open a new line of research.

More specifically, in our context, the cluster expansion of the log-likelihood ratio takes the form 
\begin{equation}
\log \frac{\ud \cP}{\ud \cQ}(A) = F(A) + \sum_{m \ge 1} \sum_{e_1,\dots,e_m} \phi(H(e_1,\dots,e_m)) \lambda^m \bigg( \frac{1}{p^m} \prod_{j=1}^{m} A_{e_j} - 1 \bigg) ,
\label{eq:log-likelihood-ratio-cluster-expansion-introduction}
\end{equation}
where (i) $F(A) := |A| \log \frac{p(1-q)}{q(1-p)} + \binom{n}{2} \log \frac{1-p}{1-q}$, which depends only on the number of edges $|A|$, (ii) the inner sum is over possibly repeated edges $e_1, \dots, e_m$ in $\binom{[n]}{2}$ that form a connected multigraph called a \emph{cluster}, (iii) $\phi(H(e_1,\dots,e_m))$ is known as the \emph{Ursell function}, which is related to cumulants, and (iv) $\lambda$ is a parameter determining the size of $M$. 
These definitions will be made precise in Section~\ref{sec:intro-cluster-expansion} where we formally introduce the cluster expansion. 
Note that each summand on the right-hand side of \eqref{eq:log-likelihood-ratio-cluster-expansion-introduction} includes the indicator $\prod_{j=1}^{m} A_{e_j}$ of the cluster $(e_1, \dots, e_m)$, so \eqref{eq:log-likelihood-ratio-cluster-expansion-introduction} can be understood as a weighted sum of subgraph counts if the sum is reorganized as follows: 
\begin{equation}
\log \frac{\ud \cP}{\ud \cQ}(A) = F(A) + \sum_{m \ge 1} \sum_{G : |G| = m} \phi(H(G)) \lambda^m \bigg( \frac{1}{p^m} \widetilde{G}(A) - \widetilde{G}(K_n) \bigg) ,
\label{eq:log-likelihood-ratio-cluster-expansion-subgraph-counts}
\end{equation}
where (i) the inner sum is over unlabeled multigraph $G$ with $m$ edges, (ii) the Ursell function can be written as $\phi(H(G))$ because it only depends on the shape of the cluster, not the labeling, and (iii) $\widetilde{G}(A) := \sum_{(e_1, \dots, e_m) \cong G} \prod_{j=1}^{m} A_{e_j}$ and $\widetilde{G}(K_n)$ is defined similarly for the complete graph $K_n$. 

The above expansion in terms of subgraph counts is reminiscent of the \emph{orthogonal decomposition} of functions on random graphs, first introduced in a series of works by Janson \cite{janson1994orthogonal,janson1994numbers} and more recently widely applied to study planted models \cite{hopkins2018statistical,kunisky2019notes,wein2025computational}. 
The comparison between the two expansions is of considerable interest, which we discuss in Section~\ref{sec:comparison-to-orthogonal-expansion}. 

To prove our main result \eqref{eq:log-likelihood-ratio-signed-wedge-count-informal} using the cluster expansion, it suffices to show that the sum in \eqref{eq:log-likelihood-ratio-cluster-expansion-subgraph-counts} is dominated by the signed wedge count $\signedPtwo(A)$. We remark that this is not simply done by showing that the dominating term in \eqref{eq:log-likelihood-ratio-cluster-expansion-subgraph-counts} corresponds to $m=2$ and $G$ being a wedge. Instead, the terms with $G$ being a \emph{tree} all contribute nontrivially to the log-likelihood ratio. However, these tree terms are all asymptotically perfectly correlated with the signed wedge count $\signedPtwo(A)$, thereby yielding the claimed result. The proof ideas are given in Section \ref{sec:nonMatchingEdges_intuition}.

\subsection{Related work}

\paragraph{Planted matchings in random graphs} As discussed above, the recovery problem for planted matchings has origins in statistical physics \cite{chertkov2010inference,semerjian2020recovery} with applications in tracking trajectories of particles. 
The task can be interpreted as recovering a planted matching in a complete bipartite graph given its random weighted adjacency matrix, with planted and non-planted edges distinguished by having different distributions. 

In a sequence of recent papers \cite{moharrami2021planted,ding2023planted}, general information-theoretic thresholds were obtained in terms of the Bhattacharyya distance between the planted and non-planted edge distributions.
More refined results were obtained for the case of exponentially distributed weights. In particular, the error curve for the fraction of correctly recovered planted edges for the maximum likelihood estimator (efficiently computable as a linear assignment problem) was shown 
to be related to a system of ODEs arising as fixed point equations of a message-passing algorithm on a planted version of Aldous's \emph{Poisson-weighted infinite tree} \cite{Aldous2004}. 

Moreover, a variation of the problem with Gaussian weights was investigated in \cite{dai2023gaussian}, with applications to database alignment. 
Edge weights with dependencies, more closely aligned with the original formulation in \cite{chertkov2010inference}, were also studied in the context of geometric planted matchings by \cite{kunisky2022strong,dai2023gaussian,wang2022random}. 
Thresholds for 
recovery, as well as error bounds, were obtained in terms of the ambient dimension of the particles. 


The detection problem has received far less attention than the recovery problem. As alluded to earlier, it was implicitly studied in \cite{janson1994numbers}, whose results and techniques bear an interesting comparison to ours. See Section~\ref{sec:perfect-matching-results} for more details.

\paragraph{Cluster expansion applications} 
The use of cluster expansions in statistical mechanics is vast and spans many decades. We mention only two recent instances of its applicability in the monomer-dimer model, which is the model we use for random matchings. Cluster expansion was used in a lattice version of this model to study correlation decay \cite{quitmann2024decay}, and also in a variant with short-range attractive interactions to study liquid-crystal properties \cite{alberici2016cluster}.

Outside its traditional sphere of influence, cluster expansion techniques have found great effect in combinatorics, algorithms, random graphs and various other fields. The influential work of \cite{scott2005repulsive} established striking connections between the zero-free region of the hard-core lattice gas partition function, convergence of the cluster expansion of its logarithm, Shearer's theorem, and the Lov{\'a}sz local lemma. The cluster expansion has also been applied to study sampling from the Potts model on expanders at low temperature \cite{jenssen2020algorithms}, structural properties and asymptotic enumeration of triangle-free graphs \cite{jenssen2025evolution}, precise phase coexistence characterizations in the random cluster model on random graphs \cite{helmuth2023finite}, independent sets in the hypercube \cite{jenssen2020independent,balogh2016applications}, and free energies in mean-field disordered systems \cite{dey2023mean, aizenman1987some}.

One of the goals of this paper is to bring these powerful cluster expansion techniques to the fore in statistics by demonstrating their effectiveness in a classical hypothesis testing framework.

Ideas from the cluster expansion are also used in \cite{bangachev2024detection} albeit in a very different manner---in their case, several steps from the formal derivation of the cluster expansion are used to give an expansion of certain expected signed subgraph counts under a random geometric graph model. Notably, this does not involve taking the logarithm of a grand canonical partition function or addressing the related questions of convergence.

\paragraph{Asymptotic distributions of log-likelihood ratios} 
The asymptotic distribution of the log-likelihood ratio is a central problem in hypothesis testing with a celebrated result due to Wilks \cite{wilks1938large}. Recent studies have focused on log-likelihood ratios in \emph{high-dimensional} versions of widely used statistical procedures, for instance, covariance testing \cite{bai2009corrections}, testing between Gaussians \cite{jiangYang2013central}, and logistic regression \cite{sur2019likelihood}. 

A line of work, more similar in spirit to this paper, studies log-likelihood ratios in signal detection in \emph{spiked} random matrix models \cite{onatski2013asymptotic,johnstoneOnatski2020testing,ElAlaoui20fundamental,banerjeeMa2022optimal,lu2023contextual}. In particular, \cite{banerjeeMa2022optimal} analyzes the asymptotic testing error attained by linear spectral statistics (positive result) and further establishes their optimality by computing the asymptotic  distribution of the log-likelihood ratio using a second moment method related to \cite{janson1995random} (negative result). This parallels the structure of this paper where our positive result follows from analyses of computationally tractable statistics.

Notable differences (aside from clearly different settings) are that (i) there is typically an absence of low-rank structure in many planted subgraph problems, including those considered in the present paper, and (ii) our techniques for analyzing the log-likelihood ratio are very different. For example, Gaussianity is used in \cite{banerjeeMa2022optimal} to decompose the log-likelihood ratio into \emph{bipartite signed cycle counts}, and it is also exploited in \cite{ElAlaoui20fundamental} through Gaussian interpolation techniques with connections to mean-field spin glasses. This paper instead leverages the connection between the log-likelihood ratio and abstract polymer models with pairwise interactions from statistical physics, which are amenable to cluster expansion techniques. 




\paragraph{Other planted models}
The recent literature on planted models is extensive, and we focus here on the works most closely related to ours. 
For the detection of planted subgraphs, many specific models have been considered, and unifying frameworks have also been proposed by \cite{elimelech2025detecting,yu2025counting} to study either information-theoretic or computational thresholds. 
However, most existing results either suggest an all-or-nothing phenomenon for a planted model (such as the well-known $2 \log_2 n$ threshold for planted clique) or only determine the order at which the phase transition occurs. 
Notable exceptions include, for example, \cite{moitraWein2025precise,mossel2025weak}, which study the precise testing error at the critical threshold. 
For planted matchings, we can determine the testing error with sharp constants thanks to the asymptotic normality of the likelihood ratio, and, in particular, reveal a smooth phase transition in the critical regime. 
At a high level, this is in line with the ``infinite-order phase transition'' for the recovery of a planted matching \cite{ding2023planted}.

Hypothesis testing with a planted signal, although not necessarily involving graph structure, has also been studied, for example, in \cite{perkins2013forgetfulness,addarioberry2010onComb}. 
The model in \cite{perkins2013forgetfulness} can be seen as a planted subgraph model where only the vertex degrees are observed (barring technical differences). 
It is shown that a degree-two polynomial of the degrees is the optimal statistic, which corresponds precisely to the signed wedge count statistic we use. 
However, the analysis of the likelihood ratio, which is our main contribution, is far more involved when a full graph is observed instead of only the degrees. 
In \cite{addarioberry2010onComb}, a planted vector model with Gaussian noise is studied and can be applied to obtain results for planted perfect matchings (see Section~4.3 of that paper), but the results are not directly comparable to ours.

Finally, there is a plethora of recent works using \emph{subgraph counts} or \emph{network motifs} as efficient statistics for detection of planted structures, many of which are based on the orthogonal decomposition \cite{janson1994orthogonal} and the low-degree polynomial framework \cite{hopkins2018statistical,wein2025computational}. 
Examples of such subgraphs include self-avoiding walks for community detection \cite{hopkins2017efficient}, stars as an optimal statistic among all constant-degree statistics \cite{yu2025counting}, balanced subgraphs for detecting a planted dense or general subgraph \cite{dhawan2025detection,elimelech2025detecting}, trees for detecting correlations between random graphs \cite{mao2024testing}, and triangles or four-cycles for detecting latent geometry in random graphs \cite{bubeck2016testing,bangachev2024detection}. 
The cluster expansion such as \eqref{eq:log-likelihood-ratio-cluster-expansion-introduction} for planted matchings also involves subgraph counts, so it may guide the design of low-degree statistics and algorithms in a way similar to the orthogonal decomposition---we discuss this point in Section~\ref{sec:intro-cluster-expansion}.

\subsection{Notation}

We use the standard big-O notation $O(\cdot)$, $o(\cdot)$, $\Theta(\cdot)$, $\dots$ for quantities depending on $n$ as $n \to \infty$. 
Let $\Phi$ denote the standard Gaussian cumulative distribution function (CDF). 
Let $K_n$ denote the complete graph on the vertex set $[n] := \{1, \dots, n\}$. For a graph $G$, we sometimes use the same notation $G$ for the graph itself, its edge set, and its adjacency matrix when there is no ambiguity. 
For an unlabeled, simple, template subgraph $G$, and for $A \sim G(n,q)$, define the subgraph count, the \emph{centered} subgraph count, and the \emph{signed} subgraph count respectively as follows:
\begin{align}\label{eq:ordinaryCenteredSignedSubgraphCount_definition}
    G(A) = \sum_{\substack{G' \subseteq K_n \\ G' \cong G } } \prod_{\inbraces{i,j}\in G'} A_{ij}, \quad \overline{G}(A) = G(A) - \EE G(A), \quad \text{and} \quad  \widecheck{G}(A) = \sum_{\substack{G' \subseteq K_n \\ G' \cong G } } \prod_{\inbraces{i,j}\in G'} \inparen{A_{ij} - q}.
\end{align}
We write $\aut(G)$ to denote the number of automorphisms of $G$. 
Throughout the paper, we write $P_m$, $S_m$, and $T_m$ to refer respectively to an unlabeled path, star, and tree with $m$ edges. 

\section{Main results for detecting a planted matching}
\label{sec:main-results-matching}

\subsection{Problem formulation}

Let us start by defining the model for a random matching, known as the \emph{monomer-dimer model} in statistical physics. This has antecedents in lattice chemistry (see e.g.~\cite{kasteleyn1961statistics,fisher1961statistical}) but its modern mathematical formulation can be traced to \cite{heilmann1972theory}. The latter contains the seminal Heilmann-Lieb theorem on the location of the zeros of the monomer-dimer partition function. The partition function is also referred to as the \emph{matching polynomial} in algebraic graph theory \cite{farrell1979introduction,godsil1978matching}.

\begin{definition}[The monomer-dimer model for a random matching] \label{def:monomer-dimer}
    For a simple graph $G$, for dimer density $\lambda > 0$, the monomer-dimer Gibbs measure $\mu_\lambda = \mu_{\lambda,G}$ is a probability measure over matchings in $G$ given by
\begin{equation*}
    \mu_\lambda(M) = \frac{\lambda^{\abs{M}}}{Z_{G} (\lambda) }, \qquad \text{where} \qquad Z_{G} (\lambda) := \sum_{M \subset G} \lambda^{^{\abs{M}}},
\end{equation*}
where $|M|$ denotes the size of $M$, i.e., the number of edges in $M$, and the sum is over all possible (labeled) matchings $M$ in $G$.
\end{definition}

The model for a planted matching in a random graph is defined as follows.

\begin{definition}[The planted matching model] \label{def:planted-distribution}
For a positive integer $n$, $p \in (0,1)$, and $\lambda > 0$, the planted distribution $\cP_\lambda$ is the distribution of a random graph on $n$ vertices consisting of a matching $M \sim \mu_\lambda$ 
planted in an \ER~random graph $G(n,p)$, where $\mu_\lambda = \mu_{\lambda, K_n}$ is the monomer-dimer Gibbs measure on the complete graph $K_n$ given in Definition~\ref{def:monomer-dimer}. 
More precisely, let $A$ denote the adjacency matrix of a random graph from $\cP_\lambda$. Conditional on $M$, we have $A_{ij} = 1$ if $\{i,j\} \in M$ and $A_{ij} \sim \mathrm{Bernoulli}(p)$ independently if $\{i,j\} \notin M$. 
\end{definition}

The detection of a planted matching is formulated as a hypothesis testing problem between two distributions $\cP_\lambda$ and $\cQ$. 

\begin{problem}[Detection of a planted matching] \label{prob:detection}
For a positive integer $n$, $p, q \in (0,1)$, and $\lambda > 0$, let $\cP_\lambda$ denote the planted model in Definition~\ref{def:planted-distribution}, and let $\cQ$ denote the \ER~random graph model $G(n,q)$. Given a random graph $A$, we test the null hypothesis $H_0 : A \sim \cQ$ against the alternative hypothesis $H_1 : A \sim \cP_\lambda$. 
\end{problem}

Before proceeding to our main results for the detection of a planted matching, let us first build intuition for how the parameters scale in the planted matching model. 
Note the maximum size of a matching in $K_n$ is $\lfloor n/2 \rfloor$. It is easily seen that, as $\lambda \to \infty$ in Definition~\ref{def:monomer-dimer}, the Gibbs measure $\mu_\infty$ becomes the uniform distribution over perfect matchings. Less intuitively, as soon as $\lambda$ is of order $1/n$, the typical size of $M \sim \mu_\lambda$ is of order $n$. 
In this regime, the results from \cite{alberici2014mean} for the ``pure hard-core monomer-dimer model'' (in their terminology) establish the \emph{thermodynamic limits} for $n^{-1} \log Z_{K_n}(\lambda)$ and $2 \EE \abs{M}/n$ as $n \to \infty$. 
We map their results into our notation in Appendix~\ref{sec:thermodynamicLimits_MDmodel}. More precisely, we have the following result for $\EE |M|$ (see Theorem~\ref{thm:thermodynamicLimits_MD}).

\begin{lemma}
\label{lem:matching-limiting-size}
For $\zeta > 0$, suppose 
$$
\lambda = \lambda_n := \frac{1}{\zeta n }. 
$$
Then we have that 
\begin{equation}
\label{eq:MDMean_convergence_to_c}
\lim_{n \rightarrow \infty }\frac{2 \EE_{\mu_\lambda} \abs{M}}{n} = c \in (0,1) , \qquad \text{where} \qquad c = c(\zeta) := 1 - \frac{1}{2} \inparen{  \sqrt{\zeta^2 + 4\zeta } - \zeta } .
\end{equation}
\end{lemma}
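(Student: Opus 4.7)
The plan is to express $\EE_{\mu_\lambda}|M|$ as a logarithmic derivative of the partition function and then invoke a known thermodynamic limit. Since $Z_{K_n}(\lambda) = \sum_M \lambda^{|M|}$, a direct differentiation yields
\[
\EE_{\mu_\lambda}|M| \;=\; \lambda \, \frac{\ud}{\ud \lambda} \log Z_{K_n}(\lambda),
\]
so the task reduces to analyzing $n^{-1} \log Z_{K_n}(\lambda_n)$ and its $\lambda$-derivative in the regime $\lambda_n = 1/(\zeta n)$.

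For the main step I would invoke the thermodynamic-limit results of \cite{alberici2014mean} for the pure hard-core monomer-dimer model on $K_n$, which give explicit limits for both $n^{-1} \log Z_{K_n}(\lambda)$ and the dimer density $2\EE|M|/n$, phrased in their parameterization. Translating into the activity $\lambda_n = 1/(\zeta n)$ reduces the resulting mean-field fixed-point equation to the quadratic $(1-c)^2 = \zeta c$, whose unique root in $(0,1)$ is precisely $c(\zeta) = 1 - \tfrac{1}{2}\bigl(\sqrt{\zeta^2 + 4\zeta} - \zeta\bigr)$. This parameter translation is what Appendix~\ref{sec:thermodynamicLimits_MDmodel} is designed to handle.

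A self-contained alternative is to apply Laplace's method to the explicit combinatorial identity
\[
Z_{K_n}(\lambda) = \sum_{k=0}^{\lfloor n/2 \rfloor} \frac{n!}{(n-2k)!\, 2^k\, k!}\, \lambda^k,
\]
writing $k = cn/2$ and using Stirling so that each summand scales as $\exp\!\bigl(n f(c) + o(n)\bigr)$ for an explicit concave $f$; the first-order optimality condition then recovers the same quadratic $(1-c)^2 = \zeta c$. The main (minor) obstacle in either approach is upgrading the location of the Laplace maximum to the \emph{expected value} $\EE|M|$ rather than just the mode, which is handled either by log-concavity of the matching polynomial (Heilmann--Lieb) or, equivalently, by the smoothness of the free energy in $\log \lambda$ guaranteed by \cite{alberici2014mean}.
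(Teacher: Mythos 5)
Your primary route is exactly the paper's proof: the paper establishes Lemma~\ref{lem:matching-limiting-size} by quoting the thermodynamic-limit result of \cite{alberici2014mean} (restated as Theorem~\ref{thm:thermodynamicLimits_MD}) and performing the parameter translation $\lambda = \frac{1}{\zeta n}$, under which the explicit formula for $g(h)$ gives $c(\zeta)$; your fixed-point equation $(1-c)^2 = \zeta c$ is algebraically equivalent to that formula, so the content is the same. Your self-contained alternative (Laplace's method on $Z_{K_n}(\lambda) = \sum_k \frac{n!}{(n-2k)!\,2^k k!}\lambda^k$, with the mode-to-mean upgrade via Heilmann--Lieb log-concavity) is not in the paper but is a correct and more elementary route, and you rightly flag the only delicate point, namely that the saddle location gives the typical size and one must separately argue it agrees with $\EE|M|$ in the limit.
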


Our main results will be most easily understood in the above limiting regime, although they have more general implications. 
Informally, the question we aim to answer is the following: For $n$ large, if we plant a matching of size $\Theta(n)$ in a random graph $G(n,p)$, what scaling of $p = p_n$ enables us to detect the presence of the hidden matching? 


\subsection{Equal ambient edge density and the edge count}

Let us start with the case $p=q$ in Problem~\ref{prob:detection}; that is, the planted model $\cP_\lambda$ has an \emph{ambient edge density} equal to that in the null model $\cQ$. In this simple case, the planted matching adds $\Theta(n)$ more edges in the model $\cP_\lambda$ compared to $\cQ$ as discussed above. Therefore, the edge count (i.e., the total number of edges in $A$) 
is a natural test statistic that distinguishes the two hypotheses. Since the standard deviation of the edge count in $A \sim \cP_\lambda$ or $\cQ$ is $\Theta(n\sqrt{p(1-p)})$, it is easily seen that the edge count yields a consistent test if $p \to 0$, while the critical regime is when $p$ is a constant, which we now focus on.

\begin{assumption}
    \label{asmpt:nonMatchingEdges} 
Consider Problem~\ref{prob:detection} with $p=q \in (0,1)$ being a constant. 
Suppose $\lambda = \frac{1}{\zeta n}$ for a constant $\zeta \ge 40$. Let $c$ be defined by \eqref{eq:MDMean_convergence_to_c}.
\end{assumption}

\noindent
The assumption $\zeta \ge 40$ is not optimized---the absolute constant can be made smaller. However, it cannot be completely lifted due to the convergence issue of the cluster expansion (see Theorem~\ref{thm:truncate_CE_clogn} and Section~\ref{sec:cluster-expansion-convergence}). 
This limits the size of the matching in view of \eqref{eq:MDMean_convergence_to_c}, and we discuss more about this in Section~\ref{sec:perfect-matching-results}.


Consider the \emph{signed edge count} defined by 
\begin{equation}
    \signedKtwo(A) = \sum_{\inbraces{i,j} \in \binom{[n]}{2}} (A_{ij} - q) ,\label{eq:def-signedKtwo}
\end{equation}
which is simply the number of edges in $A$ centered to have mean zero. 
Define the threshold test $\varphi_n : \inbraces{0,1}^{\binom{[n]}{2}} \rightarrow \inbraces{0,1}$ by
\begin{equation} 
    \varphi_n(A) = \boldsymbol{1}\!\inbraces{ \frac{\signedKtwo(A)}{\sqrt{\binom{n}{2}p(1-p)}} \geq \frac{c}{2\sqrt{2}} \sqrt{\frac{1-p}{p}} }.
\label{eq:edge-count-threshold-test}
\end{equation}
That is, $\varphi_n(A)$ returns $1$ (resp.~$0$) if the test result is that $A \sim \cP_\lambda$ (resp.~$A \sim \cQ$).
The next result is a simple consequence of the central limit theorem (CLT). 
See Section~\ref{sec:analysis-edge-wedge-count} for the proof.

\begin{theorem} \label{thm:threshold-edge-count}
Suppose Assumption \ref{asmpt:nonMatchingEdges} holds. As $n \to \infty$, the threshold test \eqref{eq:edge-count-threshold-test} satisfies
    \begin{align*}
        \PP_{A \sim \cP_\lambda}\!\insquare{ \varphi_n(A) = 0  } + \PP_{A \sim \cQ}\!\insquare{ \varphi_n(A) = 1  } \longrightarrow 2 \Phi\!\inparen{  -\frac{c}{2\sqrt{2}} \sqrt{\frac{1-p}{p}}  } .
    \end{align*}
\end{theorem}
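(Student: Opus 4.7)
The plan is to analyze the Type~I and Type~II errors separately and show each converges to $\Phi(-t)$, where $t := \tfrac{c}{2\sqrt{2}}\sqrt{(1-p)/p}$. The threshold $t$ is placed exactly half-way between the asymptotic mean of the standardized statistic under $\cQ$ (which is $0$) and its asymptotic mean under $\cP_\lambda$ (which I will show equals $2t$), while the asymptotic variance is $1$ in both cases, so the two error probabilities coincide asymptotically.

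Under $\cQ = G(n,p)$, $\signedKtwo(A) = \sum_{\{i,j\}}(A_{ij} - p)$ is a sum of $\binom{n}{2}$ i.i.d.~centered Bernoulli$(p)$ variables, so the classical CLT immediately gives $\signedKtwo(A)/\sqrt{\binom{n}{2}p(1-p)} \Rightarrow \cN(0,1)$ and hence $\PP_{\cQ}[\varphi_n(A) = 1] \to \Phi(-t)$. Under $\cP_\lambda$, I condition on the planted matching $M$ and decompose
\[
\signedKtwo(A) = (1-p)|M| + X_M, \qquad X_M := \sum_{e \notin M}(A_e - p).
\]
Given $M$, $X_M$ is a sum of $\binom{n}{2} - |M|$ i.i.d.~centered Bernoulli$(p)$ variables; since $|M| = O(n) = o(\binom{n}{2})$ almost surely, a conditional CLT gives $X_M/\sqrt{\binom{n}{2}p(1-p)} \Rightarrow \cN(0,1)$. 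A short calculation using $\sqrt{\binom{n}{2}p(1-p)} \sim n\sqrt{p(1-p)/2}$ together with the asymptotic $\EE_{\mu_\lambda}|M| \sim cn/2$ from Lemma~\ref{lem:matching-limiting-size} shows that $(1-p)\EE_{\mu_\lambda}|M|/\sqrt{\binom{n}{2}p(1-p)} \to 2t$. Provided $|M|/n \to c/2$ in probability, Slutsky's theorem then yields $\signedKtwo(A)/\sqrt{\binom{n}{2}p(1-p)} \Rightarrow 2t + \cN(0,1)$ under $\cP_\lambda$, so $\PP_{\cP_\lambda}[\varphi_n(A) = 0] \to \Phi(t - 2t) = \Phi(-t)$.

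The only nontrivial ingredient is concentration of $|M|$. My preferred route uses the Heilmann--Lieb theorem: the partition function $Z_{K_n}(t) = \sum_k m_k t^k$, where $m_k$ is the number of matchings of size $k$ in $K_n$, has only real (necessarily negative) roots, so the probability generating function of $|M|$ under $\mu_\lambda$ factors as
\[
\EE_{\mu_\lambda} x^{|M|} = \frac{Z_{K_n}(\lambda x)}{Z_{K_n}(\lambda)} = \prod_{i}\bigl((1 - q_i) + q_i x\bigr)
\]
for appropriate $q_i \in (0,1)$, i.e., $|M|$ is distributed as a sum of independent Bernoullis. This yields $\Var_{\mu_\lambda}|M| \leq \EE_{\mu_\lambda}|M| = O(n)$, and Chebyshev's inequality delivers $|M|/n \to c/2$ in probability. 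I would flag the Heilmann--Lieb appeal as the main conceptual input; a more computational alternative is to evaluate $\Var_{\mu_\lambda}|M| = \lambda \tfrac{d}{d\lambda} \EE_{\mu_\lambda}|M|$ directly using the explicit thermodynamic-limit formula from Appendix~\ref{sec:thermodynamicLimits_MDmodel}, which gives the same $O(n)$ bound.
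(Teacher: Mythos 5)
Your proposal is correct, and its overall skeleton matches the paper's: standardize $\signedKtwo$, apply the classical CLT under $\cQ$, and under $\cP_\lambda$ split the statistic into a drift of size $(1-p)\EE_{\mu_\lambda}|M|$ plus a CLT-scale fluctuation, with the drift converging to $\frac{c}{\sqrt{2}}\sqrt{(1-p)/p}$ by Lemma~\ref{lem:matching-limiting-size}, so each error tends to $\Phi(-t)$. Where you genuinely diverge from the paper is in the two supporting steps. First, the decomposition under $\cP_\lambda$: the paper (Lemma~\ref{lemma:signed_K2_normality_P_Q}) superimposes an independent $\widetilde A\sim G(n,p)$ on all pairs, writes $\signedKtwo(A)\overset{d}{=}U+V$ with $U=\sum M_{ij}(1-\widetilde A_{ij})$, and controls $\Var U$ by the law of total variance; you instead condition on $M$ and use a conditional CLT for the $\binom{n}{2}-|M|$ non-matching edges, which is equally valid since $|M|\le n/2$ deterministically and the Lindeberg condition holds uniformly in $M$. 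Second, and more substantively, the concentration $|M|/n\to c/2$: the paper's ingredient is Proposition~\ref{prop:Var|M|_Cn_bound}, which proves $\Var_{\mu_\lambda}|M|=O(n)$ through the cluster expansion and therefore inherits the restriction $\lambda\le\frac{1}{30n}$, whereas your appeal to the Heilmann--Lieb theorem (real-rootedness of $Z_{K_n}$, hence $|M|$ distributed as a sum of independent Bernoullis, hence $\Var_{\mu_\lambda}|M|\le\EE_{\mu_\lambda}|M|=O(n)$) is cleaner and works for every $\lambda>0$, so it would also cover regimes the paper's proposition does not. The trade-off is that the paper's cluster-expansion bound is developed anyway for the main theorems, so it comes for free there, while your route imports an external structural theorem; your secondary suggestion via $\Var_{\mu_\lambda}|M|=\lambda\frac{\ud}{\ud\lambda}\EE_{\mu_\lambda}|M|$ and the thermodynamic-limit formula is the only place needing more care (differentiating a pointwise limit requires a convexity/uniformity argument), but since it is offered only as an alternative, the proof as written stands.
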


The above asymptotic error achieved by thresholding the edge count turns out to be statistically optimal. To prove a matching negative result, we study the likelihood ratio $\frac{\ud \cP_\lambda}{\ud \cQ}$ because it is known to be the optimal test statistic for simple hypothesis testing. The following result shows that, in fact, the log-likelihood ratio is dominated by the signed edge count.

\begin{theorem}\label{thm:nonMatching_LLdistributionUnderNull}
    Suppose Assumption \ref{asmpt:nonMatchingEdges} holds. 
    Let $\signedKtwo(A)$ be the signed edge count defined by \eqref{eq:def-signedKtwo}. 
    Then for $A \sim \cQ$ and for each $n$, the log-likelihood ratio satisfies 
    \begin{align}\label{eq:nonMatching_LLdistributionUnderNull}
        \log \frac{\ud \cP_\lambda}{\ud \cQ}(A) = - \frac{1-p}{p} \inparen{ \frac{\EE\abs{M}}{n}  }^2 + \sqrt{\frac{2(1-p)}{p}} \frac{\EE\abs{M}}{n} \frac{\signedKtwo(A)}{\sqrt{\Var \signedKtwo(A)}} + O_\PP\!\inparen{ \frac{1}{p \sqrt{n}}  }.
    \end{align}
\end{theorem}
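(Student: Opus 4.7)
The plan is to apply the cluster expansion identity \eqref{eq:log-likelihood-ratio-cluster-expansion-subgraph-counts}: since $p = q$ under Assumption~\ref{asmpt:nonMatchingEdges}, the deterministic prefactor $F(A)$ vanishes identically, so
\begin{equation*}
\log \frac{\ud \cP_\lambda}{\ud \cQ}(A) \;=\; \sum_{m \ge 1} \lambda^m \sum_{G : |G| = m} \phi(H(G)) \left[ \frac{\widetilde G(A)}{p^m} - \widetilde G(K_n) \right].
\end{equation*}
The first step is to truncate this sum at $m \le c \log n$ using the convergence of the cluster expansion (Theorem~\ref{thm:truncate_CE_clogn}), which is applicable precisely because $\zeta \ge 40$; the tail beyond $c \log n$ can then be bounded by a negligible $O(n^{-C})$ for any prescribed $C$.

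Next I would classify the truncated summands by the shape of $G$. Simple cyclic shapes ($|V(G)| \le |E(G)|$) have $\Var \widetilde G(A) = O(n^{2|V(G)| - 2})$ for constant $p$, so each cluster contribution has order $\lambda^m p^{-m} n^{|V(G)|-1} = O(n^{|V(G)|-|E(G)|-1}) = O(1/n)$, and summing over the $O(\log n)$ truncated cyclic shapes gives an $O_\PP(1/\sqrt n)$ total. For simple trees $T_m$, each individual contribution is only $O_\PP(1)$, so they cannot be discarded; instead I would Taylor expand each tree count around the mean degree sequence and show that its centered part is, to leading order, linear in the centered degrees $\{\deg(v)-(n-1)p\}_v$. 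Since $\sum_v (\deg(v) - (n-1)p) = 2 \signedKtwo(A)$, every such linear piece collapses onto $\signedKtwo(A)$, while the quadratic-or-higher residual is $O_\PP(1/(p\sqrt n))$ per tree by a direct variance bound. Multigraphs with repeated edges simplify via $A_e^k = A_e$ onto counts of their support graph, producing both an additional $\signedKtwo$-proportional piece and a deterministic shift that feeds into the mean. This ``perfect correlation'' of trees with $\signedKtwo$ mirrors the mechanism sketched for the wedge regime in Section~\ref{sec:nonMatchingEdges_intuition}.

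The final step is aggregation. The deterministic shifts arising from multigraph shapes should assemble into $-\frac{1-p}{p}(\EE |M|/n)^2$, while the $\signedKtwo$-proportional pieces from $m = 1$, from the collapsed trees, and from the multigraphs should assemble into $\sqrt{2(1-p)/p}\,(\EE|M|/n)\,\signedKtwo(A)/\sqrt{\Var \signedKtwo(A)}$. The natural tool for verifying both identifications is the matching-moment identity $\EE |M| = \lambda\, \partial_\lambda \log Z_{K_n}(\lambda)$ applied to the cluster-expansion representation of $\log Z_{K_n}(\lambda/p) - \log Z_{K_n}(\lambda)$, which (for $p = q$) is exactly the expected log-likelihood and whose derivative in $\lambda$ returns the matching-size moment we want.

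I expect the main obstacle to be this final combinatorial bookkeeping step: showing that the large sum of Ursell weights over all trees and multigraphs truly telescopes into the compact expressions $\EE|M|/n$ and $(\EE|M|/n)^2$, rather than some apparently unrelated series. This simplification is forced by the fact that the log-likelihood ratio is itself the difference of two monomer-dimer log-partition functions, so all cluster-level rearrangements should ultimately be interpretable through $\partial_\lambda \log Z$ identities; making this cleanly rigorous is the technical heart of the argument. A secondary but nontrivial task is uniform variance control over all $O(\log n)$ cluster shapes used in the error analysis, which should follow from standard subgraph-counting estimates combined with the exponential decay of cluster weights guaranteed by the convergence proof.
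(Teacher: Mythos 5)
Your skeleton matches the paper's: truncate the cluster expansion at $2\log n$ (Theorem~\ref{thm:truncate_CE_clogn}), discard cyclic clusters, let simple trees carry the fluctuation by collapsing them onto $\signedKtwo(A)$, and let repeated-edge clusters carry the mean. But there is a genuine gap at what you yourself call the technical heart: the identification of the deterministic part with $-\frac{1-p}{p}(\EE|M|/n)^2$. Your proposed mechanism rests on the claim that $\log Z_{K_n}(\lambda/p)-\log Z_{K_n}(\lambda)$ ``is exactly the expected log-likelihood'' for $p=q$, so that $\EE|M|=\lambda\,\partial_\lambda \log Z_{K_n}(\lambda)$ does the bookkeeping. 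That premise is false: $\EE_{\cQ}\log Z_A(\lambda/p)\neq \log Z_{K_n}(\lambda q/p)$ because the expectation does not pass through the logarithm, and indeed $\log Z_{K_n}(\lambda/p)-\log Z_{K_n}(\lambda)$ is of a much larger order than the $\Theta(1)$ expected log-likelihood ratio. Taking the expectation term by term in the truncated expansion, a cluster with $m$ edges and $m'$ distinct edges contributes $\lambda^m(p^{m'-m}-1)$, so the mean is $\frac{1-p}{p}$ times the aggregate Ursell weight of trees with exactly one repeated edge. Relating that aggregate to $(\EE|M|/n)^2$ is \emph{not} a consequence of the univariate derivative identity; in the paper it requires a new combinatorial identity (Lemma~\ref{lem:ursell-function-tmrep-tm-identity}, built on Lemma~\ref{lemma:Ursell_trees_adjVertices_identity}) expressing $\sum_{\TmRep}\widetilde\phi(H(\TmRep))/(2\aut(\TmRep))$ as minus a convolution of $\ell\,\widetilde\phi(H(T_\ell))/\aut(T_\ell)$ over pairs of simple trees, proved by a cut-edge/bi-coloring bijection on the incompatibility graph, plus careful series bookkeeping (truncation of the squared series, falling-factorial corrections) in Lemma~\ref{lemma:E_oneRepTrees_mainResult}. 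Without this (or some genuinely different derivation, e.g.\ a multivariate-fugacity argument, which you do not supply), the mean term is unproved. One also needs a variance bound showing the one-repeated-edge contribution concentrates (the paper's Lemma~\ref{lemma:Var_oneRepTrees_small}), which your proposal does not address.

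Two secondary issues. First, your error aggregation treats the shapes as if there were only $O(\log n)$ of them; in fact the number of tree (and cyclic) templates with $m$ edges grows exponentially in $m$, so the per-shape variance bounds must be summed against the Ursell weights using the Penrose tree-graph bound, exactly as in the paper's Lemmas~\ref{lemma:VarProjK_2_Perp_simpleTrees}, \ref{lemma:nonMatching_simpleCyclic_small}--\ref{lemma:nonMatching_moreThanTwoRep}; your closing remark about exponential decay of cluster weights gestures at this but the counting as stated is wrong. Second, ``Taylor expanding each tree count around the mean degree sequence'' is a heuristic; the rigorous version is the $L^2$ projection the paper uses, i.e.\ computing $\Cov[\overline{T}_m(A),\signedKtwo(A)]$ explicitly, bounding the variance of the orthogonal residual, and then identifying the aggregated coefficient $\sum_m\sum_{T_m} m!\,\phi(H(T_m))\,m\,\lambda^m (n)_{m+1}/\aut(T_m)$ with $\EE|M|$ via Propositions~\ref{prop:GibbsMatching_mean_var_from_CE} and \ref{prop:MDMean_cycles_O(1)}; that last identification step is needed and is only implicit in your write-up.
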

\noindent 
Note that the main terms in \eqref{eq:nonMatching_LLdistributionUnderNull} are of constant order since $\EE |M| = \Theta(n)$, and that the remainder term vanishes in probability. We have opted to leave explicit the dependence on $p$ in the remainder term in \eqref{eq:nonMatching_LLdistributionUnderNull} even when $p = \Theta(1)$ in this regime because this will provide a useful comparison to the setting in Section~\ref{sec:equal-average-edge-density-results}.

The above theorem is proved in Section~\ref{sec:plantingVarSize_pEqualq} via a \emph{finite-sample} analysis. As a result, while the theorem is stated with asymptotic notation, the approximation \eqref{eq:nonMatching_LLdistributionUnderNull} is inherently non-asymptotic. Moreover, \eqref{eq:nonMatching_LLdistributionUnderNull} implies that the log-likelihood ratio is asymptotically normal and achieves the same asymptotic testing error as the signed edge count, which is therefore statistically optimal.

\begin{theorem}\label{thm:plantedMatchings_mean_is_minus_half_variance}
Suppose Assumption \ref{asmpt:nonMatchingEdges} holds. 
As $n \to \infty$, 
the log-likelihood ratio 
    satisfies
    \begin{align*}\label{eq:plantedMatchings_mean_is_minus_half_variance}
        \log \frac{\ud \cP_\lambda}{\ud \cQ}(A) \overset{d}{\longrightarrow} \cN\!\inparen{ \pm   \frac{c^2}{4} \frac{1-p}{p}, \, \frac{c^2}{2} \frac{1-p}{p}  },
    \end{align*}
    where `$+$' holds for $A \sim \cP_\lambda$ and `$-$' holds for $A \sim \cQ$.
    Consequently, 
    \begin{align*}
        \inf_{\psi_n} \left( \PP_{A \sim \cP_\lambda}\!\insquare{ \psi_n(A) = 0  } + \PP_{A \sim \cQ}\!\insquare{ \psi_n(A) = 1  } \right) = 
     1 - \TV(\cP_\lambda, \cQ) \longrightarrow 2 \Phi\!\inparen{  -\frac{c}{2\sqrt{2}} \sqrt{\frac{1-p}{p}}  } ,
    \end{align*}
    where the infimum is taken over all tests $\psi_n : \inbraces{0,1}^{\binom{[n]}{2}} \rightarrow \inbraces{0,1}$.
\end{theorem}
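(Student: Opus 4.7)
The plan is to combine the finite-sample approximation of the log-likelihood ratio from Theorem~\ref{thm:nonMatching_LLdistributionUnderNull} with a standard CLT for the signed edge count and Le Cam's third lemma to transfer the limit to the alternative. Under $\cQ$, the signed edge count $\signedKtwo(A)=\sum_{\{i,j\}}(A_{ij}-q)$ is a sum of $\binom{n}{2}$ i.i.d.\ centered Bernoulli variables, so the classical CLT gives $\signedKtwo(A)/\sqrt{\Var\signedKtwo(A)}\convd\cN(0,1)$. Substituting into \eqref{eq:nonMatching_LLdistributionUnderNull}, using $2\EE|M|/n\to c$ from Lemma~\ref{lem:matching-limiting-size}, and applying Slutsky's theorem to absorb the $O_\PP(1/(p\sqrt{n}))$ remainder, I obtain
\begin{equation*}
\log\frac{\ud\cP_\lambda}{\ud\cQ}(A)\convd \cN\!\inparen{-\frac{c^2(1-p)}{4p},\, \frac{c^2(1-p)}{2p}}
\end{equation*}
under $\cQ$, which is the `$-$' half of the first assertion.

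Next, the asymptotic mean $-c^2(1-p)/(4p)$ is exactly $-\tfrac12$ times the asymptotic variance $c^2(1-p)/(2p)$. Because the limit $U\sim\cN(-\sigma^2/2,\sigma^2)$ satisfies $\EE e^U=1$, Le Cam's first lemma yields mutual contiguity of $\cP_\lambda$ and $\cQ$. Le Cam's third lemma (see \cite[Example~6.7]{van2000asymptotic}) then shifts the mean by the variance to give the distribution under $\cP_\lambda$, namely $\cN(+c^2(1-p)/(4p),\, c^2(1-p)/(2p))$, which is the `$+$' half.

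For the testing-error statement, the identity $\inf_\psi\bigl(\PP_{\cP_\lambda}[\psi=0]+\PP_\cQ[\psi=1]\bigr)=1-\TV(\cP_\lambda,\cQ)$ is the standard Neyman--Pearson duality, with the infimum attained by the likelihood ratio test $\psi^\star(A)=\bbone\{\log(\ud\cP_\lambda/\ud\cQ)(A)\ge 0\}$. Since the limiting Gaussians have continuous CDFs at $0$, the convergence in distribution of $\log(\ud\cP_\lambda/\ud\cQ)(A)$ under each hypothesis yields
\begin{equation*}
\PP_{\cP_\lambda}\!\insquare{\psi^\star(A)=0}+\PP_\cQ\!\insquare{\psi^\star(A)=1}\longrightarrow 2\Phi(-\mu/\sigma),
\end{equation*}
where $\mu:=c^2(1-p)/(4p)$ and $\sigma:=c\sqrt{(1-p)/(2p)}$. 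A short algebraic computation gives $\mu/\sigma=(c/(2\sqrt{2}))\sqrt{(1-p)/p}$, matching the claimed limit. I expect no substantive obstacle: Theorem~\ref{thm:nonMatching_LLdistributionUnderNull} does all the heavy lifting and the present theorem is essentially a bookkeeping corollary, the only care being the algebraic check of the mean--variance relation (needed to invoke Le Cam's machinery) and the continuity of the limit CDF at $0$ (needed to convert distributional convergence into convergence of error probabilities).
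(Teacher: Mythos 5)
Your proposal is correct and follows essentially the same route as the paper's proof: asymptotic normality under $\cQ$ from Theorem~\ref{thm:nonMatching_LLdistributionUnderNull} combined with the CLT for $\signedKtwo$ (Lemma~\ref{lemma:signed_K2_normality_P_Q}), the transfer to $\cP_\lambda$ via the mean-equals-minus-half-variance contiguity condition and Le Cam's third lemma, and the testing-error limit via the Neyman--Pearson duality with the likelihood-ratio test thresholded at zero.
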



We emphasize that, although the asymptotic behavior of the likelihood ratio is captured by a simple statistic, establishing this result is a sophisticated task. 
Moreover, as a consequence of the above theorems, there is no statistical-to-computational gap for this testing problem.

\subsection{Equal average edge density and the signed wedge count}
\label{sec:equal-average-edge-density-results}

We now consider the more challenging setting where the \emph{average edge density} in the planted model $\cP_\lambda$ is equal to that in the model $\cQ$, 
i.e., $\EE_{\cQ} A_{ij} = \EE_{\cP_\lambda} A_{ij}$ which is equivalent to condition \eqref{eq:matchingEdgeDensities_q_definition}.
In this case, the edge count is uninformative and thus does not trivialize the positive result. It turns out that another simple statistic, the \emph{signed wedge count} defined by 
\begin{equation}
    \signedPtwo(A) = \sum_{j \in [n]} \sum_{\inbraces{i,k} \in \binom{[n] \setminus \{j\}}{2} } (A_{ij} - q) (A_{jk} - q) , \label{eq:def-signedPtwo}
\end{equation}
is the optimal statistic. On the one hand, it is natural to consider counting wedges for two reasons: (i) a wedge is the next simplest network motif beyond an edge, and (ii) the planted model is expected to contain fewer wedges because the planted matching, by definition, contains no wedge. 
On the other hand, a planted matching is defined by the \emph{global constraint} that the edges in the matching are not adjacent to each other, so it is highly nontrivial why a simple network motif involving only two edges is optimal. 

What is perhaps surprising is the scaling of the edge density $p$ in $n$ in the critical regime. To see this critical scaling, we can compute $\EE_{\cQ} [\signedPtwo(A)] - \EE_{\cP_\lambda} [\signedPtwo(A)] = \Theta(n)$ and $\sqrt{\Var_{\cQ} (\signedPtwo(A))} \approx \sqrt{\Var_{\cP_\lambda} (\signedPtwo(A))} = \Theta(n^{3/2} p)$ (see Lemma~\ref{lemma:signed_P2_normality_P_Q} for a more precise statement), which suggests the scaling $p = \Theta(\frac{1}{\sqrt{n}})$. Consequently, in the regime $\frac{\log n}{n} \ll p \ll \frac{1}{\sqrt{n}}$, there are already plenty of matchings of size $\Theta(n)$ in a $G(n,p)$ random graph, but we can still consistently detect the presence of just one additional planted matching using the statistic $\signedPtwo(A)$.

The above considerations motivate the following assumption. 

\begin{assumption}
    \label{asmpt:MatchingEdgeDensity} 
Consider Problem~\ref{prob:detection} with 
$p\sqrt{n} \rightarrow \theta$ 
as $n \to \infty$ for a constant $\theta > 0$
and 
\begin{equation}\label{eq:matchingEdgeDensities_q_definition}
q := p + \frac{\EE\abs{M}}{\binom{n}{2}}(1-p).
\end{equation}
Suppose $\lambda = \frac{1}{\zeta n}$ for a constant $\zeta \ge 60$. Let $c$ be defined by \eqref{eq:MDMean_convergence_to_c}.
%
\end{assumption}

\noindent
Note that since $\EE |M| = \Theta(n)$, the conditions $p \sqrt{n} \to \theta$ and \eqref{eq:matchingEdgeDensities_q_definition} imply that $q - p = \Theta(\frac 1n)$ and $p \sim q \sim \frac{\theta}{\sqrt{n}}$. 

To formalize the result for testing with the signed wedge count, define the threshold test $\varphi'_n(A) : \inbraces{0,1}^{\binom{[n]}{2}} \rightarrow \inbraces{0,1}$ by
\begin{equation}
\varphi'_n(A) = \boldsymbol{1}\!\inbraces{ \frac{\signedPtwo(A)}{\sqrt{3\binom{n}{3}q^2(1-q^2)}} \leq -\frac{c^2}{2\sqrt{2}\theta} }.
    \label{eq:wedge-count-threshold-test}
\end{equation}
That is, $\varphi'_n(A)$ returns $1$ (resp.~$0$) if the test result is $A \sim \cP_\lambda$ (resp.~$A \sim \cQ$). 
This threshold test achieves the following asymptotic error, proved in Section~\ref{sec:analysis-edge-wedge-count}.

\begin{theorem}\label{thm:matchingEdgeDensities_compEfficientStat_mainResult}
Suppose Assumption \ref{asmpt:MatchingEdgeDensity} holds. As $n \to \infty$, the threshold test \eqref{eq:wedge-count-threshold-test} satisfies
    \begin{equation*}
        \PP_{A \sim \cP_\lambda}\!\insquare{ \varphi'_n(A) = 0  } + \PP_{A \sim \cQ}\!\insquare{ \varphi'_n(A) = 1  } \longrightarrow 2 \Phi\!\inparen{  -\frac{c^2}{2\sqrt{2} \theta }  }.
    \end{equation*}
\end{theorem}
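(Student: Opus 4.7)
The plan is to reduce the theorem to three ingredients: (i) asymptotic normality of the standardized signed wedge count under both $\cQ$ and $\cP_\lambda$, for which I would invoke Lemma~\ref{lemma:signed_P2_normality_P_Q}; (ii) explicit computation of the first and second moments of $\signedPtwo(A)$ under each distribution; and (iii) a Gaussian tail computation, once the threshold $-c^2/(2\sqrt{2}\theta)$ in \eqref{eq:wedge-count-threshold-test} is recognized as the midpoint between the two standardized means.

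For the moments under $\cQ$, since the $A_{ij}$'s are independent $\mathrm{Bernoulli}(q)$, each summand $(A_{ij}-q)(A_{jk}-q)$ has mean zero, and any two distinct wedges share either no edge or exactly one edge; in both cases a quick case analysis shows the covariance of the summands vanishes (a squared centered Bernoulli factor is multiplied by independent mean-zero factors). Hence
\begin{align*}
\EE_\cQ[\signedPtwo(A)] = 0, \qquad \Var_\cQ[\signedPtwo(A)] = 3\binom{n}{3} q^2 (1-q)^2,
\end{align*}
which is asymptotic to the stated normalizer $3\binom{n}{3} q^2 (1-q^2)$ since $q = \Theta(n^{-1/2}) \to 0$.

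For the moments under $\cP_\lambda$, I would condition on the planted matching $M$. Because $M$ is a matching and the two edges $\{i,j\},\{j,k\}$ of a wedge both contain $j$, at most one of them can lie in $M$. The conditional expectation of a wedge summand is $(p-q)^2$ when neither edge lies in $M$, and $(1-q)(p-q)$ when exactly one edge lies in $M$. Partitioning wedges according to whether the center $j$ is $M$-matched (and which arm is its matching partner), I get
\begin{align*}
\EE[\signedPtwo(A)\mid M] = 2\abs{M}(n-2)(1-q)(p-q) + (p-q)^2 \bigl[2\abs{M}\tbinom{n-2}{2}+(n-2\abs{M})\tbinom{n-1}{2}\bigr].
\end{align*}
Substituting the relation $p-q = -(1-p)\cdot 2\EE\abs{M}/(n(n-1))$ derived from \eqref{eq:matchingEdgeDensities_q_definition}, taking expectation over $M$, and using $\EE\abs{M} \sim cn/2$ together with $p \to 0$, the first term on the right contributes $\sim -c^2 n$ while the bracketed term contributes $\sim +c^2 n/2$, yielding
\begin{align*}
\EE_{\cP_\lambda}[\signedPtwo(A)] \sim -\tfrac{c^2 n}{2}.
\end{align*}
The variance equivalence $\Var_{\cP_\lambda}[\signedPtwo(A)] \sim \Var_\cQ[\signedPtwo(A)]$ I would take from Lemma~\ref{lemma:signed_P2_normality_P_Q}.

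Combining these with the standardizer $\sqrt{3\binom{n}{3}q^2(1-q^2)} \sim \theta n/\sqrt{2}$, the standardized mean under $\cP_\lambda$ converges to $-c^2/(\sqrt{2}\theta)$, which is exactly twice the threshold. Both type I and type II probabilities then converge to $\Phi(-c^2/(2\sqrt{2}\theta))$ by the asymptotic normality in Lemma~\ref{lemma:signed_P2_normality_P_Q}, and their sum gives the claim. The main obstacle is the partial cancellation in the $\EE_{\cP_\lambda}$ calculation: both the linear-in-$\abs{M}$ term and the quadratic-in-$(p-q)$ term are individually of order $n$ with opposite signs, so the subleading constants must be tracked carefully to identify the limiting constant $-c^2/2$. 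A secondary technical point---variance equivalence between $\cP_\lambda$ and $\cQ$---is intuitively clear because only the $O(n)$ matching edges have altered distributions, a negligible additive correction to the $\Theta(n^3 q^2)$ leading-order variance, but its verification is outsourced to the invoked lemma.
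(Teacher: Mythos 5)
Your proposal is correct and follows essentially the same route as the paper: Theorem~\ref{thm:matchingEdgeDensities_compEfficientStat_mainResult} is deduced there directly from Lemma~\ref{lemma:signed_P2_normality_P_Q}, whose proof likewise conditions on $M$ via $A_{ij}=M_{ij}(1-\widetilde{A}_{ij})+\widetilde{A}_{ij}$ and the relation $p-q\sim -2\EE\abs{M}/n^2$, and your wedge-by-wedge count of the conditional mean (arriving at $\EE_{\cP_\lambda}\signedPtwo(A)\sim -c^2 n/2 = -2(\EE\abs{M})^2/n$) is just an equivalent reorganization of the paper's term-by-term decomposition I--VI. The final midpoint-threshold Gaussian computation matches the paper's argument.
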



Similar to the previous case, to prove the optimality of the $\signedPtwo$ statistic, we now show a matching negative result by considering the likelihood ratio $\frac{\ud \cP_\lambda}{\ud \cQ}$. 
The following result shows that the log-likelihood ratio is dominated by the signed wedge count asymptotically. 

\begin{theorem}\label{thm:MatchingEdgeDensity_LLdistributionUnderNull}
    Suppose Assumption \ref{asmpt:MatchingEdgeDensity} holds. 
    Let the signed wedge count $\signedPtwo(A)$ be defined by \eqref{eq:def-signedPtwo}.
    Then for $A \sim \cQ$ and for each $n$, the log-likelihood ratio satisfies 
    \begin{align}\label{eq:MatchingEdgeDensity_LLdistributionUnderNull}
        \log \frac{\ud \cP_\lambda}{\ud \cQ}(A) = - \frac{1}{4 n q^2 } \inparen{ \frac{2\EE\abs{M}}{n}  }^4 + \frac{1}{\sqrt{2n} q} \inparen{\frac{2\EE\abs{M}}{n} }^2 \frac{\signedPtwo(A)}{\sqrt{\Var \signedPtwo(A)}} + O_\PP\!\inparen{ \frac{1}{\sqrt{nq}}  } .
    \end{align}
\end{theorem}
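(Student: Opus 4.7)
The plan is to apply the cluster expansion \eqref{eq:log-likelihood-ratio-cluster-expansion-subgraph-counts} and isolate the contribution of the signed wedge count, structurally parallel to the proof of Theorem~\ref{thm:nonMatching_LLdistributionUnderNull}, but with substantial additional complications from the fact that $p \ne q$ and that $p = \Theta(1/\sqrt n)$. First, I would truncate the cluster expansion at $m \le C\log n$ via Theorem~\ref{thm:truncate_CE_clogn} (the source of the $\zeta \ge 60$ assumption), absorbing the tail into the $O_\PP(1/\sqrt{nq})$ error. Next, I would classify clusters by the topology of the multigraph $H(e_1,\dots,e_m)$, separating simple-tree clusters from clusters containing a cycle or a repeated edge. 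Since $\lambda = 1/(\zeta n)$ and $p = \Theta(1/\sqrt n)$, a tree $T_m$ on $m+1$ vertices satisfies $\lambda^m \widetilde{T_m}(K_n) = \Theta(n)$, so trees of every order $m$ contribute at the same scale, whereas each additional vertex merging or edge repetition introduces a suppression factor of $1/n$. Second-moment estimates on the non-tree part then show that all non-tree clusters together contribute $O_\PP(1/\sqrt{nq})$.

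The core of the proof is the analysis of the tree contributions. For each unlabeled tree $T_m$, I would expand $\frac{1}{p^m}\widetilde{T_m}(A) - \widetilde{T_m}(K_n)$ in the centered variables $B_{ij} = A_{ij} - q$, grouping terms by which subset of the $m$ edges remains uncentered (producing factors of $q$) and which remains centered (producing a factor $\widecheck{G}(A)$ for $G \subseteq T_m$). This expresses each tree term as a linear combination of signed subgraph counts $\widecheck{G}(A)$, with explicit polynomial coefficients in $q$, $(q-p)/p$, $\lambda/p$, and the Ursell weight $\phi(H(T_m))$. Summing over all $T_m$ up to the truncation level, I would re-collect the coefficient of each fixed $\widecheck{G}$. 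The claim — consistent with the paper's remark that the tree terms are asymptotically perfectly correlated with $\signedPtwo(A)$ — is that (i) the coefficient of $\widecheck{P_2}(A) = \signedPtwo(A)$ assembles exactly into the prefactor $\frac{1}{\sqrt{2n}\,q}(2\EE|M|/n)^2 / \sqrt{\Var \signedPtwo(A)}$; (ii) the deterministic pieces combine into $-\frac{1}{4nq^2}(2\EE|M|/n)^4$; and (iii) the signed counts of all other subgraphs (stars $\widecheck{S_j}$ for $j \ge 3$, longer trees, and cyclic or repeated-edge subgraphs) contribute only $O_\PP(1/\sqrt{nq})$.

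The main obstacle is the treatment of the signed edge count $\widecheck{K_2}(A)$. Because $p < q$, the deterministic prefactor $F(A) = |A|\log\tfrac{p(1-q)}{q(1-p)} + \binom{n}{2}\log\tfrac{1-p}{1-q}$ contributes, to leading order, a $\widecheck{K_2}(A)$ term of stochastic order $n^{1/4}$, and the $m=1$ cluster together with all repeated-edge clusters for $m \ge 2$ produce $\widecheck{K_2}(A)$ contributions of the same order. These pieces must be tracked simultaneously and shown to combine to an $O_\PP(1/\sqrt{nq})$ residual; this cancellation is driven precisely by the choice of $q$ in \eqref{eq:matchingEdgeDensities_q_definition} that equates the average edge densities of $\cP_\lambda$ and $\cQ$. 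Verifying this cancellation quantitatively while also controlling the proliferation of lower-order cross terms coming from the small but nonzero factor $(q-p)/p = \Theta(1/\sqrt n)$ is the main bookkeeping challenge. Once these pieces are in place, combining the tree-to-wedge collapse, the edge-count cancellation, and the non-tree error bound yields the claimed expansion \eqref{eq:MatchingEdgeDensity_LLdistributionUnderNull}.
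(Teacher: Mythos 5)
Your plan for the fluctuation part is essentially the paper's: truncate via Theorem~\ref{thm:truncate_CE_clogn}, isolate simple-tree clusters, expand in centered variables (equivalently, project onto $\signedKtwo$ and $\signedPtwo$), check that the $\signedKtwo$ contributions of the trees cancel against the $F(A)$ prefactor because of the choice \eqref{eq:matchingEdgeDensities_q_definition}, and show the residual orthogonal to $\signedKtwo,\signedPtwo$ is $O_\PP(1/\sqrt{nq})$. But there is a genuine gap in how you dispose of the rest of the expansion: you claim that every cluster containing a repeated edge can be bundled into the $O_\PP(1/\sqrt{nq})$ error by second-moment estimates. That is false in this regime, and it destroys your item (ii). The repeated-edge tree clusters have \emph{large means}: the trees with exactly one repeated edge contribute a deterministic term of order $1/q=\Theta(\sqrt n)$ (Claim~\ref{claim:EoneRepTrees_expanded_final}), which is exactly what is needed to cancel the $\Theta(1/q)$ pieces of $F(A)$ and of $\EE[\simpleTrees]$ (Lemma~\ref{lemma:combined_definition_as_Ftwo_EsimpleTrees_EonerepTrees}); and the trees with two repeated edges (triple edges, adjacent and separated double-double edges) contribute $\Theta(1)$ deterministic terms, which after cancelling $F_3$ and the leftover $O(1)$ pieces are precisely what produces the mean $-\frac{1}{4nq^2}\inparen{\frac{2\EE|M|}{n}}^4$ (Lemmas~\ref{lemma:tripleEdge_cancels_logprefactorO1}, \ref{lemma:sepDD_combSepDD_cancel}, \ref{lemma:adjDD_combAdjDD_give_meanPart}). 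Only the \emph{fluctuations} of these repeated-edge sums about their means are negligible (Lemma~\ref{lemma:matchingEdgeDensity_oneRep_twoRep_are_deterministic}); their expectations cannot be dropped. Since the deterministic content of the simple-tree clusters plus $F(A)$ is fixed (it is $F(A)+\EE[\simpleTrees]$ no matter how you regroup the centered expansion), your bookkeeping in the second paragraph would end with a divergent $\Theta(\sqrt n)$ deterministic residual rather than the stated mean, so the proof as proposed cannot close.

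A secondary, smaller point: the steps you describe as ``the coefficient of $\signedPtwo$ assembles exactly into the prefactor'' and ``the deterministic pieces combine into $-\frac{1}{4nq^2}(2\EE|M|/n)^4$'' are not routine resummation; they rest on combinatorial identities for Ursell functions (splitting a tree along a marked repeated edge or a marked wedge into two colored trees, Lemmas~\ref{lemma:Ursell_trees_adjVertices_identity}--\ref{lem:ursell-function-tm-identity-2} and their double-double/triple-edge analogues), which let one recognize the coefficient sums as $(\EE|M|)^2$ and $(\EE|M|)^4$ up to controlled errors. Any corrected version of your argument would need these (or an equivalent device) in addition to restoring the repeated-edge tree contributions.
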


\noindent
Note that the second term on the right-hand side of \eqref{eq:MatchingEdgeDensity_LLdistributionUnderNull} (i.e., the main random term) are of order $\frac{1}{q\sqrt{n}} \sim \frac{1}{p\sqrt{n}}$, which is the same as the remainder term in \eqref{eq:nonMatching_LLdistributionUnderNull}. Therefore, proving \eqref{eq:MatchingEdgeDensity_LLdistributionUnderNull} is a more challenging task because we need to carefully show that all the larger terms in the log-likelihood ratio cancel each other in the regime $p \sqrt{n} = \Theta(1)$. 

The above result is proved in Section~\ref{sec:plantingVarSize_pNotEqualq}. Similar to the previous case, the analysis is finite-sample and \eqref{eq:MatchingEdgeDensity_LLdistributionUnderNull} holds non-asymptotically.
Moreover, it readily implies the following.

\begin{theorem}\label{thm:matchingEdgeDensities_mean_is_minus_half_variance}
Suppose Assumption \ref{asmpt:MatchingEdgeDensity} holds. 
As $n \to \infty$, 
the log-likelihood ratio satisfies
\begin{align*}
        \log \frac{\ud \cP_\lambda}{\ud \cQ}(A) \overset{d}{\longrightarrow} \cN\!\inparen{ \pm   \frac{c^4}{4\theta^2}, \, \frac{c^4}{2\theta^2}  },
    \end{align*}
    where `$+$' holds for $A \sim \cP_\lambda$ and `$-$' holds for $A \sim \cQ$.
    Consequently, 
    \begin{align*}
        \inf_{\psi_n} \left( \PP_{A \sim \cP_\lambda}\!\insquare{ \psi_n(A) = 0  } + \PP_{A \sim \cQ}\!\insquare{ \psi_n(A) = 1  } \right) = 
     1 - \TV(\cP_\lambda, \cQ) \longrightarrow 2 \Phi\!\inparen{  -\frac{c^2}{2\sqrt{2} \theta}  } ,
    \end{align*}
    where the infimum is taken over all tests $\psi_n : \inbraces{0,1}^{\binom{[n]}{2}} \rightarrow \inbraces{0,1}$.
\end{theorem}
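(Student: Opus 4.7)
The plan is to combine the finite-sample approximation of the log-likelihood ratio in Theorem~\ref{thm:MatchingEdgeDensity_LLdistributionUnderNull} with a CLT for the signed wedge count, then invoke Le Cam's framework of local asymptotic normality to transfer the Gaussian limit from the null to the alternative, and finally upgrade the weak convergence of the likelihood ratio to convergence of the total variation distance.

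For the limit under $\cQ$, Assumption~\ref{asmpt:MatchingEdgeDensity} together with Lemma~\ref{lem:matching-limiting-size} gives $2\EE\abs{M}/n \to c$, and the definition of $q$ combined with $p\sqrt{n}\to \theta$ implies $p\sim q\sim \theta/\sqrt{n}$. Substituting these into Theorem~\ref{thm:MatchingEdgeDensity_LLdistributionUnderNull}, the deterministic term converges to $-c^4/(4\theta^2)$, the coefficient of the standardized signed wedge count converges to $c^2/(\sqrt{2}\theta)$, and the remainder $O_\PP(1/\sqrt{nq}) = O_\PP(n^{-1/4})$ is $o_\PP(1)$. Combined with the CLT $\signedPtwo(A)/\sqrt{\Var \signedPtwo(A)}\convd \cN(0,1)$ under $\cQ$ from Lemma~\ref{lemma:signed_P2_normality_P_Q}, Slutsky's theorem yields $\log \frac{\ud\cP_\lambda}{\ud\cQ}(A)\convd \cN(-c^4/(4\theta^2),\, c^4/(2\theta^2))$ under $\cQ$. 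The crucial structural feature is that the limiting mean equals $-\frac{1}{2}$ times the limiting variance $\sigma^2 := c^4/(2\theta^2)$, which is exactly the condition for mutual contiguity of $\cQ$ and $\cP_\lambda$ by Le Cam's first lemma; Le Cam's third lemma then delivers $\log \frac{\ud\cP_\lambda}{\ud\cQ}(A)\convd \cN(+c^4/(4\theta^2),\, c^4/(2\theta^2))$ under $\cP_\lambda$ (see \cite[Examples~6.5 and 6.7]{van2000asymptotic}).

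For the TV statement, the infimum equals $1-\TV(\cP_\lambda, \cQ)$ by optimality of the likelihood ratio test. Writing $L_n := \ud\cP_\lambda/\ud\cQ$ and $\sigma := c^2/(\sqrt{2}\theta)$, the previous step yields $L_n \convd L := e^{\sigma Z-\sigma^2/2}$ under $\cQ$ with $Z\sim \cN(0,1)$. Since $L_n\ge 0$, $\EE_\cQ L_n = 1$, and $\EE L = 1$, a standard Scheff\'e-type argument upgrades this to $L^1(\cQ)$ convergence, so that $\EE_\cQ\abs{L_n-1}\to \EE\abs{L-1}$. A direct change-of-measure computation (in which $L$ itself becomes the Radon--Nikodym derivative of $\cN(\sigma,1)$ with respect to $\cN(0,1)$) identifies $\frac{1}{2}\EE\abs{L-1}$ with $\TV(\cN(\sigma,1),\cN(0,1)) = 2\Phi(\sigma/2)-1$, giving $1-\TV(\cP_\lambda,\cQ)\to 2\Phi(-\sigma/2) = 2\Phi(-c^2/(2\sqrt{2}\theta))$, as required. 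The main obstacle, once Theorem~\ref{thm:MatchingEdgeDensity_LLdistributionUnderNull} is granted, is the CLT for $\signedPtwo$ in the regime $p = \Theta(1/\sqrt{n})$, which I would handle via a Janson-style moment calculation for degree-two centered polynomials in independent Bernoullis \cite{janson1994orthogonal}; the remaining ingredients (Slutsky, the LAN transfer via Le Cam's lemmas, and the Scheff\'e upgrade) are textbook once the ``$-\frac{1}{2}\sigma^2$'' mean--variance relation secured by Theorem~\ref{thm:MatchingEdgeDensity_LLdistributionUnderNull} is in hand.
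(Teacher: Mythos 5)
Your proposal is correct and follows essentially the same route as the paper: the limit under $\cQ$ is Theorem~\ref{thm:MatchingEdgeDensity_LLdistributionUnderNull} plus the CLT for $\signedPtwo$ from Lemma~\ref{lemma:signed_P2_normality_P_Q} (itself Janson's \cite[Theorem~1]{janson1994orthogonal}) and Slutsky, and the limit under $\cP_\lambda$ is Le Cam's third lemma via the mean $=-\tfrac12$ variance relation, exactly as in the paper's proof (which reduces to the argument given for Theorem~\ref{thm:plantedMatchings_mean_is_minus_half_variance}). The only divergence is the last step: the paper obtains the TV limit from the Neyman--Pearson lemma together with both Gaussian limits, i.e., the asymptotic errors of the likelihood-ratio test thresholded at zero, whereas you pass through uniform integrability of $L_n$ (valid here since $\cP_\lambda \ll \cQ$ for every $n$, so $\EE_\cQ L_n = 1$) to get $\EE_\cQ\abs{L_n-1}\to\EE\abs{L-1}$ and identify $\tfrac12\EE\abs{L-1}$ with $\TV(\cN(0,1),\cN(\sigma,1)) = 2\Phi(\sigma/2)-1$; this variant needs only the null limit for the TV statement, but since the alternative limit is part of the theorem anyway, the two routes are of essentially equal cost and both are correct.
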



The conclusion is also analogous to the previous case: the log-likelihood ratio is dominated by the signed wedge count, which is asymptotically normal in the regime $p \sqrt{n} \to \theta > 0$, and there is no statistical-to-computational gap for this testing problem.

\subsection{Planted perfect matching}
\label{sec:perfect-matching-results}

A limitation of the above results is the condition $\lambda = \frac{1}{\zeta n}$ for $\zeta$ larger than an absolute constant as in Assumptions~\ref{asmpt:nonMatchingEdges} and~\ref{asmpt:MatchingEdgeDensity}. By \eqref{eq:MDMean_convergence_to_c}, this means that the largest possible matching our results apply to has expected size $\EE |M| \sim c n/2$ for a certain constant $c \in (0,1)$. 
On the one hand, we believe our main results, Theorems~\ref{thm:nonMatching_LLdistributionUnderNull} and~\ref{thm:MatchingEdgeDensity_LLdistributionUnderNull}, can be extended to a regime where $\lambda = o(1/n)$ and $\EE |M| = o(n)$ with non-essential modifications of the proofs.
On the other hand, the convergence of the cluster expansion is a fundamental bottleneck that prohibits us from taking $\lambda$ to be sufficiently large so that $c$ is close to $1$, so we cannot cover the entire range of $\EE |M|$. 
This limitation is well-known in the cluster expansion literature and will be made clear by the proofs in Section~\ref{sec:cluster-expansion-convergence}. 
Nevertheless, we still expect our main theorems to hold for any $\lambda = \Omega(1/n)$ and $c \in (0,1)$, because the extreme case $\lambda = \infty$ and $c=1$ appeared implicitly in \cite{janson1994numbers} as intermediate results, which were proved using an entirely different approach. 

To be more precise, we now assume $n$ is even for simplicity. 
Let us consider the case $\lambda = \infty$ in Definition~\ref{def:planted-distribution} and Problem~\ref{prob:detection}. That is, we test the null model $\cQ$ against the alternative model $\cP_\infty$ where a uniformly random perfect matching (of size $n/2$) is planted in a $G(n,p)$ random graph. 
The goal is to show results analogous to Theorems~\ref{thm:threshold-edge-count}, \ref{thm:plantedMatchings_mean_is_minus_half_variance}, \ref{thm:matchingEdgeDensities_compEfficientStat_mainResult}, and~\ref{thm:matchingEdgeDensities_mean_is_minus_half_variance}. 
Our positive results about the edge count and the wedge count remain valid, and the negative results via the likelihood ratio follow from intermediate results in \cite{janson1994numbers}. 

\begin{theorem} \label{thm:perfect-matching-p=q}
Consider Problem~\ref{prob:detection} with $p=q \in (0,1)$ being a constant and $\lambda = \infty$. Let $c=1$. Then all the statements in Theorems~\ref{thm:threshold-edge-count} and \ref{thm:plantedMatchings_mean_is_minus_half_variance} hold.
\end{theorem}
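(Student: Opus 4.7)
The plan is to split the two claims. The positive edge-count statement (the analog of Theorem~\ref{thm:threshold-edge-count}) is a direct CLT computation: conditional on the planted $M$, which is now a fixed perfect matching of size $n/2$, one can write
\[
\signedKtwo(A) = \tfrac{n}{2}(1-p) + \sum_{\{i,j\}\notin M}(A_{ij}-p),
\]
so under $\cP_\infty$ the signed edge count has deterministic mean $(n/2)(1-p)$ and variance $(\binom{n}{2}-n/2)p(1-p) \sim \binom{n}{2}p(1-p)$, with a Lindeberg CLT applying to the sum; under $\cQ$ it is a standard Bernoulli CLT. Plugging $c=1$ into the threshold \eqref{eq:edge-count-threshold-test} and applying the two-sided Gaussian tail formula yields the stated limit $2\Phi(-\tfrac{1}{2\sqrt{2}}\sqrt{(1-p)/p})$. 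No new ingredients are needed here beyond what already appears in the proof of Theorem~\ref{thm:threshold-edge-count}.

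The substantive work is the analog of Theorem~\ref{thm:plantedMatchings_mean_is_minus_half_variance}, for which I would first compute the likelihood ratio by hand. Since $K_n$ contains $(n-1)!!$ perfect matchings and a planted perfect matching contributes $n/2$ deterministic edges whose Bernoulli probabilities cancel, a short computation gives
\[
\frac{\ud \cP_\infty}{\ud \cQ}(A) = \frac{N(A)}{(n-1)!!\, p^{n/2}} = \frac{N(A)}{\EE_\cQ[N(A)]},
\]
where $N(A)$ denotes the number of perfect matchings contained in $A$ and where we used $\EE_\cQ[N(A)] = (n-1)!!\, p^{n/2}$. Consequently
\[
\log\frac{\ud \cP_\infty}{\ud \cQ}(A) = \log N(A) - \log \EE_\cQ[N(A)],
\]
so everything reduces to understanding the fluctuations of the perfect matching count in $G(n,p)$.

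Here I would quote the intermediate result of~\cite{janson1994numbers}, which establishes, for constant $p \in (0,1)$, the log-normality
\[
\log \frac{N(A)}{\EE_\cQ[N(A)]} \overset{d}{\longrightarrow} \cN\!\bigl(-\tfrac{\sigma^2}{2},\, \sigma^2\bigr) \quad\text{under } A \sim \cQ, \qquad \sigma^2 = \tfrac{1-p}{2p},
\]
obtained via an orthogonal decomposition of $N/\EE N$ into even-cycle contributions and sharp second-moment estimates. Given this, the mean--variance relation $-\sigma^2/2$ gives mutual contiguity of $\cP_\infty$ and $\cQ$; Le Cam's third lemma then transfers the limit to $\cN(+\sigma^2/2,\sigma^2)$ under $\cP_\infty$, and the standard Gaussian formula for the total variation distance gives $\TV(\cP_\infty,\cQ) \to 1 - 2\Phi(-\sigma/2) = 1 - 2\Phi(-\tfrac{1}{2\sqrt{2}}\sqrt{(1-p)/p})$, which is precisely the $c=1$ specialization of Theorem~\ref{thm:plantedMatchings_mean_is_minus_half_variance}. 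The main obstacle, and the reason we cannot internalize this step into the present framework, is that at $\lambda = \infty$ the polymer weights fall outside the convergence regime of the cluster expansion discussed in Section~\ref{sec:cluster-expansion-convergence}, so the log-partition-function route of this paper breaks down; our role here is therefore limited to reducing the likelihood ratio to $\log(N/\EE N)$, verifying that the variance from~\cite{janson1994numbers} matches our $c^2(1-p)/(2p)$ at $c=1$, and invoking the Le Cam machinery to conclude.
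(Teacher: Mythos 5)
Your proposal is correct and follows essentially the same route as the paper: for $p=q$ the likelihood ratio reduces to $M(A)/\bigl(M(K_n)q^{n/2}\bigr)=M(A)/\EE_{\cQ}M(A)$, and the asymptotic normality with mean $-\tfrac{1-p}{4p}$ and variance $\tfrac{1-p}{2p}$ is imported from Janson's intermediate result (Theorem~4, Eq.~(1.27) of \cite{janson1994numbers}), after which Le Cam's third lemma and the Gaussian testing-error formula give the $\cP_\infty$ limit and the total variation statement, exactly as in the proofs of Theorems~\ref{thm:threshold-edge-count} and~\ref{thm:plantedMatchings_mean_is_minus_half_variance}. The edge-count (positive) part is likewise the same CLT argument, only simplified since $|M|=n/2$ is deterministic.
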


\begin{theorem} \label{thm:perfect-matching-p-not-equal-q}
Consider Problem~\ref{prob:detection} with 
$p\sqrt{n} \rightarrow \theta > 0$ as $n \to \infty$, 
$q = p + \frac{\EE\abs{M}}{\binom{n}{2}}(1-p)$, and $\lambda = \infty$. Let $c=1$. Then all the statements in Theorems~\ref{thm:matchingEdgeDensities_compEfficientStat_mainResult} and~\ref{thm:matchingEdgeDensities_mean_is_minus_half_variance} hold.
\end{theorem}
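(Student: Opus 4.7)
The proof naturally splits into a \emph{positive} half (the signed wedge count test $\varphi'_n$ attains the claimed asymptotic error) and a \emph{negative} half (a matching lower bound via the log-likelihood ratio), and I plan to handle the two halves separately.

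For the positive half, I would mirror the proof of Theorem~\ref{thm:matchingEdgeDensities_compEfficientStat_mainResult} verbatim, the only substantive change being that $M$ is now a uniformly random perfect matching of deterministic size $n/2$ rather than a random matching from the monomer--dimer Gibbs measure. What is needed is a central limit theorem for $\signedPtwo(A)$ under each hypothesis. Under $\cQ = G(n,q)$ the statistic is a degree-two homogeneous polynomial in independent centered Bernoullis, with variance $3\binom{n}{3}q^2(1-q^2)$, and asymptotic normality follows from Janson's orthogonal decomposition or from a standard martingale CLT. Under $\cP_\infty$ I would decompose $A = M \sqcup A'$ where $A'$ is the $G(n,p)$ part restricted to $\binom{[n]}{2}\setminus M$, and expand $\signedPtwo(A)$ into the deterministic $M$-contribution, the cross terms with one edge in $M$ and one in $A'$, and the pure $A'$ part. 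Since a perfect matching contains no wedge, the deterministic part reduces to an expression depending only on $|M|=n/2$ and $q$, which produces a mean shift of size $-n/2 + o(n)$ between $\cP_\infty$ and $\cQ$, while the variance is dominated by the $A'$ part and coincides asymptotically with the variance under $\cQ$. Setting $c=1$ in~\eqref{eq:wedge-count-threshold-test} then calibrates the threshold exactly, and the stated error $2\Phi\!\inparen{-1/(2\sqrt{2}\,\theta)}$ falls out.

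For the negative half, the cluster-expansion machinery driving Theorem~\ref{thm:MatchingEdgeDensity_LLdistributionUnderNull} no longer converges at $\lambda=\infty$, so I would take an entirely different route. I would write the likelihood ratio in closed form as
\begin{equation*}
\frac{\ud \cP_\infty}{\ud \cQ}(A) \;=\; \frac{N(A)}{(n-1)!!}\cdot \frac{p^{|A|-n/2}(1-p)^{\binom{n}{2}-|A|}}{q^{|A|}(1-q)^{\binom{n}{2}-|A|}},
\end{equation*}
where $N(A)$ counts the perfect matchings of $A$. The second factor is a smooth function of the edge count $|A|$ only, and since $|A|$ satisfies a CLT its logarithm contributes an asymptotically Gaussian term whose mean and variance I can read off from $p\sqrt{n}\to\theta$ together with the definition $q = p + \frac{n/2}{\binom{n}{2}}(1-p)$. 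The substantive question is therefore the asymptotic distribution of $\log N(A)$ for $A\sim G(n,q)$ in the regime $q\sim \theta/\sqrt{n}$, which is exactly the content of the intermediate results of \cite{janson1994numbers}, proved there via a second-moment/orthogonal-decomposition argument that is independent of the cluster expansion. Plugging Janson's asymptotic normality of $\log N(A)$ together with the edge-count CLT into the displayed formula, and exploiting the cancellation forced by the calibration of $q$, yields the claimed limit $\cN\!\inparen{\pm 1/(4\theta^2),\, 1/(2\theta^2)}$ for the log-likelihood ratio under $\cQ$. Contiguity is then immediate from the mean-equals-minus-half-variance identity, Le Cam's third lemma transfers the limit to $\cP_\infty$, and the total-variation statement follows from the explicit Gaussian form exactly as in Theorem~\ref{thm:matchingEdgeDensities_mean_is_minus_half_variance}.

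The main obstacle sits in the negative half: carefully translating Janson's parametrization and asymptotic expansions into the $(p,q,\theta)$ conventions used here, and in particular verifying that the joint fluctuations of $\log N(A)$ and $|A|$---their variances together with the covariance between them---combine exactly into the target variance $1/(2\theta^2)$ rather than some larger quantity. The positive half, by contrast, should be essentially a routine adaptation of the finite-$\lambda$ proof, since replacing a Gibbs random matching by a uniformly random perfect matching of fixed size only simplifies the moment computations.
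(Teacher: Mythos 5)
Your positive half is exactly the paper's route: the wedge-count analysis of Theorem~\ref{thm:matchingEdgeDensities_compEfficientStat_mainResult} goes through verbatim with $|M|=n/2$ deterministic, and nothing more is needed there. Your negative half is also in the paper's spirit (drop the cluster expansion, lean on intermediate results of \cite{janson1994numbers}), and your closed form for $\frac{\ud\cP_\infty}{\ud\cQ}(A)$ is the same as the paper's. The gap is in the combination step. In the regime $q\sim\theta/\sqrt{n}$ the two pieces you propose to add are \emph{individually divergent}: Janson-type normality for $\log N(A)$ alone comes with variance of order $(1-q)/(2q)\sim\sqrt{n}/(2\theta)$, and the logarithm of your edge-count factor, which is essentially $(K_2(A)-n/2)\log\inparen{\frac{p(1-q)}{q(1-p)}}$, has standard deviation of order $n\sqrt{q}\cdot\frac{1}{nq}\asymp n^{1/4}$ as well. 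The log-likelihood ratio is $O_\PP(1)$ only because these two $n^{1/4}$-scale fluctuations are asymptotically perfectly correlated and cancel to leading order. Marginal asymptotic normality of each piece plus a computation of their covariance cannot give the limit law of the difference: two sequences, each asymptotically Gaussian with variance tending to infinity and correlation tending to one, can have essentially arbitrary limiting behavior for their difference. So the sentence ``plugging Janson's asymptotic normality of $\log N(A)$ together with the edge-count CLT \dots and exploiting the cancellation'' is not a proof step; the cancellation has to be performed \emph{inside} a single limit theorem.

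This is precisely how the paper proceeds: it invokes Janson's Equation (4.29) in \cite{janson1994numbers}, which is a CLT for the edge-count-corrected matching count $M(A)(1-a)^{K_2(A)-n/2}$ with $a=\frac{n/2}{\binom{n}{2}q}$, giving the limit $\cN\!\inparen{-\frac{\tau^2}{4\theta^2},\frac{\tau^2}{2\theta^2}}$ with $\tau=\lim n^2\inparen{\kappa(P_2;M)-\kappa(K_2;M)}=-1$. One then checks the algebraic identity $1-a=\frac{p(1-q)}{q(1-p)}$ and evaluates $\EE\!\insquare{M(A)(1-a)^{K_2(A)-n/2}}=M(K_n)\,q^{n/2}\inparen{\frac{1-q}{1-p}}^{\binom{n}{2}-n/2}$ in closed form, which shows the likelihood ratio is exactly this corrected quantity divided by its mean, and the target $\cN\!\inparen{\pm\frac{1}{4\theta^2},\frac{1}{2\theta^2}}$ follows; contiguity, Le Cam's third lemma, and the total-variation statement then go as you say. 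To repair your argument, replace the ``two marginal CLTs plus covariance'' plan by this joint (corrected-count) statement of Janson; as written, that step would fail.
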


\noindent
See Section~\ref{sec:lambda_infty} for the proofs of the above results.

Note that the asymptotic results in Theorems~\ref{thm:plantedMatchings_mean_is_minus_half_variance} and~\ref{thm:matchingEdgeDensities_mean_is_minus_half_variance} (and the above theorems) are weaker than the non-asymptotic results in Theorems~\ref{thm:nonMatching_LLdistributionUnderNull} and~\ref{thm:MatchingEdgeDensity_LLdistributionUnderNull}. 
It is not clear how to extract non-asymptotic results for the \emph{log-likelihood ratio} from \cite{janson1994numbers} because the paper's technique centers around the \emph{likelihood ratio} and proves that it is asymptotically \emph{log-normal}. 

More precisely, while studying the number of perfect matchings in an \ER\ graph, the paper \cite{janson1994numbers} analyzes $\frac{d\cP_\infty}{d\cQ}$ (which is never referred to as the likelihood ratio) and shows that its variance is dominated by the aggregate of the signed counts of $k$ disjoint wedges for $k \ge 1$. 
The proofs involve intricate combinatorics of perfect matchings, and are also crucially based on Janson's earlier book \cite{janson1994orthogonal} which develops fascinating theory about the orthogonal decomposition of functions on random graphs. 

Compared to Janson's approach, the cluster expansion has the advantage that it deals directly with the log-likelihood ratio for a fixed $n$ and yields finite-sample results about it. 
It remains an intriguing question how our approach can be extended beyond the bottleneck $\EE |M| \sim cn/2$ for a certain constant $c$. 
The above results for small and infinite $\lambda$ provide strong evidence that the formal cluster expansion, even when non-convergent in the $\lambda = \Omega(1/n)$ regime, still contains useful and ``correct'' information about the log-likelihood ratio. 
Making this observation rigorous is an interesting direction for future research.

\section{Cluster expansion for planted models}
\label{sec:intro-cluster-expansion}

We formally introduce the cluster expansion in this section. 
In addition to applying it to planted matchings, we also consider the planted clique model in Section~\ref{sec:plantedClique} to shed light on the potential use of the cluster expansion for other planted models. 
A comparison of the cluster expansion to the orthogonal decomposition is provided in Section~\ref{sec:comparison-to-orthogonal-expansion}.

Following \cite[Chapter 5]{friedli2017statistical}, we consider a \emph{polymer partition function} 
\begin{equation}
Z := \sum_{\Gamma' \subset \Gamma} \bigg( \prod_{\gamma \in \Gamma'} w(\gamma) \bigg) \bigg( \prod_{\{\gamma, \gamma'\} \subset \Gamma'} \delta(\gamma, \gamma') \bigg) ,
\label{eq:generic-polymer-partition-function}
\end{equation}
where $\Gamma$ is a finite set whose elements are called \emph{polymers}, $w(\gamma) \in \R$ is the weight of a polymer $\gamma$, and $\delta(\gamma, \gamma') \in \R$ is the pairwise interaction between polymers $\gamma$ and $\gamma'$, assumed to satisfy $\delta(\gamma, \gamma') = \delta(\gamma', \gamma)$, $\delta(\gamma, \gamma) = 0$, and $|\delta(\gamma, \gamma')| \le 1$ for all $\gamma, \gamma' \in \Gamma$. 
The cluster expansion refers to the formal series 
\begin{equation}
    \log Z \overset{\textrm{F}}{=} \sum_{m \geq 1} \sum_{\gamma_1,
    \dots,\gamma_m \in \Gamma} \phi(H(\gamma_1,\dots,\gamma_m)) \prod_{i = 1}^{m} w(\gamma_i) , \label{eq:generic-cluster-expansion}
\end{equation}
where $\overset{\textrm{F}}{=}$ means that the equality is formal (i.e., the convergence of the series has not been justified), and the coefficient $\phi(H(\gamma_1,\dots,\gamma_m))$, known as the \emph{Ursell function}, is defined as follows. 
\begin{definition}[Ursell function]
\label{def:Ursell_definition}
For any ordered tuple $(\gamma_1,\dots,\gamma_m)$ of possibly repeated polymers in $\Gamma$, define $H = H(\gamma_1,\dots,\gamma_m)$ to be the graph on the vertex set $\{\gamma_1,\dots,\gamma_m\}$\footnote{The vertex set $\{\gamma_1,\dots,\gamma_m\}$ is sometimes identified with $[m] = \{1, \dots, m\}$ when there is no ambiguity. If there are repeated polymers $\gamma_i = \gamma_j$, the latter notation emphasizes that they are distinct vertices in $H$.} 
with edge $\inbraces{\gamma_i, \gamma_j}$ present if the weight $\delta(\gamma_i, \gamma_j) - 1$ is nonzero.
    The Ursell function $\phi$ of the graph $H$ is defined as follows. For $m = 1$, let $\phi(H) = 1$. For $m \geq 2$, let
$$
\phi(H) = \frac{1}{m!} \sum_{ \substack{S \subseteq H \\ \text{spann., conn.} }} \prod_{\{\gamma, \gamma'\} \in S} (\delta(\gamma, \gamma') - 1) ,
$$
where the sum is over spanning and connected subgraphs $S$ of $H$. 
\end{definition}

\subsection{Formal results for planted matching}
\label{sec:cluster-expansion-formal-planted matching}


To see why the cluster expansion can be used to study Problem~\ref{prob:detection}, we express the log-likelihood ratio using log-partition functions.

\begin{lemma} \label{lem:log-likelihood-ratio-partition-functions}
Let $|A|$ denote the number of edges in the graph $A$, and let $Z_G(\lambda)$ be given by Definition~\ref{def:monomer-dimer}. 
For Problem~\ref{prob:detection}, the log-likelihood ratio can be written as
\begin{equation}\label{eq:MatchingEdgeDensities_loglikelihood}
    \log \frac{\ud \cP_\lambda}{\ud \cQ}(A) = F(A) + \log Z_{A}(\lambda / p) - \log Z_{K_n}(\lambda) ,
\end{equation}
where 
\begin{equation}
F(A) := |A| \log \frac{p(1-q)}{q(1-p)} + \binom{n}{2} \log \frac{1-p}{1-q} \label{eq:def-F(A)} . 
\end{equation}
\end{lemma}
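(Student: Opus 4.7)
The plan is to prove the lemma by a direct density calculation. I will write down $\cP_\lambda(A)$ by marginalizing over the hidden matching $M$, form the ratio with $\cQ(A)$, and rearrange factors until $\log Z_A(\lambda/p) - \log Z_{K_n}(\lambda) + F(A)$ appears naturally.

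First, from Definition~\ref{def:planted-distribution} the planted density can be expressed as
\begin{equation*}
\cP_\lambda(A) \;=\; \sum_{M} \mu_\lambda(M)\, \bbone[M \subseteq A] \!\!\prod_{\{i,j\} \notin M}\!\! p^{A_{ij}}(1-p)^{1-A_{ij}},
\end{equation*}
where the sum runs over matchings $M$ in $K_n$ and the indicator enforces that each planted edge is present in $A$. The null density is simply $\cQ(A) = \prod_{\{i,j\}} q^{A_{ij}}(1-q)^{1-A_{ij}}$. Inserting $\mu_\lambda(M)=\lambda^{|M|}/Z_{K_n}(\lambda)$ gives
\begin{equation*}
\frac{\ud \cP_\lambda}{\ud \cQ}(A) \;=\; \frac{1}{Z_{K_n}(\lambda)} \sum_{M \subseteq A}\lambda^{|M|}\,\frac{\prod_{\{i,j\}\notin M} p^{A_{ij}}(1-p)^{1-A_{ij}}}{\prod_{\{i,j\}} q^{A_{ij}}(1-q)^{1-A_{ij}}}.
\end{equation*}

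Next I rewrite the ratio of products as an unrestricted product over all edges times a correction on $M$. Multiplying and dividing by $\prod_{\{i,j\}}(p/q)^{A_{ij}}((1-p)/(1-q))^{1-A_{ij}}$ and noting that this unrestricted product equals exactly $\exp(F(A))$ from the definition \eqref{eq:def-F(A)}, the only discrepancy is on edges of $M$, where the ratio should contribute $1/q$ but the unrestricted product supplies $p/q$ (since $A_{ij}=1$ on $M$). The correction is thus a factor $1/p$ for each edge in $M$. Combining the resulting $p^{-|M|}$ with $\lambda^{|M|}$ yields $(\lambda/p)^{|M|}$, and the sum over matchings $M$ contained in $A$ is precisely $Z_A(\lambda/p)$ by Definition~\ref{def:monomer-dimer}. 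This gives
\begin{equation*}
\frac{\ud \cP_\lambda}{\ud \cQ}(A) \;=\; \exp(F(A))\,\frac{Z_A(\lambda/p)}{Z_{K_n}(\lambda)},
\end{equation*}
and taking logarithms yields the claimed identity.

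There is no genuine obstacle here: the statement is essentially an algebraic identity once one observes that conditioning on a planted edge replaces a $\mathrm{Bernoulli}(p)$ variable with the deterministic value $1$, which costs a factor $1/p$ relative to the ambient product measure and so renormalizes the dimer activity from $\lambda$ to $\lambda/p$. The only thing to be careful about is the bookkeeping of which edges contribute $q$ versus $p$ in the denominator and numerator, which I handle by symmetrizing to an unrestricted product and absorbing the remaining discrepancy into the matching sum. This reformulation is what opens the door to the cluster expansion analysis of $\log Z_A(\lambda/p) - \log Z_{K_n}(\lambda)$ described in Section~\ref{sec:intro-cluster-expansion}.
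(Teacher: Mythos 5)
Your proof is correct and follows essentially the same route as the paper: a direct computation of the marginalized likelihood ratio, with the unrestricted product over all edges supplying $\exp(F(A))$ and the per-edge correction $1/p$ on matching edges turning $\lambda^{|M|}$ into $(\lambda/p)^{|M|}$, so that the sum over $M \subseteq A$ is $Z_A(\lambda/p)$. The bookkeeping matches the paper's one-line derivation, so nothing further is needed.
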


\begin{proof}
By the definitions of the models $\cP_\lambda$ and $\cQ$, we have
\begin{align*}
\frac{\ud \cP_\lambda}{\ud \cQ}(A) &= \frac{1}{Z_{K_n}(\lambda)} \sum_{M \subset K_n} \lambda^{|M|} \frac{\boldsymbol{1}\!\inbraces{M \subset A}}{q^{|M|}} \prod_{\inbraces{i,j} \notin M} \frac{p^{A_{ij}} (1-p)^{1 - A_{ij}} }{ q^{A_{ij}} (1-q)^{1 - A_{ij}} } \\
&= \inparen{ \frac{p(1-q)}{q(1-p)}  }^{|A|} \inparen{ \frac{1-p}{1-q} }^{\binom{n}{2}} \frac{Z_A\!\inparen{\lambda/p}}{Z_{K_n}(\lambda)} ,
\end{align*}
from which the result follows.
\end{proof}


As a result, to study the log-likelihood ratio for Problem~\ref{prob:detection}, we may analyze $\log Z_G(\lambda)$ using the cluster expansion. 
Comparing $Z_G(\lambda) = \sum_{M \subset G} \lambda^{|M|}$ to the generic polymer partition function \eqref{eq:generic-polymer-partition-function}, we note: (i) the polymers in this case are the \emph{edges} of $G$ which we denote by $e$, (ii) the weight of each polymer is $w(e) = \lambda$, and (iii) the pairwise interaction between two polymers is $\delta(e, e') = \boldsymbol{1}\!\inbraces{e \sim e'}$ where $e \sim e'$ means that the two edges are \emph{not} adjacent. 
This pairwise interaction is known as the \emph{hard-core repulsion} between edges. 
The notation $e \sim e'$, albeit unconventional in the context of graphs, means that $e$ is \emph{compatible} with $e'$, while $e \not\sim e'$ means
the \emph{incompatibility} relation between polymers, i.e., the edges $e$ and $e'$ are adjacent. 

Next, following Definition~\ref{def:Ursell_definition}, we see that the graph $H = H(e_1, \dots, e_m)$ contains an edge $\{i,j\}$ with weight $-1$ if and only if $e_i \not\sim e_j$, i.e., $e_i$ and $e_j$ are adjacent in $G$. 
The graph $H$ is also known as the \emph{incompatibility graph} of $(e_1,\dots,e_m)$ and coincides with the line graph of the subgraph with edges $e_1,\dots,e_m$ in $G$ if there are no repeated polymers. 
The Ursell function is therefore
\begin{equation}\label{eq:Ursell_definition}
        \phi(H(e_1, \dots, e_m)) = \frac{1}{m!} \sum_{ \substack{S \subseteq H(e_1, \dots, e_m) \\ \text{spann., conn.} }} (-1)^{\abs{S}}.
    \end{equation}
A \emph{cluster} is an ordered tuple $(e_1,\dots,e_m)$ of possibly repeated polymers whose incompatibility graph is connected. 
Observe that $\phi(H)$ is nonzero only when $(e_1,\dots,e_m)$ is a cluster, which is the namesake of the cluster expansion. 

Furthermore, the cluster expansion \eqref{eq:generic-cluster-expansion}
of the log-partition function becomes
$$
    \log Z_G(\lambda) \overset{\textrm{F}}{=} \sum_{m \geq 1} \sum_{e_1,
    \dots,e_m \in G} \phi(H(e_1,\dots,e_m)) \lambda^m 
    $$
which is a \emph{perturbative} expansion around $\lambda = 0$, where $G$ is identified with its own edge set.  
Here, and henceforth, we use the convention that the inner sum is over $e_1,\dots,e_m \in \binom{[n]}{2}$, i.e., over all ordered $m$-tuples of possibly repeated polymers in $K_n$. 
Specializing the above equation to $\log Z_{K_n}(\lambda)$ 
and $\log Z_A(\lambda/p)$, we obtain 
%
\begin{equation}\label{eq:logZKn_CE_formal}
    \log Z_{K_n}(\lambda) \overset{\textrm{F}}{=} \sum_{m \geq 1} \sum_{e_1,
        \dots,e_m } \phi(H(e_1,\dots,e_m)) \lambda^m,
\end{equation}
and 
\begin{equation}\label{eq:logZA_CE_formal}
    \log Z_A\!\inparen{\frac{\lambda}{p}} \overset{\textrm{F}}{=} \sum_{m \geq 1} \sum_{e_1,
    \dots,e_m } \phi(H(e_1,\dots,e_m)) \Big( \frac{\lambda}{p} \Big)^m \prod_{j = 1}^{m} A_{e_j} .
\end{equation}
 %

We now assuage concerns about convergence and the infinite nature of the above expansions. 
In fact, these expansions can be truncated to $\Theta( \log n )$ terms with vanishing error. Consequently, for each fixed $n$, the cluster expansions we deal with are essentially finite sums over $m$.

\begin{theorem}\label{thm:truncate_CE_clogn}
    Suppose that $\lambda \le \frac{1}{30 n}$ and $\frac{9 \log n}{n} \le q \le 1.01 p$. 
    Then the following occur.
    \begin{enumerate}[(i)]
        \item The cluster expansion \eqref{eq:logZKn_CE_formal} for $\log Z_{K_n}(\lambda)$ converges absolutely. Moreover,
    \begin{align}
        \label{eq:logZ_Kn_truncate_clogn}
        \log Z_{K_n}(\lambda) = \sum_{m = 1}^{ 2 \log n} \sum_{e_1,\dots,e_m} \phi(H(e_1,\dots,e_m)) \lambda^m + \frac{1}{n} .
    \end{align}
        \item For $A \sim G(n,q)$, with probablity at least $1 - \frac{1}{n}$, the cluster expansion \eqref{eq:logZA_CE_formal} for $\log Z_{A}(\lambda/p)$ converges absolutely. Moreover,
    \begin{align}\label{eq:logZ_A_truncate_clogn}
        \log Z_{A}\!\inparen{\frac{\lambda}{p}} = \sum_{m = 1}^{ 2 \log n} \sum_{e_1,\dots,e_m} \phi(H(e_1,\dots,e_m)) \frac{\lambda^m}{p^m} \prod_{j = 1}^{m} A_{e_j} + \frac{1}{n}  .
    \end{align}
    \end{enumerate}
\end{theorem}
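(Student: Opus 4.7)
The plan is to derive both (i) and (ii) from a single Kotecký--Preiss-type convergence criterion applied to the polymer model in which polymers are edges (of $K_n$ in (i), of $A$ in (ii)), weights are $\lambda$ respectively $\lambda/p$, and incompatibility $e \not\sim e'$ means edge-adjacency. The key combinatorial input is that, in $K_n$, each edge $e$ is incompatible with exactly $D = 2(n-2)$ others; in $A$ the corresponding count $D^\star(A) := \max_{e \in A} |\{e' \in A : e' \not\sim e\}|$ is bounded by $2\Delta(A)$ for $\Delta(A)$ the maximum vertex-degree of $A$.

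For part (i), I would invoke the Kotecký--Preiss criterion in its symmetric form with constant exponents $a(e) \equiv \alpha$, $b(e) \equiv \beta$, under which the hypothesis reduces to $\lambda D e^{\alpha+\beta} \le \alpha$. Under $\lambda \le 1/(30n)$ this is satisfied with slack by (e.g.) $\alpha = 1,\ \beta = 3/2$, since $2n \cdot (1/(30n)) \cdot e^{5/2} < 1$. The KP theorem then delivers absolute convergence of \eqref{eq:logZKn_CE_formal} together with the reinforced anchored bound
\begin{equation*}
    \sum_{X \ni e} |\phi(X)|\, \lambda^{|X|}\, e^{\beta |X|} \le \alpha
\end{equation*}
for every edge $e$. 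From this, the tail beyond $|X| \ge M$ at a fixed anchor is at most $\alpha e^{-\beta M}$ by trivially dropping the exponential factor on a subset. Summing over the $\binom{n}{2} \le n^2/2$ possible anchor edges and dividing by the ordered-tuple multiplicity (each cluster of size $m$ gets anchored $m$ times) bounds the truncation error in \eqref{eq:logZ_Kn_truncate_clogn} by $n^2 \cdot e^{-3 \log n}/(2 \cdot 2\log n) \le 1/n$, which is the stated bound.

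For part (ii), conditional on $A$ the polymer model is identical except that polymers are edges of $A$, weights are $\lambda/p$, and the incompatibility-degree is $D^\star(A)$. A Chernoff bound for $\deg_A(v) \sim \mathrm{Bin}(n-1, q)$ gives $\PP(\deg_A(v) \ge 2nq) \le e^{-nq/3}$, which under $q \ge 9\log n/n$ is at most $n^{-3}$; a union bound over $v$ yields the event $\cE := \{\Delta(A) \le 2nq\}$ with $\PP(\cE) \ge 1 - n^{-2} \ge 1 - 1/n$. On $\cE$, using $q/p \le 1.01$ and $\lambda \le 1/(30n)$,
\begin{equation*}
    (\lambda/p)\, D^\star(A)\, e^{\alpha+\beta} \le 4nq(\lambda/p) e^{\alpha+\beta} \le \tfrac{4 \cdot 1.01}{30}\, e^{5/2} < 1 = \alpha ,
\end{equation*}
so the same $(\alpha,\beta)$ works and the identical tail argument yields \eqref{eq:logZ_A_truncate_clogn}. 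Thus both convergence and the $1/n$ truncation bound hold on $\cE$, proving (ii).

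The main obstacle is simply the choice of constants: the hypotheses $\lambda \le 1/(30n)$ and $q \ge 9\log n/n$ are calibrated precisely so that the Kotecký--Preiss condition can be verified with a geometric decay rate $\beta \ge 3/2$ strong enough to absorb the $n^2$ factor coming from summing over anchor edges, while simultaneously the Chernoff tail for $\Delta(A)$ is at most $1/n$. Once $(\alpha, \beta)$ are fixed with enough slack, both conclusions reduce to mechanical applications of KP and a standard maximum-degree concentration bound; there is no further delicate cancellation to exploit at this stage.
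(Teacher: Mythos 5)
Your part (i) is fine: the Kotecký--Preiss criterion with $a\equiv\alpha=1$, $d\equiv\beta=3/2$ is indeed satisfied there, since the incompatibility degree in $K_n$ is $2n-3\le 2n$ and $2n\lambda e^{5/2}\le \tfrac{2}{30}e^{5/2}\approx 0.81<1$, and the anchored tail bound $\binom{n}{2}\alpha e^{-2\beta\log n}\le \tfrac{1}{2n}$ closes the truncation. This is a legitimate alternative to the paper's route, which instead uses the Penrose tree-graph bound plus a direct tree-assignment count. However, part (ii) has a genuine gap: your key numerical verification is false. With $q\le 1.01p$ and $\lambda\le\frac{1}{30n}$ you get $(\lambda/p)D^\star(A)e^{\alpha+\beta}\le \tfrac{4.04}{30}e^{5/2}\approx 0.135\times 12.18\approx 1.64$, which is \emph{not} $<1=\alpha$, so the KP hypothesis fails for your chosen $(\alpha,\beta)$.

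Worse, this is not fixable by re-tuning $(\alpha,\beta)$ within the KP framework. Writing $c:=4.04\,\lambda n\le 4.04/30\approx 0.135$ for the one-polymer cost in (ii), the KP condition $c\,e^{\alpha+\beta}\le\alpha$ has a solution in $\alpha>0$ only if $\beta\le \ln(1/c)-1\approx 1.005$. But your truncation step needs the anchored decay to beat the number of anchor edges: in the regime $q=p=\Theta(1)$ (Assumption 2.4, which this theorem must cover) there are $\Theta(n^2)$ polymers, so truncating at $2\log n$ forces $\beta\ge 3/2$ (and even in the $q\asymp n^{-1/2}$ regime, $|A|\asymp n^{3/2}$ forces $\beta\ge 5/4$); hence no admissible $(\alpha,\beta)$ exists and the KP route cannot deliver the $\tfrac1n$ error in \eqref{eq:logZ_A_truncate_clogn} under the stated constants (it would require something like $\lambda\le\frac{1}{50n}$). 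The reason the paper's hypothesis is exactly $\lambda\le\frac{1}{30n}$ is that its Penrose tree-graph argument yields a per-edge geometric ratio $4.04\,e\lambda n\le 4.04e/30\approx 0.366\le 1/e$ with a prefactor of order $n$ (the anchor weight $|A|\lambda/p$ is absorbed), i.e.\ $4.04e^2\le 30$, which holds with almost no slack; the Kotecký--Preiss criterion loses roughly an extra factor of $e$ relative to Penrose, and that is precisely the room the constant $30$ does not leave. To prove (ii) as stated you need the sharper Penrose-type (or Fernández--Procacci-type) bound, which is essentially the paper's argument.
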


\noindent
See Section~\ref{sec:cluster-expansion-convergence} for the proof of the above result. 
The condition $\frac{9 \log n}{n} \le q \le 1.01 p$ is mild in view of the regimes we consider in Section~\ref{sec:main-results-matching}. 
On the other hand, the condition $\lambda \le \frac{1}{30 n}$ required for the convergence of the cluster expansion cannot be removed and is a limitation of the current theory as discussed in Section~\ref{sec:perfect-matching-results}. 
%
We remark that combining Lemma~\ref{lem:log-likelihood-ratio-partition-functions} with Theorem~\ref{thm:truncate_CE_clogn} yields \eqref{eq:log-likelihood-ratio-cluster-expansion-introduction} stated in the introduction.

\subsection{Heuristics for planted clique}
\label{sec:plantedClique}

While this work primarily considers planted matching detection, it is illuminating to apply the formal cluster expansion to the iconic problem of detecting a planted clique of size approximately $k$ in a random graph $G(n,1/2)$. Since the planted clique problem is well-studied in the literature, this informal discussion is \emph{not} meant to establish rigorous results---instead, the goal is to provide some heuristics about how the cluster expansion captures information in the log-likelihood ratio through a well-understood model.

For $\lambda > 0$ and a graph $G$ with vertex set $[n]$ and edge weights $G_{ij}$, consider the Gibbs measure $\nu_\lambda(V)$ over subsets $V \subset [n]$ defined by 
\begin{equation*}
    \nu_\lambda(V) = \frac{ \lambda^{|V|} \prod_{\{i,j\} \in E(V)} G_{ij} }{Q_{G} (\lambda)}  , \qquad \text{where} \qquad Q_{G} (\lambda) := \sum_{V \subset [n]} \lambda^{|V|} \prod_{\{i,j\} \in E(V)} G_{ij} ,
\end{equation*}
where $E(V)$ denotes the edge set of the complete graph on $V$. 
For $G = K_n$, we sample the vertex set of the planted clique from the Gibbs measure $\nu_\lambda(V) \propto \lambda^{|V|}$. 
If $\lambda = \frac{k}{n-k}$, this is equivalent to assuming that each vertex belongs to the planted clique independently with probability $k/n$ so that the expected size of the clique is $k$. 
Since all the interesting information-theoretic and computational thresholds for a planted clique of size $k$ in a random graph $G(n,1/2)$ occur at certain $k = o(n)$, it suffices to consider $\lambda \approx k/n$. 

Let $\cP$ denote the planted clique model: $A \sim \cP$ means that conditional on $V \sim \nu_\lambda$, we have $A_{ij} = 1$ if $i,j \in V$ and $A_{ij} \sim \mathrm{Bernoulli}(1/2)$ independently otherwise. Let $\cQ = G(n,1/2)$. 
Then the likelihood ratio satisfies
\begin{align*}
\frac{\ud\cP}{\ud\cQ}(A) 
&= \frac{1}{Q_{K_n}(\lambda)} \sum_{V \subset [n]} \lambda^{|V|} \frac{\prod_{\{i,j\} \in E(V)} A_{ij} \prod_{\{i,j\} \notin E(V)} (1/2)^{A_{ij}} (1-1/2)^{1-A_{ij}}}{\prod_{\{i,j\} \subset [n]} (1/2)^{A_{ij}} (1-1/2)^{1-A_{ij}}} \\
&= \frac{\sum_{V \subset [n]} \lambda^{|V|} \prod_{\{i,j\} \in E(V)} (2 A_{ij})}{\sum_{V' \subset [n]} \lambda^{|V'|}} 
= \frac{Q_{2A}(\lambda)}{Q_{K_n}(\lambda)} ,
\end{align*}
where $2A$ denotes the graph $A$ with edge weights $2 A_{ij}$. 
As a result, we have
\begin{equation}
\log \frac{\ud\cP}{\ud\cQ}(A) = \log Q_{2A}(\lambda) - \log Q_{K_n}(\lambda) ,
\label{eq:log-likelihood-planted-clique-partition-functions}
\end{equation}
and the cluster expansion can be applied to study the two log-partition functions above.

Note that $Q_G(\lambda)$ is in the form of \eqref{eq:generic-polymer-partition-function} where (i) the polymers are vertices, (ii) the weight of each polymer is $w(i) = \lambda$, and (iii) the pairwise interaction between two polymers is $\delta(i,j) = G_{ij}$. Therefore, the Ursell function in Definition~\ref{def:Ursell_definition} is given by 
$$
\phi(H(i_1,\dots,i_m)) = \frac{1}{m!} \sum_{ \substack{S \subseteq H(i_1,\dots,i_m) \\ \text{spann., conn.} }} \prod_{\{i, j\} \in S} (G_{i j} - 1) .
$$
For $G = K_n$, we have $G_{ij} - 1 = 0$ if $i \ne j$ and $G_{ii} - 1 = -1$, so the Ursell function $\phi(H(i_1,\dots,i_m))$ is zero unless $i_1 = \cdots = i_m$. 
Moreover, $\phi(H(i,\dots,i))$ is the same for $G = K_n$ and $G = 2A$. As a result, by \eqref{eq:log-likelihood-planted-clique-partition-functions} and \eqref{eq:generic-cluster-expansion}, we obtain
\begin{equation}
\log \frac{\ud\cP}{\ud\cQ}(A) \overset{\textrm{F}}{=} \sum_{m \geq 2} \sum_{\substack{i_1,
    \dots,i_m \in [n] \\ \text{not all equal} }} \frac{1}{m!} \sum_{ \substack{S \subseteq H(i_1,\dots,i_m) \\ \text{spann., conn.} }} \prod_{\{i,j\} \in S} (2 A_{ij} - 1) \lambda^m .
\label{eq:cluster-expansion-planted-clique}
\end{equation}

The issue with the formal series \eqref{eq:cluster-expansion-planted-clique}, which is essentially equivalent to the cluster expansion of the partition function for the hard-core model, is that its convergence requires $\lambda = O(1/n)$ \cite{scott2005repulsive}. 
This means that the planted clique has a constant size and is therefore too restrictive. 
Nevertheless, it turns out that a truncated version of \eqref{eq:cluster-expansion-planted-clique} captures sufficiently interesting information for planted clique detection. 


To be more precise, let us consider the partial sum over \emph{distinct} $i_1, \dots, i_m \in [n]$ in \eqref{eq:cluster-expansion-planted-clique}:
$$
\bigg[ \log \frac{\ud\cP}{\ud\cQ}(A) \bigg]_{\mathrm{part}}
:= \sum_{m = 2}^n \frac{\lambda^m}{m!} \sum_{\substack{i_1,
    \dots,i_m \in [n] \\ \text{distinct} }}  \sum_{ \substack{S \subseteq H(i_1,\dots,i_m) \\ \text{spann., conn.} }} \prod_{\{i,j\} \in S} (2 A_{ij} - 1) ,
$$
where convergence is no longer an issue because the sum is finite once $i_1, \dots, i_m$ are required to be distinct. 
We then deduce that 
\begin{equation}
\bigg[ \log \frac{\ud\cP}{\ud\cQ}(A) \bigg]_{\mathrm{part}}
= \sum_{m=2}^n \lambda^{m} \sum_{\substack{\alpha \subset K_n \  \text{conn.} \\ |V(\alpha)| = m}} \prod_{\{i,j\} \in \alpha} (2 A_{ij} - 1) ,
\label{eq:truncated-log-likelihood-planted-clique}
\end{equation}
where $\alpha$ is a connected subgraph of $K_n$ (coming from labeling the vertices of $S$ by $i_1, \dots, i_m$ in the previous display) and $V(\alpha)$ denotes the vertex set of $\alpha$. 

Furthermore, since the Kullback--Leibler (KL) divergence is defined by $\KL(\cP, \cQ) = \EE_{A \sim \cP} \log \frac{\ud\cP}{\ud\cQ}(A)$, we can analogously introduce 
$$
\left[ \KL(\cP, \cQ) \right]_{\mathrm{part}}
:= \EE_{A \sim \cP} \bigg[ \log \frac{\ud\cP}{\ud\cQ}(A) \bigg]_{\mathrm{part}}
= \sum_{m=2}^n \lambda^{m} \sum_{\substack{\alpha \subset K_n \  \text{conn.} \\ |V(\alpha)| = m}} \EE_{V \sim \nu_\lambda} \bigg[ \prod_{\{i,j\} \in \alpha} \EE_{A \sim \cP} [ 2 A_{ij} - 1 \mid V] \bigg] .
$$
Since $2 A_{ij} - 1 = 1$ if $i,j \in V$ and otherwise it has mean zero conditional on $V$, so the outer expectation above is equal to $\PP_{V \sim \nu_\lambda} [ V(\alpha) \subset V]$. 
Note that this probability is $(k/n)^m$ if we set $\lambda = \frac{k}{n-k} \approx \frac{k}{n}$ by our earlier discussion. 
As a result, 
\begin{equation}
\left[ \KL(\cP, \cQ) \right]_{\mathrm{part}}
= \sum_{m=2}^n (k/n)^{m} \sum_{\substack{\alpha \subset K_n \  \text{conn.} \\ |V(\alpha)| = m}} (k/n)^m 
= \sum_{\substack{\alpha \subset K_n \  \text{conn.} \\ |\alpha| \ge 1}} (k/n)^{2 |V(\alpha)|} .
\label{eq:truncated-KL-planted-clique}
\end{equation}

The expansion 
\eqref{eq:truncated-KL-planted-clique} is reminiscent of the (rigorous) 
expansion of the $\chi^2$-divergence\footnote{This identity can be easily derived using the general theory \cite{janson1994orthogonal,hopkins2018statistical}. See the tutorial \cite{mao2025tutorial}, especially Equation (5) with $D = \binom{n}{2}$ and (6) which is an equality for the planted model where each vertex belongs to the clique independently with probability $k/n$.}
\begin{equation}
\chi^2(\cP,\cQ) = \sum_{\alpha \subset K_n : |\alpha| \ge 1} (k/n)^{2 |V(\alpha)|} ,
\label{eq:chi-sq-expansion-planted-clique}
\end{equation}
where the only difference is that the subgraph $\alpha$ is required to be connected in 
\eqref{eq:truncated-KL-planted-clique}. 
Moreover, from the expansion \eqref{eq:chi-sq-expansion-planted-clique}, one can obtain both the information-theoretic threshold $k \sim 2 \log_2 n$ and the computational threshold $k \asymp \sqrt{n}$ in the low-degree polynomial framework (see Theorem~2.5 in the tutorial \cite{mao2025tutorial}). 
Since the connectedness of $\alpha$ is not essential for obtaining these thresholds from \eqref{eq:chi-sq-expansion-planted-clique}, they can be extracted from the expansion \eqref{eq:truncated-KL-planted-clique} too. 
It is intriguing that the truncated cluster expansion contains sufficient information to recover both thresholds for planted clique detection, even though the formal series is not expected to converge.

\subsection{Comparison to the orthogonal decomposition}
\label{sec:comparison-to-orthogonal-expansion}

For testing the null model $\cQ = G(n,q)$ against any alternative random graph model $\cP$, the orthogonal decomposition of the likelihood ratio (see \cite{janson1994orthogonal,hopkins2018statistical,kunisky2019notes}) takes the form
$$
\frac{\ud\cP}{\ud\cQ}(A) = \sum_{\alpha \subset K_n} \EE_{\cP} [\phi_\alpha] \cdot \phi_\alpha(A), \quad \text{ where } \phi_\alpha(A) := \prod_{\{i,j\} \in \alpha} \frac{A_{ij} - q}{\sqrt{q(1-q)}} .
$$
We compare this to the cluster expansion:
\begin{itemize}
\item
Most notably, the orthogonal decomposition is for the likelihood, while the cluster expansion is for the log-likelihood. As a result, we can directly obtain non-asymptotic approximations of the log-likelihood ratio which subsequently yields its asymptotic distribution.

\item
The orthogonal decomposition is a rigorous finite sum. 
On the other hand, the cluster expansion is a formal series \eqref{eq:generic-cluster-expansion} whose convergence needs to be proved. 

\item
The orthogonal decomposition is the same for any planted model $\cP$. 
The cluster expansion, however, is a technique rather than a unique expansion, because for different planted models we may expand the log-likelihood ratios in very different ways such as \eqref{eq:log-likelihood-ratio-cluster-expansion-introduction} versus \eqref{eq:cluster-expansion-planted-clique}. 

\item
Both expansions involve (signed) subgraph counts. 
In line with the above comparison, the orthogonal decomposition is always in terms of signed subgraph counts (note the definition of $\phi_\alpha$ above), but the cluster expansion may involve subgraph counts as in \eqref{eq:log-likelihood-ratio-cluster-expansion-subgraph-counts} or the signed version as in \eqref{eq:cluster-expansion-planted-clique}.

\item
By restricting the sums to small template subgraphs, both expansions may be used to inform computational thresholds and low-degree polynomial algorithms. 
This aspect of the cluster expansion is not formally developed in this work due to the lack of a statistical-to-computational gap for planted matching detection.
Nonetheless, the resemblance between \eqref{eq:truncated-KL-planted-clique} and \eqref{eq:chi-sq-expansion-planted-clique} suggests that cluster expansion techniques can potentially be used from the perspective of low-degree polynomials.
%
\end{itemize}
In view of the broad applications of the orthogonal decomposition in statistical problems, we believe the link between the cluster expansion and planted models established by this work opens an interesting direction for future research.

\section{First few terms of the log-likelihood ratio}
\label{sec:nonMatchingEdges_intuition}


To understand the proof strategy for our main results, it is helpful to explicitly compute the first few terms in the cluster expansion of the log-likelihood ratio in the simple $p=q$ case. 
This provides intuition about the asymptotic normality of the log-likelihood ratio and also outlines the proof of Theorem~\ref{thm:nonMatching_LLdistributionUnderNull}. The strategy for proving Theorem~\ref{thm:MatchingEdgeDensity_LLdistributionUnderNull} is analogous.

In light of the absolute convergence in Theorem \ref{thm:truncate_CE_clogn}, we can reorganize the sum over polymers into sums over template subgraphs (which include multigraphs) as in \eqref{eq:log-likelihood-ratio-cluster-expansion-subgraph-counts}. The main message of this section is that the dominating terms in the cluster expansion correspond to template subgraphs that are \emph{simple trees} and \emph{trees with one repeated edge}. In particular, they give rise to the zero-mean fluctuation part and the deterministic mean part respectively in \eqref{eq:nonMatching_LLdistributionUnderNull}: 
\begin{align*}
    \text{simple trees} &\overset{d}{\approx} \cN\inparen{ 0, \,  \frac{2(1-p)}{p} \inparen{\frac{\EE\abs{M}}{n}}^2  }, \quad 
    \text{one repeated edge trees} \approx - \frac{1-p}{p} \inparen{ \frac{\EE\abs{M}}{n}  }^2 .
\end{align*}
In addition, the fact that the limiting Gaussian has mean exactly $-1/2$ of the variance (contiguity condition) will already be apparent from the first few terms. 

More precisely, by Lemma~\ref{lem:log-likelihood-ratio-partition-functions} (note that $F(A) = 1$ for $p=q$) together with Theorem~\ref{thm:truncate_CE_clogn}, with high probability over $A \sim \cQ$, we have
\begin{align}
    \log \frac{\ud \cP_\lambda}{\ud \cQ}(A) &= \log Z_{A}\!\inparen{ \frac{\lambda}{p}  }  - \log Z_{K_n}(\lambda) \notag \\
    &\approx \sum_{m =1}^{2 \log n} \sum_{e_1,\dots,e_m} \phi(H(e_1,\dots,e_m)) \lambda^m \insquare{ \prod_{j=1}^{m} \frac{A_{e_j}}{p} - 1  } \nonumber \\
    &= \sum_{m =1}^{2 \log n} \sum_{G : |G| = m} \sum_{(e_1,\dots,e_m) \cong G} \phi(H(e_1,\dots,e_m)) \lambda^m \insquare{ \prod_{j=1}^{m} \frac{A_{e_j}}{p} - 1  } ,
    \label{eq:nonMatchingLL_intuition_expansion}
\end{align}
where $G$ denotes a template subgraph with $m$ edges.  
To compute the innermost sum corresponding to each $G$, the counts and Ursell functions of clusters up to size 4 are given in Table \ref{table:clusterExpansion_matchingPolynomial}. 
Recall the notation in \eqref{eq:ordinaryCenteredSignedSubgraphCount_definition}: for a template $G$, we use $G(A)$ to denote the number of copies of $G$ in $A$.

\begin{table}[ht]
\centering
\renewcommand{\arraystretch}{2}
\setlength{\tabcolsep}{6pt}
\begin{tabular}{@{}c|cccc@{}}
\toprule
\multicolumn{1}{l}{}   & Cluster & $\psi$ & Ordering & Ursell \\ \midrule
$m=1$                  & \DrawKtwo & 1 & 1 & 1 \\ \midrule
\multirow{2}{*}{$m=2$} & \DrawKtwoRep & 1 & 1 & $-\frac{1}{2}$\\
                       &\DrawPtwo& 1 & 2! & $-\frac{1}{2}$ \\ \midrule
\multirow{5}{*}{$m=3$} &\DrawKtwoRepRep& 1 & 1 &$\frac{1}{3}$ \\
                       &\DrawPtwoRep& 2 & $\frac{3!}{2!}$ &$\frac{1}{3}$ \\
                       &\DrawKthree& 1 & 3! & $\frac{1}{3}$ \\
                       &\DrawSthree& 1 & 3! & $\frac{1}{3}$\\
                       &\DrawPthree& 1  & 3! & $\frac{1}{6}$\\ \bottomrule
\end{tabular}
\qquad
\begin{tabular}{@{}c|cccc@{}}
\toprule
\multicolumn{1}{l}{}   & Cluster & $\psi$ & Ordering & Ursell \\ \midrule
\multirow{9}{*}{$m=4$} &\DrawKtwoRTripleRep& 1 & 1 & $-\frac{1}{4}$ \\
                        &\DrawPtwoAdjRepRep& 1 & $\frac{4!}{2!2!}$ & $-\frac{1}{4}$\\
                        &\DrawPtwoTripleRepRep& 2 & $\frac{4!}{3!}$& $-\frac{1}{4}$\\
                        &\DrawSthreeRep& 3 &$\frac{4!}{2!}$&$-\frac{1}{4}$\\
                        &\DrawPthreeMiddleRep&1&$\frac{4!}{2!}$&$-\frac{1}{6}$\\
                        &\DrawPthreeLeavesRep&2&$\frac{4!}{2!}$&$-\frac{1}{12}$\\
                        &\DrawSfour&1&4!&$-\frac{1}{4}$\\
                        &\DrawSthreeWithTail &1& 4!&$-\frac{1}{12}$\\
                        &\DrawPfour&1&4!&$-\frac{1}{24}$\\ \bottomrule 
\end{tabular}
\caption{Clusters of size four (selected) and below with corresponding quantities appearing in each summand in \eqref{eq:nonMatchingLL_intuition_expansion}. For each cluster template $G$, let $G_0$ be the simple graph obtained from $G$ by removing any repeated edges. The quantity $\psi(G)$ is the number of ways to place any repeated edges in a labeled version of $G_0$ so that the resulting graph is $G$. 
If $G$ is simple set $\psi(G) = 1$. 
The factor ``ordering'' is present because $e_1, \dots, e_m$ are ordered. 
The contribution corresponding to the template $G$ in \eqref{eq:nonMatchingLL_intuition_expansion} is then $\lambda^m \insquare{ G_0(A)/p^m - G_0(K_n)  } \psi(G) \cdot \inbraces{\text{ordering} } \cdot \inbraces{\text{Ursell}}$. 
}
\label{table:clusterExpansion_matchingPolynomial}
\end{table}

\begin{table}[ht]
\renewcommand{\arraystretch}{2.5}
\setlength{\tabcolsep}{10pt}
\centering
\begin{tabular}{@{}ccc@{}}
\toprule
\multicolumn{1}{c|}{$m=1$}   & \DrawKtwo & $\displaystyle \lambda \insquare{\frac{K_2(A)}{p} - \binom{n}{2}}$ \\ \midrule
\multicolumn{1}{c|}{\multirow{2}{*}{$m=2$}} & \DrawKtwoRep  & $\displaystyle - \frac{\lambda^2}{2} \insquare{ \frac{K_2(A)}{p^2} - \binom{n}{2}   }$     \\
\multicolumn{1}{c|}{}                      & \DrawPtwo    &   $\displaystyle -\lambda^2 \insquare{  \frac{P_2(A)}{p^2} - 3 \binom{n}{3} }$ \\
\bottomrule   
\end{tabular}
\qquad
\begin{tabular}{@{}c|ll@{}}
\toprule
\multirow{5}{*}{$m=3$} & \DrawKtwoRepRep    & $\displaystyle \frac{\lambda^3}{3} \insquare{ \frac{K_2(A)}{p^3} - \binom{n}{2}   }$ \\
                       &   \DrawPtwoRep    &  $\displaystyle 2 \lambda^3 \insquare{ \frac{P_2(A)}{p^3} - 3\binom{n}{3}   }$ \\
                       &  \DrawKthree    &   $\displaystyle 2 \lambda^3 \insquare{ \frac{K_3(A)}{p^3} - \binom{n}{3}   }$     \\
                       &  \DrawSthree    &    $\displaystyle 2 \lambda^3 \insquare{ \frac{S_3(A)}{p^3} - 4\binom{n}{4}   }$     \\
                       &  \DrawPthree      &    $\displaystyle \lambda^3 \insquare{ \frac{P_3(A)}{p^3} - \frac{4!}{2}\binom{n}{4}   }$  \\
\bottomrule  
\end{tabular}
\caption{Subgraph templates and contributions for first few terms in \eqref{eq:nonMatchingLL_intuition_expansion}, where $G_0(A)$ denotes the number of copies of $G_0$ in the graph $A$. We use $K_m$ to denote the complete graph on $m$ vertices, $P_m$ to denote the path of length $m$, and $S_m$ to denote the star with $m$ edges.}
\label{table:nonMatchingLL_intuition_expansion}
\end{table}

With the calculations in Table~\ref{table:clusterExpansion_matchingPolynomial}, we then obtain the contributions corresponding to the first few templates $G$ in Table \ref{table:nonMatchingLL_intuition_expansion}. 
Let $G_0$ be the simple graph obtained from $G$ by removing any repeated edges. 
We make the following observations, bearing in mind $\lambda = \Theta( \frac{1}{n} )$.
\begin{enumerate}
    \item The first time a template subgraph $G_0 = G$ appears (no repeated edges), the corresponding term cancels in expectation and hence produces a zero-mean fluctuation term (e.g.~\DrawKtwo has zero mean and variance $O(1)$).
    \item The second time a base template subgraph $G_0$ appears (exactly one repeated edge in $G$), it contributes essentially a constant order deterministic term (e.g.~$\DrawKtwoRep$ has mean $O(1)$ and variance $O(n^{-2})$).
    \item The third and subsequent times a base template subgraph $G_0$ appears (two or more repeated edges in $G$), it is of smaller order (e.g.~$\DrawKtwoRepRep$ is $O(n^{-1})$). 
    \item Cyclic subgraphs are of smaller order (e.g.~\DrawKthree has zero-mean and variance $O(n^{-2})$ so it is $O_\PP(n^{-1})$). Thus we expect tree subgraphs to dominate. 
\end{enumerate}
The above observations extend also to $m \geq 4$. Importantly, terms corresponding to template graphs that are cyclic or have at least two repeated edges, as well as the small fluctuations coming from graphs with one repeated edge, will be shown to be small in aggregate---they do not conspire to produce non-negligible $O(1)$ terms in the limit. We take this for granted momentarily and carry forward the computation for only the first few terms corresponding to trees with at most one repeated edge.

By the classical CLT and a variance computation, we obtain (recall the notation for the signed edge count $\signedKtwo$ in \eqref{eq:ordinaryCenteredSignedSubgraphCount_definition})
\begin{align*}
    \DrawKtwo = \frac{\lambda}{p} \signedKtwo(A) \overset{d}{\approx} \cN\inparen{ 0, \, \frac{\lambda^2 n^2}{2} \frac{1-p}{p}   } \qquad \text{and} \qquad \DrawKtwoRep \approx \EE\insquare{\DrawKtwoRep} \approx - \frac{\lambda^2 n^2}{4} \frac{1-p}{p}.
\end{align*}
In other words, the edge term and the double edge term combine into a Gaussian with mean equal to $-1/2$ of the variance.

We consider next the wedge term (recall the notation for the centered wedge count $\overline{P}_2$ in \eqref{eq:ordinaryCenteredSignedSubgraphCount_definition}). Note that 
\begin{align*}
    \Corr\insquare{  \overline{P}_2(A), \signedKtwo  } = \frac{\Cov\insquare{  \overline{P}_2(A), \signedKtwo  } }{\sqrt{\Var \overline{P}_2(A)} \sqrt{\Var \signedKtwo} } \approx \frac{\binom{n}{3} 6 p^2 (1 - p)}{\sqrt{ \binom{n}{4} 2 \cdot 4! \cdot  p^3(1-p)}\sqrt{ \binom{n}{2} p (1-p) }} \longrightarrow 1.
\end{align*}
Therefore $\overline{P}_2(A)$ is asymptotically a linear function of $\signedKtwo(A)$ in an $L_2$ sense (in fact this is true for all centered subgraph counts). We have 
\begin{align*}
    \DrawPtwo = - \frac{\lambda^2}{p^2} \overline{P}_2(A) \approx - \frac{\lambda^2}{p^2} \frac{\Cov\insquare{  \overline{P}_2(A), \signedKtwo(A) }}{\Var \signedKtwo(A)} \signedKtwo(A) \approx - \frac{2\lambda^2 n}{p} \signedKtwo(A).
\end{align*}
In particular, the randomness in $\overline{P}_2(A)$ is approximately the same as in the signed edge count $\signedKtwo$. 

Repeat this procedure for simple trees with $m=3$ edges---$S_3$ and $P_3$, in each case projecting the centered subgraph count in the direction of $\signedKtwo$. Let
\begin{align*}
    \sigma = \lambda n \sqrt{\frac{1-p}{2p}} \qquad \text{and} \qquad Z \sim \cN(0,1),
\end{align*}
noting that $\sigma = O(1)$. The contributions from the first few terms are summarized as follows. The zero-mean fluctuation contributions from the $m=1$, $2$, $3$ terms are:
\begin{align*}
    & m = 1 \qquad \begin{tikzpicture}[scale=1]
        \SetGraphUnit{1}
        \GraphInit[vstyle=Hasse]
        \SetVertexSimple[MinSize=2pt]
        \Vertex{A}\EA(A){B}\Edge(A)(B)
    \end{tikzpicture} :\qquad \qquad\qquad\qquad\qquad\quad   \sigma Z \\
    & m = 2 \qquad \begin{tikzpicture}
        \SetGraphUnit{1}
        \GraphInit[vstyle=Hasse]
        \SetVertexSimple[MinSize=2pt]
        \Vertex[x = -0.5, y = 0.1]{A}
        \Vertex[x=0,y=0]{B}
        \Vertex[x = 0.5, y = 0.1]{C}
        \Edge(B)(C)
        \Edge(A)(B)
    \end{tikzpicture} : \qquad\qquad\qquad\qquad\qquad\quad  -2\lambda n \sigma Z \\
    & m = 3 \qquad \begin{tikzpicture}
        \SetGraphUnit{1}
        \GraphInit[vstyle=Hasse]
        \SetVertexSimple[MinSize=2pt]
        \Vertex[x = 0, y = 0]{A}
        \Vertex[x=0.5,y=0]{B}
        \Vertex[x = 1, y = 0.3]{C}
        \Vertex[x = 1, y = -0.3]{D}
        \Edge(B)(C)
        \Edge(A)(B)
        \Edge(B)(D)
    \end{tikzpicture}, \quad \begin{tikzpicture}
        \SetGraphUnit{1}
        \GraphInit[vstyle=Hasse]
        \SetVertexSimple[MinSize=2pt]
        \Vertex[x = 0, y = 0]{A}
        \Vertex[x=0.5,y=0]{B}
        \Vertex[x = 1, y = 0]{C}
        \Vertex[x = 1.5, y = 0]{D}
        \Edge(B)(C)
        \Edge(A)(B)
        \Edge(C)(D)
    \end{tikzpicture}  : \qquad\qquad\quad 5 \lambda^2 n^2 \sigma Z.
\end{align*}
They together contribute a variance $(\sigma - 2 \lambda n \sigma + 5 \lambda^2 n^2 \sigma)^2 = \sigma^2 (1 - 4 \lambda n + 14 \lambda^2 n^2 + O(\lambda^3 n^3))$. 
The mean (deterministic) contribution from the $m=2$, $3$, $4$ terms are: 
\begin{align*}
    & m = 2 \qquad \begin{tikzpicture}
        \SetGraphUnit{1}
        \GraphInit[vstyle=Hasse]
        \SetVertexSimple[MinSize=2pt]
        \Vertex{A}\EA(A){B}
        \Edge[style={bend left = 10}](A)(B)
        \Edge[style={bend left = 10}](B)(A)
    \end{tikzpicture} : \qquad\qquad\qquad\qquad\qquad\qquad\qquad -\frac{1}{2}\sigma^2  \\
    & m = 3 \qquad \begin{tikzpicture}
        \SetGraphUnit{1}
        \GraphInit[vstyle=Hasse]
        \SetVertexSimple[MinSize=2pt]
        \Vertex[x = -0.5, y = 0.1]{A}
        \Vertex[x=0,y=0]{B}
        \Vertex[x = 0.5, y = 0.1]{C}
        \Edge(B)(C)
        \Edge[style={bend left = 15}](A)(B)
        \Edge[style={bend left = 15}](B)(A)
    \end{tikzpicture} : \qquad\qquad\qquad\qquad\qquad\qquad\qquad  2\lambda n \sigma^2  \\
    & m = 4  \qquad    
    \begin{tikzpicture}
        \SetGraphUnit{1}
        \GraphInit[vstyle=Hasse]
        \SetVertexSimple[MinSize=2pt]
        \Vertex[x = 0, y = 0]{A}
        \Vertex[x=0.5,y=0]{B}
        \Vertex[x = 1, y = 0.3]{C}
        \Vertex[x = 1, y = -0.3]{D}
        \Edge[style={bend left = 15}](A)(B)
        \Edge[style={bend left = 15}](B)(A)
        \Edge(B)(C)
        \Edge(B)(D)
    \end{tikzpicture}, \quad     
    \begin{tikzpicture}
        \SetGraphUnit{1}
        \GraphInit[vstyle=Hasse]
        \SetVertexSimple[MinSize=2pt]
        \Vertex[x = 0, y = 0]{A}
        \Vertex[x=0.5,y=0]{B}
        \Vertex[x = 1, y = 0]{C}
        \Vertex[x = 1.5, y = 0]{D}
        \Edge[style={bend left = 15}](B)(C)
        \Edge[style={bend left = 15}](C)(B)
        \Edge(A)(B)
        \Edge(C)(D)
    \end{tikzpicture}, \quad 
    \begin{tikzpicture}
        \SetGraphUnit{1}
        \GraphInit[vstyle=Hasse]
        \SetVertexSimple[MinSize=2pt]
        \Vertex[x = 0, y = 0]{A}
        \Vertex[x=0.5,y=0]{B}
        \Vertex[x = 1, y = 0]{C}
        \Vertex[x = 1.5, y = 0]{D}
        \Edge[style={bend left = 15}](A)(B)
        \Edge[style={bend left = 15}](B)(A)
        \Edge(B)(C)
        \Edge(C)(D)
    \end{tikzpicture}: \qquad  - 7 \lambda^2 n^2 \sigma^2.
\end{align*}
Altogether, they combine to give a Gaussian random variable
\begin{align}\label{eq:first_1,2,3,4_clusters_mean_and_fluctuation}
    &\cN\inparen{ \inparen{-\frac{1}{2} + 2\lambda n - 7 \lambda^2 n^2 + \cdots } \sigma^2, \, \inparen{1 - 4\lambda n + 14 \lambda^2 n^2 + \cdots} \sigma^2  }.
\end{align}
The pattern that the mean equals $-1/2$ of the variance continues to hold. On the other hand, similar computations reveal that the series for $\EE\abs{M}$ in \eqref{eq:E|M|_truncate_clogn} is also dominated by the simple tree templates (made precise in Proposition \ref{prop:GibbsMatching_mean_var_from_CE}). Using Table \ref{table:clusterExpansion_matchingPolynomial}, the dominant first few terms of $\EE\abs{M}$ are seen to be
\begin{align*}
    \EE \abs{M} \sim \frac{n^2 \lambda}{2} - n^3 \lambda^2 + \frac{5}{2} n^4 \lambda^3 + \cdots.
\end{align*}
Rewriting the series in \eqref{eq:first_1,2,3,4_clusters_mean_and_fluctuation} as a square, we find that
\begin{align}\label{eq:nonMatching_LLfirstFewTerms_GaussianForm}
    \log \frac{\ud \cP_\lambda}{\ud \cQ}(A)
    &\overset{d}{\approx} \cN\!\inparen{  - \frac{1}{4}\inparen{ \underbrace{  \lambda n - 2\lambda^2 n^2 + 5 \lambda^3 n^3 + \cdots }_{= 2 \EE\abs{M} /n } }^2 \frac{1-p}{p},  \, \frac{1}{2}\inparen{ \underbrace{ \lambda n - 2\lambda^2 n^2 + 5 \lambda^3 n^3 + \cdots }_{= 2 \EE\abs{M} /n } }^2 \frac{1-p}{p}  },
\end{align}
which explains why we expect Theorem~\ref{thm:nonMatching_LLdistributionUnderNull} to hold!

\section{Concluding remarks and future directions}

This paper studies a hypothesis testing problem of distinguishing between two models $\cP_{\lambda}$ and $\cQ$. The planted model $\cP_{\lambda}$ consists of a matching $M$ drawn from the monomer-dimer model on $K_n$ with dimer density $\lambda$ superimposed with an \ER~$G(n,p)$. The null model $\cQ$ is a plain \ER~$G(n,q)$. In the critical regime, we provide a precise, finite-sample characterization of the log-likelihood ratio $\log \frac{\ud \cP_\lambda}{\ud \cQ}(A)$ for $A \sim \cQ$. This is accomplished for both the cases (i) $p = q$, i.e.~equal ambient edge density, and (ii) $p \neq q$ with $p$, $q$ chosen such that $\cP_\lambda$ and $\cQ$ have equal average edge density. This allows us to elucidate the fundamental limits of detection. Together with the computationally efficient edge or wedge count test statistics which attain the optimal total variation rate, our results confirm the absence of a statistical-to-computational gap. 

Additionally, one of the goals of this paper is to demonstrate the value of the cluster expansion as a tool in mathematical statistics. The techniques presented here for studying log-likelihood ratios are very different from more established methods such as orthogonal decompositions of the likelihood ratio. To list just one striking difference: in cluster expansions the log is taken at the very \emph{first} step, whereas if orthogonal decomposition techniques are employed to study log-likelihood ratios, the log is typically taken at the very \emph{last} step \cite{janson1994numbers}.

Although the cluster expansion can provide remarkably precise results---as demonstrated here in a statistical setting and elsewhere through its vast successes in other fields---there remains a limitation regarding convergence. Outside the disk of convergence, statements can only remain formal. Nevertheless, we offer some encouraging observations. As shown by the similar asymptotic log-likelihood distributions for both $\lambda = \Theta(1/n)$ and $\lambda = \infty$ in this manuscript, cluster expansions may still provide useful and ``correct" information outside the disk of convergence. One plausible explanation, at least where the monomer-dimer model is concerned, comes from the Heilmann-Lieb theorem \cite{heilmann1972theory} providing analyticity of the log partition function for all real $\lambda > 0$, yielding an absence of phase transitions in the Lee-Yang sense (see e.g.~\cite[Section 3.7]{friedli2017statistical}) across all such $\lambda$. In other words, the technical issue of convergence may turn out to have no bearing on certain qualitative aspects of the system. We refer to \cite{quitmann2024decay} who extended the exponential decay of correlations in the monomer-dimer model on lattice graphs, obtained by cluster expansion at small densities, across the entire range of physical parameter values. Establishing analogous extensions in the planted matching detection problem is an interesting problem for future research.

Along these lines, the planted clique heuristics discussed in Section \ref{sec:plantedClique} culminated in the formal $\KL$ approximation \eqref{eq:truncated-KL-planted-clique} which at least exposes the familiar information-theoretic and computational thresholds for detection. Given the considerable interest in the planted clique model as a canonical example for studying statistical-to-computational gaps, we consider it an exciting direction to extract rigorous insights that build upon these preliminary heuristics. We point to \cite{mousset2020probability} for an example of cluster expansion-type techniques being used to give asymptotics of probabilities of subgraph containment in random graphs or arithmetic progressions in random subset of integers, even when operating in regimes where the full expansion may be non-convergent.

Finally, it is natural to consider applications of the techniques in this paper to other planted subgraph problems. For instance, $k$-factors consisting of vertex-disjoint components (e.g.~triangle factors \cite{krivelevich1997triangle}) are suitable candidates as they also display hardcore repulsive interactions. It is also of interest to reach towards hypergraph settings \cite{adomaityte2022planted}. On a different note, one may consider other ambient random graph ensembles besides the standard \ER. Inhomogeneous \ER~graphs for instance, may exhibit non-Gaussian asymptotic subgraph count distributions \cite{bhattacharya2023fluctuations}. We remark that the asymptotic jointly Gaussian distribution of signed subgraph counts features heavily in the orthogonal decomposition techniques in \cite{janson1994numbers,janson1994orthogonal}, whence Wick's formula and Hermite polynomial identities are critical in establishing log-normality of the likelihood ratio. The cluster expansion may therefore be advantageous in this case since, for instance, Gaussianity plays no role whatsoever in the proofs of Theorems \ref{thm:nonMatching_LLdistributionUnderNull} and \ref{thm:MatchingEdgeDensity_LLdistributionUnderNull}.





\section*{Acknowledgements}

We are very grateful to Will Perkins for invaluable discussions about the monomer-dimer model and cluster expansions, in particular pointing us to the use of the Penrose tree-graph bound. CM was supported in part by NSF grant DMS-2338062.

\bibliography{testing}
\bibliographystyle{alpha}

\appendix

\section{Thermodynamic limits of the monomer-dimer model} 

\label{sec:thermodynamicLimits_MDmodel}

We now state a result by \cite{alberici2014mean} that immediately implies Lemma~\ref{lem:matching-limiting-size}. 

\begin{theorem}(\cite[Proposition 2 and Remark 9]{alberici2014mean})
    \label{thm:thermodynamicLimits_MD}
    Let $b > 0$ and suppose 
    \begin{align*}
        \lambda = \lambda_n := \frac{1}{2e^{1 + b} n }. 
    \end{align*}
    Define $h := \frac{1+b}{2} + \log \sqrt{2}$ and 
    \begin{align}
        g(h) := \frac{1}{2} \inparen{  \sqrt{e^{4h} + 4e^{2h} } - e^{2h}  }.
    \end{align}
    Then
    \begin{enumerate}[(i)]
        \item the thermodynamic limit of the free energy of the monomer-dimer model exists and
        \begin{align}
            \label{eq:thermodynamicLimits_MD_FE}
        \lim_{n \rightarrow \infty }\frac{1}{n} \log Z_{K_{n}}(\lambda) = - \frac{1 - g(h)}{2} - \log g(h).
        \end{align} 
        \item Additionally, the expected matching size of the monomer-dimer model converges as
        \begin{align}
            \label{eq:thermodynamicLimits_MD_mean}
            \lim_{n \rightarrow \infty }\frac{2 \EE_{M \sim \mu_\lambda} \abs{M}}{n} = 1 - g(h).
        \end{align}
    \end{enumerate}
\end{theorem}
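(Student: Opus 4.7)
This theorem is just a translation of \cite[Proposition 2 and Remark 9]{alberici2014mean} into our notation, so the plan is not to reprove the existence of the thermodynamic limit (which is the substantive content of \cite{alberici2014mean}) but simply to verify that the change of variables between their parameterization and ours produces the formulas claimed. First, I would record the setup in \cite{alberici2014mean}, where the ``pure hard-core monomer-dimer model'' on $K_n$ is written with a chemical-potential parameter $h$ and a partition function of the form $\widetilde Z_n(h) = \sum_{M \subset K_n} e^{2h|M|}$, normalized by an overall factor chosen so that the per-vertex free energy has a finite thermodynamic limit. Their Proposition~2 gives $\lim_{n\to\infty} n^{-1}\log \widetilde Z_n(h)$ as $-\tfrac{1-g(h)}{2}-\log g(h)$, and their Remark~9 identifies $2\EE_{\mu}|M|/n \to 1-g(h)$, where $g(h)$ solves the fixed-point equation $g(h)e^{2h}(1-g(h)) = g(h)$ equivalent to the quadratic in the statement.

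The dictionary I would set up is as follows: with our scaling $\lambda = \frac{1}{2e^{1+b}n}$ and $h = \frac{1+b}{2}+\log\sqrt{2}$, one computes $e^{2h} = 2e^{1+b}$, so $\lambda n = e^{-2h}$; this identifies our $\lambda^{|M|}$ with $e^{-2h|M|}n^{-|M|}$, matching the normalization convention used by \cite{alberici2014mean} up to an additive constant in the free energy. A direct plug-in then gives
\begin{equation*}
g(h) = \tfrac{1}{2}\bigl(\sqrt{e^{4h}+4e^{2h}}-e^{2h}\bigr) = \tfrac{1}{2}\bigl(\sqrt{\zeta^2+4\zeta}-\zeta\bigr), \qquad \zeta := 2e^{1+b},
\end{equation*}
so that $1-g(h) = c(\zeta)$, which is precisely the constant appearing in Lemma~\ref{lem:matching-limiting-size}. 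Substituting into the Alberici--Contucci formulas for the free energy and matching density then immediately yields \eqref{eq:thermodynamicLimits_MD_FE} and \eqref{eq:thermodynamicLimits_MD_mean}.

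The only genuine step that requires care, and the one I would treat as the main obstacle, is confirming that the additive ``reference'' constants in the two definitions of free energy agree, i.e.\ that the overall normalization in \cite{alberici2014mean} does not introduce an unmatched $n$-dependent term. This is a bookkeeping exercise: one writes $\log Z_{K_n}(\lambda) = -|M|\log(\lambda n)$-shift applied to $\log \widetilde Z_n(h)$, checks that this shift contributes only $\log \lambda$ times $\EE|M|$, and verifies that adding this back reproduces \eqref{eq:thermodynamicLimits_MD_FE} exactly. Once the dictionary and normalization are reconciled, part~(i) of the theorem is \cite[Proposition~2]{alberici2014mean} verbatim and part~(ii) is \cite[Remark~9]{alberici2014mean} verbatim, and Lemma~\ref{lem:matching-limiting-size} follows as a special case.
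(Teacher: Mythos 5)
Your proposal takes essentially the same route as the paper: the whole content is a notational dictionary into \cite{alberici2014mean}, and your core identification $e^{2h}=2e^{1+b}$, i.e.\ $\lambda n=e^{-2h}$ (equivalently $h=-\tfrac12\log(\lambda n)$), is exactly the substitution the paper uses before citing Proposition~2 and Remark~9.

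However, the step you single out as the one requiring care --- reconciling the normalizations --- is carried out incorrectly as written. The partition function in \cite{alberici2014mean} is $Z^{\text{MD}}_n(h,w)=\sum_M w^{|M|}e^{h(N-2|M|)}$, with monomer activity $e^h$ and dimer activity $w$; it is not of the form $\sum_M e^{2h|M|}$. Taking $w=1/n$ and $h=-\tfrac12\log(\lambda n)$ gives the exact identity $Z^{\text{MD}}_n(h,1/n)=e^{hn}\,Z_{K_n}(\lambda)$, so the discrepancy between the two free energies is the deterministic per-vertex constant $-h$, not a term of the form ``$\log\lambda$ times $\EE|M|$'' as you state; correspondingly, Alberici--Contucci's Proposition~2 gives $\lim n^{-1}\log Z^{\text{MD}}_n(h,1/n)=h-\tfrac{1-g(h)}{2}-\log g(h)$, and it is precisely the $-h$ shift that cancels the leading $h$ to produce \eqref{eq:thermodynamicLimits_MD_FE}. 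Two smaller points: the fixed-point equation you quote, $g(h)e^{2h}(1-g(h))=g(h)$, is not equivalent to the stated quadratic (the correct relation is $g^2=e^{2h}(1-g)$); and for part~(ii), Remark~9 gives the limit of the monomer density $m_n^{\text{MD}}=\partial_h\, n^{-1}\log Z^{\text{MD}}_n(h,1/n)\to g(h)$, so one still needs the (easy but necessary) observation that $m_n^{\text{MD}}=1-2\EE_{M\sim\mu_\lambda}|M|/n$, which is how the paper converts the monomer-density statement into \eqref{eq:thermodynamicLimits_MD_mean}. With these corrections your argument coincides with the paper's.
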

\begin{remark}
    The mapping between our notation and that of \cite{alberici2014mean} is as follows. The partition function of the monomer-dimer model on $K_n$ considered in \cite{alberici2014mean} is
\begin{align*}
    Z^{\text{MD}}_n(h, w) := \sum_{M \text{ matching} } w^{\abs{M}} e^{h(N - 2\abs{M})}. 
\end{align*}
That is, $e^h$ and $w$ are the monomer and dimer activity parameters respectively. Setting $h := - \frac{1}{2} \log (\lambda n)$, it is easily checked that
\begin{align*}
    \frac{1}{n} \log Z_{K_n}(\lambda) = -h + \frac{1}{n} \log Z_{n}^{\text{MD}}\inparen{  h, \frac{1}{n}  }.
\end{align*}
Then \cite[Proposition 2 and Remark 9]{alberici2014mean} gives
\begin{align*}
    \lim_{n \rightarrow \infty} \frac{1}{n} \log Z_{n}^{\text{MD}}\inparen{  h, \frac{1}{n}  } = h - \frac{1 - g(h)}{2} - \log g(h).
\end{align*}
This leads to \eqref{eq:thermodynamicLimits_MD_FE}. Next, the expected monomer density defined in \cite[Remark 2]{alberici2014mean} as 
\begin{align*}
    m_n^{\text{MD}} := \frac{\partial}{\partial h_n} \frac{1}{n} \log Z_{n}^{\text{MD}}\inparen{  h, \frac{1}{n}  }
\end{align*}
satisfies the relation
\begin{align*}
    m_n^{\text{MD}} = 1 - \frac{2\EE_{M \sim \mu_\lambda} \abs{M}}{n}.
\end{align*}
Then \cite[Remark 9]{alberici2014mean} gives $m_n^{\text{MD}} \longrightarrow g(h)$ and this leads to \eqref{eq:thermodynamicLimits_MD_mean}.
\end{remark}

\section{Analysis of the edge count and the wedge count}
\label{sec:analysis-edge-wedge-count}

In this section, we analyze the signed edge count and the signed wedge count, thereby establishing the positive results for detection, Theorems~\ref{thm:threshold-edge-count} and~\ref{thm:matchingEdgeDensities_compEfficientStat_mainResult}.

\subsection{Proof of Theorem~\ref{thm:threshold-edge-count}}

The following lemma gives the mean and variance of the signed edge count and establishes its asymptotic normality under the planted and null distributions.
Theorem~\ref{thm:threshold-edge-count} then follows immediately.





\begin{lemma}\label{lemma:signed_K2_normality_P_Q}

Suppose Assumption \ref{asmpt:nonMatchingEdges} holds. Let $\signedKtwo$ be defined by \eqref{eq:def-signedKtwo}. Then we have 
    \begin{alignat*}{2}
        &\text{(i)}\,\, \EE_{\cQ} \signedKtwo(A) = 0, &\qquad\qquad \text{(ii)}\,\, &\Var_{\cQ} \signedKtwo(A) = \binom{n}{2} p(1-p), \\
        & \text{(iii)}\,\, \EE_{\cP_\lambda} \signedKtwo(A) = \EE\abs{M}(1-p), &\qquad\qquad  \text{(iv)}\,\, &\Var_{\cP_\lambda} \signedKtwo(A) = \binom{n}{2} p(1-p) + O(n^{3/2}).
    \end{alignat*}
Moreover, 
    \begin{equation*}
        \frac{\signedKtwo(A)}{\sqrt{\binom{n}{2}p(1-p)}} \overset{d}{\longrightarrow}_{\cQ} \cN\!\inparen{ 0, 1  } \quad \text{and} \quad \frac{\signedKtwo(A)}{\sqrt{\binom{n}{2}p(1-p)}} \overset{d}{\longrightarrow}_{\cP_\lambda} \cN\!\inparen{  \sqrt{\frac{1-p}{p}} \frac{c}{\sqrt{2}}, \, 1  } .
    \end{equation*}
\end{lemma}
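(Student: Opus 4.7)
Parts (i) and (ii) are immediate: under $\cQ = G(n,q)$ with $q=p$, the edge indicators are i.i.d.\ $\Ber(p)$, so $\signedKtwo(A)$ is a sum of $\binom{n}{2}$ independent centered random variables each with variance $p(1-p)$. Part (iii) follows by conditioning on $M$: since $q=p$, we have $\EE[A_{ij} - p \mid M] = (1-p)\boldsymbol{1}\{\{i,j\} \in M\}$, and summing over $\{i,j\}$ gives $\EE_{\cP_\lambda}\signedKtwo(A) = (1-p)\EE|M|$.

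For part (iv), the plan is to apply the law of total variance to the decomposition
\begin{equation*}
\signedKtwo(A) = (1-p)|M| + Y_M, \qquad Y_M := \sum_{\{i,j\} \notin M}(A_{ij} - p).
\end{equation*}
Given $M$, the entries $\{A_{ij} - p\}_{\{i,j\} \notin M}$ are i.i.d.\ centered Bernoullis, so $\Var(Y_M \mid M) = [\binom{n}{2} - |M|]\,p(1-p)$. Combined with $\Var(\EE[\signedKtwo \mid M]) = (1-p)^2\Var|M|$, this gives $\Var_{\cP_\lambda}\signedKtwo = \binom{n}{2}p(1-p) - \EE|M|\,p(1-p) + (1-p)^2\Var|M|$. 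It thus suffices to control $\Var|M|$. This can be done by differentiating $\log Z_{K_n}(\lambda)$ twice in $\lambda$, yielding $\Var|M| = \EE|M| + \lambda^2 \frac{d^2}{d\lambda^2}\log Z_{K_n}(\lambda)$; the second-derivative term is of order $n$ at $\lambda = \Theta(1/n)$ (this can be read off from the truncated cluster expansion in Theorem~\ref{thm:truncate_CE_clogn}, or alternatively from negative correlation of edges in matchings). Since $\EE|M| = \Theta(n)$, we obtain $\Var|M| = O(n)$ and the correction to the leading $\binom{n}{2}p(1-p)$ is $O(n) = O(n^{3/2})$.

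For the asymptotic normality, under $\cQ$ we appeal directly to the classical Lindeberg CLT for bounded independent summands. Under $\cP_\lambda$, the plan is to treat the two pieces of the decomposition above separately and combine via Slutsky's theorem. The deterministic-looking piece $(1-p)|M|/\sqrt{\binom{n}{2}p(1-p)}$ converges in probability to $\sqrt{(1-p)/p}\,c/\sqrt{2}$ by Chebyshev (using $\EE|M|/n \to c/2$ from Lemma~\ref{lem:matching-limiting-size} and the $\Var|M| = O(n)$ bound above). For the fluctuation piece $Y_M/\sqrt{\binom{n}{2}p(1-p)}$, I would compute the conditional characteristic function: given $M$ with $|M| = O_\PP(n)$, it is a sum of $\binom{n}{2}-|M|$ i.i.d.\ bounded centered random variables with total variance $\binom{n}{2}p(1-p)(1 + O(n^{-1}))$, so the conditional characteristic function converges pointwise to $e^{-t^2/2}$, and bounded convergence then yields unconditional convergence to $\cN(0,1)$.

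The main obstacle is ensuring that conditioning on the random matching $M$ does not disrupt the CLT for $Y_M$. The characteristic-function route sidesteps this cleanly because the conditional limit is deterministic once $|M|/\binom{n}{2} = o_\PP(1)$, which follows from $|M| = O_\PP(n)$. Everything else is either a direct computation or follows from the known concentration of $|M|$ in the monomer-dimer model.
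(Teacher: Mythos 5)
Your proposal is correct, and it follows the same high-level strategy as the paper (split $\signedKtwo$ under $\cP_\lambda$ into a planted part that concentrates and a fluctuation part that is asymptotically normal, then combine by Slutsky), but the decomposition differs in a way worth noting. The paper represents $A$ as the union of an independent $\widetilde{A}\sim G(n,p)$ with $M$, writing $\signedKtwo(A)=U+V$ with $U=\sum M_{ij}(1-\widetilde{A}_{ij})$ and $V=\sum(\widetilde{A}_{ij}-p)$; since $V$ is an i.i.d.\ sum over \emph{all} $\binom{n}{2}$ pairs, the classical CLT applies with no conditioning, at the price of a cross-covariance term that only yields the stated $\Var_{\cP_\lambda}\signedKtwo=\binom{n}{2}p(1-p)+O(n^{3/2})$. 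Your decomposition $\signedKtwo=(1-p)|M|+Y_M$ conditions on $M$ and sums only over non-matching edges; this makes the law-of-total-variance computation exact (the correction is in fact $O(n)$, sharper than the $O(n^{3/2})$ claimed), but the fluctuation term $Y_M$ depends on $M$, so you correctly need the conditional characteristic-function argument, which goes through cleanly because $|M|\le n/2$ deterministically forces the conditional variance to be $\binom{n}{2}p(1-p)(1-O(1/n))$. The only ingredient you re-derive is $\Var|M|=O(n)$: your identity $\Var|M|=\EE|M|+\lambda^2\frac{d^2}{d\lambda^2}\log Z_{K_n}(\lambda)$ is correct, but note that Theorem~\ref{thm:truncate_CE_clogn} as stated controls $\log Z$ itself rather than its derivatives; the bound you need is exactly Proposition~\ref{prop:Var|M|_Cn_bound} (proved by the same Penrose-bound argument applied to the series \eqref{eq:MD_Var_CE_formal}), which the paper also invokes at this point, so citing it directly closes that step.
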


\begin{proof}
It is straightforward to compute the mean and variance of $\signedKtwo$ under $\cQ$, and its asymptotic normality is immediate by the classical CLT.
       
    Let $A \sim \cP_\lambda$. Let $M := \inbraces{M_{ij}}$ be indicator random variables with $M_{ij} = 1$ if edge $\inbraces{i,j}$ is in the planted matching, and $M_{ij} = 0$ otherwise. Note that $A \sim \cP_\lambda$ can be regarded as the union between $\tilde{A} \sim G(n,p)$ and $M$, with $\tilde{A}$ independent of $M$. We may write
    \begin{align*}
        \signedKtwo(A) &\overset{d}{=} \sum_{\inbraces{i,j} \in \binom{[n]}{2} } (M_{ij} (1-\widetilde{A}_{ij}) + \widetilde{A}_{ij} - p) = U + V
    \end{align*}
    where 
    \begin{align*}
        U &:= \sum_{\inbraces{i,j} \in \binom{[n]}{2} } M_{ij}(1- \tilde{A}_{ij}), \qquad\text{and}\qquad V := \sum_{\inbraces{i,j} \in \binom{[n]}{2} } (\tilde{A}_{ij} - p).
    \end{align*}
    Note that $\EE U = \EE\abs{M}(1-p)$ and $\EE V = 0$, so $\EE \signedKtwo(A) = \EE\abs{M}(1-p)$. By the law of total variance, 
    \begin{align*}
        \Var U = \Var \insquare{ \EE\insquare{ U \, | \, M  }  } + \EE\insquare{\Var \insquare{  U \, | \, M  }}.
    \end{align*}
    We have $\Var \insquare{ \EE\insquare{ U \, | \, M  }  } = (1-p)^2 \Var \abs{M} = O(n)$ by Proposition~\ref{prop:Var|M|_Cn_bound},
    and also $\EE\insquare{\Var \insquare{  U \, | \, M  }} = p(1-p) \EE \abs{M} = O(n)$ by Lemma~\ref{lem:matching-limiting-size}. Thus $\Var U = O(n)$. 
    In addition, $\Var V = \binom{n}{2} p(1-p)$.
    We conclude that 
    $$
    \Var \signedKtwo = \Var U + \Var V + O\left( \sqrt{\Var U \cdot \Var V} \right) = 
    \binom{n}{2} p(1-p) + O(n^{3/2}) .
    $$
    Moreover, by Lemma~\ref{lem:matching-limiting-size}, 
    $$
        \frac{U}{\sqrt{\binom{n}{2}p(1-p)}} = \frac{ \EE\abs{M}(1-p) + O_{\PP}(\sqrt{n})}{\sqrt{\binom{n}{2}p(1-p)}} = \sqrt{\frac{1-p}{p}} \frac{c}{\sqrt{2}} + o_{\PP}(1).
    $$
    On the other hand, $V / \sqrt{\binom{n}{2}p(1-p)} \overset{d}{\longrightarrow} \cN(0,1)$ by the classical CLT. This completes the proof.
\end{proof}

\subsection{Proof of Theorem~\ref{thm:matchingEdgeDensities_compEfficientStat_mainResult}}


The following lemma gives the mean and variance of the signed wedge count and establishes its asymptotic normality under the planted and null distributions.
Theorem~\ref{thm:matchingEdgeDensities_compEfficientStat_mainResult} then follows immediately.



\begin{lemma}\label{lemma:signed_P2_normality_P_Q}
Suppose 
Assumption~\ref{asmpt:MatchingEdgeDensity} holds. Let $\signedPtwo$ be defined by \eqref{eq:def-signedPtwo}. Write $\sim$ to mean equality to leading order terms. 
    Then we have
\begin{alignat*}{2}
        &\text{(i)}\,\, \EE_{\cQ} \signedPtwo(A) = 0, &\qquad\qquad \text{(ii)}\,\, &\Var_{\cQ} \signedPtwo(A) = 3\binom{n}{3} q^2(1-q)^2, \\
        & \text{(iii)}\,\, \EE_{\cP_\lambda} \signedPtwo(A) \sim -\frac{2\inparen{\EE\abs{M}}^2}{n}, &\qquad\qquad  \text{(iv)}\,\, &\Var_{\cP_\lambda} \signedPtwo(A) = 3\binom{n}{3} q^2(1-q)^2 + o(n^2).
    \end{alignat*}
    Moreover, 
    \begin{equation*}
        \frac{\signedPtwo(A)}{\sqrt{3\binom{n}{3}q^2(1-q^2)}} \overset{d}{\longrightarrow}_{\cQ} \cN\!\inparen{ 0, 1  } \quad \text{and} \quad \frac{\signedPtwo(A)}{\sqrt{3\binom{n}{3}q^2(1-q^2)}} \overset{d}{\longrightarrow}_{\cP_\lambda} \cN\!\inparen{  -\frac{c^2}{\sqrt{2} \theta}, \, 1  } .
    \end{equation*}
\end{lemma}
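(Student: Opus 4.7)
I would handle the four conclusions in the order (i)--(ii), then (iii)--(iv), then the two asymptotic normality statements. The moments under $\cQ$ are immediate: the variables $A_{ij}-q$ are independent and mean zero under $\cQ$, so $\EE_{\cQ}\signedPtwo = 0$, and for two wedges the covariance of $(A_{e_1}-q)(A_{e_2}-q)$ and $(A_{e_1'}-q)(A_{e_2'}-q)$ vanishes unless both edge pairs coincide, which gives (i) and (ii) on the nose.

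\textbf{Moments under $\cP_\lambda$.} My strategy is the coupling $A_{ij} = M_{ij} \vee \tilde A_{ij}$ with $\tilde A \sim G(n,p)$ independent of $M$. The identity $M_{ij}M_{jk}=0$ (matching constraint), together with the choice \eqref{eq:matchingEdgeDensities_q_definition} of $q$, yields $\EE_{\cP_\lambda} A_{ij}=q$ and, after a short calculation,
\[
\Cov_{\cP_\lambda}(A_{ij},A_{jk}) = -\mu^2(1-p)^2, \qquad \mu := \EE|M|/\tbinom{n}{2}.
\]
Summing over the $3\binom{n}{3}$ wedges gives (iii). For (iv) I would apply the law of total variance with $M$ as the conditioning variable. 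The conditional expectation is affine in $|M|$, so $\Var(\EE[\signedPtwo\mid M]) = O(\mu^2 n^2 \Var|M|) = O(n)$ by Proposition~\ref{prop:Var|M|_Cn_bound}. For $\EE[\Var[\signedPtwo\mid M]]$, conditional independence of non-matching edges kills cross-covariances between wedges sharing no edge; the diagonal pairs recover $3\binom{n}{3}q^2(1-q)^2$ to leading order (since $p \sim q$), and a careful case analysis on pairs of wedges sharing exactly one edge shows these aggregate to at most $O(n^{3/2}) = o(n^2)$, delivering (iv).

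\textbf{Asymptotic normality.} Under $\cQ$, $\signedPtwo$ is a degree-two centered polynomial in independent Bernoullis with leading variance of order $n^3 q^2 = \Theta(n^2)$; asymptotic normality follows either from Janson's CLT for signed subgraph counts \cite{janson1994orthogonal} or from a direct fourth-cumulant verification. Under $\cP_\lambda$ the most economical route is via contiguity. Theorem~\ref{thm:MatchingEdgeDensity_LLdistributionUnderNull} already expresses $\log(\ud\cP_\lambda/\ud\cQ)$ under $\cQ$, up to an $o_\PP(1)$ remainder, as an affine function of the normalized $\signedPtwo$, so the joint limit of $(\signedPtwo/\sqrt{\Var_\cQ\signedPtwo},\,\log(\ud\cP_\lambda/\ud\cQ))$ under $\cQ$ is a degenerate bivariate Gaussian whose covariance can be read off directly from \eqref{eq:MatchingEdgeDensity_LLdistributionUnderNull}. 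Le Cam's third lemma \cite[Example~6.7]{van2000asymptotic} then transfers the limit to $\cP_\lambda$, shifting the mean of the normalized wedge count by exactly this limiting covariance. Consistency with the direct computation in (iii) pins down the shift as $-c^2/(\sqrt{2}\,\theta)$, completing the statement.

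\textbf{Main obstacle.} The hard part will be the conditional-variance calculation under $\cP_\lambda$. The $\Theta(n^4)$ pairs of wedges sharing exactly one edge split into subcases according to whether the shared edge and each of the two remaining edges belong to $M$, and the per-pair covariance has a different order in $p$ and $\mu$ in each subcase (ranging from $p(1-p)\mu^2(1-p)^2$ when no matching edges are involved down to $p(1-p)(1-q)^2$ when the two remaining edges both belong to $M$, with the shared-edge-in-$M$ case contributing zero). I would need a careful enumeration to confirm that every subcase aggregates to $o(n^2)$ rather than merely to the loose naive bound. The remaining pieces are routine once the key identity $M_{ij}M_{jk}=0$ is exploited.
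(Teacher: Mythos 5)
Your computations of (i)--(iv) are correct and close in spirit to the paper's, though organized differently: the paper writes $A_{ij}=M_{ij}(1-\widetilde A_{ij})+\widetilde A_{ij}$ and splits $\signedPtwo(A)$ into six terms I--VI (the matching--matching term vanishing because $M_{ij}M_{jk}=0$), computing the mean and variance of each, whereas you obtain (iii) from the exact pairwise covariance $\Cov_{\cP_\lambda}(A_{ij},A_{jk})=-\mu^2(1-p)^2$ and (iv) from the law of total variance given $M$. The case analysis you flag as the main obstacle does close: pairs of wedges sharing an $M$-edge contribute zero, and the three remaining subcases contribute $O(n^4\cdot p\,n^{-2})$, $O(n^3\cdot p\,n^{-1})$ and $O(n^2\cdot p)$ respectively, each $O(pn^2)=O(n^{3/2})=o(n^2)$, while $\Var(\EE[\signedPtwo\mid M])=O(n)$ by Proposition~\ref{prop:Var|M|_Cn_bound}; so that part of your plan goes through.

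The genuine divergence is the planted CLT, and here your route has a gap. The paper proves it directly: after normalization, terms I--V concentrate on the deterministic shift $-c^2/(\sqrt2\,\theta)$ (only II and V contribute), and the surviving random term VI is the signed wedge count of the independent $G(n,p)$ layer, which is asymptotically $\cN(0,1)$ by \cite[Theorem 1]{janson1994orthogonal}; no contiguity is needed and the lemma stays elementary, so the positive result Theorem~\ref{thm:matchingEdgeDensities_compEfficientStat_mainResult} does not lean on the cluster expansion. Your route through Theorem~\ref{thm:MatchingEdgeDensity_LLdistributionUnderNull} and Le Cam's third lemma is not circular (that theorem is proved without this lemma), but Le Cam's third lemma \emph{determines} the shift as the limiting covariance of the normalized wedge count with $\log\frac{\ud\cP_\lambda}{\ud\cQ}$ under $\cQ$; you cannot afterwards ``pin it down by consistency with (iii)''. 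Reading the coefficient off \eqref{eq:MatchingEdgeDensity_LLdistributionUnderNull} as displayed would give shift $+c^2/(\sqrt2\,\theta)$, contradicting your own (iii). In fact the sign of that coefficient must be negative: this is forced by $\EE_{\cP_\lambda}[Z]=\EE_{\cQ}\!\insquare{\frac{\ud\cP_\lambda}{\ud\cQ}\,Z}$ with $Z=\signedPtwo/\sqrt{\Var_\cQ\signedPtwo}$ together with (iii), and it matches the minus sign in Lemma~\ref{lemma:ProjP2simpleTrees_is_fluctuationPart} (the displayed sign in \eqref{eq:MatchingEdgeDensity_LLdistributionUnderNull} appears to be a typo). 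To make your argument rigorous you must therefore verify the sign of this covariance independently, at which point you are essentially redoing the direct computation; the paper's self-contained argument avoids the issue entirely.
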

\begin{proof}[Proof of Lemma \ref{lemma:signed_P2_normality_P_Q}]
It is straightforward to compute the mean and variance of $\signedPtwo$ under $\cQ$, and its asymptotic normality follows immediately from \cite[Theorem 1]{janson1994orthogonal}.

    We next consider $A \sim \cP_\lambda$. 
    Using the same notation as in the proof of Lemma~\ref{lemma:signed_K2_normality_P_Q}, 
    we have $A_{ij} = M_{ij} (1-\widetilde{A}_{ij}) + \widetilde{A}_{ij}$, where $\widetilde{A} \sim G(n,p)$ is independent of $M$. In what follows, we write $\sum_{i\text{--}j\text{--}k}$ to mean $\sum_{j \in [n]} \sum_{\inbraces{i,k} \in \binom{[n]\setminus j}{2}}$. Decompose $\signedPtwo(A)$ as
    \begin{align*}
        \signedPtwo(A) &= \sum_{i\text{--}j\text{--}k} \inparen{ M_{ij} + (1 - M_{ij})\widetilde{A}_{ij} - p + p - q  } \inparen{ M_{jk} + (1 - M_{jk})\widetilde{A}_{jk} - p + p - q  } \\
        &= \text{I} + \text{II} + \text{III} + \text{IV} + \text{V} + \text{VI},
    \end{align*}
    where, using symmetry, 
    \begin{alignat*}{2}
        &\text{I} = \sum_{i\text{--}j\text{--}k} M_{ij} (1 - \widetilde{A}_{ij}) M_{jk} (1 - \widetilde{A}_{jk}), 
        &\qquad & 
        \text{II} = 2 \sum_{i\text{--}j\text{--}k} M_{ij} (1-\widetilde{A}_{ij})(p-q) ,\\
        &\text{III} = 2\sum_{i\text{--}j\text{--}k} M_{ij} (1-\widetilde{A}_{ij}) (\widetilde{A}_{jk} - p) ,
        &\qquad  
        &\text{IV} = 2\sum_{i\text{--}j\text{--}k} (\widetilde{A}_{ij} - p) (p - q),\\
        &\text{V} = \sum_{i\text{--}j\text{--}k} (p - q)^2 ,
        &\qquad  
        &\text{VI} = \sum_{i\text{--}j\text{--}k} (\widetilde{A}_{ij} - p) (\widetilde{A}_{jk} - p).
    \end{alignat*}
    Term $\text{I}$ is identically zero, since $\inbraces{i,j}$ and $\inbraces{j,k}$ cannot simultaneously be in a matching. The expected value of $\text{II}$ is, using $p - q \sim - 2\EE\abs{M}/n^2$ (see Assumption~\ref{asmpt:MatchingEdgeDensity}), 
    \begin{align*}
        \EE\,\text{II} &= \EE\insquare{ 2(p-q)(n-2)\sum_{\inbraces{i,j}} M_{ij}(1-\widetilde{A}_{ij})  } \\
        &\sim - \frac{4\EE\abs{M}}{n} \EE\insquare{ \EE\insquare{   \sum_{\inbraces{i,j}} M_{ij} (1-\widetilde{A}_{ij})  \,\bigg\rvert\, M  }  } = - \frac{4\inparen{\EE\abs{M}}^2}{n}.
    \end{align*}
    In addition, 
    \begin{align*}
        \Var\insquare{ \sum_{\inbraces{i,j}} M_{ij} (1-\widetilde{A}_{ij})  } &= \EE \Var\insquare{  \sum_{\inbraces{i,j}} M_{ij} (1-\widetilde{A}_{ij}) \,\bigg\rvert\, M  } + \Var \, \EE\insquare{ \sum_{\inbraces{i,j}} M_{ij} (1-\widetilde{A}_{ij})  \,\bigg\rvert\, M  } \\
        &= \EE \insquare{ \abs{M} p(1-p)  } + \Var\insquare{ \abs{M}(1-p)   } 
        = O(n) 
    \end{align*}
    by Lemma~\ref{lem:matching-limiting-size} and Proposition~\ref{prop:Var|M|_Cn_bound}.  
    It follows that
    \begin{equation*}
        \Var \text{II} = O \left( \Var\insquare{ \sum_{\inbraces{i,j}} M_{ij} (1-\widetilde{A}_{ij})  } \right) = O(n) . 
    \end{equation*}
    One can similarly show that 
    \begin{align*}
        \EE \text{III}  = 0, \,\Var \text{III} = O(n^2 q), \quad \quad \EE \text{IV}  = 0, \,\Var \text{IV} = O(n^2 q) \quad\text{and}\quad \text{V}  \sim \frac{2(\EE\abs{M})^2}{n}.
    \end{align*}
Term VI has mean zero and variance $3\binom{n}{3} q^2(1-q)^2$. 
The mean and variance of $\signedPtwo$ then follow by combining terms I--VI. 
Furthermore, scaling terms I--V by $1/\sqrt{3\binom{n}{3}q^2(1-q^2)} = \Theta(1/n)$, only terms $\text{II}$ and $\text{V}$ contribute a deterministic $\Theta(1)$ term:
    \begin{align*}
        \frac{\text{I} + \text{II} + \text{III} + \text{IV} + \text{V}}{\sqrt{3\binom{n}{3}q^2(1-q^2)}} = - \frac{1}{\sqrt{2}\sqrt{n}q } \inparen{ \frac{2\EE\abs{M}}{n}  }^2 + o_\PP(1) \overset{d}{\longrightarrow} - \frac{c^2}{\sqrt{2} \theta} .
    \end{align*}
    The proof is complete by \cite[Theorem 1]{janson1994orthogonal} giving 
    \begin{align*}
        \frac{\text{VI}}{\sqrt{3\binom{n}{3}q^2(1-q^2)}} &\overset{d}{\longrightarrow} \cN(0,1).\qedhere
    \end{align*}
\end{proof}

\section{Proofs for the cluster expansion}

\subsection{Cluster expansion convergence}
\label{sec:cluster-expansion-convergence}

We first prove the convergence of the cluster expansion, Theorem \ref{thm:truncate_CE_clogn}.
The main tool is the celebrated Penrose tree-graph bound. 

\begin{lemma}(Penrose tree-graph bound \cite[Equation 7]{penrose1967convergence}). 
    \label{lemma:Penrose_tree_bound}
    Let $H$ be a graph, identified with its own edge set, and let $\inbraces{w_e}_{e \in H}$ be complex edge weights. Suppose that $\abs{1 + w_e} \leq 1$ for all $e$. Then
    \begin{align*}
        \bigg| \sum_{\substack{C \subseteq H  \\ \text{conn., spann.}}}   \prod_{e \in C} w_e \bigg| \leq \sum_{\substack{T \subseteq H, \text{ tree}  \\ \text{conn., spann.}}} \prod_{e \in T} \abs{w_e} ,
    \end{align*}
    where on the right-hand side, the sum is over connected spanning trees $T$ in $H$.
\end{lemma}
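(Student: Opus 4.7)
The plan is to establish a combinatorial identity due to Penrose,
\[
\sum_{\substack{C \subseteq H \\ \text{conn., spann.}}} \prod_{e \in C} w_e \;=\; \sum_{\substack{T \subseteq H \text{ tree} \\ \text{conn., spann.}}} \prod_{e \in T} w_e \prod_{e \in R(T)} (1 + w_e),
\]
where $R(T) \subseteq H \setminus T$ is a set of edges that depends only on the tree $T$ (together with an auxiliary choice of root vertex and vertex ordering). Once this identity is in hand, the hypothesis $|1 + w_e| \le 1$ yields $\bigl|\prod_{e \in R(T)}(1+w_e)\bigr| \le 1$, and the triangle inequality immediately produces the stated bound $\bigl|\sum_C \prod_{e \in C} w_e\bigr| \le \sum_T \prod_{e \in T} |w_e|$.

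To obtain the identity I would construct a \emph{Penrose partition scheme}: a map $\tau$ sending each connected spanning subgraph $C \subseteq H$ to a spanning tree $\tau(C) \subseteq C$, with the key property that the preimage of any fixed tree $T$ forms a Boolean cube $\tau^{-1}(T) = \{\,T \cup S : S \subseteq R(T)\,\}$. The standard construction fixes a root $o$ and a total order on vertices, and defines $\tau(C)$ by breadth-first search from $o$ inside $C$: each vertex $v \neq o$ receives level $\ell_C(v) := d_C(o,v)$ and parent $p_C(v)$ equal to the smallest-labeled neighbor in $C$ at level $\ell_C(v) - 1$; set $\tau(C) := \{\,\{v, p_C(v)\} : v \neq o\,\}$. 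Call an edge $e = \{u,v\} \in H \setminus T$ \emph{admissible for $T$} if inserting $e$ into $T$ does not disturb the BFS from $o$, i.e.\ $|\ell_T(u) - \ell_T(v)| \le 1$, and in the one-step case with $\ell_T(v) = \ell_T(u) + 1$ the vertex $p_T(v)$ carries a smaller label than $u$. Take $R(T)$ to be the set of all admissible edges, a set determined purely by $T$.

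The crux of the argument---and the main obstacle---is verifying the cube preimage property, which reduces to two complementary claims: (a) for every $C$ with $\tau(C) = T$, each edge of $C \setminus T$ lies in $R(T)$; and (b) for every $S \subseteq R(T)$, the subgraph $T \cup S$ satisfies $\tau(T \cup S) = T$. Both rest on the same invariant: adjoining admissible edges preserves BFS distances and tie-broken parent choices, because by construction an admissible edge neither shortcuts an $o$-to-vertex path in $T$ nor preempts a smaller-labeled parent assignment. Conversely, an inadmissible edge either creates a strict shortcut (forcing $\ell_C \neq \ell_T$ somewhere) or supplies an earlier-labeled candidate parent (forcing $p_C \neq p_T$), contradicting $\tau(C) = T$. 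This is a delicate but classical combinatorial check; once secured, one computes
\[
\sum_{C \,:\, \tau(C) = T} \prod_{e \in C} w_e \;=\; \prod_{e \in T} w_e \sum_{S \subseteq R(T)} \prod_{e \in S} w_e \;=\; \prod_{e \in T} w_e \prod_{e \in R(T)}(1+w_e),
\]
and summing over $T$ yields the Penrose identity. The triangle inequality together with $|1+w_e| \le 1$ then concludes the proof.
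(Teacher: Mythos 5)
Your proposal is correct: it is the classical Penrose partition-scheme argument (map each connected spanning subgraph to a spanning tree via rooted BFS with minimal-label tie-breaking, check the preimage of each tree is the Boolean cube $\{T\cup S : S\subseteq R(T)\}$, factor the cube sum as $\prod_{e\in T}w_e\prod_{e\in R(T)}(1+w_e)$, and conclude by the triangle inequality using $|1+w_e|\le 1$), and your verification of the two cube-preimage claims via the level/parent invariant is the right and complete justification. The paper does not prove this lemma at all — it imports it directly from Penrose's 1967 paper — so your write-up simply supplies the standard proof behind the citation.
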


\begin{proof}[Proof of Theorem \ref{thm:truncate_CE_clogn}]
    (i) To establish the absolute convergence and \eqref{eq:logZ_Kn_truncate_clogn}, it suffices to show that
    \begin{align*}
        \sum_{m > 2 \log n} \sum_{e_1,\dots,e_m} \abs{ \phi(H(e_1,\dots,e_m)) } \lambda^m  \leq \frac{1}{n},
    \end{align*}
    Fix $m$. Let $\cT_{m-1}^{\text{lab}}$ be the set of labeled trees on vertex set $[m]$ and let $\cT(H)^{\text{lab}}$ be the set of labeled spanning trees of a graph $H$. 
    As discussed in Section~\ref{sec:cluster-expansion-formal-planted matching}, the incompatibility graph of cluster $(e_1,\dots,e_m)$, denoted by $H = H(e_1,\dots,e_m)$, contains an edge $\{i,j\}$ with weight $-1$ if $e_i \not\sim e_j$, i.e., $e_i$ and $e_j$ are adjacent. 
    By the Penrose tree-graph bound, Lemma \ref{lemma:Penrose_tree_bound}, applied to \eqref{eq:Ursell_definition}, we have
    \begin{align*}
        \sum_{e_1,\dots,e_m} \abs{ \phi(H(e_1,\dots,e_m)) } &= \frac{1}{m!} \sum_{e_1,\dots,e_m} \bigg| \sum_{ \substack{ S \subseteq H \\ \text{conn., spann.} } } \prod_{\inbraces{i,j} \in S } -\boldsymbol{1}\!\inbraces{e_i \not\sim e_j}  \bigg| \\
        &\leq \frac{1}{m!} \sum_{e_1,\dots,e_m} \sum_{t \in \cT_{m-1}^{\text{lab}}} \boldsymbol{1}\!\inbraces{ t \in \cT(H)^{\text{lab}}  } \\
        &= \frac{1}{m!}  \sum_{t \in \cT_{m-1}^{\text{lab}}} \sum_{e_1,\dots,e_m} \boldsymbol{1}\!\inbraces{ t \in \cT(H)^{\text{lab}}  }.
    \end{align*}

    Fix $t \in \cT_{m-1}^{\text{lab}}$. We next describe an iterative process to construct clusters $(e_1,\dots,e_m)$ such that the incompatibility graph $H$ contains $t$ as a spanning tree.

    \underline{Step 1:} Pick a polymer $\tilde{e}$ to assign to vertex $i_1 = 1$ of $t$. There are $\binom{n}{2}$ ways to do this.

    \underline{Step 2:} Iteratively, suppose vertices $i_1 = 1,i_2,\dots,i_j$ have been assigned to polymers $e_{i_1} = \tilde{e}, e_{i_2}, \dots, e_{i_j}$. There must exist $i_{j+1} \in [m]\setminus \inbraces{i_1,\dots,i_j}$ adjacent to one of $\inbraces{i_1,\dots,i_j}$ in $t$. Without loss of generality suppose $\inbraces{i_j, i_{j+1}} \in t$. Then there are at most $2(n-2) + 1 = 2n - 3 =: \Delta$ choices for $e_{i_{j+1}}$, corresponding to all possible adjacent edges to $e_{i_j}$, as well as itself.

    By Cayley's theorem $\abs{\cT_{m-1}^{\text{lab}}} = m^{m-2}$. Note also $m^m / m! \leq e^m$. It follows that 
    \begin{align}\label{eq:logZ_Kn_truncate_clogn_fixed_m_bound}
        \sum_{e_1,\dots,e_m} \abs{ \phi(H(e_1,\dots,e_m)) } \leq \frac{m^{m-2}}{m!} \binom{n}{2} \Delta^{m-1} \leq \frac{n}{2m^2 } (e \Delta)^m.
    \end{align}
    Multiplying by $\lambda^m$ and summing over $m \geq 2 \log n$, we obtain
    \begin{align}
        \label{eq:logZ_Kn_truncate_clogn_fixed_m_bound_sum_over_m}
        \sum_{m \geq 2 \log n} \sum_{e_1,\dots,e_m} \abs{\phi(H(e_1,\dots,e_m))} \lambda^m \leq \frac{n}{2} \sum_{m \geq 2 \log n} (e\lambda \Delta)^m 
        \leq n (e \lambda \Delta)^{2 \log n} 
        \leq \frac{1}{n},
    \end{align}
    if $e \lambda \Delta \leq \frac{1}{e}$ which holds by assumption. This establishes \eqref{eq:logZ_Kn_truncate_clogn}.

\smallskip
\noindent
(ii) 
    The argument for \eqref{eq:logZ_A_truncate_clogn} is similar to above. The difference is that the underlying graph is random. We will show that with probability at least $1 - \frac{1}{n}$, 
    \begin{align*}
        \sum_{m > 2 \log n} \sum_{e_1,\dots,e_m} \abs{ \phi(H(e_1,\dots,e_m)) } \frac{\lambda^m}{p^m} \prod_{j = 1}^{k} A_{e_j}  \leq \frac{1}{n}.
    \end{align*}
    Fix $m$. By a similar application of the Penrose tree-graph bound Lemma \ref{lemma:Penrose_tree_bound}, we obtain 
    \begin{align}
        \label{eq:logZ_A_truncate_clogn_apply_penrose}
        \sum_{e_1,\dots,e_m} \abs{ \phi(H(e_1,\dots,e_m)) } \frac{\lambda^m}{p^m} \prod_{j = 1}^{m} A_{e_j} &\leq \frac{1}{m!} \frac{\lambda^m}{p^m}  \sum_{t \in \cT_{m-1}^{\text{lab}}} \sum_{e_1,\dots,e_m \in A} \boldsymbol{1}\!\inbraces{ t \in \cT(H)^{\text{lab}}  }.
    \end{align}
    Fix $t \in \cT_{m-1}^{\text{lab}}$. We describe a similar iterative process to construct clusters $(e_1,\dots,e_m)$ where $e_i$'s are in $A$, and such that the incompatibility graph $H$ contains $t$ as a spanning tree.

    \underline{Step 1:} Pick a polymer $\tilde{e} \in A$ to assign to vertex $i_1 = 1$ of $t$. There are $|A|$ ways to do this.

    \underline{Step 2:} Iteratively, suppose vertices $i_1 = 1,i_2,\dots,i_j$ have been assigned to polymers $e_{i_1} = \tilde{e}, e_{i_2}, \dots, e_{i_j}$. There must exist $i_{j+1} \in [m]\setminus \inbraces{i_1,\dots,i_j}$ adjacent to one of $\inbraces{i_1,\dots,i_j}$ in $t$. Without loss of generality suppose $\inbraces{i_j, i_{j+1}} \in t$. Then there are at most $2(\Delta(A) - 1) + 1$ choices for $e_{i_{j+1}}$, corresponding to all possible distinct adjacent edges to $e_{i_{j}}$ in $A$, as well as $e_{i_j}$ itself, where $\Delta(A)$ denotes the max degree in $A$.

For $A \sim G(n,q)$, the Chernoff bound together with a union bound implies that $\Delta(A) \le 2nq$ and $|A| \le n^2 q$ 
with probability at least $1 - \frac{1}{n}$ if $q \ge \frac{9 \log n}{n}$. 
Conditional on this event, we arrive after similar simplifications at 
    \begin{equation*}
        \sum_{e_1,\dots,e_m} \abs{ \phi(H(e_1,\dots,e_m)) } \frac{\lambda^m}{p^m} \prod_{j = 1}^{m} A_{e_j}
        \le \frac{m^{m-2}}{m!} \frac{\lambda^m}{p^m} |A| (2 \Delta(A) - 1)^{m-1} 
         \le \frac{n}{4 m^2} \Big(\frac{4e \lambda nq}{p}\Big)^{m} .
    \end{equation*}
    Summing over $m \geq 2 \log n$ and using the condition $q \le 1.01p$, we have 
\begin{align}\label{eq:logZA_truncate_clogn_sum_over_m}
        \sum_{m > 2 \log n} \sum_{e_1,\dots,e_m} \abs{ \phi(H(e_1,\dots,e_m)) } \frac{\lambda^m}{p^m} \prod_{j = 1}^{k} A_{e_j} \leq \frac{n}{4} \sum_{m > 2 \log n} \inparen{4.04 e \lambda n}^m \leq \frac{1}{n}
    \end{align}
    if $4.04 e \lambda n \leq \frac{1}{e}$ which holds by assumption. This establishes \eqref{eq:logZ_A_truncate_clogn} and finishes the proof.
\end{proof}

\begin{remark}
    The condition 
    $\lambda \le \frac{1}{30 n}$ assumed in Theorem \ref{thm:truncate_CE_clogn} 
    could be improved if we are willing to sum up to $m = C \log n$ for some bigger constant $C > 2$, or if we are willing to have a smaller rate of decay of the tail sum (say $n^{-\delta}$, for some $0 < \delta < 1$). However, since $\lambda = O(1/n)$ is necessary in view of the above proof, we choose not to optimize the constant.
\end{remark}

In addition, using the cluster expansion of the log-partition function together with the identities $\EE_{M \sim \mu_\lambda} \abs{M} = \lambda \inparen{  \log Z_{K_n}(\lambda)  }'$ and $\Var_{M \sim \mu_\lambda} \abs{M} = \lambda \inparen{\EE_{M \sim \mu_\lambda}\abs{M}}'$, we have the cluster expansions
\begin{align}
    \EE_{M \sim \mu_\lambda} \abs{M} &\overset{\textrm{F}}{=} \sum_{m \geq 1} \sum_{e_1,\dots,e_m } \phi(H(e_1,\dots,e_m)) m \lambda^m ,\label{eq:MD_mean_CE_formal}\\
    \Var_{M \sim \mu_\lambda} \abs{M} &\overset{\textrm{F}}{=} \sum_{m \geq 1} \sum_{e_1,\dots,e_m } \phi(H(e_1,\dots,e_m)) m^2 \lambda^m. \label{eq:MD_Var_CE_formal}
\end{align}
The $\EE_{M \sim \mu_\lambda}\abs{M}$ series satisfies similar desirable properties as those of $\log Z_{K_n}(\lambda)$.
\begin{proposition}\label{prop:truncate_mean_clogn}
    Suppose $\lambda \le \frac{1}{30 n}$. Then the cluster expansion \eqref{eq:MD_mean_CE_formal} for $\EE_{M \sim \mu_\lambda} \abs{M}$ converges absolutely. Moreover,
    \begin{equation}\label{eq:E|M|_truncate_clogn}
        \EE_{M \sim \mu_\lambda} \abs{M} = \sum_{m = 1}^{ 2 \log n} \sum_{e_1,\dots,e_m} \phi(H(e_1,\dots,e_m)) m \lambda^m + \frac{1}{n}  .
    \end{equation}
\end{proposition}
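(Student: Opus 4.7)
The plan is to follow the same strategy as in the proof of Theorem~\ref{thm:truncate_CE_clogn}(i), noting that the only change is the extra factor of $m$ appearing in each summand of \eqref{eq:MD_mean_CE_formal} relative to \eqref{eq:logZKn_CE_formal}. Since this factor is at most polynomial in $m$ and the underlying cluster sum already enjoys geometric decay in $m$, it should not spoil the argument.

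More precisely, I would first re-apply the Penrose tree-graph bound (Lemma~\ref{lemma:Penrose_tree_bound}) exactly as in the derivation of \eqref{eq:logZ_Kn_truncate_clogn_fixed_m_bound}, which gives, for every fixed $m$,
\[
\sum_{e_1,\dots,e_m} \abs{\phi(H(e_1,\dots,e_m))}\, m\,\lambda^m \;\le\; \frac{m \cdot m^{m-2}}{m!}\binom{n}{2}\lambda^m\Delta^{m-1} \;\le\; \frac{n}{2m}\,(e\lambda\Delta)^m ,
\]
with $\Delta = 2n-3$, where the extra $m$ from the mean expansion simply cancels one factor of $m$ in the denominator of $1/m^2$.

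Next, I would use the hypothesis $\lambda \le \tfrac{1}{30n}$ to get $e\lambda\Delta \le e/15 \le 1/5$, so the bound above is summable in $m$. Absolute convergence of \eqref{eq:MD_mean_CE_formal} is then immediate from
\[
\sum_{m \ge 1} \frac{n}{2m}(e\lambda\Delta)^m \;\le\; \frac{n}{2}\log\frac{1}{1-e\lambda\Delta} ,
\]
while for the truncation claim \eqref{eq:E|M|_truncate_clogn} I would bound the tail directly:
\[
\sum_{m > 2\log n} \frac{n}{2m}(e\lambda\Delta)^m \;\le\; \frac{n}{4\log n}\sum_{m > 2\log n}(1/5)^m \;\le\; \frac{n}{\log n}\cdot (1/5)^{2\log n} \;=\; \frac{n^{1-2\log 5}}{\log n} ,
\]
which is much smaller than $1/n$ for all $n$ sufficiently large (and can be made $\le 1/n$ for every $n \ge 1$ by slightly tightening the constant in $e\lambda\Delta \le 1/5$, exactly as in Theorem~\ref{thm:truncate_CE_clogn}(i)).

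I do not anticipate any genuine obstacle here: the argument is essentially a re-run of the proof of Theorem~\ref{thm:truncate_CE_clogn}(i) with an additional factor of $m$. The only thing to double-check is the numeric bound on the tail, since the derivative relation $\EE_{\mu_\lambda}|M| = \lambda (\log Z_{K_n}(\lambda))'$ inserts an extra $m$ that slightly erodes the geometric-series margin; the calculation above shows this margin is still comfortable under the assumption $\lambda \le 1/(30n)$.
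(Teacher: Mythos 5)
Your proposal is correct and is essentially the paper's own argument: the paper proves this proposition by noting that the proof of \eqref{eq:logZ_Kn_truncate_clogn} carries an extra factor of $1/m^2$ in \eqref{eq:logZ_Kn_truncate_clogn_fixed_m_bound}, which absorbs the additional $m$ from the mean expansion, exactly as in your first display. Your explicit tail computation with $e\lambda\Delta \le 1/5$ is a harmless quantitative sharpening of the same Penrose tree-graph bound argument.
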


From \eqref{eq:MDMean_convergence_to_c} we deduce that $\EE \abs{M} = O(n)$. The following result indicates that the variance is on the same order, implying a concentration around the mean of the matching size for the monomer-dimer model in the $\lambda = \Theta( \frac{1}{n} )$ regime. 
\begin{proposition}\label{prop:Var|M|_Cn_bound}
    Suppose $\lambda \le \frac{1}{30 n}$. Then we have $\Var_{M \sim \mu_\lambda}(\abs{M}) = O(n) $.
\end{proposition}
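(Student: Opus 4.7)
The plan is essentially a direct adaptation of the proof of Theorem~\ref{thm:truncate_CE_clogn}(i). Observe that the cluster expansion \eqref{eq:MD_Var_CE_formal} differs from the expansion \eqref{eq:logZKn_CE_formal} of $\log Z_{K_n}(\lambda)$ only by the insertion of an extra factor of $m^2$ per $m$-cluster, arising because $\Var_{\mu_\lambda}|M|$ is obtained from $\log Z_{K_n}(\lambda)$ by applying $\lambda\partial_\lambda$ twice. The key observation is that this extra $m^2$ is exactly absorbed by the $m^{-2}$ gain coming from Cayley's formula $|\mathcal{T}^{\text{lab}}_{m-1}| = m^{m-2}$ combined with the $1/m!$ normalization in the Ursell function.

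First I would invoke the Penrose tree-graph bound (Lemma~\ref{lemma:Penrose_tree_bound}) in exactly the manner of the derivation of \eqref{eq:logZ_Kn_truncate_clogn_fixed_m_bound}, yielding
\begin{equation*}
\sum_{e_1,\dots,e_m} |\phi(H(e_1,\dots,e_m))| \;\leq\; \frac{m^{m-2}}{m!}\binom{n}{2}\Delta^{m-1} \;\leq\; \frac{n}{2m^2}(e\Delta)^m,
\end{equation*}
where $\Delta = 2n-3$. Multiplying by the weight $m^2\lambda^m$ appearing in \eqref{eq:MD_Var_CE_formal}, the $m^2$ cancels, leaving the clean bound $\frac{n}{2}(e\lambda\Delta)^m$ on the absolute value of the $m$-th term.

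Next I would sum over $m \geq 1$ to control the series. Under the hypothesis $\lambda \leq \frac{1}{30n}$ we have $e\lambda\Delta \leq \frac{e(2n-3)}{30n} < \frac{e}{15} < 1$, so
\begin{equation*}
\sum_{m \geq 1} m^2 \lambda^m \sum_{e_1,\dots,e_m} |\phi(H(e_1,\dots,e_m))| \;\leq\; \frac{n}{2}\sum_{m \geq 1}(e\lambda\Delta)^m \;\leq\; C n
\end{equation*}
for an absolute constant $C$. This simultaneously establishes absolute convergence of \eqref{eq:MD_Var_CE_formal} (so the formal equality is a genuine equality) and gives the desired bound $\Var_{M\sim\mu_\lambda}|M| \leq Cn$.

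I do not anticipate any substantive obstacle. The only nontrivial input is the exact matching between the $m^2$ from two differentiations and the $m^{-2}$ slack supplied by the Penrose/Cayley machinery; the remaining verification that $e\lambda\Delta < 1$ is immediate from the assumed scaling of $\lambda$. Essentially the same constant $\frac{1}{30n}$ that ensured convergence for $\log Z_{K_n}$ suffices here, because the new $m^2$ weight does not change the geometric rate of the bounding series.
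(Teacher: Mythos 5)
Your argument is correct and is essentially identical to the paper's proof: both use the Penrose tree-graph bound exactly as in the derivation of \eqref{eq:logZ_Kn_truncate_clogn_fixed_m_bound}, observe that the spare $1/m^2$ there absorbs the $m^2$ weight in \eqref{eq:MD_Var_CE_formal}, and then sum the geometric series $\frac{n}{2}\sum_{m\ge 1}(e\lambda\Delta)^m$ over all $m\ge 1$ to get $O(n)$.
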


\begin{proof}[Proof of Proposition \ref{prop:truncate_mean_clogn}]
    The absolute convergence and truncation for $\EE_{M \sim \mu_\lambda} \abs{M}$ follows by a straightforward modification of that of \eqref{eq:logZ_Kn_truncate_clogn}. Indeed, in \eqref{eq:logZ_Kn_truncate_clogn_fixed_m_bound} there was an extra factor of $1/m^2$. Therefore the additional $m$ factor in the cluster expansion for $\EE_{M \sim \mu_\lambda} \abs{M}$ does not present any additional difficulty.
\end{proof}

\begin{proof}[Proof of Proposition \ref{prop:Var|M|_Cn_bound}]
    The result follows by a straightforward modification of the proof of absolute convergence and truncation of \eqref{eq:logZ_Kn_truncate_clogn}. In \eqref{eq:logZ_Kn_truncate_clogn_fixed_m_bound}, the extra factor of $1/m^2$ handles the additional factor of $m^2$ appearing in \eqref{eq:MD_Var_CE_formal}. Next, in \eqref{eq:logZ_Kn_truncate_clogn_fixed_m_bound_sum_over_m}, we sum over $m \geq 1$ instead of $m \geq \log n$, leading to the desired bound.
\end{proof}

\subsection{Tree terms in the cluster expansion}

Our analysis of the cluster expansions relies on the crucial observation that the dominating terms correspond to clusters that are \emph{trees}. 
We first make this precise for the expansion \eqref{eq:E|M|_truncate_clogn} of the mean size of a matching from the monomer-dimer model---it admits the following useful approximation as a sum over trees up to size $O(\log n)$. 


\begin{proposition}\label{prop:GibbsMatching_mean_var_from_CE}
    Suppose $\lambda \le \frac{1}{30 n}$. Then for each $n$,
    \begin{align*}
        \EE_{M \sim \mu_\lambda} \abs{M} &= \sum_{m = 1}^{2 \log n} \sum_{T_m}  m! \phi(H(T_m)) m \lambda^m \frac{(n)_{m+1}}{\aut (T_m)} + O(1),
    \end{align*}
    where the sum is over unlabeled connected trees $T_m$ with $m$ edges, $(n)_{m+1}$ denotes the falling factorial, $\aut(T_m)$ denotes the number of automorphisms of $T_m$, and $H(T_m)$ denotes the line graph of $T_m$; and for $m = 1$, $\phi(H(K_2)) = 1$, and for $m \geq 2$, $\phi(H(T_m)) = \sum_{S \subseteq H(T_m)} (-1)^{\abs{S}}/m!$, where the sum ranges over all connected and spanning subgraphs of $H(T_m)$. 
\end{proposition}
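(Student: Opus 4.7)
The plan is to start from the absolutely convergent truncated cluster expansion \eqref{eq:E|M|_truncate_clogn} given by Proposition~\ref{prop:truncate_mean_clogn} and reorganize the inner sum over ordered polymer tuples $(e_1,\dots,e_m)$ according to the shape of the underlying support in $K_n$. I would partition the tuples into \emph{simple-tree clusters}, meaning those whose multiset of polymers forms a simple tree on $m+1$ vertices with no repeated polymers, and all remaining clusters, which either contain a repeated polymer or whose underlying simple support contains a cycle.

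For the simple-tree contribution at a fixed $m$, I would group ordered tuples by unlabeled tree isomorphism class $T_m$. Each unlabeled $T_m$ has $(n)_{m+1}/\aut(T_m)$ labeled embeddings into $K_n$, and each such labeled embedding yields $m!$ distinct orderings of its edges into an ordered tuple $(e_1,\dots,e_m)$. Since the incompatibility graph of a simple-tree cluster is the line graph of the embedded $T_m$, the Ursell function depends only on the isomorphism class and equals $\phi(H(T_m))$. Multiplying by the cluster-expansion weight $m\lambda^m$, the simple-tree portion of the inner sum becomes exactly $\sum_{T_m} m!\,\phi(H(T_m))\,m\lambda^m\,(n)_{m+1}/\aut(T_m)$, which matches the claimed formula.

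The main obstacle is to show that the remaining non-simple-tree clusters contribute only $O(1)$ summed over $m=1,\dots,2\log n$; I expect this to be the heart of the argument. For this I would refine the Penrose enumeration used in Theorem~\ref{thm:truncate_CE_clogn}. By Lemma~\ref{lemma:Penrose_tree_bound}, $m!\,|\phi(H(e_1,\dots,e_m))|$ is bounded by the number of spanning trees of the incompatibility graph, which is at most $m^{m-2}$ by Cayley. Enumerating ordered tuples by first fixing such a spanning tree and then sequentially placing polymers as in the proof of Theorem~\ref{thm:truncate_CE_clogn}, each step $j \ge 2$ offers at most $2(n-v_{j-1})$ choices that introduce a new vertex and at most $2v_{j-1}-3 \le 2m$ choices that do not, where $v_{j-1}$ is the number of vertices already accumulated. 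A non-simple-tree cluster forces at least $k \ge 1$ of the $m-1$ extension steps to be of the non-new-vertex type, so summing over the locations of these steps gives
\[
\#\{\text{non-simple-tree tuples of size } m\} \le \binom{n}{2}\sum_{k=1}^{m-1}\binom{m-1}{k}(2n)^{m-1-k}(2m)^k = \binom{n}{2}(2n)^{m-1}\bigl[(1+m/n)^{m-1}-1\bigr].
\]
For $m \le 2\log n$ the bracketed factor is at most $2m^2/n$, so the count is of order $m^2\, 2^m\, n^m$. Combining with $m^{m-2}/m! \le e^m/m^2$ and the factor $m\lambda^m$ from \eqref{eq:E|M|_truncate_clogn}, the non-simple-tree contribution at level $m$ is $O\bigl(m\,(2e\lambda n)^m\bigr)$, and since $2e\lambda n \le 2e/30 < 1$ under the hypothesis $\lambda \le 1/(30n)$, summing over $m$ yields a convergent series of size $O(1)$. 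Adding this to the truncation error $O(1/n)$ from Proposition~\ref{prop:truncate_mean_clogn} concludes the proof.
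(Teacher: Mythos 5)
Your proposal is correct, and its skeleton is the same as the paper's: truncate via Proposition~\ref{prop:truncate_mean_clogn}, convert the simple-tree part of the polymer sum into the stated sum over unlabeled trees using exactly the relabeling count $(n)_{m+1}m!/\aut(T_m)$, and control the rest with the Penrose tree-graph bound. Where you diverge is in how the non-simple-tree clusters are shown to contribute $O(1)$. The paper does this through a standalone result (Proposition~\ref{prop:MDMean_cycles_O(1)}): it isolates a repeated edge or a cycle $C_r$, chooses the $r$ cycle polymers up front (a $\binom{n}{r}r!/(2r)$-type count), and then runs the sequential placement with at most $2$ choices at cycle positions, giving a bound $\binom{m+r}{r}(e\lambda\Delta)^{m+r}2^r$ summable over all $m\ge 0$, $r\ge 2$ with no truncation needed; this finer, cycle-length-resolved form is reused elsewhere in the paper (e.g.\ in bounding cyclic terms of the log-likelihood ratio), which is why it is packaged that way. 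Your alternative classifies each placement step as introducing a new vertex of $K_n$ or not, observes that a non-simple-tree cluster forces at least one ``old-vertex'' step, and charges each such step a factor $2m$ instead of $2n$; the resulting bound $\binom{n}{2}(2n)^{m-1}\bigl[(1+m/n)^{m-1}-1\bigr]\le Cm^2 2^{m}n^{m}$ combined with Cayley and $m\lambda^m$ gives $O\bigl(m(2e\lambda n)^m\bigr)$, summable under $\lambda\le 1/(30n)$. This is a slightly more elementary bookkeeping that avoids the case analysis over cycle length, at the mild cost of leaning on the prior truncation $m\le 2\log n$ to make $(1+m/n)^{m-1}-1=O(m^2/n)$, whereas the paper's lemma is uniform in $m$ and more reusable. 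Both counts are valid, and your checks of the key facts (every step sharing a vertex keeps the support connected, so all-new-vertex steps force a simple tree; the per-step bounds $2(n-v)$ and $2v-3\le 2m$) are sound.
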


Proposition \ref{prop:MDMean_cycles_O(1)} is a consequence of the following result. 

\begin{proposition}
\label{prop:MDMean_cycles_O(1)}
Suppose $\lambda \le \frac{1}{30 n}$. Then
$$
        \sum_{m \geq 2} \sum_{\substack{e_1,\dots,e_m \text{ contains} \\ 
       \text{a repeated edge or a cycle} }} m \lambda^m |\phi(H(e_1,\dots,e_m))| = O(1) .
$$
\end{proposition}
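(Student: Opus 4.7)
The plan is to refine the Penrose tree-graph bound argument used in the proof of Theorem~\ref{thm:truncate_CE_clogn}, extracting an additional factor of order $m^2/n$ that reflects the constraint that the cluster contains a repeated edge or a cycle.

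I would first apply Lemma~\ref{lemma:Penrose_tree_bound} to obtain
\[ |\phi(H(e_1,\dots,e_m))| \le \frac{1}{m!} \sum_{t \in \cT_{m-1}^{\text{lab}}} \boldsymbol{1}\{t \subseteq H(e_1,\dots,e_m)\}, \]
and then swap summations so as to count, for each fixed labeled tree $t$ on $[m]$, the number of clusters $(e_1,\dots,e_m)$ with a repeated edge or a cycle whose incompatibility graph contains $t$ as a spanning subgraph.

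Next, I would re-run the iterative cluster construction from the proof of Theorem~\ref{thm:truncate_CE_clogn}. At each step $k \ge 2$ the new polymer $e_{i_k}$ must share a vertex with its parent $e_{i_{j'}}$ in $t$, giving at most $\Delta := 2n-3$ total choices. The key observation is that a cluster fails to be a simple tree precisely when, at some step $k$, the chosen polymer is \emph{bad}---meaning either it coincides with a previously placed polymer or it has both endpoints already in the vertex set $V_{k-1}$ of $e_{i_1},\dots,e_{i_{k-1}}$. Since $|V_{k-1}| \le 2(k-1)$ and the parent $e_{i_{j'}}$ has only two vertices, the number of bad choices at step $k$ is at most $O(k)$, compared with $\Delta = \Theta(n)$ unconstrained choices. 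Union-bounding over the earliest bad step bounds the count of bad clusters containing $t$ by $O(m^2)\binom{n}{2}\Delta^{m-2}$.

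Combining with Cayley's formula $|\cT_{m-1}^{\text{lab}}| = m^{m-2}$ and the bound $m^m/m! \le e^m$, and absorbing the factor $\binom{n}{2}/\Delta^2 = O(1)$, the contribution of bad clusters at level $m$ to the sum in question is at most $O\!\left(m (2e\lambda n)^m\right)$. Under the hypothesis $\lambda \le \frac{1}{30n}$, we have $2e\lambda n \le e/15 < 1$, so $\sum_{m \ge 2} m (2e\lambda n)^m$ converges to a constant. The one step requiring care---but no deep difficulty---is the combinatorial bookkeeping in the previous paragraph: at the earliest step where the cluster deviates from a simple tree, the number of bad new polymers drops from $\Theta(n)$ to $O(m)$, which is precisely the factor of $O(m^2/n)$ saving over the simple-tree bound in Theorem~\ref{thm:truncate_CE_clogn} that forces the present sum to be $O(1)$.
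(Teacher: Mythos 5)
Your argument is correct, but it is organized differently from the paper's proof. The paper decomposes the bad clusters according to the length $r$ of a cycle they contain (with a repeated edge treated as $C_2$): it first chooses the $r$ polymers forming that cycle en bloc, at cost roughly $\binom{n}{r}\frac{r!}{2r}$, and then runs the iterative tree-guided assignment with at most $2$ choices at cycle slots and $\Delta$ elsewhere; the crucial $1/n$ saving comes from the cycle ``closing on itself'' (its $r$ polymers span only $r$ vertices rather than $r+1$), and the final bound is a double sum $\sum_{m,r}\binom{m+r}{r}(e\lambda\Delta)^{m+r}2^r$ collapsed into a geometric series in $3e\lambda\Delta$. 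You instead keep a single iterative construction and locate the \emph{earliest bad step}, where the new polymer either repeats an earlier one or has both endpoints already in the current vertex set; your bookkeeping that a cluster fails to be a simple tree exactly when such a step occurs is correct (the placed polymers always form a connected union, so a both-endpoints-present edge closes a cycle, and a repeat is caught when its second copy is placed), and the count of bad choices at that step is indeed $O(k)\le O(m)$, giving the $O(m^2/n)$ saving over the simple-tree count and hence $\sum_m O\!\left(m(e\lambda\Delta)^m\right)=O(1)$ under $\lambda\le\frac{1}{30n}$. Your route is arguably leaner, since it avoids the case analysis over $r$ and the care needed to avoid incomplete or double-counted cycle assignments; the paper's $r$-parametrized construction has the side benefit that essentially the same skeleton is reused later (e.g.\ in the cyclic-cluster bounds over a random graph $A$, where counts of $r$-cycles $C_r(A)$ enter explicitly), which your earliest-bad-step bookkeeping would not provide as directly, though that is not needed for the present proposition.
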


\begin{proof}[Proof of Proposition \ref{prop:GibbsMatching_mean_var_from_CE}]
By \eqref{eq:E|M|_truncate_clogn} and Proposition~\ref{prop:MDMean_cycles_O(1)}, we have 
$$
\EE_{M \sim \mu_\lambda} \abs{M} = \sum_{m = 1}^{ 2 \log n} \sum_{\substack{e_1,\dots,e_m \\ \text{simple tree}}} \phi(H(e_1,\dots,e_m)) m \lambda^m + O(1)  
$$
which is equivalent to the statement in Proposition \ref{prop:GibbsMatching_mean_var_from_CE}, because there are $\frac{(n)_{m+1}}{\aut (T_m)} m!$ ways to assign the edges of an unlabeled tree $T_m$ to $e_1, \dots, e_m$. 
\end{proof}

\begin{proof}[Proof of Proposition \ref{prop:MDMean_cycles_O(1)}]
    Fix integers $m \geq 0$ and $r \geq 2$. 
    Consider clusters $\inbraces{e_1,\dots, e_{m + r}}$ that contain a cycle $C_r$ or (if $r=2$) a repeated edge which we denote by $C_2$. 
    Let us use $[m+r]$ for the vertex set of $H = H(e_1,\dots,e_{m+r})$. 
    Let $\cT_{m+r}^{\text{lab}}$ denote the set of all labeled trees with vertex set $[m+r]$. Let $\cT(H)^{\text{lab}}$ denote the set of all labeled spanning trees of a graph $H$.

    Similar to the proof of Theorem~\ref{thm:truncate_CE_clogn}, the Penrose tree-graph bound Lemma \ref{lemma:Penrose_tree_bound} implies that
    \begin{align*}
        \sum_{ \substack{e_1, \dots, e_{m + r} \\ \text{contains } C_r } } \abs{\phi(H(e_1,\dots,e_{m+r}))}
        &= \frac{1}{(m+r)!} \sum_{ \substack{e_1, \dots, e_{m + r} \\ \text{contains } C_r } } \bigg| \sum_{ \substack{ S \subseteq H \\ \text{conn., spann.} } } \prod_{\inbraces{i,j} \in S } -\boldsymbol{1}\!\inbraces{e_i \not\sim e_j}  \bigg| \\
        &\leq \frac{1}{(m+r)!} \sum_{ \substack{e_1, \dots, e_{m + r} \\ \text{contains } C_r } } \sum_{t \in \cT_{m+r}^{\text{lab}}} \boldsymbol{1}\!\inbraces{ t \in \cT(H)^{\text{lab}} } \\
        &= \frac{1}{(m+r)!} \sum_{t \in \cT_{m+r}^{\text{lab}}} \sum_{ \substack{e_1, \dots, e_{m + r} \\ \text{contains } C_r } }  \boldsymbol{1}\!\inbraces{ t \in \cT(H)^{\text{lab}} }.
    \end{align*}
    Fix $t \in \cT_{m+r}^{\text{lab}}$. We describe an iterative process to construct clusters $\inbraces{e_1,\dots,e_{m+r}}$ such that $t \in \cT(H)^{\text{lab}}$ and the cluster contains at least one $C_r$. 
    
    \underline{Step 1:} Fix $V' \subseteq V(t) = [m+r]$ with $\abs{V'} = r$. The set $V'$ will be the index set such that $\{e_i : i \in V'\}$ forms $C_r$. 
    There are $\binom{m + r}{r}$ ways to choose $V'$. 
    
    \underline{Step 2:} Choose $r$ distinct polymers to make up a single $C_r$: there are $\binom{n}{r} \frac{r!}{2r}$ ways to do this if $r \ge 3$ and $\binom{n}{2}$ ways if $r = 2$. 
    
    \underline{Step 3:} Pick a polymer $\tilde{e}$ from the above chosen $r$ polymers to assign to an arbitrary vertex $i_1 \in V'$. (We may take $i_1$ to be the smallest index in $V'$.) There are $r$ choices for $\tilde{e}$. 

    \underline{Step 4:} Iteratively, suppose $i_1, \dots, i_j$ have been assigned to polymers $e_{i_1} = \tilde{e}, e_{i_2},\dots, e_{i_j}$. There must exist $i_{j+1} \in [m+r] \setminus \inbraces{i_1,\dots,i_{j}}$ such that $i_{j+1}$ is adjacent to one of $\inbraces{i_1,\dots,i_j}$ in $t$. Without loss of generality suppose $\inbraces{i_j, i_{j+1}} \in t$. Now
    \begin{itemize}
        \item if $i_{j+1} \in V'$, then we attempt to assign a polymer in the chosen $C_r$ to $i_{j+1}$. There are at most two choices for $e_{i_{j+1}}$, which has to be compatible with the assignment of $e_{i_j}$ to $i_j$. 
        If there are no compatible choices for $e_{i_{j+1}}$, we terminate the iteration and output an incomplete assignment (which does not contribute to the sum).
        
        \item if $i_{j+1} \notin V'$, then there are at most $2(n-2) + 1 = 2n - 3 := \Delta$ choices for $e_{i_{j+1}}$, corresponding to all possible distinct incident edges to $e_{i_j}$, as well as itself. 
    \end{itemize}
    For a chosen $C_r$, the subset of completed cluster assignments that had utilized all chosen polymers in $C_r$ contain all the desired ordered clusters $\inbraces{e_1,\dots,e_{m+r}}$ satisfying $t \in \cT(H)^{\text{lab}}$ and $e_1,\dots,e_{m+r}$ containing that chosen $C_r$.


    In this way, we have 
    \begin{align*}
        \sum_{ \substack{e_1, \dots, e_{m + r} \\ \text{contains } C_r } }  \boldsymbol{1}\!\inbraces{ t \in \cT(H)^{\text{lab}} } &\leq \binom{m+r}{r} \binom{n}{r} \frac{r!}{r} r \Delta^m 2^{r-1}  \leq \frac{1}{2}\binom{m+r}{r} n^r \Delta^m 2^r
    \end{align*}
    By Cayley's theorem $\abs{\cT_{m+r}^{\text{lab}}} = (m+r)^{m+r - 2}$. It follows that
    \begin{align}
        \sum_{ \substack{e_1,\dots, e_{m + r} \\ \text{contains } C_r } } \abs{\phi(H(e_1,\dots,e_{m+r})) } \lambda^{m+r}(m+r) 
        &\leq \frac{1}{2(m+r)} \binom{m+r}{r} \frac{(m+r)^{m+r}}{(m+r)!} n^r \Delta^m \lambda^{m+r} 2^r \nonumber \\
        &\leq \frac{1}{2(m+r)} \binom{m+r}{r} (e \lambda \Delta)^{m+r} 2^r.
        \label{eq:MDMean_cycles_O(1)_afterCayley}
    \end{align}
    Summing over $m$ and $r$ gives, since $\lambda \le \frac{1}{30 n}$,
    \begin{align*}
        &\sum_{m \geq 0, r \geq 2} \sum_{ \substack{e_1,\dots, e_{m + r} \\ \text{contains } C_r } } \abs{\phi(H(e_1,\dots,e_{m+r})) } \lambda^{m+r}(m+r) \leq \sum_{m \geq 0, r \geq 2} \binom{m+r}{r} (e \lambda \Delta)^{m+r} 2^r \\
        &\quad= \sum_{\ell \geq 2} (e\lambda \Delta)^\ell \sum_{r = 2}^{\ell} \binom{\ell}{r} 2^r 
        \leq \sum_{\ell \geq 2} (3e\lambda \Delta)^\ell 
        = O(1) \qedhere
    \end{align*}
\end{proof}

\subsection{Combinatorial identities for the Ursell function}
\label{sec:Ursell_CombinatorialIdentities}

Before proceeding to analyze the mean part of the log-likelihood ratio, we prove some combinatorial identities about the Ursell functions. 
In what follows, $G(\text{spann., conn.})$ denotes the set of spanning connected subgraphs of a connected graph $G$. 










\begin{lemma}
    \label{lemma:Ursell_trees_adjVertices_identity}
    Let $(V(H), H)$ be a connected graph. Let $v_*$ and $v_{**}$ be two adjacent vertices in $H$. Define the following subset of bi-colorings of $V(H)$:
    \begin{align}
        \cC(H; v_{*}, v_{**}) := \inbraces{  (V_{\text{red}}, V_{\text{blue}}) : \, 
    \begin{aligned}
        & V_{\text{red}} \cup V_{\text{blue}} = V(H) \text{ disjoint}, \, V_{\text{red}} \ni v_*, \, V_{\text{blue}} \ni v_{**},\\
        & H[V_{\text{red}}] \text{ and } H[V_{\text{blue}}] \text{ are each connected subgraphs } 
    \end{aligned}
    }. \label{eq:def-bi-coloring}
    \end{align}
    (See Figure \ref{fig:nonMatching_TredTblueJoin_and_IncompatibilityGraph} (Right) for an example of such a bi-coloring in $\cC(H; v_{*}, v_{**})$.) 
    Then 
    \begin{align}\label{eq:Ursell_trees_adjVertices_identity}
        \sum_{ \substack{ S \subseteq H(\text{spann., conn.})  } } (-1)^{\abs{S}} = \sum_{ \substack{ (V_{\text{red}}, V_{\text{blue}}) \\ \in \cC(H; v_{*}, v_{**}) } } \sum_{\substack{  S_{\text{red}} \subseteq H[V_{\text{red}}](\text{spann., conn.})  \\ S_{\text{blue}} \subseteq H[V_{\text{blue}}](\text{spann., conn.})   } } (-1)^{\abs{S_{\text{red}}} + \abs{S_{\text{blue}}} + 1 }.
    \end{align}
\end{lemma}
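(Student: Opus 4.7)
The plan is to establish the identity via a sign-reversing involution on the set of spanning connected subgraphs of $H$, with the surviving fixed points placed in bijection with the tuples indexing the right-hand side. Let $e_* := \{v_*, v_{**}\}$, which is an edge of $H$ by the adjacency hypothesis.

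The central observation is the map $\iota(S) := S \triangle \{e_*\}$, which flips the parity of $|S|$ and hence reverses the sign $(-1)^{|S|}$. First I would verify that $\iota$ preserves the property of being spanning and connected in all cases \emph{except} when $e_* \in S$ and $e_*$ is a bridge of $S$. Indeed, adding $e_*$ to a spanning connected $S$ with $e_* \notin S$ clearly yields another spanning connected subgraph, while removing a non-bridge edge from a connected subgraph keeps it connected. Hence $\iota$ restricts to a sign-reversing involution on the set of spanning connected $S$ for which $\iota(S)$ is also spanning connected, and these paired contributions cancel on the left-hand side of \eqref{eq:Ursell_trees_adjVertices_identity}.

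Next I would show that the surviving $S$---those containing $e_*$ as a bridge---are in bijective correspondence with the tuples indexed on the right-hand side. Given such an $S$, removing $e_*$ splits $S$ into exactly two connected components; let $V_{\text{red}}$ and $V_{\text{blue}}$ denote their vertex sets, containing $v_*$ and $v_{**}$ respectively, so that $(V_{\text{red}}, V_{\text{blue}})$ is an ordered partition of $V(H)$. The restrictions $S_{\text{red}} := S \cap H[V_{\text{red}}]$ and $S_{\text{blue}} := S \cap H[V_{\text{blue}}]$ are spanning and connected in the induced subgraphs $H[V_{\text{red}}]$ and $H[V_{\text{blue}}]$; in particular, both induced subgraphs are connected, certifying that $(V_{\text{red}}, V_{\text{blue}}) \in \cC(H; v_*, v_{**})$. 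Conversely, each tuple $((V_{\text{red}}, V_{\text{blue}}), S_{\text{red}}, S_{\text{blue}})$ from the right-hand side reconstructs a unique surviving $S = S_{\text{red}} \cup S_{\text{blue}} \cup \{e_*\}$, which is spanning, connected, and has $e_*$ as a bridge. Since $|S| = |S_{\text{red}}| + |S_{\text{blue}}| + 1$, the signs match exactly those appearing on the right-hand side of \eqref{eq:Ursell_trees_adjVertices_identity}.

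Overall the argument is clean, and the only subtle point is the case analysis for when $\iota(S)$ fails to remain spanning and connected. Once bridges are isolated as the sole obstruction, the bi-coloring picture emerges naturally: the bridge $e_*$ accounts for the $+1$ sign correction, and the two connected components of $S \setminus \{e_*\}$ furnish the red and blue spanning connected subgraphs.
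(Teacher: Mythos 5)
Your proposal is correct and follows essentially the same route as the paper: the paper also cancels the terms where $e_*$ is absent against those where $e_*$ is present but not a cut-edge (your involution $S \mapsto S \triangle \{e_*\}$ phrased as a pairing), and then identifies the surviving subgraphs having $e_*$ as a cut-edge with the bi-colorings in $\cC(H; v_*, v_{**})$ together with spanning connected subgraphs of the two induced pieces. The sign bookkeeping $|S| = |S_{\text{red}}| + |S_{\text{blue}}| + 1$ matches the paper's as well.
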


\begin{proof}[Proof of Lemma \ref{lemma:Ursell_trees_adjVertices_identity}]
    Denote $e_* := \inbraces{v_*, v_{**}}$. We claim that 
    \begin{align}\label{eq:Ursell_trees_adjVertices_identity_eStarCutEdge}
        \sum_{ \substack{ S \subseteq H(\text{spann., conn.})  } } (-1)^{\abs{S}} = \sum_{ \substack{ S \subseteq H(\text{spann., conn.}) \\ S \ni e_*, \text{ $e_*$ is cut-edge} } } (-1)^{\abs{S}}.
    \end{align}
    To see this, partition $H(\text{spann., conn.})$ into two sets: $H(e_*)$ and $H(\text{no } e_*)$ which consists of spanning and connected subgraphs that respectively contain and do not contain $e_*$. Further partition $H(e_*)$ into two sets $H(e_*, \text{cut})$ and $H(e_*, \text{not cut})$ which contain the subgraphs where $e_*$ is respectively a cut-edge and not a cut-edge. 
    Any $S \in H(e_*, \text{not cut})$ can be uniquely paired with an $S \setminus \{e_*\}$ that lives in $H(\text{no } e_*)$. In other words, there is a bijection between $H(e_*, \text{not cut})$ and $H(\text{no } e_*)$ obtained by including and not including $e_*$. The summands corresponding to these pairs in the LHS of \eqref{eq:Ursell_trees_adjVertices_identity_eStarCutEdge} cancel since they differ by exactly one edge. Therefore, it remains only to sum over $H(e_*, \text{cut})$. This establishes \eqref{eq:Ursell_trees_adjVertices_identity_eStarCutEdge}. 

    The set $H(e_*, \text{cut})$ can be generated by the following procedure
    \begin{enumerate}
        \item 
        Color the vertices of $H$ red and blue and call the resulting colored vertex sets $V_{\text{red}}$ and $V_{\text{blue}}$ respectively, such that $V_{\text{red}} \ni v_*$, $V_{\text{blue}} \ni v_{**}$, and the induced subgraphs $H[V_{\text{red}}]$ and $H[V_{\text{blue}}]$ are each connected. 
        \item Join any spanning and connected $ S_{\text{red}} \subseteq H[V_{\text{red}}]$ with a spanning and connected $S_{\text{blue}} \subseteq H[V_{\text{blue}}]$ with the edge $\inbraces{v_*, v_{**}}$ to form a spanning and connected subgraph of $H$. 
        \item The uncolored collection of all such joinings over all choices of $(V_{\text{red}}, V_{\text{blue}}) $ forms the desired set $H(e_*, \text{cut})$.
    \end{enumerate}
    The size of any such subgraph generated by the above procedure is $\abs{S_{\text{red}}} + \abs{S_{\text{blue}}} + 1$. This proves the equality in \eqref{eq:Ursell_trees_adjVertices_identity}.
\end{proof}

\begin{figure}[ht]\centering
\begin{tikzpicture}
\node (1) at (1,2) {$1$};
\node (8) at (1,4) {$8$};
\node (3) at (2,3) {$3$};
\node (7) at (4,3) {$7$};
\node (4) at (5,3) {$4$};

\draw[thick, red] (3) -- node[label={[label distance=-0.5em]90:$v_{*}$}] {} (7);
\draw[thick, red] (4) -- (7);
\draw[thick, red] (1) -- (3);
\draw[thick, red] (3) -- (8);
\end{tikzpicture}
\qquad\qquad 
\begin{tikzpicture}
\node (9) at (1,2) {$9$};
\node (10) at (2,3) {$10$}; 
\node (5) at (1,4) {$5$}; 
\node (6) at (0,3) {$6$};
\node (7) at (3,4) {$7$};
\node (3) at (4.8,3) {$3$};
\node (11) at (6,4) {$11$};
\node (2) at (7,3) {$2$}; 

\draw[thick, blue] (9) -- (10);
\draw[thick, blue] (5) -- (10);
\draw[thick, blue] (5) -- (6);
\draw[thick, blue] (7) -- (10);
\draw[thick, blue] (3) -- (11);
\draw[thick, blue] (2) -- (11);

\draw[thick, blue] (3) -- node[label={[label distance=-0.5em]267:$v_{**}$}] {} (7);
\end{tikzpicture}
\caption{Example of tuple \eqref{eq:nonMatching_GenericTuple}. Left: $\widetilde{T}_{\text{red}}(v_*)$ with $\abs{\widetilde{T}_{\text{red}}(v_*)} = \ell = 4$. Right: $\widetilde{T}_{\text{blue}}(v_{**})$ with $\abs{\widetilde{T}_{\text{blue}}(v_{**})} = m + 1 -\ell = 7$. Their join by superimposing $v_*$ and $v_{**}$ gives the one-repeated-edge tree with $m + 1 = 11$ edges and $11$ vertices as in Figure \ref{fig:nonMatching_TredTblueJoin_and_IncompatibilityGraph} (Left).}
\label{fig:nonMatching_Tred_Tblue_separate}
\end{figure}
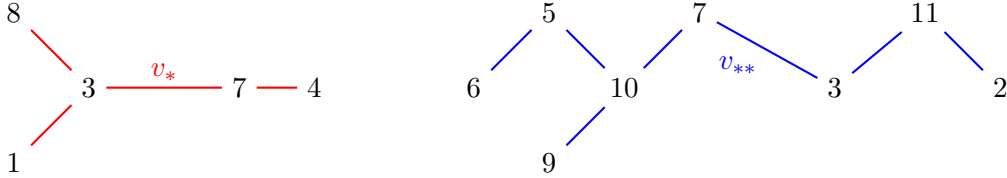

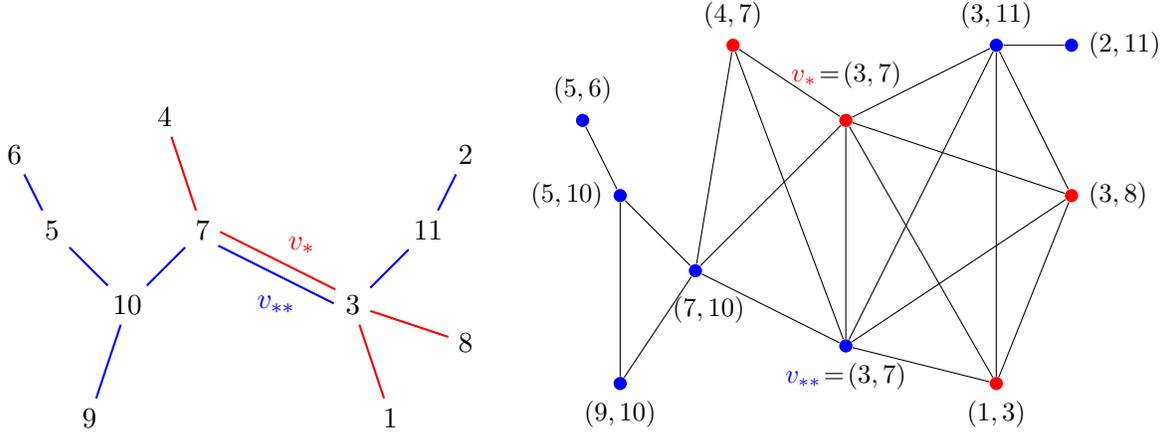
\begin{figure}[ht]\centering
\begin{tikzpicture}
\node (9) at (1.5,1.5) {$9$};
\node (10) at (2,3) {$10$}; 
\node (5) at (1,4) {$5$}; 
\node (6) at (0.5,5) {$6$};
\node (7) at (3,4) {$7$};
\node (4) at (2.5,5.5) {$4$};
\node (3) at (5,3) {$3$};
\node (1) at (5.5,1.5) {$1$};
\node (8) at (6.5,2.5) {$8$};
\node (11) at (6,4) {$11$};
\node (2) at (6.5,5) {$2$}; 

\draw[thick, blue] (9) -- (10);
\draw[thick, blue] (5) -- (10);
\draw[thick, blue] (5) -- (6);
\draw[thick, blue] (7) -- (10);
\draw[thick, blue] (3) -- (11);
\draw[thick, blue] (2) -- (11);

\draw[thick, transform canvas={yshift=0.24em}, red] (3) -- node[label={[label distance=-0.5em]87:$v_{*}$}] {} (7);

\draw[thick, transform canvas={xshift=-0.06em, yshift=-0.24em}, blue] (3) -- node[label=below:$v_{**}$] {} (7);

\draw[thick, red] (4) -- (7);
\draw[thick, red] (1) -- (3);
\draw[thick, red] (3) -- (8);
\end{tikzpicture}
\quad 
\begin{tikzpicture}[every node/.style={font=\small}, redCirc/.style={circle,fill=red, minimum size=5pt, inner sep=0pt}, blueCirc/.style={circle,fill=blue, minimum size=5pt, inner sep=0pt}]


\node[blueCirc,label=below:{$(9,10)$}] (9-10) at (1,1.5) {};
\node[blueCirc, label={[xshift=5pt,yshift=-3pt]below:$(7,10)$}] (7-10) at (2,3) {};
\node[blueCirc, label=left:{$(5,10)$}] (5-10) at (1,4) {};
\node[blueCirc, label=above:{$(5,6)$}] (5-6) at (0.5,5) {};
\node[redCirc, label=above:{$(4,7)$}] (4-7) at (2.5,6) {};

\node[redCirc,label={[label distance=0.51em]88:$\textcolor{red}{v_{*}}\!=\!(3,7)$}] (vStar) at (4,5) {};
\node[blueCirc,label=below:{$\textcolor{blue}{v_{**}}\! = \!(3,7)$}] (vStarStar) at (4,2) {};

\node[blueCirc,label=above:{$(3,11)$}] (3-11) at (6,6) {};
\node[blueCirc,label=right:{$(2,11)$}] (2-11) at (7,6) {};
\node[redCirc,label=right:{$(3,8)$}] (3-8) at (7,4) {};
\node[redCirc,label=below:{$(1,3)$}] (1-3) at (6,1.5) {};

\draw (9-10) -- (7-10);
\draw (5-10) -- (7-10);
\draw (5-6) -- (5-10);
\draw (5-10) -- (9-10);
\draw (4-7) -- (7-10);

\draw (4-7) -- (vStar);
\draw (4-7) -- (vStarStar);
\draw (7-10) -- (vStar);
\draw (7-10) -- (vStarStar);

\draw (vStar) -- (vStarStar);

\draw (3-11) -- (vStar);
\draw (3-11) -- (vStarStar);
\draw (3-8) -- (vStar);
\draw (3-8) -- (vStarStar);
\draw (1-3) -- (vStar);
\draw (1-3) -- (vStarStar);

\draw (1-3) -- (3-11);
\draw (1-3) -- (3-8);
\draw (3-8) -- (3-11);

\draw (2-11) -- (3-11);
\end{tikzpicture}
\caption{(Left) The repeated edge tree represented by $\inparen{\widetilde{T}_{\text{red}}(v_*), \widetilde{T}_{\text{blue}}(v_{**})}$ from Figure \ref{fig:nonMatching_Tred_Tblue_separate}. (Right) The incompatibility graph $H$ with the corresponding coloring.}
\label{fig:nonMatching_TredTblueJoin_and_IncompatibilityGraph}
\end{figure}

\begin{lemma} \label{lem:ursell-function-tmrep-tm-identity}
Let $T_m$ denote a generic unlabeled simple tree on $m+1$ vertices, and let $\TmRep$ denote a generic unlabeled tree with one repeated edge on $m+1$ vertices. 
Then we have
\begin{align}\label{eq:convolutionOfUrsell_overTrees}
    &\sum_{\TmRep} \frac{\widetilde{\phi}(H(\TmRep)) }{2 \aut(\TmRep)} = - \sum_{\ell = 1}^{m} \sum_{ \inparen{T_\ell, T_{m+1-\ell}}  }\frac{\ell   \widetilde{\phi}(H(T_\ell))  }{\aut(T_\ell) }  \frac{(m+1 - \ell) \widetilde{\phi}(H(T_{m + 1 - \ell}))}{\aut(T_{m + 1 - \ell})} ,
\end{align}
where $\widetilde{\phi}(H) := m! \cdot \phi(H)$ denotes the unnormalized Ursell function, and $\aut(\cdot)$ denotes the number of authomorphisms. 
\end{lemma}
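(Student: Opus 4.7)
The plan is to apply Lemma~\ref{lemma:Ursell_trees_adjVertices_identity} to each incompatibility graph $H(\TmRep)$ on the left-hand side. For a fixed $\TmRep$, let $e^*$ denote its unique repeated edge, and label its two polymer copies as $v_*$ and $v_{**}$. These copies share both endpoints and are therefore adjacent in $H(\TmRep)$, so the hypothesis of the lemma holds and yields
\begin{equation*}
    \widetilde{\phi}(H(\TmRep)) \;=\; - \sum_{(V_{\text{red}}, V_{\text{blue}}) \in \cC(H(\TmRep); v_*, v_{**})} \widetilde{\phi}(H[V_{\text{red}}]) \cdot \widetilde{\phi}(H[V_{\text{blue}}]) .
\end{equation*}

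The next step is to recognize that each valid bi-coloring corresponds to a decomposition of $\TmRep$ into a pair of simple trees joined along $e^*$. Since $v_* \in V_{\text{red}}$ and $v_{**} \in V_{\text{blue}}$, the polymers in $V_{\text{red}}$ include only one copy of $e^*$, so they induce a sub-multigraph of $\TmRep$ without repeated edges. The connectedness of $H[V_{\text{red}}]$ forces this sub-multigraph to be connected; since the only cycle in $\TmRep$ is the digon formed by the two copies of $e^*$, it must in fact be a simple tree $T_\ell$ with $\ell := |V_{\text{red}}|$ edges. The analogous statement on the blue side yields a simple tree $T_{m+1-\ell}$. Because both sub-trees are simple, $H[V_{\text{red}}]$ coincides with the line graph of $T_\ell$, and hence $\widetilde{\phi}(H[V_{\text{red}}]) = \widetilde{\phi}(H(T_\ell))$; similarly for blue.

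The core combinatorial content then boils down to establishing the identity
\begin{equation*}
    \sum_{\TmRep} \frac{\#\{\text{bi-colorings of } \TmRep \text{ of type } (T_\ell, T_{m+1-\ell})\}}{2 \aut(\TmRep)} \;=\; \frac{\ell}{\aut(T_\ell)} \cdot \frac{m+1-\ell}{\aut(T_{m+1-\ell})} ,
\end{equation*}
for each fixed ordered pair $(T_\ell, T_{m+1-\ell})$. Substituting the lemma's expression into the left-hand side of \eqref{eq:convolutionOfUrsell_overTrees} and swapping the order of summation, this identity immediately delivers the right-hand side. It encodes a bijection between bi-colored unlabeled $\TmRep$ and ordered pairs of simple trees each carrying a marked edge (together with an orientation specifying how the endpoints glue): superimposing the two marked edges of $(T_\ell, T_{m+1-\ell})$ produces $\TmRep$ together with a bi-coloring, while extracting the red/blue sub-trees from a bi-colored $\TmRep$ and recording $v_*, v_{**}$ as the marked edges is the inverse. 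The factor $1/2$ on the left-hand side compensates for the ambiguity in assigning the labels $v_*$ versus $v_{**}$ to the two indistinguishable copies of $e^*$.

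The main obstacle will be making the automorphism bookkeeping for this bijection rigorous, particularly in degenerate situations such as $T_\ell \cong T_{m+1-\ell}$, or when $\TmRep$ has a non-trivial symmetry that exchanges the two copies of $e^*$ together with some vertex permutation. The cleanest route is to perform the counting in terms of labeled objects on a fixed vertex set $[m+1]$: on one side, labeled $\TmRep$ together with a distinguished choice of $v_*$; on the other, labeled pairs of simple trees with marked edges and a specified gluing. In this labeled setting the bijection is direct, and the orbit-stabilizer theorem then converts the labeled sums back into the unlabeled sums appearing in \eqref{eq:convolutionOfUrsell_overTrees}.
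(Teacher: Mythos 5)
Your proposal is correct and follows essentially the same route as the paper's proof: the identity is reduced, via Lemma~\ref{lemma:Ursell_trees_adjVertices_identity} applied to $H(\TmRep)$ with $v_*, v_{**}$ the two copies of the repeated edge, to the bijection between the bi-colorings $\cC(H;v_*,v_{**})$ and decompositions of $\TmRep$ into two simple trees glued along that edge, with the remaining automorphism bookkeeping handled by counting labeled objects on $[m+1]$. The only difference is order of operations—the paper passes to labeled (and colored) trees first and then applies the Ursell identity per labeled $\widetilde{\TmRep}$, whereas you apply it per unlabeled $\TmRep$ and defer the same labeled orbit-stabilizer count to the end (where, note, the $1/2$ is ultimately cancelled by the factor $2$ for the two ways of gluing the marked edges)—which is an equivalent repackaging, not a different argument.
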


\begin{proof}
We will rewrite the LHS and RHS of \eqref{eq:convolutionOfUrsell_overTrees} over ``labeled'' and ``colored and labeled'' trees respectively, and show that \eqref{eq:convolutionOfUrsell_overTrees} is equivalent to (with notation to be explained below)
\begin{align}
    \sum_{\widetilde{\TmRep} } \widetilde{\phi}\!\inparen{H\!\inparen{ \widetilde{\TmRep}  }} = - \sum_{ (\widetilde{T}_{\text{red}}(v_*), \widetilde{T}_{\text{blue}}(v_{**}))  } \widetilde{\phi}(H(  \widetilde{T}_{\text{red}}(v_{*})) ) \widetilde{\phi}(H( \widetilde{T}_{\text{blue}}(v_{**}) )).
    \label{eq:rewrite-as-labeled-colored}
\end{align}

More precisely, on the LHS of \eqref{eq:convolutionOfUrsell_overTrees} we rewrite the sum over $\widetilde{\TmRep}$'s which are vertex-labeled trees with labels in $[m+1]$ and with $m+1$ edges. The number of such $\widetilde{\TmRep}$'s that can be generated from a single unlabeled $\TmRep$ is
    $\frac{(m+1)!}{\aut(\TmRep)}$.
Therefore, the LHS of \eqref{eq:rewrite-as-labeled-colored} is $2 \cdot (m+1)!$ times the LHS of \eqref{eq:convolutionOfUrsell_overTrees}. 

On the other hand, rewrite the RHS of \eqref{eq:convolutionOfUrsell_overTrees} as a sum over tuples generically denoted by
\begin{align}
    \inparen{ \widetilde{T}_{\text{red}}(v_*), \,  \widetilde{T}_{\text{blue}}(v_{**}) }
    \label{eq:nonMatching_GenericTuple}
\end{align}
satisfying the following: 
\begin{itemize}
    \item $\widetilde{T}_{\text{red}}(v_*)$ and $\widetilde{T}_{\text{blue}}(v_{**})$ are vertex-labeled simple trees.
    
    \item $\widetilde{T}_{\text{red}}(v_*)$ and $\widetilde{T}_{\text{blue}}(v_{**})$ each have a distinguished edge\footnote{Note that $v_*$ is an edge of $\widetilde{T}_{\text{red}}(v_*)$ but corresponds to a vertex of $H(\widetilde{T}_{\text{red}}(v_*))$, and hence the notation.} $v_{*}$ and $v_{**}$ respectively. The label set of the two vertices incident to $v_{*}$ must coincide with that for $v_{**}$.
The label set of all the other vertices of $\widetilde{T}_{\text{red}}(v_*)$ has an empty intersection with the label set of all the other vertices of $\widetilde{T}_{\text{blue}}(v_{**})$. The vertices are labeled using $[m+1]$.
    
    \item Joining $\widetilde{T}_{\text{red}}(v_*)$ and $\widetilde{T}_{\text{blue}}(v_{**})$ by superimposing the vertices with the same labels (so that $v_*$ and $v_{**}$ form the double edge) gives a multi-tree with $m+1$ edges and $m+1$ vertices.
\end{itemize}
We refer to Figure \ref{fig:nonMatching_Tred_Tblue_separate} for an example of such a tuple \eqref{eq:nonMatching_GenericTuple}, and to Figure \ref{fig:nonMatching_TredTblueJoin_and_IncompatibilityGraph} (Left) for the corresponding joined tree.

The number of tuples \eqref{eq:nonMatching_GenericTuple} that can be generated from a single unlabeled pair $(T_\ell, T_{m+1-\ell})$ for a fixed $1 \leq \ell \leq m$ is
$$
    \binom{m+1}{\ell + 1} \frac{(\ell + 1)!}{\aut(T_\ell)} \ell (m+1- \ell) \cdot 2 \cdot \frac{(m-\ell)!}{\aut(T_{m+1-\ell})} 
    = \frac{2 \cdot (m+1)! \ell (m+1-\ell)}{\aut(T_\ell) \aut(T_{m+1-\ell})}.
    $$
The Ursell functions are independent of the coloring or labeling of the graphs they are applied to. 
Therefore, we see that the RHS of \eqref{eq:rewrite-as-labeled-colored} is $2 \cdot (m+1)!$ times the RHS of \eqref{eq:convolutionOfUrsell_overTrees}. 

Consequently, it suffices to show for a fixed $\widetilde{\TmRep}$ that
\begin{align}\label{eq:convolutionOfUrsell_fixed_T_m^rep}
    \widetilde{\phi}\!\inparen{ H\!\inparen{ \widetilde{\TmRep}  }} = - \sum_{ ( \widetilde{T}_{\text{red}}(v_*), \widetilde{T}_{\text{blue}}(v_{**}) ) \cong \widetilde{\TmRep}  } \widetilde{\phi}(H( \widetilde{T}_{\text{red}}(v_{*}))) \widetilde{\phi}(H( \widetilde{T}_{\text{blue}}(v_{**}))),
\end{align}
where we write $ ( \widetilde{T}_{\text{red}}(v_*), \widetilde{T}_{\text{blue}}(v_{**}) ) \cong \widetilde{\TmRep}$ to mean that an uncolored version of the join of $( \widetilde{T}_{\text{red}}(v_*), \widetilde{T}_{\text{blue}}(v_{**}) ) $ is isomorphic to $\widetilde{\TmRep}$. In what follows, we fix $H := H\!\inparen{ \widetilde{\TmRep}  }$ the incompatibility graph of $\widetilde{\TmRep}$. 
For convenience, we also denote the two vertices in $H$ corresponding to the repeated edges by $v_{*}$ and $v_{**}$. 
Define the subset $\cC(H; v_*, v_{**})$ of bi-colorings of $V(H)$ as in \eqref{eq:def-bi-coloring}. 
There is a bijection between $\cC(H; v_*, v_{**})$ and the set $\inbraces{  ( \widetilde{T}_{\text{red}}(v_*), \widetilde{T}_{\text{blue}}(v_{**}) ) \cong \widetilde{\TmRep}   }$. We refer to Figure \ref{fig:nonMatching_TredTblueJoin_and_IncompatibilityGraph} (Right) for an example of such a bi-coloring of $V(H)$ that corresponds to a splitting of $\widetilde{T}_{m}^{\text{rep}}$ along the repeated edges into a red and a blue tree.
Recalling the Ursell function defined by \eqref{eq:Ursell_definition}, 
we see that \eqref{eq:convolutionOfUrsell_fixed_T_m^rep} 
is exactly the equality shown in Lemma \ref{lemma:Ursell_trees_adjVertices_identity}.
\end{proof}

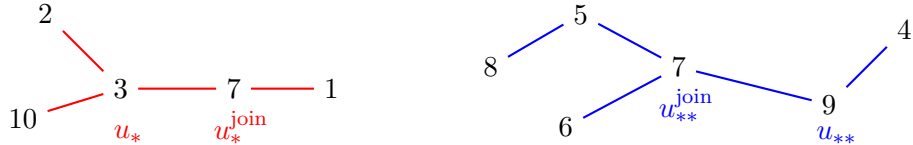
\begin{figure}[ht]\centering
\begin{tikzpicture}
\node (10) at (0.7,2.6) {$10$};
\node (2) at (1,4) {$2$};
\node (3) at (2,3) {$3$};
\node [below=of 3, xshift=0.1cm, yshift=0.9cm] {$\textcolor{red}{u_*}$};
\node (7) at (3.5,3) {$7$};
\node [below=of 7, xshift=0.1cm, yshift=1.1cm] {$\textcolor{red}{u_{*}^{\text{join}}}$};
\node (1) at (4.8,3) {$1$};

\draw[thick, red] (3) -- (7);
\draw[thick, red] (1) -- (7);
\draw[thick, red] (2) -- (3);
\draw[thick, red] (3) -- (10);
\end{tikzpicture}
\qquad\qquad 
\begin{tikzpicture}
\node (6) at (1.5,3.2) {$6$}; 
\node (5) at (1.7,4.7) {$5$}; 
\node (8) at (0.5,4) {$8$};
\node (7) at (3,4) {$7$};
\node [below=of 7, xshift=0.1cm, yshift=1.1cm] {$\textcolor{blue}{u_{**}^{\text{join}}}$};
\node (9) at (5,3.5) {$9$};
\node [below=of 9, xshift=0.1cm, yshift=1.1cm] {$\textcolor{blue}{u_{**}}$};
\node (4) at (6,4.5) {$4$};

\draw[thick, blue] (6) -- (7);
\draw[thick, blue] (5) -- (7);
\draw[thick, blue] (5) -- (8);
\draw[thick, blue] (7) -- (9);
\draw[thick, blue] (4) -- (9);
\end{tikzpicture}
\caption{Example of tuple \eqref{eq:eqAverageED_FlucPart_P2Decor_TredTblueTuple}. Left: $\widetilde{T}_{\text{red}}(u_*, u_*^{\text{join}})$ with size $\ell = 4$. Right: $\widetilde{T}_{\text{blue}}(u_{**}, u_{**}^{\text{join}})$ with size $m + 1 -\ell = 5$. Their join by superimposing on the `join' vertices gives the $P_2$ decorated tree with $m + 1 = 9$ edges and $10$ vertices as in Figure \ref{fig:equalAverageED_TredTblueJoin_and_IncompatibilityGraph} (Left).}
\label{fig:equalAverageED_flucPart_TredTblueSeparate}
\end{figure}

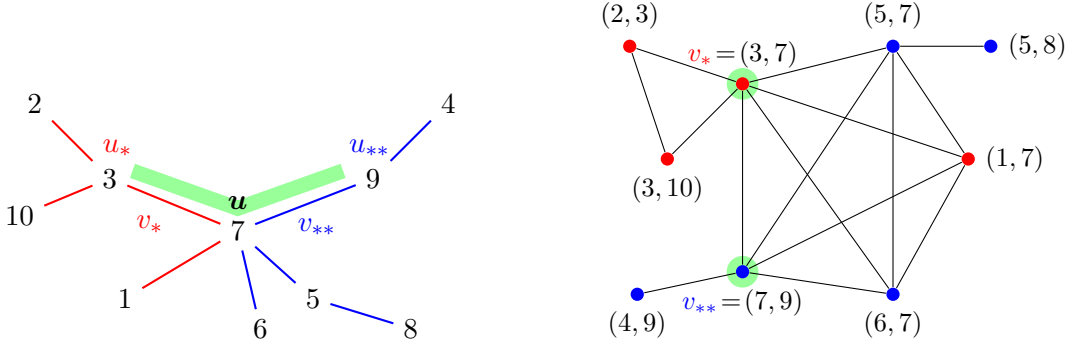
\begin{figure}[ht]\centering
\begin{tikzpicture}

\draw[green, opacity=0.4, line width=0.2cm] (2.3,4.1) -- (3.7,3.6) -- (5.1,4.1);

\node (10) at (0.8,3.5) {$10$};
\node (2) at (1,5) {$2$};
\node (3) at (2,4) {$3$};
\node [above=of 3, xshift=0.1cm, yshift=-1.1cm] {$\textcolor{red}{u_*}$};
\node (7) at (3.7,3.3) {$7$};
\node [above=of 7, xshift=0cm, yshift=-1.1cm] {$\bm{u}$};
\node (9) at (5.5,4) {$9$};
\node [above=of 9, xshift=-0.05cm, yshift=-1.1cm] {$\textcolor{blue}{u_{**}}$};
\node (4) at (6.5,5) {$4$};
\node (1) at (2.2,2.4) {$1$};
\node (5) at (4.7,2.4) {$5$};
\node (8) at (6,2) {$8$};
\node (6) at (4,2) {$6$};

\draw[thick, red] (2) -- (3);
\draw[thick, red] (3) -- (10);
\draw[thick, red] (1) -- (7);

\draw[thick, red] (3) -- node[label={[label distance=-0.35em]267:$v_{*}$}] {} (7);
\draw[thick, blue] (7) -- node[label=below:$v_{**}$, xshift=0.15cm, yshift=0.1cm] {} (9);

\draw[thick, blue] (4) -- (9);
\draw[thick, blue] (5) -- (7);
\draw[thick, blue] (5) -- (8);
\draw[thick, blue] (6) -- (7);

\end{tikzpicture}
\qquad\qquad 
\begin{tikzpicture}[every node/.style={font=\small}, redCirc/.style={circle,fill=red, minimum size=5pt, inner sep=0pt}, blueCirc/.style={circle,fill=blue, minimum size=5pt, inner sep=0pt}]


\node[circle, fill=green, opacity=0.4, minimum size=12pt,inner sep=0pt] at (4,5) {};
\node[circle, fill=green, opacity=0.4, minimum size=12pt,inner sep=0pt] at (4,2.5) {};

\node[blueCirc,label=below:{$(4,9)$}] (4-9) at (2.6,2.2) {};
\node[redCirc, label=below:{$(3,10)$}] (3-10) at (3,4) {};
\node[redCirc, label=above:{$(2,3)$}] (2-3) at (2.5,5.5) {};

\node[redCirc,label={[label distance=-0.05em]88:$\textcolor{red}{v_{*}}\!=\!(3,7)$}] (vStar) at (4,5) {};
\node[blueCirc,label=below:{$\textcolor{blue}{v_{**}}\! = \!(7,9)$}] (vStarStar) at (4,2.5) {};

\node[blueCirc,label=above:{$(5,7)$}] (5-7) at (6,5.5) {};
\node[blueCirc,label=right:{$(5,8)$}] (5-8) at (7.3,5.5) {};
\node[redCirc,label=right:{$(1,7)$}] (1-7) at (7,4) {};
\node[blueCirc,label=below:{$(6,7)$}] (6-7) at (6,2.2) {};

\draw (2-3) -- (3-10);

\draw (4-9) -- (vStarStar);

\draw (2-3) -- (vStar);
\draw (3-10) -- (vStar);

\draw (vStar) -- (vStarStar);

\draw (5-7) -- (vStar);
\draw (1-7) -- (vStar);
\draw (6-7) -- (vStar);
\draw (5-7) -- (vStarStar);
\draw (1-7) -- (vStarStar);
\draw (6-7) -- (vStarStar);

\draw (5-7) -- (1-7);
\draw (5-7) -- (6-7);
\draw (1-7) -- (6-7);

\draw (5-7) -- (5-8);

\end{tikzpicture}
\caption{(Left) The joined tree represented by $\inparen{\widetilde{T}_{\text{red}}(u_*, u_*^{\text{join}}), \widetilde{T}_{\text{blue}}(u_{**}, u_{**}^{\text{join}})}$ from Figure \ref{fig:equalAverageED_flucPart_TredTblueSeparate}. The $P_2$ decoration is highlighted in green. (Right) The incompatibility graph $H$ with the corresponding coloring. Vertices highlighted in green correspond to the $P_2$ decoration.}
\label{fig:equalAverageED_TredTblueJoin_and_IncompatibilityGraph}
\end{figure}

\begin{lemma} \label{lem:ursell-function-tm-identity-2}
Let $T_m$ denote a generic unlabeled simple tree on $m+1$ vertices. 
Then we have
\begin{align}
    \label{eq:MatchingEdgeDensity_fluctuation_fix_m}
    \sum_{T_m} \widetilde{\phi}(H(T_m)) \frac{\gamma(T_m)}{\aut(T_m)} = -2 \sum_{\ell = 1}^{m-1} \sum_{(T_\ell, T_{m-\ell})} \frac{ \ell (m-\ell) \widetilde{\phi}(H(T_\ell)) \widetilde{\phi}(H(T_{m-\ell})) }{\aut(T_\ell) \aut(T_{m - \ell})} ,
\end{align}
where $\gamma(\cdot)$ is defined in \eqref{eq:gamma_Tm_definition}. 
\end{lemma}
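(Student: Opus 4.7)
The plan is to proceed in direct analogy to the proof of Lemma \ref{lem:ursell-function-tmrep-tm-identity}, replacing the repeated-edge decoration with a $P_2$ decoration: two adjacent edges $v_*, v_{**}$ of $T_m$ meeting at a common center vertex $\bm{u}$. As suggested by Figures \ref{fig:equalAverageED_flucPart_TredTblueSeparate} and \ref{fig:equalAverageED_TredTblueJoin_and_IncompatibilityGraph}, the natural combinatorial object on the RHS is an ordered pair of vertex-labeled trees $(\widetilde{T}_{\text{red}}(u_*, u_*^{\text{join}}), \widetilde{T}_{\text{blue}}(u_{**}, u_{**}^{\text{join}}))$, each equipped with a distinguished ``join'' vertex and a distinguished incident edge, joined by superimposing the two join vertices (so that their labels coincide with $\bm{u}$). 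I would first rewrite both sides of \eqref{eq:MatchingEdgeDensity_fluctuation_fix_m} over ``labeled $P_2$-decorated $\widetilde{T}_m$'s'' and ``labeled red-blue tuples'' respectively, thereby reducing \eqref{eq:MatchingEdgeDensity_fluctuation_fix_m} to a per-labeled-tree identity.

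The key step is then to interpret this per-tree identity as an instance of Lemma \ref{lemma:Ursell_trees_adjVertices_identity}, viewing the two decoration edges $v_*$ and $v_{**}$ as the two adjacent vertices prescribed by that lemma in the incompatibility graph $H := H(\widetilde{T}_m)$. The lemma expands $\widetilde{\phi}(H)$ as a signed sum over bi-colorings in $\cC(H; v_*, v_{**})$, and each such bi-coloring corresponds bijectively to a decomposition of $\widetilde{T}_m$ along the center vertex $\bm{u}$ into a red subtree $\widetilde{T}_\ell$ containing $v_*$ and a blue subtree $\widetilde{T}_{m-\ell}$ containing $v_{**}$. Crucially, and in contrast to Lemma \ref{lem:ursell-function-tmrep-tm-identity} where the red and blue trees share an entire edge, here they share only the single vertex $\bm{u}$. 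The sign $(-1)^{|S_{\text{red}}|+|S_{\text{blue}}|+1}$ from Lemma \ref{lemma:Ursell_trees_adjVertices_identity} yields the negative sign on the RHS of \eqref{eq:MatchingEdgeDensity_fluctuation_fix_m}.

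Finally, I would reconcile the combinatorial factors by counting the number of labeled red-blue tuples arising from each unlabeled pair $(T_\ell, T_{m-\ell})$: (i) choose the shared label $\bm{u} \in [m+1]$, (ii) partition the remaining $m$ labels into an $\ell$-subset for red and an $(m-\ell)$-subset for blue, and (iii) label each subtree together with its distinguished (vertex, incident edge) pair. An orbit-stabilizer argument, combined with the identity $\sum_v d(v) = 2\ell$ for the number of (vertex, incident edge) pairs in $T_\ell$, yields a count of $\frac{4(m+1)!\,\ell(m-\ell)}{\aut(T_\ell)\aut(T_{m-\ell})}$ labeled tuples per unlabeled pair; combined with a factor $\tfrac{1}{2}$ from the two orderings $(v_*, v_{**})$ and $(v_{**}, v_*)$ of the $P_2$ decoration (which correspond to swapping the red and blue roles), this matches precisely the factor $2\ell(m-\ell)$ on the RHS of \eqref{eq:MatchingEdgeDensity_fluctuation_fix_m}. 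The main obstacle will be this combinatorial bookkeeping, which is more delicate than in the repeated-edge case: because the red and blue subtrees share only a single vertex rather than a full edge, the automorphism groups $\aut(T_\ell)$ and $\aut(T_{m-\ell})$ interact with the constraint ``$\bm{u}$ is labeled by a specified element'' in a more subtle way, and particular care must be taken when counting labelings of $T_\ell$ for which the vertex labeled $\bm{u}$ has a prescribed set of incident edges.
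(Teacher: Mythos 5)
Your proposal is correct and follows essentially the same route as the paper's proof: rewriting both sides over labeled $P_2$-decorated trees and labeled red--blue tuples joined at the center vertex, reducing to a per-tree identity, and invoking Lemma~\ref{lemma:Ursell_trees_adjVertices_identity} with the two decoration edges playing the roles of the adjacent vertices $v_*$, $v_{**}$ in the line graph. Your combinatorial bookkeeping, including the count $\frac{4(m+1)!\,\ell(m-\ell)}{\aut(T_\ell)\aut(T_{m-\ell})}$ and the factor $\tfrac12$ absorbed by the red--blue swap, matches the paper's exactly.
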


\begin{proof}
Similar to the proof of Lemma~\ref{lem:ursell-function-tmrep-tm-identity}, we will label and color the trees in \eqref{eq:MatchingEdgeDensity_fluctuation_fix_m} and show that it is equivalent to (with notation to be explained)
\begin{align}
    \label{eq:MatchingEdgeDensity_fluctuation_fix_m_rewrite}
    \sum_{\widetilde{T_m}(u_{*}, u_{**})} \widetilde{\phi}(H(\widetilde{T_m})) = -\frac{1}{2}  \sum_{\inparen{ \widetilde{T_{\text{red}}}(u_{*}^{\text{join}}, u_{*}), \, \widetilde{T_{\text{blue}}}(u_{**}^{\text{join}}, u_{**})  }} \widetilde{\phi}(H(\widetilde{T_{\text{red}}})) \widetilde{\phi}(H(\widetilde{T_{\text{blue}}})),
\end{align}
where we have suppressed any mention of distinguished vertices in the Ursell functions where clear from context. 

Rewrite the LHS of \eqref{eq:MatchingEdgeDensity_fluctuation_fix_m} in terms of ``labeled trees decorated with a $P_2$''. Formally, rewrite the LHS as a sum over generic elements $\widetilde{T_m}(u_{*}, u_{**})$ such that 
\begin{itemize}
    \item the vertices are labeled with $[m+1]$, and
    \item the (unique) path between the distinguished vertices $u_{*}$ and $u_{**}$ forms a $P_2$.
\end{itemize}
Note that two identically labeled identical trees $\widetilde{T_m}(u_{*}, u_{**})$ and $\widetilde{T_m}(u'_{*}, u'_{**})$ are considered different if $\inbraces{u_{*}, u_{**}}$ and $\inbraces{u'_{*}, u'_{**}}$ are different pairs (i.e., the $P_2$ decoration is different for them). The number of elements $\widetilde{T_m}(u_{*}, u_{**})$ that can be generated from an unlabeled and undecorated $T_m$ is
    $\frac{(m+1)!}{\aut(T_m)} \gamma(T_m).$
Therefore, the LHS of \eqref{eq:MatchingEdgeDensity_fluctuation_fix_m_rewrite} is $(m+1)!$ times the LHS of \eqref{eq:MatchingEdgeDensity_fluctuation_fix_m}.

On the other hand, rewrite the RHS of \eqref{eq:MatchingEdgeDensity_fluctuation_fix_m} in terms of ``colored, labeled trees whose join is decorated with a $P_2$''. Formally, the RHS will be written as a sum over generic elements 
\begin{align}\label{eq:eqAverageED_FlucPart_P2Decor_TredTblueTuple}
    \inparen{ \widetilde{T_{\text{red}}}(u_{*}^{\text{join}}, u_{*}), \, \widetilde{T_{\text{blue}}}(u_{**}^{\text{join}}, u_{**})  }
\end{align}
satisfying the following: 
\begin{itemize}
    \item $\widetilde{T_{\text{red}}}(u_{*}^{\text{join}}, u_{*})$ and $\widetilde{T_{\text{blue}}}(u_{**}^{\text{join}}, u_{**})$ are vertex-labeled trees with distinguished vertices as indicated in parenthesis, and have vertices and edges colored red and blue respectively.
    \item The vertex label for $u_{*}^{\text{join}}$ coincides with that of $u_{**}^{\text{join}}$.
    \item Joining the two trees by superimposing $u_{*}^{\text{join}}$ and $u_{**}^{\text{join}}$ gives a tree that is labeled by $[m+1]$.
    \item Denote the joining vertex by $u := u_{*}^{\text{join}} = u_{**}^{\text{join}}$. In the joined tree, $u_{*} \text{---} u \text{---} u_{**}$ forms a $P_2$.
\end{itemize}
An example of such a tuple \eqref{eq:eqAverageED_FlucPart_P2Decor_TredTblueTuple} and its corresponding join is given in Figure \ref{fig:equalAverageED_flucPart_TredTblueSeparate} and Figure \ref{fig:equalAverageED_TredTblueJoin_and_IncompatibilityGraph} (Left).

By construction, the joined tree has vertex sets $V_{\text{red}}$ and $V_{\text{blue}}$ colored red and blue respectively. The induced subgraphs $H[V_{\text{red}}]$ and $H[V_{\text{blue}}]$ are connected sub-trees. There is only one vertex $u$ that is colored both red and blue. The number of such generic elements that can be generated from a pair $(T_\ell, T_{m-\ell})$ is 
\begin{align}
    \label{eq:MatchingEdgeDensity_fluctuation_fix_m_RHS_CombFactor}
    \binom{m+1}{\ell + 1} \frac{(\ell + 1)!}{\aut(T_\ell)} \ell \cdot 2 \cdot  (m-\ell)  \cdot 2 \cdot \frac{(m-\ell)!}{\aut(T_{m-\ell})} 
    = \frac{4 (m + 1)! \ell (m-\ell)}{\aut(T_\ell) \aut(T_{m-\ell})} .
\end{align}
Rewriting \eqref{eq:MatchingEdgeDensity_fluctuation_fix_m} as described, we see that the the RHS of \eqref{eq:MatchingEdgeDensity_fluctuation_fix_m_rewrite} is $(m+1)!$ times the RHS of \eqref{eq:MatchingEdgeDensity_fluctuation_fix_m}, 
where $\ell = \abs{ \widetilde{T_{\text{red}}}  }$ and so $m - \ell = \abs{ \widetilde{T_{\text{blue}}}  }$.

Therefore, it suffices to show that for a fixed $\widetilde{T_m}(u_{*}, u_{**})$, 
\begin{align}
    \label{eq:MatchingEdgeDensity_fluctuation_fix_m_fix_lab_dec_Tm}
    \widetilde{\phi}(H(\widetilde{T_m})) = - \sum_{\inparen{ \widetilde{T_{\text{red}}}(u_{*}^{\text{join}}, u_{*}), \, \widetilde{T_{\text{blue}}}(u_{**}^{\text{join}}, u_{**})  } \cong  \widetilde{T_m}(u_{*}, u_{**})} \widetilde{\phi}(H(\widetilde{T_{\text{red}}})) \widetilde{\phi}(H(\widetilde{T_{\text{blue}}})),
\end{align}
where the sum constraint means that an uncolored version of the join of $\widetilde{T_{\text{red}}}(u_{*}^{\text{join}}, u_{*})$ and $\widetilde{T_{\text{blue}}}(u_{**}^{\text{join}}, u_{**})  $ is isomorphic to $\widetilde{T_m}(u_{*}, u_{**})$. In particular, the $P_2$ decoration of the joined tree must also coincide with that of $\widetilde{T_m}(u_{*}, u_{**})$. Note that every such valid pair $\inparen{ \widetilde{T_{\text{red}}}(u_{*}^{\text{join}}, u_{*}), \, \widetilde{T_{\text{blue}}}(u_{**}^{\text{join}}, u_{**})  }$ has a corresponding pair $\inparen{ \widetilde{T_{\text{blue}}}(u_{*}^{\text{join}}, u_{*}), \, \widetilde{T_{\text{red}}}(u_{**}^{\text{join}}, u_{**})  }$ with the colors switched. By symmetry, these give the same contribution. We fix without loss of generality that $u_*$ is always colored red and $u_{**}$ is always colored blue\footnote{Note that this is different from the proof of Lemma~\ref{lem:ursell-function-tmrep-tm-identity} where the red edge in the double edge is denoted by $v_*$ and the blue one denoted by $v_{**}$. In that proof if we swapped the colors, there would be double counting. However, here $u_*$ and $u_{**}$ are different vertices in the tree, so swapping colors indeed contributes a factor $2$, canceling the factor $1/2$ in \eqref{eq:MatchingEdgeDensity_fluctuation_fix_m_rewrite}.}, thus absorbing the factor of $\frac{1}{2}$ in \eqref{eq:MatchingEdgeDensity_fluctuation_fix_m_rewrite}.

Suppose the $P_2$ decoration in $\widetilde{T_m}(u_{*}, u_{**})$ is $u_{*} \text{---} u \text{---} u_{**}$. In what follows, fix $H = H(\widetilde{T_m}(u_{*}, u_{**}))$ the incompatibility graph (line graph) of $\widetilde{T_m}(u_{*}, u_{**})$. We distinguish the two vertices in $H$, calling them $v_{*}$ and $v_{**}$, corresponding to the two edges $\inbraces{u_{*}, u}$ and $\inbraces{u_{**}, u}$ in the $P_2$ decoration in $\widetilde{T_m}(u_{*}, u_{**})$. 
Define the subset $\cC(H; v_*, v_{**})$ of bi-colorings of $V(H)$ as in \eqref{eq:def-bi-coloring}. 
There is a bijection between $\cC(H; v_*, v_{**})$ and the set $\inbraces{  \inparen{ \widetilde{T_{\text{red}}}(u_{*}^{\text{join}}, u_{*}), \, \widetilde{T_{\text{blue}}}(u_{**}^{\text{join}}, u_{**})   }  \cong  \widetilde{T_m}(u_{*}, u_{**}) }$. Figure \ref{fig:equalAverageED_TredTblueJoin_and_IncompatibilityGraph} (Right) gives an example of such a bi-coloring of $V(H)$ that corresponds to a splitting of $\widetilde{T_m}(u_{*}, u_{**})$ on vertex $u$ into a red and a blue tree given in (Left).
Recalling the definition of the Ursell function \eqref{eq:Ursell_definition}, we see that \eqref{eq:MatchingEdgeDensity_fluctuation_fix_m_fix_lab_dec_Tm} reduces to \eqref{eq:Ursell_trees_adjVertices_identity}. 
The proof is complete by Lemma \ref{lemma:Ursell_trees_adjVertices_identity}.
\end{proof}

\begin{lemma}
Let $T_m$ denote an unlabeled simple tree, let $T_m^{\text{rep}}$ denote an unlabeled tree with one twice repeated edge, and let $\TmTripleEdge$ denote an unlabeled tree with one edge repeated three times. 
Then 
\begin{align}
        \frac{1}{3!} \sum_{\TmTripleEdge} \frac{\tilde \phi(H(\TmTripleEdge)) }{\aut(\TmTripleEdge)} = -\frac{2}{3} \sum_{\ell = 1}^{m} \sum_{\inparen{T_\ell^{\text{rep}}, T_{m+1-\ell}} } (m+1-\ell) \frac{\widetilde{\phi}(H(T_\ell^{\text{rep}})) \widetilde{\phi}(H(T_{m+1-\ell})) }{ \aut(T_\ell^{\text{rep}}) \aut(T_{m+1-\ell}) } .
        \label{eq:tripleEdge_cancels_logprefactorO1_fix_m}
    \end{align}
\end{lemma}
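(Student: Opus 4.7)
The plan is to mirror the strategy used in Lemmas \ref{lem:ursell-function-tmrep-tm-identity} and \ref{lem:ursell-function-tm-identity-2}. First I would rewrite both sides of \eqref{eq:tripleEdge_cancels_logprefactorO1_fix_m} as sums over vertex-labeled objects (and on the right, additionally colored objects), reducing the lemma to a per-labeled-graph identity that can then be attacked via the combinatorial Lemma \ref{lemma:Ursell_trees_adjVertices_identity}. Specifically, I will recast the LHS as a sum over labeled triple-edge trees $\widetilde{\TmTripleEdge}$ with vertex set $[m+1]$, noting that each unlabeled $\TmTripleEdge$ gives rise to $\frac{(m+1)!}{\aut(\TmTripleEdge)}$ such labeled trees and that $\widetilde{\phi}(H(\cdot))$ depends only on the isomorphism class. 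The RHS I will rewrite as a sum over tuples $\bigl(\widetilde{T_{\text{red}}^{\text{rep}}}(v_*, v_*^{\text{cp}}),\ \widetilde{T_{\text{blue}}}(v_{**})\bigr)$ of colored, vertex-labeled trees carrying distinguished edges (the two copies of the red double edge, and a distinguished blue edge) whose join---obtained by superimposing the three distinguished edges into a triple edge on a common pair of vertices---yields a labeled triple-edge tree on $[m+1]$. A count of how many such colored labeled tuples arise from each unlabeled representative $(T_\ell^{\text{rep}}, T_{m+1-\ell})$, analogous to \eqref{eq:MatchingEdgeDensity_fluctuation_fix_m_RHS_CombFactor}, will convert the lemma (modulo the prefactors $\frac{1}{3!}$ and $-\frac{2}{3}$) into a pointwise identity of Ursell functions for each fixed $\widetilde{\TmTripleEdge}$.

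For the pointwise identity, I would analyze the incompatibility graph $H = H(\widetilde{\TmTripleEdge})$: the three vertices $a, b, c$ of $H$ corresponding to the three parallel edges form a triangle, since all three parallel edges are mutually adjacent in the multi-tree. A valid colored splitting $(\widetilde{T_{\text{red}}^{\text{rep}}}, \widetilde{T_{\text{blue}}}) \cong \widetilde{\TmTripleEdge}$ then corresponds to a choice of which two of $\{a,b,c\}$ join $V_{\text{red}}$ (providing the red double edge) and which single one joins $V_{\text{blue}}$, together with a bi-coloring of the remaining vertices of $H$ for which both induced subgraphs $H[V_{\text{red}}]$ and $H[V_{\text{blue}}]$ are connected (ensuring the red and blue multi-trees are each connected). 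I would then apply Lemma \ref{lemma:Ursell_trees_adjVertices_identity} to the pair $(v_*, v_{**})$ with $v_*, v_{**}$ two triangle vertices; the resulting sum over bi-colorings of $V(H)$ splits according to whether the third triangle vertex ends up red or blue. The former case corresponds to a valid red-double/blue-simple splitting, while the latter corresponds to the red-blue swapped splitting; summing over the three possible choices of the pair $(v_*, v_{**})$ inside the triangle and accounting for the swap produces the factor $\frac{2}{3}$.

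The main obstacle, as with Lemmas \ref{lem:ursell-function-tmrep-tm-identity} and \ref{lem:ursell-function-tm-identity-2}, will be tracking the combinatorial constants carefully. Three multiplicative factors need to be reconciled: (a) the $\binom{3}{2} = 3$ ways to choose which two parallel edges are grouped as the red double edge, (b) the factor of $2$ from the symmetry of the red double edge (distinguishing its two copies as $v_*$ and $v_*^{\text{cp}}$), and (c) the implicit red-blue swap contained in Lemma \ref{lemma:Ursell_trees_adjVertices_identity}'s bi-coloring sum. These must be matched against the $\frac{1}{3!}$ prefactor on the LHS and the $\frac{2}{3}$ on the RHS, along with the $(m+1-\ell)$ factor on the RHS (which counts the choice of distinguished blue edge $v_{**}$ among the $m+1-\ell$ edges of $T_{m+1-\ell}$) and the absence of a corresponding $\ell$ factor (because the red double edge is already intrinsic to $T_\ell^{\text{rep}}$, unlike the situation in Lemma \ref{lem:ursell-function-tmrep-tm-identity} where both trees are simple). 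Once these factors are aligned, the identity reduces cleanly to an application of Lemma \ref{lemma:Ursell_trees_adjVertices_identity}, completing the proof.
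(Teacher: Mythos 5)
Your proposal follows essentially the same route as the paper's proof: rewrite both sides as sums over labeled (and, on the right, colored) triple-edge trees with the counting factor converting unlabeled to labeled representatives, reduce to a per-labeled-tree Ursell identity on the incompatibility graph where the three parallel edges form a triangle, and conclude via Lemma \ref{lemma:Ursell_trees_adjVertices_identity} together with the red--blue swap symmetry. The only difference is in the constant bookkeeping you flag as the main obstacle: the paper fixes one pair $(v_*, v_{**})$ of triangle vertices and observes that $\cC(H; v_*, v_{**})$ splits into the two symmetric subsets according to whether the third triangle vertex $v_3$ is colored red or blue (yielding the factor $2$ directly), rather than summing over the three choices of pair as you suggest, but both accountings lead to the same identity.
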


\begin{proof}
We first show that
\eqref{eq:tripleEdge_cancels_logprefactorO1_fix_m} is equivalent to  
    \begin{align}
    \sum_{\widetilde{T}_m^{\equiv}} \widetilde{\phi}(H(\widetilde{T}_m^{\equiv})) = -2 \sum_{\inparen{ \widetilde{T}_{\text{red}}^{\text{rep}}, \widetilde{T}_{\text{blue}}  } } \widetilde{\phi}(H(\widetilde{T}_{\text{red}}^{\text{rep}}))  \widetilde{\phi}(H(\widetilde{T}_{\text{blue}})) \label{eq:T_m-triple-equiv-form}
    \end{align}
    with notation to be explained. 
Note that we can rewrite the LHS of \eqref{eq:tripleEdge_cancels_logprefactorO1_fix_m} as a sum over trees with exactly one triple repeated edge, with vertices labeled from $[m+1]$. We generically denote such trees by $\widetilde{\TmTripleEdge}$. The number of ways to label an unlabeled $\TmTripleEdge$ is $(m+1)! / \aut(\TmTripleEdge)$. 
Therefore, the LHS of \eqref{eq:T_m-triple-equiv-form} is $6 \cdot (m+1)!$ times the LHS of \eqref{eq:tripleEdge_cancels_logprefactorO1_fix_m}.

    On the other hand, we will rewrite the RHS of \eqref{eq:tripleEdge_cancels_logprefactorO1_fix_m} in terms of edge-colored and labeled trees. These are generically denoted by the tuple
    \begin{align}\label{eq:equalAverageED_TripleEdge_Tuple}
        \inparen{ \widetilde{T}_{\text{red}}^{\text{rep}}, \widetilde{T}_{\text{blue}}  },
    \end{align}
    satisfying the following: 
    \begin{itemize}
        \item $\widetilde{T}_{\text{red}}^{\text{rep}}$ is a labeled tree with exactly one repeated edge. $\widetilde{T}_{\text{blue}}$ is a labeled simple tree. Both have vertex labels in $[m+1]$.
        \item There is an edge in $\widetilde{T}_{\text{blue}}$ labeled the same as the repeated edge in $\widetilde{T}_{\text{red}}^{\text{rep}}$.
        \item Superimposing on the same labeled edge (matching the corresponding vertices by label) gives a triple edge tree labeled in $[m+1]$.
    \end{itemize}
    Note that by construction, the triple edge of the joined tree will have two edges colored red, and one colored blue. We refer to Figure \ref{fig:equalAverageED_TripleEdge} (Left) for an example of the joined tree. The number of such tuples that can be generated from an unlabeled, uncolored pair $\inparen{T_\ell^{\text{rep}}, T_{m+1-\ell}}$ is 
    \begin{align*}
        \binom{m+1}{\ell + 1} \frac{(\ell + 1)!}{\aut(\TmRep)} (m+1-\ell) \cdot 2 \cdot \frac{(m-\ell)!}{\aut(T_{m+1-\ell})}
        =  \frac{2 \cdot (m+1)!}{\aut(\TmRep) \aut(T_{m+1-\ell})} .
    \end{align*}
    For any $\widetilde{T}_{\text{red}}^{\text{rep}}$, define $\ell = \abs{\widetilde{T}_{\text{red}}^{\text{rep}}} - 1$. Thus any corresponding $\widetilde{T}_{\text{blue}}$ satisfies $|\widetilde{T}_{\text{blue}}| = m+1-\ell$. 
    We see that the RHS of \eqref{eq:T_m-triple-equiv-form} is $6 \cdot (m+1)!$ times the RHS of \eqref{eq:tripleEdge_cancels_logprefactorO1_fix_m}.

    Therefore, to prove \eqref{eq:tripleEdge_cancels_logprefactorO1_fix_m}, it suffices to show for fixed $\widetilde{T}_m^{\equiv}$ that
    \begin{align}
        \widetilde{\phi}(H(\widetilde{T}_m^{\equiv})) = -2 \sum_{\inparen{ \widetilde{T}_{\text{red}}^{\text{rep}}, \widetilde{T}_{\text{blue}}  } \cong  \widetilde{T}_m^{\equiv} }  \widetilde{\phi}(H(\widetilde{T}_{\text{red}}^{\text{rep}}))  \widetilde{\phi}(H(\widetilde{T}_{\text{blue}})),
        \label{eq:tripleEdge_cancels_logprefactorO1_fix_m_fix_T_m3rep}
    \end{align}
    where the sum constraint means that the \emph{uncolored} joined tree of $\inparen{ \widetilde{T}_{\text{red}}^{\text{rep}}, \widetilde{T}_{\text{blue}}  }$ is isomorphic to $\widetilde{T}_m^{\equiv}$. Fix now the incompatibility (i.e.~line) graph $H = H(\widetilde{T}_m^{\equiv})$. Let the three vertices in $H$ corresponding to the triple edge be $v_*$, $v_{**}$, and $v_3$. Define the following subset of bi-colorings of $V(H)$: 
    \begin{align*}
        \cC(H; v_3 \text{ red} ) := \inbraces{  (V_{\text{red}}, V_{\text{blue}}) : \, 
    \begin{aligned}
        & V_{\text{red}} \cup V_{\text{blue}} = V(H) \text{ disjoint}, \, V_{\text{red}} \ni v_*, v_3, \, V_{\text{blue}} \ni v_{**},\\
        & H[V_{\text{red}}] \text{ and } H[V_{\text{blue}}] \text{ are each connected subgraphs} 
    \end{aligned}
    }. 
    \end{align*}
    Define the set $\cC(H; v_3 \text{ blue} )$ analogously, with $v_{3}$ always in $V_{\text{blue}}$ instead. Note that with $\cC(H; v_*, v_{**})$ defined in \eqref{eq:def-bi-coloring}, by symmetry
    \begin{align*}
        \cC(H; v_*, v_{**}) = \cC(H; v_3 \text{ red} ) \cup \cC(H; v_3 \text{ blue} ).
    \end{align*}
    There is a bijection between $\cC(H; v_3 \text{ red} )$ and $\inbraces{\inparen{ \widetilde{T}_{\text{red}}^{\text{rep}}, \widetilde{T}_{\text{blue}}  } \cong  \widetilde{T}_m^{\equiv}}$. Figure \ref{fig:equalAverageED_TripleEdge} (Right) gives an example of such a bi-coloring. Recalling the definition of the Ursell function, \eqref{eq:tripleEdge_cancels_logprefactorO1_fix_m_fix_T_m3rep} reduces to 
    \begin{align*}
        \sum_{S \subseteq H(\text{conn.,~spann.})} \inparen{-1}^{\abs{S}} &= 2 \sum_{ (V_{\text{red}}, V_{\text{blue}}) \in \cC(H; v_3 \text{ red}) } \sum_{\substack{  S_{\text{red}} \subseteq H[V_{\text{red}}](\text{spann., conn.})  \\ S_{\text{blue}} \subseteq H[V_{\text{blue}}](\text{spann., conn.})   } } (-1)^{\abs{S_{\text{red}}} + \abs{S_{\text{blue}}} + 1 } \\
        &= \sum_{ (V_{\text{red}}, V_{\text{blue}}) \in \cC(H; v_*, v_{**}) } \sum_{\substack{  S_{\text{red}} \subseteq H[V_{\text{red}}](\text{spann., conn.})  \\ S_{\text{blue}} \subseteq H[V_{\text{blue}}](\text{spann., conn.})   } } (-1)^{\abs{S_{\text{red}}} + \abs{S_{\text{blue}}} + 1 }.
    \end{align*}
    The proof is complete by Lemma \ref{lemma:Ursell_trees_adjVertices_identity}.
\end{proof}

\begin{figure}[htp]\centering
\begin{tikzpicture}
\tikzset{%
glow/.style={%
preaction={#1, draw, line join=round, line width=0.5pt, opacity=0.04,
preaction={#1, draw, line join=round, line width=1.0pt, opacity=0.04,
preaction={#1, draw, line join=round, line width=1.5pt, opacity=0.04,
preaction={#1, draw, line join=round, line width=2.0pt, opacity=0.04,
preaction={#1, draw, line join=round, line width=2.5pt, opacity=0.04,
preaction={#1, draw, line join=round, line width=3.0pt, opacity=0.04,
preaction={#1, draw, line join=round, line width=3.5pt, opacity=0.04,
preaction={#1, draw, line join=round, line width=4.0pt, opacity=0.04,
preaction={#1, draw, line join=round, line width=4.5pt, opacity=0.04,
preaction={#1, draw, line join=round, line width=5.0pt, opacity=0.04,
preaction={#1, draw, line join=round, line width=5.5pt, opacity=0.04,
preaction={#1, draw, line join=round, line width=6.0pt, opacity=0.04,
}}}}}}}}}}}}}}

\draw[yellow, opacity=0.4, line width=0.15cm] (2,3) -- (4,4);

\node (2) at (2,3) {$2$};
\node (8) at (4,4) {$8$};
\node (3) at (1,4) {$3$};
\node (1) at (0.5,5.2) {$1$};
\node (5) at (1,2) {$5$};
\node (7) at (4.5,5.5) {$7$};
\node (4) at (4.5,3) {$4$};
\node (6) at (5.7,2.2) {$6$};
\node (9) at (4,2) {$9$};

\draw[thick, red] (2) -- (3);
\draw[thick, red] (2) -- (5);
\draw[thick, red] (1) -- (3);
\draw[thick, red] (7) -- (8);

\draw[thick, blue] (4) -- (8);
\draw[thick, blue] (4) -- (6);
\draw[thick, blue] (4) -- (9);

\draw[thick, transform canvas={xshift=-0.15em, yshift=0.55em}, red] (2) -- node[label={[label distance=-0.3em]90:$v_{*}$}] {} (8);
\draw[thick, red] (2) -- (8);
\draw[thick, transform canvas={xshift=0.15em, yshift=-0.55em}, blue] (2) -- node[label=below:$v_{**}$] {} (8);
\end{tikzpicture}
\qquad\qquad 
\begin{tikzpicture}[every node/.style={font=\small}, redCirc/.style={circle,fill=red, minimum size=5pt, inner sep=0pt}, blueCirc/.style={circle,fill=blue, minimum size=5pt, inner sep=0pt}]
\tikzset{%
glow/.style={%
preaction={#1, draw, line join=round, line width=0.5pt, opacity=0.04,
preaction={#1, draw, line join=round, line width=1.0pt, opacity=0.04,
preaction={#1, draw, line join=round, line width=1.5pt, opacity=0.04,
preaction={#1, draw, line join=round, line width=2.0pt, opacity=0.04,
preaction={#1, draw, line join=round, line width=2.5pt, opacity=0.04,
preaction={#1, draw, line join=round, line width=3.0pt, opacity=0.04,
preaction={#1, draw, line join=round, line width=3.5pt, opacity=0.04,
preaction={#1, draw, line join=round, line width=4.0pt, opacity=0.04,
preaction={#1, draw, line join=round, line width=4.5pt, opacity=0.04,
preaction={#1, draw, line join=round, line width=5.0pt, opacity=0.04,
preaction={#1, draw, line join=round, line width=5.5pt, opacity=0.04,
preaction={#1, draw, line join=round, line width=6.0pt, opacity=0.04,
preaction={#1, draw, line join=round, line width=6.5pt, opacity=0.04,
preaction={#1, draw, line join=round, line width=7.0pt, opacity=0.04,
preaction={#1, draw, line join=round, line width=7.5pt, opacity=0.04,
preaction={#1, draw, line join=round, line width=8.0pt, opacity=0.04,
preaction={#1, draw, line join=round, line width=8.5pt, opacity=0.04,
preaction={#1, draw, line join=round, line width=9.0pt, opacity=0.04,
preaction={#1, draw, line join=round, line width=9.5pt, opacity=0.04,
}}}}}}}}}}}}}}}}}}}}}

\node[circle, fill=yellow, opacity=0.6, minimum size=12pt,inner sep=0pt] at (2,3) {};

\node[redCirc,label=left:{$v_3$}] (vThree) at (2,3) {};
\node[redCirc,label=below:{$v_*$}] (vStar) at (4,4) {};
\node[blueCirc,label=right:{$v_{**}$}] (vStarStar) at (6,3) {};

\node[redCirc, label=above:{$(2,3)$}] (2-3) at (2.5,5) {};
\node[redCirc, label=above:{$(2,5)$}] (2-5) at (5,5) {};
\node[redCirc, label=left:{$(1,3)$}] (1-3) at (1.5,4.5) {};
\node[redCirc, label=below:{$(7,8)$}] (7-8) at (3,1) {};
\node[blueCirc, label={[xshift=-4pt]below:{$(4,8)$}}] (4-8) at (5,1.4) {};
\node[blueCirc, label=right:{$(4,6)$}] (4-6) at (6.5,2) {};
\node[blueCirc, label=right:{$(4,9)$}] (4-9) at (6,0.5) {};

\draw (vThree) -- (vStar);
\draw (vThree) -- (vStarStar);
\draw (vStar) -- (vStarStar);

\draw (2-3) -- (vThree);
\draw (2-3) -- (vStar);
\draw (2-3) to [bend left=16] (vStarStar);
\draw (2-5) to [bend right=16] (vThree);
\draw (2-5) -- (vStar);
\draw (2-5) -- (vStarStar);
\draw (1-3) -- (2-3);
\draw (2-3) -- (2-5);

\draw (7-8) -- (vThree);
\draw (7-8) to [bend left=20] (vStar);
\draw (7-8) -- (vStarStar);
\draw (4-8) -- (vThree);
\draw (4-8) to [bend right=20] (vStar);
\draw (4-8) -- (vStarStar);
\draw (4-8) -- (7-8);
\draw (4-8) -- (4-6);
\draw (4-8) -- (4-9);
\draw (4-6) -- (4-9);
\end{tikzpicture}
\caption{(Left) A joined tree represented by the tuple \eqref{eq:equalAverageED_TripleEdge_Tuple}. Edge $v_3$ is indicated in green. (Right) The corresponding incompatibility graph $H$ where vertices $v_*$, $v_{**}$, and $v_3$ correspond to the repeated edge $(2,8)$. The bi-coloring depicted is in $\cC(H; v_3 \text{ red} )$.}
\label{fig:equalAverageED_TripleEdge}
\end{figure}

\section{Analysis of the log-likelihood ratio: equal ambient edge density}
\label{sec:plantingVarSize_pEqualq}

This section focuses on analyzing the likelihood ratio for Problem~\ref{prob:detection} in the setting of Assumption~\ref{asmpt:nonMatchingEdges}. 
Let us first show that Theorem~\ref{thm:plantedMatchings_mean_is_minus_half_variance} is an immediate consequence of Theorem~\ref{thm:nonMatching_LLdistributionUnderNull}.

\begin{proof}[Proof of Theorem~\ref{thm:plantedMatchings_mean_is_minus_half_variance}]
The asymptotic normality of the log-likelihood ratio for $A \sim \cQ$ follows immediately from Theorem~\ref{thm:nonMatching_LLdistributionUnderNull} combined with Lemma~\ref{lemma:signed_K2_normality_P_Q}. 
The corresponding statement for $A \sim \cP_\lambda$ is deduced from Le Cam's third lemma by considering the limiting joint distribution of $\inparen{\log \frac{\ud \cP_\lambda}{\ud \cQ}, \log \frac{\ud \cP_\lambda}{\ud \cQ}}$ under $\cQ$ as in \cite[Example~6.7]{van2000asymptotic}. 
The second statement in Theorem~\ref{thm:plantedMatchings_mean_is_minus_half_variance} follows from the Neyman--Pearson lemma together with the optimal error for testing between two Gaussian hypotheses, achieved by thresholding the log-likelihood at zero.
\end{proof}

\noindent
The rest of this section is devoted to proving Theorem~\ref{thm:nonMatching_LLdistributionUnderNull}. 

\subsection{Approximation of the log-likelihood ratio}

We collect several definitions and results which will prove Theorem \ref{thm:nonMatching_LLdistributionUnderNull}. With Lemma~\ref{lem:log-likelihood-ratio-partition-functions} together with the cluster expansion absolute convergence and truncation provided by Theorem \ref{thm:truncate_CE_clogn}, let us further decompose the log-likelihood as 
\begin{align}\label{eq:decompose_loglikelihood}
    \log \frac{\ud \cP_\lambda}{\ud \cQ}(A) &= \sum_{m = 1}^{2 \log n} \sum_{e_1,\dots,e_m} \phi(H(e_1,\dots,e_m)) \lambda^m \insquare{ \prod_{j=1}^{m} \frac{A_{e_j}}{p} - 1  } + O_\PP\!\inparen{ \frac{1}{n}  }  \nonumber\\
    &= \simpleTrees + \oneRepTrees + \mathsf{remainder}_{\leq 2 \log n} + O_\PP\!\inparen{ \frac{1}{n}  },
\end{align}
where 
\begin{align*}
    \simpleTrees &:= \sum_{m = 1}^{2 \log n} \sum_{\substack{e_1,\dots,e_m \\ \text{simple tree}}} \phi(H(e_1,\dots,e_m)) \lambda^m \insquare{ \prod_{j=1}^{m} \frac{A_{e_j}}{p} - 1  } \\
    \oneRepTrees &:= \sum_{m = 2}^{2 \log n} \sum_{\substack{e_1,\dots,e_m \\ \text{tree with one rep.~edge}}} \phi(H(e_1,\dots,e_m)) \lambda^m \insquare{ \prod_{j=1}^{m} \frac{A_{e_j}}{p} - 1  } \\
    \mathsf{remainder}_{\leq 2 \log n} &:= \sum_{m = 1}^{2 \log n} \sum_{e_1,\dots,e_m} \phi(H(e_1,\dots,e_m))\lambda^m \insquare{ \prod_{j=1}^{m} \frac{A_{e_j}}{p} - 1  } - \simpleTrees - \oneRepTrees.
\end{align*}

In what follows, Assumption \ref{asmpt:nonMatchingEdges} is in force. In Section \ref{sec:nonMatchingFluctuationPart} we will show that $\simpleTrees$ gives the zero-mean fluctuation part of \eqref{eq:nonMatching_LLdistributionUnderNull}. 
\begin{proposition}\label{prop:nonMatching_simpleTrees_mainResult}
Let $A \sim \cQ$. Then
    \begin{align}\label{eq:nonMatching_simpleTrees_mainResult}
         \simpleTrees = \sqrt{\frac{2(1-p)}{p}} \frac{\EE\abs{M}}{n} \frac{\signedKtwo(A)}{\sqrt{\Var \signedKtwo(A)}} + O_\PP\!\inparen{ \frac{1}{p \sqrt{n}}  }.
    \end{align}
\end{proposition}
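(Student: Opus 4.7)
The plan is to reorganize $\simpleTrees$ as a sum over unlabeled tree templates $T_m$, perform an ``edge-Hermite'' decomposition on the centered tree counts, show that the degree-one projection of every $\overline{T_m}(A)$ lines up with $\signedKtwo(A)$ with a weight that precisely reassembles $\EE|M|$ via Proposition~\ref{prop:GibbsMatching_mean_var_from_CE}, and finally bound the higher-degree residual in $L^2$.

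First, since $\phi(H(e_1,\dots,e_m))$ depends only on the isomorphism class of $(e_1,\dots,e_m)$ and since each unordered labeled copy of $T_m$ in $K_n$ yields $m!$ ordered tuples, the inner sum reorganizes into
\begin{equation*}
\simpleTrees = \sum_{m=1}^{2\log n} \sum_{T_m} m!\,\phi(H(T_m))\,\lambda^m \cdot \frac{\overline{T_m}(A)}{p^m},
\end{equation*}
where $T_m$ ranges over unlabeled simple trees with $m$ edges and $\overline{T_m}(A) = T_m(A) - p^m (n)_{m+1}/\aut(T_m)$. Next I will expand each product edgewise via $A_e = p + (A_e - p)$:
\begin{equation*}
\overline{T_m}(A) = \sum_{k=1}^{m} p^{m-k} \sum_{G \cong T_m} \sum_{\substack{S \subseteq E(G) \\ |S| = k}} \prod_{e \in S}(A_e - p).
\end{equation*}
The $k=1$ slice is, after exchanging the order of summation and using edge-transitivity of $K_n$, equal to $\tfrac{m p^{m-1} (n)_{m+1}}{\binom{n}{2}\aut(T_m)}\,\signedKtwo(A)$; call this the \emph{main component} of $\overline{T_m}(A)$ and write $R_m$ for the remaining $k \ge 2$ contribution.

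Assembling the main components across all trees gives the dominant term
\begin{equation*}
\frac{\signedKtwo(A)}{p\binom{n}{2}} \sum_{m=1}^{2\log n} \sum_{T_m} m!\,\phi(H(T_m))\,m\lambda^m\,\frac{(n)_{m+1}}{\aut(T_m)} \;=\; \frac{\EE|M| + O(1)}{p\binom{n}{2}}\,\signedKtwo(A),
\end{equation*}
where the last equality is exactly the content of Proposition~\ref{prop:GibbsMatching_mean_var_from_CE}. A short calculation converts $\frac{\EE|M|}{p\binom{n}{2}}\signedKtwo(A) = \sqrt{\tfrac{2(1-p)}{p}}\,\frac{\EE|M|}{n}\cdot\frac{\signedKtwo(A)}{\sqrt{\Var\signedKtwo(A)}}(1+O(1/n))$, and the $O(1)$ slack multiplied by $\signedKtwo(A) = O_\PP(n\sqrt{p})$ contributes $O_\PP(1/(n\sqrt{p}))$, which is dominated by the advertised $O_\PP(1/(p\sqrt{n}))$ error.

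It therefore remains to control in $L^2$ the residual term $\mathsf{Res} := \sum_{m,T_m} m!\,\phi(H(T_m))\,\lambda^m R_m/p^m$. By construction each $R_m$ is orthogonal to $\signedKtwo(A)$ and has mean zero, so $\EE\,\mathsf{Res} = 0$ and it suffices to bound $\Var(\mathsf{Res})$. The key estimate is that for a single template $T_m$ and a connected subgraph $G' \subseteq T_m$ with $|G'| = k \ge 2$, the signed count $\widecheck{G'}(A)$ contributes variance of order $n^{|V(G')|} p^k$ to $\overline{T_m}(A)$, so the rescaled component contributes $\lambda^{2m} p^{-2k}\,n^{|V(G')|} p^k$; the worst case $k=2$, $|V(G')|=3$ (a wedge inside a tree) gives a variance of order $1/(np^2)$, matching the target $L^2$ bound $1/(p\sqrt{n})^2$. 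The hard part is combining these single-template estimates across all $m \le 2\log n$ and all $T_m$ without losing a super-polynomial factor from the combinatorial explosion of trees. I will use the Penrose tree-graph bound (Lemma~\ref{lemma:Penrose_tree_bound}), applied in the same way as in the proof of Theorem~\ref{thm:truncate_CE_clogn}, to show that $\sum_{T_m} m!\,|\phi(H(T_m))|\cdot(\text{per-template residual } L^2) \cdot \lambda^m$ decays geometrically in $m$; summing over $m$ then preserves the order $1/(p\sqrt{n})$. Cross-covariances between residuals of different templates are handled by a Cauchy--Schwarz bound that reduces everything to the same single-template estimate, but organizing this cleanly so that the Penrose bound still applies is the most delicate step and is where I expect the main technical work to lie.
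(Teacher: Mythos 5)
Your proposal is correct and follows essentially the same route as the paper: the paper also splits $\simpleTrees$ into a component along $\signedKtwo$ plus an orthogonal residual, identifies the coefficient with $\EE\abs{M}$ via Proposition~\ref{prop:GibbsMatching_mean_var_from_CE}, and bounds the residual by a triangle inequality on per-template standard deviations (with the wedge-overlap term dominant, as you note) assembled through the Penrose tree-graph bound exactly as in the proof of Theorem~\ref{thm:truncate_CE_clogn}. The only cosmetic difference is that you extract the main component as the degree-one slice of the edgewise expansion $A_e = p + (A_e - p)$, while the paper uses the covariance projection coefficient $\alpha(T_m)$; these coincide, so your residual is the paper's $r(T_m,A)$ and your plan matches Lemmas~\ref{lemma:nonMatching_ProjK2simpleTrees_distribution} and~\ref{lemma:VarProjK_2_Perp_simpleTrees}.
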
 
\noindent
In Section \ref{sec:nonMatchingMeanPart} we will show that $\oneRepTrees$ gives the deterministic mean part of \eqref{eq:nonMatching_LLdistributionUnderNull}. 
\begin{proposition}\label{prop:nonMatching_oneRepTrees_mainResult}
Let $A \sim \cQ$. Then
    \begin{align}\label{eq:nonMatching_oneRepTrees_mainResult}
        \oneRepTrees = - \frac{1-p}{p} \inparen{\frac{\EE\abs{M}}{n}}^2 + O_\PP\!\inparen{\frac{(\log n)^2}{n p^{3/2}}}.
    \end{align}
\end{proposition}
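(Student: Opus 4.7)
I would decompose $\oneRepTrees = \EE[\oneRepTrees] + (\oneRepTrees - \EE[\oneRepTrees])$ and analyze each piece separately. The first piece produces the deterministic main term, and the second gives the $O_\PP$ error. The bridge between the two analyses is a careful unwinding of which simple tree underlies a given ``one repeated edge'' template.

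The first step is to simplify the indicator. For any cluster $(e_1, \dots, e_m)$ whose shape is a tree with exactly one repeated edge, the underlying simple graph is a tree on $m$ vertices with $m-1$ distinct edges, which I will denote $T_{m-1}$. Since $A_e \in \{0,1\}$ implies $A_e^2 = A_e$, we have $\prod_{j=1}^m A_{e_j} = \prod_{e \in T_{m-1}} A_e$. Under Assumption \ref{asmpt:nonMatchingEdges} ($p = q$), this gives the deterministic expectation $\EE[\prod_{j=1}^m A_{e_j}/p^m - 1] = q^{m-1}/p^m - 1 = (1-p)/p$.

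The second step is the mean computation, which is where the main deterministic term emerges. Accounting for labeled copies and orderings of polymers (with a factor $1/2$ for the repeated edge), we obtain
\[
\EE[\oneRepTrees] \;=\; \frac{1-p}{p} \sum_{m=2}^{2\log n} \lambda^m (n)_m \sum_{T_{m-1}^{\text{rep}}} \frac{\widetilde{\phi}(H(T_{m-1}^{\text{rep}}))}{2\, \aut(T_{m-1}^{\text{rep}})}.
\]
Applying the convolution identity in Lemma \ref{lem:ursell-function-tmrep-tm-identity} (with its parameter set to $m-1$), the inner sum rewrites as a convolution over pairs of simple trees $(T_\ell, T_{m-\ell})$. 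Approximating $(n)_m \approx (n)_{\ell+1}(n)_{m-\ell+1}/n^2$ (with relative error $O(m^2/n)$ for $m = O(\log n)$), the double sum factors as $-1/n^2$ times the square of $\sum_\ell \lambda^\ell \sum_{T_\ell} \ell\, \widetilde{\phi}(H(T_\ell)) (n)_{\ell+1}/\aut(T_\ell)$. By Proposition \ref{prop:GibbsMatching_mean_var_from_CE}, the bracketed sum equals $\EE|M| + O(1)$, yielding
\[
\EE[\oneRepTrees] \;=\; -\frac{1-p}{p}\left(\frac{\EE|M|}{n}\right)^2 + \text{deterministic error},
\]
where the error bundles truncation, the falling-factorial approximation, and the $O(1)$ slack from Proposition \ref{prop:GibbsMatching_mean_var_from_CE}.

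The third step controls the fluctuation $\oneRepTrees - \EE[\oneRepTrees]$. After the collapse in Step 1, this is a linear combination, indexed by simple trees $T$ with at most $2\log n - 1$ edges, of centered indicators $\prod_{e\in T} A_e - q^{|T|}$. The dominant contribution is the $m = 2$ term, which equals $-\frac{\lambda^2}{2 p^2}\,\signedKtwo(A)$ (the doubled-edge entry in Table \ref{table:nonMatchingLL_intuition_expansion}); by Lemma \ref{lemma:signed_K2_normality_P_Q} its standard deviation is $O(\lambda^2 \sqrt{n^2 p}/p^2) = O(1/(n p^{3/2}))$. Higher-$m$ terms are suppressed by $(\lambda n)^{m-2}$ in absolute value thanks to the cluster-expansion convergence from Theorem \ref{thm:truncate_CE_clogn} (ultimately a consequence of the Penrose tree-graph bound Lemma \ref{lemma:Penrose_tree_bound}), and summing over $m \le 2\log n$ with Cauchy--Schwarz gives an aggregate $O_\PP((\log n)^2/(np^{3/2}))$ bound.

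The main obstacle will be the delicate bookkeeping in the second step. Several errors need to be aggregated cleanly: the cluster-expansion truncation (handled by Theorem \ref{thm:truncate_CE_clogn} at cost $O(1/n)$), the uniform approximation of $(n)_m$ by $(n)_{\ell+1}(n)_{m-\ell+1}/n^2$ (relative error $O((\log n)^2/n)$ at truncation), and the ``boundary'' cross-terms of the convolution that are excluded from $(\EE|M|)^2$ but whose absolute sum can be bounded using Penrose-type estimates as in the proof of Proposition \ref{prop:MDMean_cycles_O(1)}. A secondary but more mechanical difficulty is the variance computation in the third step, which requires uniform control over overlapping tree subgraph counts; this is standard but must be carried out in a way that the tail in $m$ does not accumulate worse than $(\log n)^2$.
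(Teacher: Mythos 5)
Your proposal is correct and follows essentially the same route as the paper: the same split of $\oneRepTrees$ into its expectation and fluctuation, with the mean identified with $-\frac{1-p}{p}(\EE|M|/n)^2$ via the Ursell convolution identity of Lemma~\ref{lem:ursell-function-tmrep-tm-identity}, falling-factorial approximations, the expression of $\EE|M|$ from Proposition~\ref{prop:GibbsMatching_mean_var_from_CE}, and Penrose-type bounds for the boundary cross-terms and absolute sums (as in Claims (i)--(iv) of the paper's Lemma~\ref{lemma:E_oneRepTrees_mainResult}), and the fluctuation controlled by per-tree variance estimates (Claim~\ref{claim:Var_G_N_m}) aggregated over shapes and over $m$ exactly as in Lemma~\ref{lemma:Var_oneRepTrees_small}. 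The only differences are cosmetic (an index shift, isolating the doubled-edge $m=2$ term as the dominant fluctuation, and a slightly lossier $(\log n)^2$ factor in the variance aggregation, which still fits the stated error).
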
 
\noindent
Finally, in Section \ref{sec:nonMatchingRemainderLessThanTwoLogn} we will show that $ \mathsf{remainder}_{\leq 2 \log n}$ is small.
\begin{proposition}\label{prop:nonMatching_remainderLessThanTwoLogn_mainResult}
    Let $A \sim \cQ$. Then
    \begin{align}\label{eq:nonMatching_remainderLessThanTwoLogn_mainResult}
        \mathsf{remainder}_{\leq 2 \log n} = O_\PP\!\inparen{ \frac{1}{p^{2} n }  }.
    \end{align}
\end{proposition}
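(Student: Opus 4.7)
The plan is to decompose $\mathsf{remainder}_{\leq 2 \log n} = \mathcal{R}_{\mathrm{cyc}} + \mathcal{R}_{\mathrm{rep}}$, where $\mathcal{R}_{\mathrm{cyc}}$ collects clusters whose underlying multigraph contains a cycle and $\mathcal{R}_{\mathrm{rep}}$ collects tree clusters with at least two repeated edges, and bound each piece in $L^1(\cQ)$-norm (and in $L^2(\cQ)$-norm where the mean vanishes), finally invoking Markov's inequality. The guiding principle, already visible in the proofs of Theorem \ref{thm:truncate_CE_clogn} and Proposition \ref{prop:MDMean_cycles_O(1)}, is that in the Penrose tree-graph enumeration (i) each cycle in the underlying multigraph saves a factor of $n$ (closing the cycle reuses a vertex), while (ii) each repeated edge saves a factor of $n$ but pays a factor of $1/p$ in the moment $\EE_{\cQ}[\prod_j A_{e_j}/p^m]$. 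Hence each cycle contributes a multiplicative $1/n$ and each repetition $1/(np)$ relative to the simple-tree contributions in Propositions \ref{prop:nonMatching_simpleTrees_mainResult} and \ref{prop:nonMatching_oneRepTrees_mainResult}.

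Quantitatively, for $\mathcal{R}_{\mathrm{rep}}$, a tree cluster with $m$ polymers and $r \geq 2$ repetitions (so $m-r$ distinct edges) satisfies $\EE_{\cQ}|\prod_j A_{e_j}/p^m - 1| = 1/p^{r} - 1 = O(1/p^r)$. Adapting the iterative polymer-assignment scheme from the proof of Theorem \ref{thm:truncate_CE_clogn}, in which each repetition presents $O(1)$ rather than $\Theta(n)$ options, one obtains
\begin{align*}
    \sum_{m \geq 2,\ r \geq 2} \sum_{\substack{(e_1,\ldots,e_m)\\ \text{tree},\ r\text{ reps}}} \lambda^m |\phi(H(e_1,\ldots,e_m))| \cdot \frac{1}{p^r} \;=\; O\!\left(\frac{1}{n^2 p^2}\right),
\end{align*}
which in turn bounds $\|\mathcal{R}_{\mathrm{rep}}\|_{L^1}$. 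For $\mathcal{R}_{\mathrm{cyc}}$, a parallel enumeration refining \eqref{eq:MDMean_cycles_O(1)_afterCayley} and summed over the number of cycles and repetitions shows that the dominant piece is simple single-cycle clusters. These have zero mean under $\cQ$, so a direct variance computation using independence of distinct edges yields $\|\mathcal{R}_{\mathrm{cyc}}\|_{L^2} = O(1/(np))$. Adding the two bounds and noting $p \leq 1$ produces the claimed $O_\PP(1/(np^2))$.

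The main obstacle will be the second-moment calculation for $\mathcal{R}_{\mathrm{cyc}}$, since two distinct cyclic clusters may share polymers and the monomials $\prod_j A_{e_j}/p^m$ are then correlated under $\cQ$. I would handle this by extending the iterative polymer-counting argument to pairs of clusters, parametrized by the multiset of shared polymers: a shared polymer saves a factor of $n$ from the vertex count but activates an additional $1/p$ in the mixed moment, so the accounting reduces to the same per-cycle and per-repetition bookkeeping used for the single-cluster sums. Once this is in place, summing over $m \leq 2 \log n$ under $\lambda n \leq 1/30$ (as in Theorem \ref{thm:truncate_CE_clogn}) and applying Markov's inequality completes the proof.
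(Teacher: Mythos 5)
Your strategy is essentially the paper's. The paper splits $\mathsf{remainder}_{\leq 2\log n}$ into $\mathsf{simpleCyclic}$, $\mathsf{oneRepCyclic}$, and $\mathsf{moreThanTwoRep}$ (Lemmas \ref{lemma:nonMatching_simpleCyclic_small}--\ref{lemma:nonMatching_moreThanTwoRep}), which is the same partition of clusters you use, only with the cyclic family subdivided by repetition count; and the core tool is identical in both cases, namely the Penrose enumeration in which each designated ``repetition link'' of the spanning tree forces a single choice, trading a factor $\Delta\asymp n$ against the factor $1/p$ coming from the exact identity $\EE_{\cQ}\prod_j A_{e_j}=p^{d}$, $d$ the number of distinct edges. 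Two organizational differences are worth noting: you bound the repeated-edge families in $L^1$ and invoke Markov, whereas the paper bounds the corresponding random sums pathwise on the event $\{\Delta(A)\le 2.02np,\ |A|\le 1.01\,n^2p\}$ and then uses concentration of $\Delta(A)$ and $|A|$ --- your route is slightly cleaner (no degree concentration needed) and suffices since only an $O_\PP$ statement is claimed; and for the variance of the mean-zero simple cyclic part you propose a pair-of-clusters enumeration, while the paper sidesteps that bookkeeping by the triangle inequality over template subgraphs combined with the single-template variance estimate of Claim \ref{claim:Var_G_N_m}, which is less work and which you could reuse verbatim.

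Two points to fix. First, your stated bound $O(1/(n^2p^2))$ for the $L^1$ norm of the tree-with-$\ge 2$-repetitions part is off by a factor of $n$: already the triple-edge cluster ($m=3$, one distinct edge) contributes $\binom{n}{2}\cdot\tfrac13\,\lambda^3\,(p^{-2}-1)=\Theta\!\inparen{\tfrac{1}{np^2}}$ in expectation, and this family is precisely what dictates the rate $1/(np^2)$ in the proposition (compare Lemma \ref{lemma:nonMatching_moreThanTwoRep}); the corrected bound $O(1/(np^2))$ still yields the claim, so this is an arithmetic slip rather than a structural gap, but as written the intermediate estimate is false. Second, your $\mathcal{R}_{\mathrm{cyc}}$ as a whole is not mean-zero: cyclic clusters carrying a repeated edge have expectation of order $1/(np)$ (e.g.\ a triangle with one doubled edge), so the ``zero mean, hence variance'' argument applies only to the simple cyclic subfamily, after the repeated-edge cyclic clusters have been removed by a separate $L^1$ (or high-probability) counting step. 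You gesture at this when you say the dominant piece is the simple single-cycle clusters, but that reduction has to be carried out explicitly, exactly as the paper does in Lemma \ref{lemma:nonMatching_oneRepCyclic_small} and in the cyclic portion of Lemma \ref{lemma:nonMatching_moreThanTwoRep}.
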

\begin{proof}[Proof of Theorem \ref{thm:nonMatching_LLdistributionUnderNull}]
    Immediate from \eqref{eq:decompose_loglikelihood} and the above three propositions.
\end{proof} 


\subsection{Fluctuation part}\label{sec:nonMatchingFluctuationPart}
In this section we establish Proposition \ref{prop:nonMatching_simpleTrees_mainResult}. Let $T_m$ denote a generic unlabeled connected simple tree with $m$ edges. 
Recall the notation \eqref{eq:ordinaryCenteredSignedSubgraphCount_definition} that $\overline{T}_m(A) = T_m(A) - \EE_{A \sim \cQ} T_m(A)$. Note that $\EE_{A \sim \cQ} T_m(A) = T_m(K_n) p^m$. Observe that $\simpleTrees$ as defined in \eqref{eq:decompose_loglikelihood} can be written as
\begin{align*}
    \simpleTrees = \sum_{m = 1}^{2 \log n} \sum_{T_m} \phi(H(T_m)) m! \frac{\lambda^m}{p^m} \overline{T}_m(A).
\end{align*}
For each $T_m$, define
\begin{equation}
    \alpha(T_m) := \frac{\Cov\insquare{\overline{T}_m(A), \signedKtwo(A) }}{ \Var \signedKtwo(A)}, \qquad \text{and} \qquad r(T_m, A) := \overline{T}_m(A) - \alpha(T_m) \signedKtwo(A).
    \label{eq:def-alpha-r-Tm-A}
\end{equation}
Decompose $\simpleTrees$ as 
\begin{align}
    \simpleTrees = \Proj_{\signedKtwo}\!\inparen{  \simpleTrees  } + \Proj^{\perp}_{\signedKtwo}\!\inparen{  \simpleTrees  },
\end{align}
where 
\begin{align*}
    \Proj_{\signedKtwo}\!\inparen{  \simpleTrees  } &:= \sum_{m = 1}^{2 \log n} \sum_{T_m} \phi(H(T_m)) m! \frac{\lambda^m}{p^m} \alpha(T_m) \signedKtwo(A), \\
    \Proj^\perp_{\signedKtwo}\!\inparen{  \simpleTrees  } &:= \sum_{m = 1}^{2 \log n} \sum_{T_m} \phi(H(T_m)) m! \frac{\lambda^m}{p^m} r(T_m, A).
\end{align*}
Note that both these projections have zero mean. The proof of Proposition \ref{prop:nonMatching_simpleTrees_mainResult} will be immediate from Lemmas \ref{lemma:nonMatching_ProjK2simpleTrees_distribution} and \ref{lemma:VarProjK_2_Perp_simpleTrees} below. In particular, these lemmas make precise the zero-mean fluctuation statement implied in the heuristic \eqref{eq:nonMatching_LLfirstFewTerms_GaussianForm}.
\begin{lemma}\label{lemma:nonMatching_ProjK2simpleTrees_distribution}
    Let $A \sim \cQ$. Then 
    \begin{align}
        \Proj_{\signedKtwo}\!\inparen{  \simpleTrees  } = \sqrt{ \frac{2(1-p)}{p}  } \frac{\EE\abs{M}}{n} \frac{\signedKtwo(A)}{\sqrt{\Var \signedKtwo(A)}} + O_\PP\!\inparen{\frac{1}{n}}.
    \end{align}
\end{lemma}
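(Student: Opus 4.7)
The plan is to carry out a direct calculation that reduces $\Proj_{\signedKtwo}(\simpleTrees)$ to a constant (deterministic) multiple of $\signedKtwo(A)$, where the constant is identified with a tree expansion that Proposition~\ref{prop:GibbsMatching_mean_var_from_CE} recognizes as $\EE|M|$.

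First I would compute $\alpha(T_m)$ explicitly for each unlabeled simple tree $T_m$ with $m$ edges. Under $\cQ = G(n,q) = G(n,p)$, the edges $\{A_{ij}\}$ are i.i.d.\ Bernoulli$(p)$, so for any labeled copy $T_m'$ of $T_m$ and any edge $\{k,l\} \in \binom{[n]}{2}$,
\begin{align*}
\Cov\!\bigl[\textstyle\prod_{e \in T_m'} A_e,\ A_{\{k,l\}}\bigr]
= \begin{cases} p^m(1-p) & \text{if } \{k,l\} \in T_m',\\ 0 & \text{otherwise.} \end{cases}
\end{align*}
Since $T_m$ has $m$ edges and the number of labeled copies of $T_m$ in $K_n$ is $(n)_{m+1}/\aut(T_m)$,
\begin{equation*}
\Cov[\overline{T}_m(A), \signedKtwo(A)] = \frac{(n)_{m+1}}{\aut(T_m)} \cdot m \cdot p^m(1-p),
\end{equation*}
and combined with $\Var \signedKtwo(A) = \binom{n}{2}p(1-p)$ this yields
\begin{equation*}
\alpha(T_m) = \frac{(n)_{m+1}\, m\, p^{m-1}}{\aut(T_m)\binom{n}{2}}.
\end{equation*}

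Next I would substitute this into the definition of $\Proj_{\signedKtwo}(\simpleTrees)$ and simplify:
\begin{align*}
\Proj_{\signedKtwo}(\simpleTrees)
&= \sum_{m=1}^{2\log n}\sum_{T_m} \phi(H(T_m))\, m!\, \frac{\lambda^m}{p^m}\, \alpha(T_m)\, \signedKtwo(A)\\
&= \frac{\signedKtwo(A)}{p\binom{n}{2}} \sum_{m=1}^{2\log n}\sum_{T_m} m!\,\phi(H(T_m))\, m\, \lambda^m\, \frac{(n)_{m+1}}{\aut(T_m)}.
\end{align*}
By Proposition~\ref{prop:GibbsMatching_mean_var_from_CE}, the inner double sum equals $\EE|M| + O(1)$. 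Hence
\begin{equation*}
\Proj_{\signedKtwo}(\simpleTrees) = \frac{\EE|M| + O(1)}{p\binom{n}{2}}\signedKtwo(A).
\end{equation*}

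Finally I would rewrite the leading factor in the form given in the lemma. Using $\sqrt{\Var\signedKtwo(A)} = \sqrt{\binom{n}{2}p(1-p)}$ and $\binom{n}{2} = n^2/2 \cdot (1 - 1/n)$,
\begin{equation*}
\frac{\EE|M|}{p\binom{n}{2}}\signedKtwo(A) = \sqrt{\tfrac{2(1-p)}{p}}\,\frac{\EE|M|}{n}\,\frac{\signedKtwo(A)}{\sqrt{\Var\signedKtwo(A)}} \cdot (1+O(1/n)).
\end{equation*}
Since $\EE|M| = \Theta(n)$ (by Lemma~\ref{lem:matching-limiting-size}), $p$ is a constant, and $\signedKtwo(A)/\sqrt{\Var\signedKtwo(A)} = O_\PP(1)$ under $\cQ$ (Lemma~\ref{lemma:signed_K2_normality_P_Q}), both the multiplicative $O(1/n)$ correction and the additive contribution from the $O(1)$ remainder in the tree expansion contribute $O_\PP(1/n)$, giving the claimed equality. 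No step is really an obstacle here; the only care needed is to track that all the $p$-dependent factors in the remainder are harmless in the $p = \Theta(1)$ regime of Assumption~\ref{asmpt:nonMatchingEdges}, which is immediate.
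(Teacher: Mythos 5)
Your proof is correct and follows essentially the same route as the paper: compute $\alpha(T_m) = \frac{(n)_{m+1} m p^{m-1}}{\aut(T_m)\binom{n}{2}}$, sum against the Ursell weights, identify the resulting series as $\EE|M| + O(1)$ via Proposition~\ref{prop:GibbsMatching_mean_var_from_CE}, and absorb the $O(1)$ remainder and the $\binom{n}{2}$-versus-$n^2/2$ discrepancy into the $O_\PP(1/n)$ term. The only difference is that you spell out the covariance computation and the final error bookkeeping, which the paper leaves implicit.
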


\begin{proof}[Proof of Lemma \ref{lemma:nonMatching_ProjK2simpleTrees_distribution}]
    Compute for each $m$ and $T_m$,
    \begin{align}\label{eq:nonMatching_coeff_signedKtwo}
        \alpha(T_m) = \left. \frac{(n)_{m+1}}{\aut(T_m)} m p^m (1-p) \middle/   \binom{n}{2}p(1-p) \right. .
    \end{align}
    Plugging in $\alpha(T_m)$ into $\Proj_{\signedKtwo}\!\inparen{  \simpleTrees  }$, and comparing the resulting expression to the $\EE\abs{M}$ series from Proposition \ref{prop:GibbsMatching_mean_var_from_CE}, we find
    \begin{align*}
        \Proj_{\signedKtwo}\!\inparen{  \simpleTrees  } &= \inparen{ \sum_{m = 1}^{2 \log n} \sum_{T_m} \phi(H(T_m)) m! m \lambda^m \frac{(n)_{m+1}}{\aut(T_m)}   } \frac{\signedKtwo(A)}{\binom{n}{2} p} \\
        &= \sqrt{ \frac{2(1-p)}{p}  } \inparen{  \frac{\EE_{M \sim \mu_\lambda}\abs{M} + O(1) }{\sqrt{n(n-1)}} } \frac{\signedKtwo(A)}{\sqrt{\Var \signedKtwo(A)}}.
        \qedhere
    \end{align*}
\end{proof}

\begin{lemma}
    \label{lemma:VarProjK_2_Perp_simpleTrees}
    Let $A \sim \cQ$. Then 
    \begin{align}
        \Var \Proj^\perp_{\signedKtwo}\!\inparen{  \simpleTrees  } = O\left( \frac{1}{p^2 n} \right) .
    \end{align}
\end{lemma}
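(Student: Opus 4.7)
The plan is to expand the variance as a double sum over tree pairs and exploit the orthogonality built into the projection. Setting $c(T_m) := \phi(H(T_m)) m! (\lambda/p)^m$ and using $\EE[r(T_m, A) \signedKtwo(A)] = 0$, we have
\begin{align*}
\Var \Proj^\perp_{\signedKtwo}(\simpleTrees) = \sum_{m, m' = 1}^{2 \log n} \sum_{T_m, T_{m'}} c(T_m) c(T_{m'}) \bigl[ \Cov(\overline{T}_m, \overline{T}_{m'}) - \alpha(T_m) \alpha(T_{m'}) \Var \signedKtwo \bigr].
\end{align*}
For $A \sim G(n,p)$ I would decompose the subgraph-count covariance by the size of the shared edge set,
\begin{align*}
\Cov(\overline{T}_m(A), \overline{T}_{m'}(A)) = \sum_{k \ge 1} P_k(T_m, T_{m'}) p^{m+m'-k}(1 - p^k),
\end{align*}
where $P_k(T_m, T_{m'})$ counts ordered pairs of labeled copies of $(T_m, T_{m'})$ in $K_n$ sharing exactly $k$ edges.

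A double-count on edge containment yields the identity $\sum_{k \ge 1} k P_k(T_m, T_{m'}) = m m' N(T_m) N(T_{m'})/\binom{n}{2}$. Combined with the formula $\alpha(T_m) = m N(T_m) p^{m-1}/\binom{n}{2}$ from \eqref{eq:nonMatching_coeff_signedKtwo}, this gives $\alpha(T_m)\alpha(T_{m'})\Var \signedKtwo = p^{m+m'-1}(1-p)\sum_{k \ge 1} k P_k(T_m, T_{m'})$. Subtracting, the $k=1$ term cancels \emph{exactly}, leaving
\begin{align*}
\Cov(r(T_m, A), r(T_{m'}, A)) = \sum_{k \ge 2} P_k(T_m, T_{m'}) \bigl[ p^{m+m'-k}(1 - p^k) - k p^{m+m'-1}(1 - p) \bigr].
\end{align*}
Each bracket simplifies to $p^{m+m'-k}(1-p)^2 Q_k(p)$ for an explicit polynomial $Q_k$, so is of order $p^{m+m'-k}$. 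For trees, the dominant overlap pattern is a common subtree on $k$ edges and $k+1$ vertices, giving $P_k(T_m, T_{m'}) = O(n^{m+m'+1-k}/(\aut(T_m)\aut(T_{m'})))$ up to prefactors polynomial in $m, m'$ (other shared patterns such as disjoint shared edges produce strictly smaller powers of $n$). The worst case $k = 2$ thus produces $|\Cov(r(T_m), r(T_{m'}))| = O(n^{m+m'-1} p^{m+m'-2}\, \mathrm{poly}(m, m')/(\aut(T_m)\aut(T_{m'})))$.

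Substituting back, the prefactor $(\lambda/p)^{m+m'}$ combines with the covariance bound to produce an overall scale of $1/(p^2 n)$, and the residual double sum factorizes as
\begin{align*}
\Biggl(\sum_{m \ge 1} \sum_{T_m} \frac{|\phi(H(T_m))| m!}{\aut(T_m)} (\lambda n)^m \cdot \mathrm{poly}(m)\Biggr)^2,
\end{align*}
which is $O(1)$ by the Penrose tree-graph bound (Lemma~\ref{lemma:Penrose_tree_bound}) together with Cayley's formula, exactly as in the proof of Theorem~\ref{thm:truncate_CE_clogn}; since $\lambda \Delta \le 1/15$ under Assumption~\ref{asmpt:nonMatchingEdges}, each $m$-sum converges as a geometric series in $e \lambda \Delta$. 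This yields the claimed bound $O(1/(p^2 n))$.

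The main obstacle is the overlap combinatorics for $k \ge 2$: the intersection of two labeled tree copies may include isolated shared vertices or a disjoint union of shared subtrees, and $P_k(T_m, T_{m'})$ must be bounded uniformly in $(T_m, T_{m'})$ as $m, m'$ grow up to $2 \log n$. I would handle this by first fixing the isomorphism type and embedding of the shared subgraph into both $T_m$ and $T_{m'}$, then extending the remaining edges of each tree independently via the Cayley-plus-Penrose argument from the proof of Theorem~\ref{thm:truncate_CE_clogn}; the combinatorial cost of specifying the overlap is polynomial in $m, m'$ and is easily absorbed into the geometric series.
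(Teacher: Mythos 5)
Your proposal is correct, and it reaches the bound by a somewhat different organization of the same underlying mechanism. The paper never expands the cross-terms: it applies the triangle inequality to the standard deviation, $\sqrt{\Var \Proj^\perp_{\signedKtwo}(\simpleTrees)} \le \sum_{m}\sum_{T_m} m! \abs{\phi(H(T_m))} (\lambda/p)^m \sqrt{\Var\, r(T_m,A)}$, so it only needs the single-tree variance estimate (Claim~\ref{claim:Var_G_N_m}) together with the near-cancellation identity \eqref{eq:nonMatching_Var_rTMA_LeadCancellation_nIdentity}, which kills the leading one-shared-edge term up to a residual one power of $n$ smaller. You instead expand the full quadratic form over pairs $(T_m,T_{m'})$, decompose $\Cov(\overline{T}_m,\overline{T}_{m'})$ by the shared edge count $k$, and use the double-counting identity $\sum_k k P_k = m m' N(T_m)N(T_{m'})/\binom{n}{2}$ to make the $k=1$ cancellation \emph{exact} for every pair of trees; what survives is the $k\ge 2$ overlap, dominated by the shared-wedge pattern, which is exactly the subleading term the paper isolates in \eqref{eq:variance-G-contribution-term-2}. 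Your route is arguably cleaner conceptually (it exhibits the projection as removing precisely the single-shared-edge covariance), at the price of needing a two-tree overlap estimate rather than only the diagonal one; both end with the same Penrose--Cayley summation giving the factorized $O(1)$ series and the scale $1/(p^2 n)$.

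One claim to tighten: the prefactor in $P_k(T_m,T_{m'})$ is not polynomial in $m,m'$ uniformly in $k$. The number of ways to place a $k$-edge shared subtree inside the two trees can be of order $\binom{m}{k}\binom{m'}{k}$ times the automorphism count of the shared shape (think of stars), which is exponential in $k$. This does not break the argument, because each shared edge beyond the first trades a factor of $n$ for a factor of $1/p$, so the $k$-th term is suppressed relative to $k=2$ by roughly $(C m m'/(np))^{k-2}$, which is summable since $m,m'\le 2\log n$ and $p$ is constant under Assumption~\ref{asmpt:nonMatchingEdges}; this is the same geometric-in-$k$ control the paper uses in \eqref{eq:variance-G-contribution-term-4} under the condition $m^2/(nq)\le 1/2$. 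So state the overlap bound as geometric in $k$ with that ratio rather than ``polynomial in $m,m'$,'' and the proof goes through.
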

We require the following estimate. Recall that $G(A)$ denotes the number of copies of $G$ in $A$. 
Define
\begin{equation}\label{eq:gamma_Tm_definition}
    \gamma(G) := \sum_{v \in V(G)} \binom{\text{degree}(v)}{2}.
\end{equation}
The quantity $\gamma(G)$ can be interpreted as the number of ways to superimpose a wedge $P_2$ on $G$. 

\begin{claim}
    \label{claim:Var_G_N_m}
    Suppose that $m = o(\sqrt{nq})$ and $A \sim G(n,q)$. Let $G_{N,m}$ be any connected unlabeled simple graph with $N$ vertices and $m$ edges. Then 
    \begin{align}\label{eq:Var_G_N_m_General}
        \Var G_{N,m}(A) = \frac{2m^{2} q^{2m-1} (1-q) (n)_N (n)_{N-2} }{\aut(G_{N,m})^2} \left( 1 + o(1) + O \Big( \frac{m^4}{n q^2} \Big) \right)
    \end{align}
    where $(n)_N$ denotes the falling factorial.
    Moreover, for any $T_m$, with $\gamma(T_m)$ defined in \eqref{eq:gamma_Tm_definition}, 
    \begin{align}\label{eq:Var_Tm_LeadAndSublead}
        \Var T_m(A) &= \frac{2 m^2 (1-q)  q^{2m - 1} (n)_{m+1} (n)_{m-1} } {\aut(T_m)^2} \nonumber \\
        &\qquad + \frac{2 \gamma(T_m)^2 (1-q^2)  q^{2m - 2} (n)_{m+1} (n)_{m-2} } {\aut(T_m)^2} 
        + O\!\inparen{  \frac{ m^6 n^{2m- 2} q^{2m - 3} }{\aut(T_m)^2}   } \\
        &= \frac{2 m^2 (1-q)  q^{2m - 1} (n)_{m+1} (n)_{m-1} } {\aut(T_m)^2} \nonumber 
        + O\!\inparen{  \frac{ m^4 n^{2m- 1} q^{2m - 2} }{\aut(T_m)^2}   } .
    \end{align} 
\end{claim}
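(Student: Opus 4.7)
The starting point is the standard expansion of the variance in terms of pairs of labeled copies of $G_{N,m}$ in $K_n$. Writing $H_i$ for a generic labeled copy and using independence of edges under $G(n,q)$,
\begin{align*}
    \Var G_{N,m}(A) = \sum_{\substack{H_1, H_2 \cong G_{N,m} \\ E(H_1) \cap E(H_2) \neq \emptyset}} \inparen{q^{2m - |E(H_1 \cap H_2)|} - q^{2m}} .
\end{align*}
I would organize this sum by the unlabeled intersection graph $F := H_1 \cap H_2$ (tracked by $s = |V(F)|$ and $k = |E(F)|$). Each $F$-class contributes a factor $q^{2m-k}(1 - q^k)$ times a combinatorial count of the form $\frac{(n)_N \cdot \#(F \subseteq G_{N,m}) \cdot \#(\text{completions of } F \text{ to } G_{N,m})}{\aut(G_{N,m})^2}$.

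First I would isolate the leading contribution, which corresponds to $F = K_2$ with $|V(H_1) \cap V(H_2)| = 2$ (exactly one shared edge at exactly two shared vertices). Counting: fix an edge $e \in \binom{[n]}{2}$; the number of copies of $G_{N,m}$ containing $e$ equals $\frac{2m(n-2)_{N-2}}{\aut(G_{N,m})}$ by double-counting copy--edge incidences, and restricting the remaining $N-2$ vertices of $H_2$ to lie outside $V(H_1)$ gives $(n-N)_{N-2}$ instead of $(n-2)_{N-2}$. Summing over the $\binom{n}{2}$ edges and using $(n-N)_{N-2} = (n)_{N-2}(1 + O(N^2/n))$ yields
\begin{align*}
    \frac{2m^2 (n)_N (n)_{N-2}}{\aut(G_{N,m})^2} \, q^{2m-1}(1-q) \cdot \inparen{1 + O\inparen{N^2/n}} ,
\end{align*}
which matches the stated leading term. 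For the second claim on trees, I would perform the analogous computation for $F = P_2$ (wedge), noting that $G_{N,m} = T_m$ contains exactly $\gamma(T_m)$ wedges and that, by double-counting, the number of copies of $T_m$ containing a fixed $P_2$ in $K_n$ equals $\frac{2\gamma(T_m)(n-3)_{m-2}}{\aut(T_m)}$; restricting to vertex-disjoint completions gives the $\gamma(T_m)^2$ term in \eqref{eq:Var_Tm_LeadAndSublead}.

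The remaining work is to bound contributions from all other intersection types. For a connected $F$ with $s$ vertices and $k$ edges one has $s \le k+1$, while for disconnected $F$ with $c$ components $s \le k + c$; in all non-leading cases this gives a relative factor of $n^{2-s} q^{1-k}$, bounded by $(nq)^{1-k} n^{1-c}$. The key bookkeeping is that the number of subgraphs $F \subseteq G_{N,m}$ with a given $(s,k)$ is at most $\binom{m}{k} \le m^k$, and the number of embeddings/completions contributes further polynomial factors in $m$. Summing over all $(s,k)$ with $k \ge 2$ in the general setting yields an aggregate relative error of $O(m^4/(nq^2))$; for the tree case, where $F$ connected forces $s = k+1$, the analogous bound with $k \ge 3$ and $k = 2$, $c = 2$ (two disjoint edges) gives the stated $O(m^6 n^{2m-2} q^{2m-3}/\aut(T_m)^2)$ error, and a crude union bound over $k \ge 2$ yields the cruder $O(m^4 n^{2m-1} q^{2m-2}/\aut(T_m)^2)$ form.

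\textbf{Main obstacle.} The bulk of the effort is not identifying the leading and subleading terms---those follow from the standard variance-expansion template---but rather giving explicit error bounds uniform in $m$. One must track how the combinatorial factors $\binom{m}{k}$ and the number of distinct $(s,k)$ intersection profiles of $G_{N,m}$ grow with $m$, and check that these polynomial-in-$m$ factors are absorbed by the powers of $(nq)^{-1}$ coming from each additional shared edge or vertex. The hypothesis $m = o(\sqrt{nq})$ is precisely what is needed to geometric-sum these tails and close the $O(m^4/(nq^2))$ bound without blow-up.
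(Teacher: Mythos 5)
Your proposal is correct and takes essentially the same route as the paper: the pairs-of-labeled-copies variance expansion, the single-shared-edge count giving the leading term $\frac{2m^2 q^{2m-1}(1-q)(n)_N(n)_{N-2}}{\aut(G_{N,m})^2}$, the shared-wedge count with the $\gamma(T_m)^2$ factor for the tree subleading term, and a geometric-series bound on all larger overlaps that closes precisely because $m^2/(nq)=o(1)$ (with the extra factor of $n$ gained for trees since connected overlaps there are forests). The only difference is organizational --- you index overlaps by the intersection profile $(s,k)$ and count via edge/wedge double-counting, whereas the paper indexes by the number $\ell$ of shared edges and counts placements directly --- which does not change the argument.
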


\begin{proof}[Proof of Claim \ref{claim:Var_G_N_m}]
    Write 
    \begin{equation*}
    \Var G_{N,m}(A) 
    = \sum_{G, G' \cong G_{N,m}} \Cov\insquare{\prod_{\{i,j\} \in G} A_{ij}, \prod_{\{i,j\} \in G'} A_{ij}} 
    = \sum_{\ell=1}^m \sum_{\substack{G, G' \cong G_{N,m} : \\ |G \cap G'| = \ell}} q^{2m-\ell} (1 - q^\ell) ,
    \label{eq:generic-variance-decomposition}
    \end{equation*}
    where the sums range over pairs of labeled copies of $G_{N,m}$ in $K_n$. 
    \begin{itemize}
    \item
    The leading order term corresponds to the pairs $(G,G')$ with exactly one overlapping edge, i.e., $\ell = 1$, with contribution 
    \begin{equation}
        \frac{(n)_N}{\aut(G_{N,m})} 2 m^2 \frac{(n-N)_{N-2}}{\aut (G_{N,m})} q^{2m-1} (1-q)
        = \Theta \left( \frac{n^{2N-2} m^2 q^{2m-1}}{\aut(G_{N,m})^2} \right) ,
        \label{eq:variance-G-contribution-term-1}
    \end{equation}
    because there are $\frac{(n)_N}{\aut(G_{N,m})}$ ways to label the vertices of $G$, there are $2m^2$ ways that $G$ and $G'$ overlap at one edge, and there are $\frac{(n-N)_{N-2}}{\aut (G_{N,m})}$ ways to label the remaining vertices of $G'$. 

\item
The next largest contribution is from the pairs $(G,G')$ with an overlapping wedge (two overlapping adjacent edges), which are part of the $\ell=2$ inner sum. The contribution of such terms is 
    \begin{equation}
        \frac{(n)_N}{\aut(G_{N,m})} \cdot 2 \cdot \gamma(G_{N,m})^2 \frac{(n-N)_{N-3}}{\aut(G_{N,m})} q^{2m-2} (1-q^2) = \Theta \left( \frac{n^{2N-3} \gamma(G_{N,m})^2 q^{2m-2}}{\aut(G_{N,m})^2} \right) ,
        \label{eq:variance-G-contribution-term-2}
    \end{equation}
    where the $2 \gamma(T_m)^2$ factor arises as the number of ways to superimpose the pair along two adjacent edges. This leads to the subleading order term in \eqref{eq:Var_Tm_LeadAndSublead}. 

    \item
    The other terms with $\ell = 2$ correspond to pairs $(G,G')$ containing two non-adjacent overlapping edges. They contribute at most
    \begin{equation}
        \frac{(n)_N}{\aut(G_{N,m})} \cdot 8 \binom{m}{2}^2 \frac{(n-N)_{N-4}}{\aut(G_{N,m})} q^{2m-2} (1-q^2) = O \left( \frac{n^{2N-4} m^4 q^{2m-2}}{\aut(G_{N,m})^2} \right) ,
        \label{eq:variance-G-contribution-term-3}
    \end{equation}
    where the counting is done similarly.

    \item
    The terms corresponding to pairs $(G,G')$ with $\ell = 3,\dots, m$ overlapping edges are similarly upper bounded by
     \begin{align}
        &\sum_{\ell = 3}^{m} O\!\inparen{ \frac{(n)_N}{\aut(G_{N,m})} \ell! \cdot 2^\ell \binom{m}{\ell}^2 \frac{(n)_{N-\ell}}{\aut(G_{N,m})} q^{2m - \ell} (1-q^\ell)   } \nonumber \\
        &\qquad = O\!\inparen{ \frac{n^{2N} q^{2m}}{\aut(G_{N,m})^2} \sum_{\ell = 3}^{m} \inparen{\frac{m^2}{nq}}^\ell  } = O\!\inparen{ \frac{n^{2N - 3} m^6 q^{2m - 3}}{\aut(G_{N,m})^2}} 
        \label{eq:variance-G-contribution-term-4}
    \end{align}
    provided that $\frac{m^2}{nq} \le \frac 12$, where in particular the factor $(n)_{N-\ell}$ is due to that $G'$ has at most $N-\ell$ vertices that do not overlap with $G$. 
This last counting is tight if $G \cap G'$ is a cycle of length $\ell$; however, if $G_{N,m} = T_m$, a tree does not contain any cycle, so $G'$ has at most $N-\ell-1$ vertices that do not overlap with $G$. Therefore, for $G_{N,m} = T_m$ and $N = m+1$, the above bound can be improved to 
\begin{equation}
O\!\inparen{ \frac{n^{2m - 2} m^6 q^{2m - 3}}{\aut(T_m)^2}} .
        \label{eq:variance-G-contribution-term-5}
\end{equation}
    \end{itemize}
Note that $m - 1 \le \gamma(G_{N,m}) \le m^2$. 
To finish the proof for a general $G_{N,m}$, it remains to combine \eqref{eq:variance-G-contribution-term-1}, \eqref{eq:variance-G-contribution-term-2}, \eqref{eq:variance-G-contribution-term-3}, and \eqref{eq:variance-G-contribution-term-4}; for a tree $G_{N,m} = T_m$ and $N = m+1$, it suffices to combine \eqref{eq:variance-G-contribution-term-1}, \eqref{eq:variance-G-contribution-term-2}, \eqref{eq:variance-G-contribution-term-3}, and \eqref{eq:variance-G-contribution-term-5}.
\end{proof}

\begin{proof}[Proof of Lemma \ref{lemma:VarProjK_2_Perp_simpleTrees}]
    By the triangle inequality, 
    \begin{align}\label{eq:nonMatching_VarProjK_2_Perp_simpleTrees_triangleInequality}
        \sqrt{\Var \Proj^\perp_{\signedKtwo}\!\inparen{  \simpleTrees  } } \leq \sum_{m = 1}^{2 \log n} \sum_{T_m} \abs{ \phi(H(T_m)) } m! \frac{\lambda^m}{p^m} \sqrt{\Var r(T_m, A) }.
    \end{align}
    We also compute
    \begin{align*}
        \EE\insquare{ \overline{T}_m(A) \signedKtwo(A) } = \frac{(n)_{m+1}}{\aut(T_m)} m p^m (1-p).
    \end{align*}
    In what follows, $C > 0$ denotes a constant independent of $n$ that may differ from line to line. Using Claim \ref{claim:Var_G_N_m} for trees together with \eqref{eq:def-alpha-r-Tm-A}, we have for fixed $1 \leq m \leq 2 \log n$ and $T_m$ that
    \begin{align*}
        &\Var r(T_m, A) = \Var\insquare{  \overline{T}_m(A) } - \frac{\EE\insquare{ \overline{T}_m(A) \signedKtwo(A) }^2}{\Var \signedKtwo(A)}\\
        &\leq 2 m^2 p^{2m - 1} (1-p) \frac{(n)_{m+1}}{\aut(T_m)^2} \insquare{ (n)_{m-1} - \frac{(n)_{m+1}}{n(n-1)}  } + C  \frac{n^{2m- 1} p^{2m - 2} m^4 }{\aut(T_m)^2} \leq C \frac{n^{2m - 1} p^{2m - 2} m^4 }{\aut(T_m)^2}
    \end{align*}
    where we have used the fact that
    \begin{align}\label{eq:nonMatching_Var_rTMA_LeadCancellation_nIdentity}
        (n)_{m+1}\insquare{(n)_{m-1} - \frac{(n)_{m+1}}{n(n-1)} } = (n)_{m+1} (n-2)_{m-3} \insquare{ -2n + 2mn - m^2 + m }  \leq 3 m n^{2m - 1}.
    \end{align}
    Therefore, from \eqref{eq:nonMatching_VarProjK_2_Perp_simpleTrees_triangleInequality},
    \begin{align}
        &\sqrt{\Var \Proj^\perp_{\signedKtwo}\!\inparen{  \simpleTrees  } } \leq C\sum_{m = 1}^{2 \log n} \sum_{T_m} \abs{ \phi(H(T_m)) } m! \frac{\lambda^m}{p^m} \sqrt{   \frac{n^{2m- 1} p^{2m - 2} m^4 }{\aut(T_m)^2}   }  \nonumber \\
        &= \frac{C}{p} \sum_{m = 1}^{2 \log n} \sum_{T_m } \abs{ \phi(H(T_m)) } m! \lambda^m m^2 \frac{n^{m - \frac{1}{2}}}{\aut(T_m)}  \nonumber =  \frac{C}{p} \sum_{m = 1}^{2 \log n}  \sum_{\substack{e_1,\dots,e_m \\ \text{simple tree}}}  \abs{ \phi(H(e_1,\dots,e_m)) } \lambda^m m^2 \frac{n^{m - \frac{1}{2}}}{(n)_{m+1}}  \nonumber \\
        &\leq \frac{C}{p n^{3/2}} \exp\inparen{ \frac{4 (\log n)^2}{n} + O\!\inparen{ \frac{(\log n)^3}{n^2}  } } \underbrace{\sum_{m = 1}^{2 \log n}  \sum_{\substack{e_1,\dots,e_m \\ \text{simple tree}}}  \abs{ \phi(H(e_1,\dots,e_m)) } \lambda^m m^2}_{ \leq Cn} \label{eq:nonMatching_VarProjK_2_Perp_simpleTrees_finalBound}
    \end{align}
    where we have used that $n^{m+1} = (n)_{m+1} \exp\inparen{ \frac{m(m+1)}{2n} + O\!\inparen{ \frac{m^3}{n^2}   }  }$, and where the sum in the last step is bounded by $C n$ by a straightforward modification of the proof of \eqref{eq:logZ_Kn_truncate_clogn} as follows. Indeed, in \eqref{eq:logZ_Kn_truncate_clogn_fixed_m_bound}, we have an extra $1/m^2$ factor which handles the additional $m^2$ factor in \eqref{eq:nonMatching_VarProjK_2_Perp_simpleTrees_finalBound}. Finally, in \eqref{eq:logZ_Kn_truncate_clogn_fixed_m_bound_sum_over_m} we sum over $m \geq 1$ instead of $m \geq 2 \log n$. This finishes the proof.
\end{proof}

\subsection{Mean part}\label{sec:nonMatchingMeanPart}

In this section, we establish Proposition \ref{prop:nonMatching_oneRepTrees_mainResult}. 
The following lemmas show that $\EE \oneRepTrees$ carries the deterministic part of the asymptotic log-likelihood distribution in \eqref{eq:nonMatching_LLdistributionUnderNull}. In particular, these results make precise the deterministic statement implied in the heuristic \eqref{eq:nonMatching_LLfirstFewTerms_GaussianForm}.
\begin{lemma}
    \label{lemma:E_oneRepTrees_mainResult}
    Let $A \sim \cQ$. Then
   \begin{align}
        \EE\insquare{\oneRepTrees} = - \frac{1-p}{p} \inparen{\frac{\EE\abs{M}}{n}}^2 + O\!\inparen{\frac{(\log n)^2}{n p}}.
        \label{eq:EoneRepTrees_is_meanSquare}
    \end{align}
\end{lemma}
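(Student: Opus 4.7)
The first step is to push the expectation inside the truncated sum defining $\oneRepTrees$. For any cluster $(e_1,\ldots,e_m)$ whose underlying multigraph is a tree with exactly one doubly-repeated edge, the $m-1$ distinct polymer edges form a simple tree on $m$ vertices; since $A \sim G(n,p)$ has i.i.d.\ $\mathrm{Bernoulli}(p)$ entries and $A_{e^*}^2 = A_{e^*}$, we have $\EE_{\cQ}[\prod_{j=1}^m A_{e_j}] = p^{m-1}$ and therefore $\EE_{\cQ}[\prod_j A_{e_j}/p^m - 1] = (1-p)/p$. This gives
\[
\EE[\oneRepTrees] = \frac{1-p}{p} \sum_{m=2}^{2\log n} \lambda^m \sum_{\substack{(e_1,\ldots,e_m)\\ \text{tree w/ one rep.~edge}}} \phi(H(e_1,\ldots,e_m)).
\]

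Following the combinatorial bookkeeping in Section~\ref{sec:Ursell_CombinatorialIdentities}, the inner ordered sum is rewritten as $(n)_m \sum_{\TmRep} \widetilde\phi(H(\TmRep))/[2\aut(\TmRep)]$: there are $(n)_m/\aut(\TmRep)$ labeled placements of $\TmRep$ in $[n]$, each yielding $m!/2$ ordered polymer tuples (the $1/2$ reflecting the indistinguishability of the two repeated polymer edges), and $\phi = \widetilde\phi/m!$. The key step is then to apply the Ursell identity Lemma~\ref{lem:ursell-function-tmrep-tm-identity} (with index $m \to m-1$), which converts this into a Cauchy-type convolution of pairs of simple trees. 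Setting $a_\ell := \sum_{T_\ell} \ell \cdot \ell!\, \phi(H(T_\ell))\, (n)_{\ell+1}/\aut(T_\ell)$ so that Proposition~\ref{prop:GibbsMatching_mean_var_from_CE} reads $\sum_{\ell=1}^{2\log n} a_\ell \lambda^\ell = \EE\abs{M} + O(1)$, I find
\[
\EE[\oneRepTrees] = -\frac{1-p}{p} \sum_{m=2}^{2\log n} \lambda^m \sum_{\ell=1}^{m-1} a_\ell\, a_{m-\ell} \cdot \frac{(n)_m}{(n)_{\ell+1}(n)_{m-\ell+1}}.
\]
Using the uniform estimate $(n)_m/[(n)_{\ell+1}(n)_{m-\ell+1}] = n^{-2}(1 + O(m^2/n))$ for $m \le 2\log n$, and recognizing the truncated double sum as $(\sum_\ell a_\ell \lambda^\ell)^2 = (\EE\abs{M})^2 + O(n)$ up to boundary indices $\ell + j > 2\log n$, the leading term collapses to $-\frac{1-p}{p} (\EE\abs{M}/n)^2$.

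The main obstacle is to bookkeep the errors cleanly so that they all collapse to $O((\log n)^2/(np))$. Three sources contribute: (i) the $O(m^2/n)$ factorial-ratio error, controlled after summing against $\sum_m \lambda^m \sum_\ell \abs{a_\ell a_{m-\ell}}$ using the Penrose-style absolute-convergence bounds from the proof of Theorem~\ref{thm:truncate_CE_clogn} together with $\sum_\ell \abs{a_\ell}\lambda^\ell = O(n)$, yielding $O((\log n)^2/n)$; (ii) the $O(1)$ error from replacing $\sum_\ell a_\ell \lambda^\ell$ by $\EE\abs{M}$, which after squaring and dividing by $n^2$ contributes $O(1/n)$; and (iii) the convolution boundary terms with $\ell + j > 2\log n$, bounded similarly by the tail estimates in Theorem~\ref{thm:truncate_CE_clogn}. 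All three are dominated by $O((\log n)^2/n)$, and multiplication by the prefactor $(1-p)/p = O(1/p)$ yields the claimed error.
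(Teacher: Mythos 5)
Your proposal is correct and follows essentially the same route as the paper: after taking expectations you invoke the same convolution identity (Lemma~\ref{lem:ursell-function-tmrep-tm-identity}), relate the resulting double sum to $(\EE\abs{M})^2$ via Proposition~\ref{prop:GibbsMatching_mean_var_from_CE}, and control the falling-factorial, truncation-boundary, and $O(1)$ errors with the same Penrose-type bounds used in the paper's Claims (i)--(iii). The only difference is presentational --- you work forward from $\EE[\oneRepTrees]$ and complete the square, whereas the paper expands $-\frac{1-p}{p}(\EE\abs{M}/n)^2$ and matches it term by term --- which is not a substantively different argument.
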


\begin{lemma}
    \label{lemma:Var_oneRepTrees_small}
    Let $A \sim \cQ$. Then
    \begin{align}
        \Var (\oneRepTrees) = O\left( \frac{1}{n^2 p^3} \right) .
    \end{align}
\end{lemma}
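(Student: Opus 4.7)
The plan is to mirror the strategy of Lemma \ref{lemma:VarProjK_2_Perp_simpleTrees}, applying the triangle inequality for variance at the level of distinct underlying simple trees rather than individual clusters. The crucial first step exploits that $A_{ij}^2 = A_{ij}$: for any one-repeated-edge cluster $(e_1, \dots, e_m)$ whose underlying simple tree $T'$ has $m$ vertices and $m-1$ distinct edges, $\prod_{j=1}^m A_{e_j} = \prod_{\{i,j\}\in T'} A_{ij}$, and $\EE[\prod_{j=1}^m A_{e_j}] = p^{m-1}$. Regrouping clusters by the underlying unlabeled simple tree $T_{m-1}$ (on $m$ vertices with $m-1$ edges) and centering yields
\begin{equation*}
\oneRepTrees - \EE[\oneRepTrees] = \sum_{m=2}^{2\log n}\frac{\lambda^m}{p^m}\sum_{T_{m-1}} D(T_{m-1}, m)\,\overline{T}_{m-1}(A),
\end{equation*}
where $D(T_{m-1}, m) := \sum \phi(H(e_1,\dots,e_m))$ is summed over ordered clusters with a fixed labeled copy of $T_{m-1}$ as underlying tree. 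Regrouping by underlying tree is essential: a bare triangle inequality at the level of individual clusters (using $\sqrt{\Var \prod_j A_{e_j}} \le p^{(m-1)/2}$) would lose a factor of $n$ in the final bound.

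Second, the triangle inequality for $\sqrt{\Var}$ combined with Claim \ref{claim:Var_G_N_m} applied to $T_{m-1}$ (giving $\sqrt{\Var \overline{T}_{m-1}(A)} \le C(m-1)p^{m-3/2}n^{m-1}/\aut(T_{m-1})$ for the leading-order term) produces
\begin{equation*}
\sqrt{\Var(\oneRepTrees)} \le \frac{C}{p^{3/2}}\sum_{m=2}^{2\log n} m\,\lambda^m \sum_{T_{m-1}}\frac{|D(T_{m-1}, m)|\,n^{m-1}}{\aut(T_{m-1})}.
\end{equation*}

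Third, I would translate the inner sum back into a sum over ordered clusters via careful combinatorial bookkeeping. Each unlabeled one-repeated-edge template $T_{m-1}^{\mathrm{rep}}$ with underlying simple tree $T_{m-1}$ corresponds to an edge-orbit of $\aut(T_{m-1})$ of size $\aut(T_{m-1})/\aut(T_{m-1}^{\mathrm{rep}})$, and the number of ordered clusters per such unlabeled $T_{m-1}^{\mathrm{rep}}$ is $(m!/2)(n)_m/\aut(T_{m-1}^{\mathrm{rep}})$. Combining these with $n^{m-1} \le e^{O((\log n)^2/n)}(n)_m/n$ yields
\begin{equation*}
\sum_{T_{m-1}}\frac{|D(T_{m-1},m)|\,n^{m-1}}{\aut(T_{m-1})} \le \frac{e^{O((\log n)^2/n)}}{n}\sum_{(e_1,\dots,e_m)\text{ one-rep tree}}|\phi(H(e_1,\dots,e_m))|.
\end{equation*}

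Finally, the Penrose tree-graph bound applied as in equation \eqref{eq:MDMean_cycles_O(1)_afterCayley} with $r=2$ (since a one-repeated-edge tree cluster contains a $C_2$) yields $\sum_{\text{one-rep clusters size }m}|\phi|\,\lambda^m \le (e\lambda\Delta)^m$ where $\Delta = 2n-3$. Under Assumption \ref{asmpt:nonMatchingEdges}, $e\lambda\Delta \le e/15 < 1$, so $\sum_{m\ge 2} m(e\lambda\Delta)^m = O(1)$. Combining all the pieces produces $\sqrt{\Var(\oneRepTrees)} = O(1/(n p^{3/2}))$, and squaring gives the claim. The main technical obstacle is the combinatorial identity in step three, which requires careful tracking of automorphism factors and the multiplicity from choosing which edge is doubled; this parallels the reorganization in the proof of Lemma \ref{lemma:VarProjK_2_Perp_simpleTrees}, now with the added complexity of an extra marked edge.
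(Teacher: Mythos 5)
Your proposal is correct and follows essentially the same route as the paper's proof: group the one-repeated-edge clusters by their underlying simple tree (the paper's identity \eqref{eq:one-rep-tree-identity} with the factors $\psi(\TmRep)$ and $(m+1)!/2$ is exactly your edge-orbit/ordering bookkeeping), apply the triangle inequality for $\sqrt{\Var}$ together with Claim \ref{claim:Var_G_N_m} to pull out the $1/p^{3/2}$, convert back to a sum over ordered clusters with the $n^{m}/(n)_{m+1}\approx 1/n$ factor, and finish with the Penrose-type bound on repeated-edge clusters (your $r=2$ argument is precisely Proposition \ref{prop:MDMean_cycles_O(1)}). Aside from an index shift and explicit centering, there is no substantive difference.
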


The proof of Proposition \ref{prop:nonMatching_oneRepTrees_mainResult} is immediate from the above two lemmas. 
In the sequel, we first prove Lemma \ref{lemma:Var_oneRepTrees_small} and then Lemma~\ref{lemma:E_oneRepTrees_mainResult}.

Let $\TmRep$ generically denote an unlabeled connected multi-tree with $m+1$ vertices and $m+1$ edges (so exactly one repeated edge). 
Define the (non-injective) map $s$ that maps a $\TmRep$ to the corresponding simple graph $T_m = s(\TmRep)$ by removing the repeated edge.  
Let $\psi(\TmRep)$ be the number of ways to place the repeated edge in a labeled version of $T_m$ so that the resulting graph is $\TmRep$. We suppress any notational reference to the map $s$ when clear from the context.
\begin{proof}[Proof of Lemma \ref{lemma:Var_oneRepTrees_small}]
    Recall \eqref{eq:decompose_loglikelihood}. 
    The number of copies of $T^{\text{rep}}_m$ in $A$ is $T_m(A) \psi(\TmRep)$, and there are $(m+1)!/2$ ways to associate the edges of $T^{\text{rep}}_m$ with $e_1, \dots, e_{m+1}$. Therefore, we have the identity
    \begin{equation}
        \oneRepTrees = \sum_{m=1}^{2 \log n - 1} \sum_{T^{\text{rep}}_m} \frac{(m+1)!}{2} \phi\inparen{ H( \TmRep ) } \psi(\TmRep) \lambda^{m+1} \left( \frac{T_m(A)}{p^{m+1}} - 1 \right).
        \label{eq:one-rep-tree-identity}
    \end{equation}
    By Claim \ref{claim:Var_G_N_m} and 
    $\sqrt{1 + z} = 1 + O(z)$,
    \begin{align*}
        \sqrt{ \Var T_m(A) } \leq \frac{\sqrt{2(1-p)} m n^m p^{m - 1/2} }{\aut(T_m)} \inparen{1 + O\!\inparen{ \frac{m^2}{np} } }.
    \end{align*}
    By the triangle inequality and the above estimate,
    \begin{align*}
        \sqrt{\Var (\oneRepTrees)} &\leq \sum_{m=1}^{2 \log n - 1} \sum_{T^{\text{rep}}_m} \frac{(m+1)!}{2} \abs{ \phi\inparen{ H( \TmRep ) } } \psi(\TmRep) \inparen{\frac{\lambda}{p}}^{m+1} \sqrt{ \Var T_m(A)} \\
        &\leq \frac{C}{p^{3/2}} \sum_{m=1}^{2 \log n - 1} \sum_{T^{\text{rep}}_m} m \frac{(m+1)!}{2} \abs{ \phi\inparen{ H( \TmRep ) } } \frac{\psi(\TmRep)}{\aut(T_m)} \lambda^{m+1} n^{m} \\
        &= \frac{C}{p^{3/2}} \sum_{m=1}^{2 \log n - 1} \sum_{\substack{e_1,\dots,e_{m+1} \\ \text{one rep.~edge tree}}} m \abs{ \phi\inparen{ H( e_1,\dots,e_{m+1} ) } } \frac{\lambda^{m+1} n^{m}}{(n)_{m+1}} ,
        \end{align*}
since $\aut(T_m) = \frac{(n)_{m+1}}{T_m(K_n)}$, the number of copies of $T^{\text{rep}}_m$ in $K_n$ is $T_m(K_n) \psi(\TmRep)$, and there are $(m+1)!/2$ ways to associate the edges of $T^{\text{rep}}_m$ with $e_1, \dots, e_{m+1}$.
Furthermore, changing $m$ to $m-1$ and using $n^{m} = (n)_{m} \exp\inparen{ \frac{m(m-1)}{2n} + O\!\inparen{ \frac{m^3}{n^2}   }  }$, we obtain
     \begin{equation*}
        \sqrt{\Var (\oneRepTrees)}  
        \leq \frac{C}{n p^{3/2}} \sum_{m=2}^{2 \log n} \sum_{\substack{e_1,\dots,e_m \\ \text{one rep.~edge tree}}} m \lambda^m \abs{ \phi(H(e_1,\dots,e_m)) } 
        = O \left( \frac{1}{n p^{3/2} } \right) 
    \end{equation*}
    by Proposition \ref{prop:MDMean_cycles_O(1)}. 
\end{proof}

The main challenge in the proof of Lemma \ref{lemma:E_oneRepTrees_mainResult} is to show that the series $\EE \oneRepTrees$, as given by taking the expectation of $\oneRepTrees$ as defined in \eqref{eq:decompose_loglikelihood}, is related to the square of another series \eqref{prop:GibbsMatching_mean_var_from_CE} for $\EE\abs{M}$. 
A key component of the following proof is the combinatorial identity established in Lemma~\ref{lem:ursell-function-tmrep-tm-identity}.

\begin{proof}[Proof of Lemma \ref{lemma:E_oneRepTrees_mainResult}]
    By \eqref{eq:one-rep-tree-identity}, we have
    \begin{align*}
        \EE \insquare{  \oneRepTrees  } = \frac{1-p}{p} \sum_{m = 1}^{2 \log n - 1} \sum_{\TmRep} \frac{(m+1)!}{2} \phi\inparen{H(\TmRep)} \lambda^{m+1} \frac{(n)_{m+1}}{\aut(\TmRep)} .
    \end{align*}
    For $m \leq 2 \log n -1$, we have the approximation
    \begin{align}
        \label{eq:fallingfac_to_power_approx}
        (n)_{m+1} = n^{m+1} \exp \inparen{  - \frac{m(m+1)}{2n} + O\!\inparen{\frac{m^3}{n^2}}   } = n^{m+1} \insquare{1 + O\!\inparen{\frac{(\log n)^2}{n}}}.
    \end{align}
    Using this, we have 
    \begin{equation}\label{eq:nonMatching_EoneRepTree_LHS_W_Expansion}
        \EE \insquare{  \oneRepTrees  } = W + O\!\inparen{\frac{(\log n)^2}{n}}W 
    \end{equation}
    where 
    \begin{equation*}
        W := \frac{1-p}{p} \sum_{m = 1}^{2 \log n - 1} (n \lambda)^{m+1} \sum_{\TmRep} \frac{(m+1)!}{2 \aut (\TmRep)} \phi\inparen{H(\TmRep)}.
    \end{equation*}

    On the other hand, from Proposition \ref{prop:GibbsMatching_mean_var_from_CE}, and with the approximation \eqref{eq:fallingfac_to_power_approx}, we have
    \begin{align}
        - \frac{1-p}{p} \inparen{\frac{\EE\abs{M}}{n}}^2 &= -\frac{1-p}{p n^2 }\inparen{  \sum_{m=1}^{2 \log n} \sum_{T_m} m! m \phi(H(T_m)) \lambda^m \frac{(n)_{m+1}}{\aut(T_m)}   }^2 + O\inparen{\frac{1}{n}} \nonumber \\
        &= X + O\!\inparen{\frac{(\log n)^2}{n}} X +  O\inparen{\frac{1}{n}} 
        \label{eq:nonMatching_EEoneRepTree_RHS_expand_in_X}
    \end{align}
    where in the first line we used that $\EE\abs{M}/n = O(1)$ as deduced from \eqref{eq:MDMean_convergence_to_c}, and where
    \begin{align*}
        X &:= - \frac{1-p}{p} \inparen{  \sum_{m=1}^{2 \log n} \sum_{T_m} m! m \phi(H(T_m)) \lambda^m \frac{n^m }{\aut(T_m)}   }^2.
    \end{align*}
For a graph $H$ on $m$ vertices, we denote the unnormalized Ursell function by
$$
\widetilde{\phi}(H) := m! \cdot \phi(H).
$$
    Expand the square in $X$ and write 
    \begin{equation*}
        X = X_{\leq 2 \log n} + X_{> 2 \log n} 
    \end{equation*}
    where 
    \begin{align*}
        X_{\leq 2 \log n} &:= - \frac{1-p}{p} \sum_{m = 1}^{2 \log n - 1} (n\lambda)^{m+1}  \sum_{\ell = 1}^{m} \sum_{(T_\ell, T_{m+1 -\ell})} \frac{\ell \widetilde{\phi}(H(T_\ell))}{\aut (T_\ell)}  \frac{(m+1-\ell) \widetilde{\phi}(H(T_{m+1-\ell}))}{\aut(T_{m+1-\ell})},\\
        X_{> 2 \log n} &:= - \frac{1-p}{p} \sum_{m = 2 \log n}^{4 \log n - 1} (n\lambda)^{m+1} \sum_{\ell = m+1 - 2\log n}^{2 \log n} \sum_{(T_\ell, T_{m+1 -\ell})} \frac{  \ell \widetilde{\phi}(H(T_\ell))}{\aut (T_\ell)}  \frac{(m+1-\ell) \widetilde{\phi}(H(T_{m+1-\ell}))}{\aut(T_{m+1-\ell})}.
    \end{align*} 
    In words, $X_{\leq 2 \log n}$ and $X_{> 2 \log n}$ sum over the pairs of simple trees $(T_\ell, T_{m+1 - \ell})$ which have respectively $\leq 2 \log n$ and $> 2 \log n$ total number of edges.

    We state the following claims.
    \begin{alignat*}{2}
        &\text{(i)}\,\, W = O\!\inparen{\frac{1}{p}}, &\qquad\qquad \text{(ii)}\,\, &X = O\!\inparen{\frac{1}{p}}, \\
        & \text{(iii)}\,\, X_{> 2 \log n} = O\!\inparen{ \frac{\log n}{n^2 p}  }, &\qquad\qquad  \text{(iv)}\,\, &W = X_{\leq 2 \log n}.
    \end{alignat*}
%
    Using the above claims in \eqref{eq:nonMatching_EoneRepTree_LHS_W_Expansion} and \eqref{eq:nonMatching_EEoneRepTree_RHS_expand_in_X} yields the desired \eqref{eq:EoneRepTrees_is_meanSquare}. It remains to prove the claims.

\smallskip
    \noindent
    \underline{\textbf{Proof of Claim (i):}} 
From the definition of $W$, we deduce that
        \begin{equation*}
        |W| \le \frac{C}{p} \sum_{m = 1}^{2 \log n - 1} \sum_{\substack{e_1,\dots,e_{m+1} \\ \text{one rep.~edge tree}}} \lambda^{m+1}  |\phi\inparen{H(e_1,\dots,e_{m+1})}| ,
    \end{equation*}
    so the claim follows from Proposition \ref{prop:MDMean_cycles_O(1)}.

\smallskip
    \noindent
    \underline{\textbf{Proof of Claim (ii):}} The claim holds because the expression within the square in $X$ can be shown to be bounded in absolute value by 
    $$
    C \sum_{m=1}^{2 \log n} \sum_{\substack{e_1,\dots,e_m \\ \text{simple tree}}} m \lambda^m |\phi(H(e_1,\dots,e_m))| \le C n
    $$
    by a straightforward modification of the proof of Theorem \ref{thm:truncate_CE_clogn}. In particular, in \eqref{eq:logZ_Kn_truncate_clogn_fixed_m_bound_sum_over_m}, we sum over $m \geq 1$ instead of $m \geq 2 \log n$.

\smallskip
    \noindent
    \underline{\textbf{Proof of Claim (iii):}} Rewrite $X_{> 2 \log n}$ as
    \begin{align*}
        X_{> 2 \log n} = -\frac{1-p}{pn^2} \sum_{\ell= 1}^{2 \log n} \sum_{\ell' = 2 \log n + 1 - \ell}^{2 \log n} \sum_{(T_\ell, T_{\ell'})} \ell! \ell \phi(H(T_{\ell})) \lambda^{\ell} \frac{n^{\ell+1}}{\aut (T_{\ell})} \ell'! \ell' \phi(H(T_{\ell'})) \lambda^{\ell'} \frac{n^{\ell'+1}}{\aut (T_{\ell'})}.
    \end{align*}
    By the triangle inequality, and using \eqref{eq:fallingfac_to_power_approx}, we have 
    \begin{align*}
        \abs{X_{> 2 \log n}} \leq \frac{C}{n^2 p} \sum_{\ell= 1}^{2 \log n}  \sum_{T_\ell}& \ell! \ell \abs{ \phi(H(T_{\ell})) } \lambda^{\ell} \frac{(n)_{\ell+1}}{\aut (T_{\ell})} \underbrace{ \sum_{\ell' = 2 \log n + 1 - \ell}^{2 \log n} \ell'! \ell' \abs{ \phi(H(T_{\ell'})) }\lambda^{\ell'} \frac{(n)_{\ell'+1}}{\aut (T_{\ell'})}}_{Y(\ell)} ,
    \end{align*}
    where, rewriting in terms of polymers, 
    \begin{align*}
        Y(\ell) = \sum_{\ell' = 2 \log n + 1 - \ell}^{2 \log n} \sum_{\substack{e_1,\dots,e_{\ell'} \\ \text{simple tree}} } \ell' \abs{\phi(H(e_1,\dots,e_{\ell'}))} \lambda^{\ell'}.
    \end{align*}
    Let $\Delta := 2n - 3$. By similar arguments as in the proof of Theorem \ref{thm:truncate_CE_clogn} using the Penrose tree-graph bound, we have analogously to \eqref{eq:logZ_Kn_truncate_clogn_fixed_m_bound_sum_over_m}, for fixed $\ell'$,
    \begin{align}
        \label{eq:meanExpression_simpleTreeBound}
        \sum_{\substack{e_1,\dots,e_{\ell'} \\ \text{simple tree}} } \ell' \abs{\phi(H(e_1,\dots,e_{\ell'}))} \lambda^{\ell'} \leq \frac{n}{2} (e\lambda \Delta)^{\ell'}.
    \end{align}
    By Assumption \ref{asmpt:nonMatchingEdges}, $e\lambda \Delta \leq \frac{1}{e}$. It follows that
    \begin{align*}
        Y(\ell) \leq \frac{n}{2}\sum_{\ell' = 2 \log n + 1 - \ell}^{2 \log n} (e \lambda \Delta)^{\ell'} \leq n (e\lambda \Delta)^{2 \log n + 1 - \ell} \leq \frac{1}{n}  (e\lambda \Delta)^{- \ell}.
    \end{align*}
    Using this upper bound for $Y(\ell)$ together with \eqref{eq:meanExpression_simpleTreeBound} for the sum in $\ell$, we have 
    \begin{align*}
        &\sum_{\ell= 1}^{2 \log n}  \sum_{T_\ell} \ell! \ell \abs{ \phi(H(T_{\ell})) } \lambda^{\ell} \frac{(n)_{\ell+1}}{\aut (T_{\ell})} Y(\ell) 
        \leq \frac{1}{n} \sum_{\ell= 1}^{2 \log n} (e\lambda \Delta)^{- \ell}  \sum_{T_\ell} \ell! \ell \abs{ \phi(H(T_{\ell})) } \lambda^{\ell} \frac{(n)_{\ell+1}}{\aut (T_{\ell})} \\
        &\quad\leq \frac{1}{n} \sum_{\ell= 1}^{2 \log n} (e\lambda \Delta)^{- \ell}  \sum_{\substack{e_1,\dots,e_{\ell} \\ \text{simple tree}} } \ell \abs{\phi(H(e_1,\dots,e_{\ell}))} \lambda^{\ell} 
        \leq \frac{1}{n} \sum_{\ell= 1}^{2 \log n} (e\lambda \Delta)^{- \ell}  \frac{n}{2} (e \lambda \Delta)^\ell \leq \log n.
    \end{align*}
    This leads to the claimed bound on $\abs{X_{> 2 \log n}}$.

\smallskip
\noindent
\underline{\textbf{Proof of Claim (iv):}} Comparing the expressions for $X_{\leq 2 \log n}$ and $W$, we see that it is equivalent to show, for every $1 \leq m \leq 2\log n - 1$, that \eqref{eq:convolutionOfUrsell_overTrees} holds. Hence, the proof is complete.
\end{proof}

\subsection{Dropping cycles and \texorpdfstring{$\geq 2$}{>= 2} repeated edge subgraphs}\label{sec:nonMatchingRemainderLessThanTwoLogn}
In this section we establish Proposition \ref{prop:nonMatching_remainderLessThanTwoLogn_mainResult} which  will follow from a series of lemmas that bound the sub-sums of $\mathsf{remainder}$ defined by:
\begin{align}\label{eq:nonMatching_remainderLessThanTwoLogn_decomposition}
    \mathsf{remainder}_{\leq 2 \log n} = \mathsf{simpleCyclic} + \mathsf{oneRepCyclic} + \mathsf{moreThanTwoRep},
\end{align}
where
\begin{align*}
    \mathsf{simpleCyclic} &:= \sum_{m = 3}^{2 \log n} \sum_{ \substack{e_1,\dots,e_m \\ \text{$e_i$'s distinct} \\ \text{contains cycle}  } } \phi(H(e_1,\dots,e_m)) \lambda^m \insquare{ \prod_{j=1}^{m} \frac{A_{e_j}}{p} - 1  }, \\
    \mathsf{oneRepCyclic} &:= \sum_{m = 4}^{2 \log n} \sum_{ \substack{e_1,\dots,e_m \\ \text{only one rep.~edge,} \\ \text{contains cycle}  } } \phi(H(e_1,\dots,e_m)) \lambda^m \insquare{ \prod_{j=1}^{m} \frac{A_{e_j}}{p} - 1  }, \\
    \mathsf{moreThanTwoRep} &:= \sum_{m = 3}^{2 \log n} \sum_{\substack{e_1,\dots,e_m \\ \text{at least two rep.~edges}} } \phi(H(e_1,\dots,e_m)) \lambda^m \insquare{ \prod_{j=1}^{m} \frac{A_{e_j}}{p} - 1  }.
\end{align*}
In what follows, assume that Assumption \ref{asmpt:nonMatchingEdges} is in force.

\begin{lemma}\label{lemma:nonMatching_simpleCyclic_small}
    It holds that $\mathsf{simpleCyclic} = O_{\PP}\!\inparen{ \frac{1}{n \sqrt{p}}  }$.
\end{lemma}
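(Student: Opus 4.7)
The plan is to show this via a mean-variance argument, since in this regime $A\sim\cQ=G(n,p)$ has independent edges and the inner products of distinct edges are independent Bernoulli's.

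First, I would observe that for any fixed distinct tuple $e_1,\dots,e_m$, the $A_{e_j}$'s are independent $\mathrm{Ber}(p)$, so $\EE\big[\prod_j A_{e_j}/p^m-1\big]=0$. Hence $\EE[\mathsf{simpleCyclic}]=0$, and by Chebyshev it suffices to show $\Var(\mathsf{simpleCyclic})=O(1/(n^2 p))$.

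Next, I would reorganize the polymer sum by template. For each connected simple graph $G$ with $N$ vertices and $m$ edges, every labeled copy of $G$ in $K_n$ contributes $m!$ orderings of its edges to the inner sum, and the Ursell function depends only on the shape. Thus
\begin{equation*}
\mathsf{simpleCyclic}=\sum_{m=3}^{2\log n}\sum_{\substack{G\text{ conn.~simple}\\ |E(G)|=m,\ G\text{ cyclic}}} m!\,\phi(H(G))\,\lambda^m\Big[\tfrac{G(A)}{p^m}-G(K_n)\Big].
\end{equation*}
Applying the triangle inequality for $L^2$,
\begin{equation*}
\sqrt{\Var(\mathsf{simpleCyclic})}\le \sum_{m=3}^{2\log n}\sum_{G}m!\,|\phi(H(G))|\,\lambda^m\,\frac{\sqrt{\Var G(A)}}{p^m}.
\end{equation*}
For cyclic connected $G$ with $m$ edges, $N\le m$, so Claim~\ref{claim:Var_G_N_m} (applicable since $m\le 2\log n=o(\sqrt{np})$) gives
\begin{equation*}
\sqrt{\Var G(A)}\ \le\ C\,\frac{m\,n^{N-1}\,p^{m-1/2}}{\aut(G)}.
\end{equation*}
Plugging this in and using $p^{m-1/2}/p^m=1/\sqrt{p}$ produces a bound involving $m\cdot m!\,n^{N-1}/\aut(G)$.

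Third, I would convert this back into a sum over polymer tuples. Since $n^{N-1}=(n)_N\cdot n^{-1}\cdot(1+O((\log n)^2/n))$ for $N\le 2\log n+1$, and since $m!\,(n)_N/\aut(G)$ equals the number of ordered distinct tuples $(e_1,\dots,e_m)$ of shape $G$, we get
\begin{equation*}
\sqrt{\Var(\mathsf{simpleCyclic})}\le \frac{C}{n\sqrt{p}}\sum_{m=3}^{2\log n}\sum_{\substack{e_1,\dots,e_m\text{ distinct}\\ \text{contains cycle}}} m\,|\phi(H(e_1,\dots,e_m))|\,\lambda^m.
\end{equation*}
The remaining sum is dominated by the bound in Proposition~\ref{prop:MDMean_cycles_O(1)} (simple cyclic clusters are a subset of ``repeated edge or cycle'' clusters, and the proposition's bound tolerates the extra $m$ factor since the proof absorbs the $m$ into a geometric series, cf.~\eqref{eq:MDMean_cycles_O(1)_afterCayley}). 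This yields $\sqrt{\Var(\mathsf{simpleCyclic})}=O(1/(n\sqrt{p}))$, and Chebyshev finishes the proof.

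The main obstacle, modest as it is, lies in the bookkeeping: one must be careful that the leading variance term $\Theta(m^2 q^{2m-1}(n)_N(n)_{N-2}/\aut(G)^2)$ for cyclic $G$ is genuinely smaller by a factor of $n^2$ than the tree case (because $N\le m$ for cyclic $G$ while $N=m+1$ for trees), which is precisely what buys the extra $1/n$ improvement over the tree variance bound in Lemma~\ref{lemma:VarProjK_2_Perp_simpleTrees}. Once this gain is extracted, the Penrose-tree bound embedded in Proposition~\ref{prop:MDMean_cycles_O(1)} handles the overall summability uniformly in $m$.
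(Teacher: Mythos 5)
Your proposal is correct and follows essentially the same route as the paper's proof: reorganize the cyclic-cluster sum by simple templates, note the mean is zero, bound $\sqrt{\Var\,\overline{G}_{N,m}(A)}$ via Claim~\ref{claim:Var_G_N_m}, convert back to a sum over polymer tuples (exploiting that cyclic templates have $N\le m$ vertices, which yields the extra $1/n$ over the tree case), and control the remaining weighted cluster sum with the Penrose-tree bound underlying Proposition~\ref{prop:MDMean_cycles_O(1)}. The only cosmetic difference is that you suppress the $(1+m^4/(np^2))$ correction in the variance estimate, which is harmless here since $p$ is constant and $m\le 2\log n$.
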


\begin{proof}
    As in Claim \ref{claim:Var_G_N_m}, let $G_{N,m}$ generically denote any connected unlabeled simple graph on $N$ vertices and $m$ vertices that contains a cycle. Define $\overline{G}_{N,m}(A) := G_{N,m}(A) - \EE_{A \sim \cQ}  G_{N,m}(A)$. We have the identity
    \begin{equation*}
        \mathsf{simpleCyclic} = \sum_{N = 3}^{2 \log n} \sum_{m = N}^{ \binom{N}{2} \wedge 2 \log n } \sum_{G_{N,m}} m! \phi(H(G_{N,m})) \frac{\lambda^m}{p^m} \overline{G}_{N,m}(A).
    \end{equation*}
    Clearly $\EE \insquare{ \mathsf{simpleCyclic} } = 0$. It suffices to show $\Var \insquare{ \mathsf{simpleCyclic} }$ is vanishing---indeed we will show $\Var \insquare{ \mathsf{simpleCyclic} } = O( \frac{1}{p n^2} )$.

    By Claim \ref{claim:Var_G_N_m} and 
    $\sqrt{1 + z} = 1 + O(z)$, for some constant $C > 0$ that may differ from line to line, 
    \begin{equation*}
        \sqrt{ \Var \overline{G}_{N,m}(A) } \leq C \frac{ m n^{N-1} p^{m - 1/2} }{\aut(G_{N,m})} \inparen{1 + \frac{m^4}{np^2} } .
    \end{equation*}
    By the triangle inequality and the above estimate, 
    \begin{align*}
        \sqrt{ \Var \insquare{ \mathsf{simpleCyclic} } } &\leq \sum_{N = 3}^{2 \log n} \sum_{m = N}^{ \binom{N}{2} \wedge 2 \log n } \sum_{G_{N,m} } m! \abs{ \phi(H(G_{N,m})) } \frac{\lambda^m}{p^m} \sqrt{\Var \overline{G}_{N,m}(A)} \\
        &\leq C \sqrt{\frac{1}{p}}  \sum_{N = 3}^{2 \log n} \sum_{m = N}^{ \binom{N}{2} \wedge 2 \log n } \sum_{G_{N,m}} m \cdot m! \abs{ \phi(H(G_{N,m})) } \lambda^m \frac{n^{N-1}}{\aut(G_{N,m})} \\
        &= C \sqrt{\frac{1}{p}}  \sum_{N = 3}^{2 \log n} \sum_{m = N}^{ \binom{N}{2} \wedge 2 \log n } \sum_{\substack{e_1,\dots,e_m \text{ cyclic} \\ \text{$N$ vertices, $e_i$'s distinct}}} m \abs{ \phi(H(e_1,\dots,e_m)) } \lambda^m \frac{n^{N-1}}{ \binom{n}{N} N! } \\
        &\leq \frac{C}{n} \sqrt{\frac{1}{p}}  \underbrace{\sum_{N = 3}^{2 \log n}  \sum_{m = N}^{ \binom{N}{2} \wedge 2 \log n } \sum_{\substack{e_1,\dots,e_m \text{ cyclic}  \\ \text{$N$ vertices, $e_i$'s distinct}}} m \abs{ \phi(H(e_1,\dots,e_m)) } \lambda^m}_{\leq C}
    \end{align*}
    where we used $n^N = (n)_{N} \exp\inparen{  \frac{N^2}{2n} + O\!\inparen{ \frac{N^3}{n^2} }  }$, and where the sum is bounded by Proposition~\ref{prop:MDMean_cycles_O(1)}.
\end{proof}

\begin{lemma}\label{lemma:nonMatching_oneRepCyclic_small}
    It holds that $\mathsf{oneRepCyclic} = O_{\PP}\!\inparen{ \frac{1}{n p}}$.
\end{lemma}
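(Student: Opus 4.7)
The plan is to show $|\EE[\mathsf{oneRepCyclic}]| = O(1/(np))$ and $\sqrt{\Var[\mathsf{oneRepCyclic}]} = O(1/(np))$ separately, and then conclude by Chebyshev's inequality.

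For the mean, fix a cluster $(e_1,\ldots,e_m)$ with exactly one repeated edge $e$. Under $\cQ = G(n,p)$, the identity $A_e^2 = A_e$ combined with $p = q$ yields $\EE_\cQ[\prod_{j=1}^m A_{e_j}/p^m] = p^{m-1}/p^m = 1/p$, so each summand contributes $(1-p)/p$ in expectation and
\[
\EE[\mathsf{oneRepCyclic}] = \frac{1-p}{p} \sum_{m=4}^{2\log n} \sum_{\substack{(e_1,\ldots,e_m) \\ \text{one rep, cyclic}}} \phi(H(e_1,\ldots,e_m)) \lambda^m.
\]
The key observation is that the underlying simple graph $G_0$ (obtained by deleting the repeated edge) has $m-1$ edges, is connected, and contains a cycle, so it spans at most $N \le m-1$ vertices --- one fewer than a tree with $m-1$ edges. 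I would adapt the Penrose tree-graph argument of Proposition~\ref{prop:MDMean_cycles_O(1)} to convert this extra vertex saving into an additional factor $1/n$. Concretely, for each $r \ge 3$ (the length of the secondary cycle among the distinct edges, beyond the $C_2$ in the incompatibility graph induced by the repeated edge), one places the repeated edge (cost $\binom{n}{2}$), then $r-1$ further polymers forming a chain of the secondary cycle (cost $O(n^{r-1})$), then a final polymer constrained to close that cycle (cost $O(1)$ rather than $O(n)$), and finally extends to a spanning tree of $H$ via the standard Penrose iteration. This yields the desired $O(1/n)$ bound on the polymer sum and hence $|\EE[\mathsf{oneRepCyclic}]| = O(1/(np))$.

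For the variance, I would rewrite $\mathsf{oneRepCyclic}$ as a sum over unlabeled templates $G$ with one repeated edge and cyclic underlying $G_0 = s(G)$, analogously to the identity \eqref{eq:one-rep-tree-identity}, and then apply the triangle inequality for $\sqrt{\Var}$:
\[
\sqrt{\Var[\mathsf{oneRepCyclic}]} \le \sum_{m,G} \frac{m!}{2} |\phi(H(G))| \psi(G) \frac{\lambda^m}{p^m} \sqrt{\Var[G_0(A)]}.
\]
Applying Claim~\ref{claim:Var_G_N_m} to $G_0$ (connected, cyclic, on $N \le m-1$ vertices with $m-1$ edges) gives $\sqrt{\Var[G_0(A)]} \le C(m-1) n^{N-1} p^{m-3/2}/\aut(G_0)$. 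Converting the template sum back into a polymer sum and applying the same refined Penrose count as above yields $\sqrt{\Var[\mathsf{oneRepCyclic}]} = O(1/(n p^{3/2}))$, which in the constant-$p$ regime of Assumption~\ref{asmpt:nonMatchingEdges} is $O(1/(np))$.

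The main obstacle is executing the refined Penrose count so that the extra $1/n$ factor genuinely materializes. Proposition~\ref{prop:MDMean_cycles_O(1)} only requires handling a single cycle (either $C_2$ from a repeated edge or $C_r$ for $r \ge 3$) before extending via a spanning tree; here, one must simultaneously accommodate both the $C_2$ from the repeated edge and a secondary cycle $C_r$ with $r \ge 3$. The most delicate step is showing that designating one polymer as ``cycle-closing'' reduces its contribution from $O(n)$ to $O(1)$ without disrupting the rest of the Penrose bookkeeping; once this is in place, summing the resulting geometric series in $e\lambda\Delta$ --- which converges absolutely under $\lambda \le 1/(30n)$ --- delivers the required bound.
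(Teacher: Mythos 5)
Your overall architecture (bound the mean, bound the variance, conclude by Chebyshev) is a legitimate route and is genuinely different from the paper's treatment of this lemma: the paper never centers $\mathsf{oneRepCyclic}$, but instead splits it into a random and a deterministic sum and bounds the random sum's absolute value pointwise on a high-probability event where $\Delta(A) \le 2.02np$ and the cycle counts $C_r(A)$ are at most twice their means, via an iterative Penrose construction with a designated ``repeated-link'' edge of the spanning tree. Your plan is closer in spirit to how the paper handles the neighboring terms (Lemma~\ref{lemma:nonMatching_simpleCyclic_small} and the one-repeated-edge trees). The variance half of your argument does go through with tools already in the paper: the identity reducing the shape-$G$ contribution to $\overline{G_0}(A)$, Claim~\ref{claim:Var_G_N_m} applied to the cyclic simple graph $G_0$ (which has $N \le m-1$ vertices, giving the crucial $n^{N-1}$), and then Proposition~\ref{prop:MDMean_cycles_O(1)} to control the resulting polymer sum; this already yields $\sqrt{\Var} = O\!\inparen{\frac{1}{np^{3/2}}}$ without any refinement, since the factor $n^{N-1} = n^N/n$ supplies the needed $1/n$.

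The gap is in the mean bound, which is where the real content of the lemma sits. You need $\bigl|\sum_m \sum_{\text{one rep., cyclic}} \phi(H(e_1,\dots,e_m))\,\lambda^m\bigr| = O(1/n)$, i.e.\ a refinement of Proposition~\ref{prop:MDMean_cycles_O(1)} gaining a factor $1/n$ when a repeated edge and a cycle are \emph{simultaneously} present, and your sketched mechanism for it does not mesh with the Penrose bookkeeping as stated. The enumeration runs along a spanning tree $t$ of the incompatibility graph $H$, and the indices carrying the cycle polymers need not be visited contiguously in that exploration; if you build the cycle one edge at a time subject only to adjacency with the $t$-parent, each such step genuinely costs $\Theta(n)$ and the cycle constraint only bites globally, so there is no well-defined ``final cycle-closing polymer'' whose cost is $O(1)$. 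The correct way to realize both savings is to commit to them \emph{before} the iteration: pre-select the entire $r$-cycle (at most $C_r(K_n) \approx n^r$ choices, already one factor of $n$ cheaper than $r$ tree-like placements, with each later visit to a cycle index costing $O(1)$), and separately designate one edge of $t$ as the link between the two copies of the repeated polymer, which forces that assignment and saves the second factor of $n$. This is exactly how the paper's proof (Steps 1--4, in both the random and deterministic parts) organizes the count; with that estimate in hand, your mean bound $\frac{1-p}{p}\cdot O(1/n)$ and the Chebyshev conclusion are fine.
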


\begin{proof}
    Write 
    \begin{align}\label{eq:nonMatching_oneRepCyclic_decomposeIntoRandomDeterministic}
        \mathsf{oneRepCyclic} = \mathsf{oneRepCyclicRandom}  - \mathsf{oneRepCyclicDeterministic},
    \end{align}
    where 
    \begin{align*}
        \mathsf{oneRepCyclicRandom} &= \sum_{m = 4}^{2 \log n} \sum_{ \substack{e_1,\dots,e_m \\ \text{only one rep.~edge,} \\ \text{contains cycle}  } } \phi(H(e_1,\dots,e_m))  \frac{\lambda^m}{p^m} \prod_{j=1}^{m} A_{e_j}  \\
        \mathsf{oneRepCyclicDeterministic} &= \sum_{m = 4}^{2 \log n} \sum_{ \substack{e_1,\dots,e_m \\ \text{only one rep.~edge,} \\ \text{contains cycle}  } } \phi(H(e_1,\dots,e_m))  \lambda^m  
    \end{align*}
    We have
    \begin{align*}
        \abs{ \mathsf{oneRepCyclicRandom} } \leq \sum_{m = 1}^{2 \log n - 3} \sum_{r = 3}^{2 \log n - m} \sum_{\substack{e_1,\dots,e_{m+r} \\ G \supseteq C_r \\ \text{some } e_{j_1} = e_{j_2} }  } \abs{ \phi(H(e_1,\dots,e_{m+r})) } \frac{\lambda^{m+r}}{p^{m+r}} \prod_{j=1}^{m+r} A_{e_j}.
    \end{align*}
    Fix $m$ and $r$. By a similar application of the Penrose tree-graph bound as in equation \eqref{eq:logZ_A_truncate_clogn_apply_penrose}, we have 
    \begin{align*}
         &\sum_{\substack{e_1,\dots,e_{m+r} \\ G \supseteq C_r \\ \text{some } e_i = e_j }  } \abs{ \phi(H(e_1,\dots,e_{m+r})) } \prod_{j=1}^{m+r} A_{e_j} \leq \frac{1}{(m+r)!} \sum_{t \in \cT_{m+r - 1}^{\text{lab}}} \sum_{\substack{e_1,\dots,e_{m+r} \in A \\ G \supseteq C_r \\ \text{some } e_i = e_j }  } \boldsymbol{1}\!\inbraces{  t \in \cT(H)^{\text{lab}}  }. 
    \end{align*}
    Fix $t \in \cT_{m+r-1}^{\text{lab}}$. We next describe an iterative process to construct clusters $\inbraces{e_1,\dots,e_{m+r}}$ with $e_i$'s in $A$, and such that its incompatibility graph $H$ contains $t$ as a spanning tree, and where $G$ contains at least one $r$-cycle, and has at least one repeated polymer.
    
    \underline{Step 1:} Fix $V' \subseteq V(t) = [m+r]$ with $\abs{V'} = r$. The set $V'$ will be the coordinates in the cluster $\inbraces{e_1, \dots, e_{m+r}}$ which contain a single $r$-cycle. There are at most $\binom{m + r}{r}$ ways to do this. 
    
    \underline{Step 2:} Choose the $r$ distinct polymers in $A$ to make up a single $r$-cycle: there are at most $C_r(A)$ ways to do this, where $C_r(A)$ is the number of labeled $r$-cycles in $A$.

    \underline{Step 3:} Pick an edge $\inbraces{i_*,j_*}$ in $t$ that will correspond to a link between a pair of repeated polymers. Not all edges in $t$ can be chosen, for instance any edge between vertices in $t$ that are chosen to represent the distinct cycle polymers is excluded. Nevertheless, there are at most $m+r - 1$ ways to do this.
    
    \underline{Step 4:} Pick a cycle polymer $\tilde{e}$ to assign to an arbitrary vertex $i_1 \in V'$. (We may take $i_1$ to be the smallest index in $V'$.) There are $r$ choices for $\tilde{e}$ out of the chosen $r$-cycle polymers. 

    Iteratively, suppose coordinates $i_1, \dots, i_j$ have been assigned to polymers $e_{i_1} = \tilde{e}, e_{i_2},\dots, e_{i_j}$. There must exist $i_{j+1} \in [m+r] \setminus \inbraces{i_1,\dots,i_{j}}$ such that $i_{j+1}$ is adjacent to one of $\inbraces{i_1,\dots,i_j}$ in $t$. Without loss of generality suppose $\inbraces{i_j, i_{j+1}} \in t$. 
    \begin{itemize}
        \item If $i_{j+1} \in V'$, then we attempt to assign a cycle polymer to $i_{j+1}$. There are at most two choices for $e_{i_{j+1}}$, which has to be compatible with the assignment of $e_{i_j}$ to $i_j$. If there are no compatible choices for a cycle polymer for $e_{i_{j+1}}$, we terminate the iteration and output an incomplete assignment.
        \item If $i_{j+1} \notin V'$ and $\inbraces{i_j, i_{j+1} } \neq \inbraces{i_*, j_*}$, then we can assign all possible distinct incident edges to $e_{i_j}$ that are in $A$, as well as $e_{i_j}$ itself. There are at most $2(\Delta(A) - 1) + 1 $ such choices for $e_{i_{j+1}}$.
        \item If $i_{j+1} \notin V'$ and $\inbraces{i_j, i_{j+1} } = \inbraces{i_*, j_*}$, then we assign $e_{i_{j}}$ to $e_{i_{j+1}}$.
    \end{itemize}
    For a chosen $r$-cycle, the subset of completed cluster assignments that had utilized all chosen $r$-cycle polymers contain all the desired ordered clusters $\inbraces{e_1,\dots,e_{m+r}}$ satisfying $t \in \cT(H\inparen{e_1,\dots,e_{m+r}})$ and $G\inparen{e_1,\dots,e_{m+r}}$ containing that chosen $r$-cycle. By Cayley's theorem $\abs{\cT_{m+r - 1}^{\text{lab}} } = (m+r)^{m+r - 2}$.

    Note that $\EE\insquare{C_r(A)} = (n)_r p^r / {2r}$. Claim \ref{claim:Var_G_N_m} applied with $G_{r,r}$ provides an upper bound on $\Var\insquare{C_r(A)}$. This leads to 
    \begin{align}
        \PP\insquare{ C_r(A) > 2 \EE\insquare{ C_r(A)}  } \leq \frac{\Var\insquare{C_r(A)}}{\inparen{\EE C_r(A)}^2} \leq \frac{C (\log n)^2}{np}.
    \end{align}
    Therefore with probability at least $1 - O(\frac{1}{n})$, we have $C_r(A) \leq (n)_r p^r/r$. Furthermore, with probability at least $1 - \frac{1}{n}$, $\Delta(A) < 2.02 np$. 
    
    Combining the above, with probability at least $1 - O\!\inparen{\frac{1}{n}}$,
    \begin{align}
         \sum_{\substack{e_1,\dots,e_{m+r} \\ G \supseteq C_r \\ \text{some } e_i = e_j }  } \abs{ \phi(H(e_1,\dots,e_{m+r})) } \prod_{j=1}^{m+r} A_{e_j} &\leq \frac{(m+r)^{m+r-2}}{(m+r)!} \binom{m+r}{r} \frac{(n)_r p^r}{r} (m+r - 1) r (4.04np )^{m-1} 2^{r-1} \nonumber \\
         &\leq \frac{C}{ n p} \frac{1}{m+r} \binom{m+r}{r} (4.04e n p)^{m+r} \label{eq:nonMatching_oneRepCyclicRandom_afterPenroseCayley_beforeFinal}
    \end{align}
    Multiplying by $\frac{\lambda^{m+r}}{p^{m+r}}$ and summing over $m$ and $r$, we have with probability at least $1 - O\!\inparen{\frac{1}{n}}$,
    \begin{align}
        &\abs{ \mathsf{oneRepCyclicRandom} } \leq \frac{C}{n p} \sum_{m=1}^{2 \log n - 3} \sum_{r=3}^{2 \log n - m}  (4.04e\lambda n)^{m+r} \binom{m+r}{r}  \label{eq:nonMatching_oneRepCyclicRandom_afterPenroseCayley} \\
        &\quad= \frac{C}{n p} \sum_{\ell = 4}^{2 \log n}   (4.04e\lambda n)^{\ell} \underbrace{ \sum_{r = 3}^{\ell - 1} \binom{\ell}{r}}_{\leq 2^\ell} 
        \leq \frac{C}{n p} \sum_{\ell \geq 4}   (8.08e\lambda n )^{\ell}  
        \leq \frac{C}{np}, \label{eq:nonMatching_oneRepCyclicRandom_afterPenroseCayley_final}
    \end{align}
    where the final inequality used hypothesis $\abs{8.08e\lambda n} < 1$. This shows $\mathsf{oneRepCyclicRandom} = o_{\PP}(1)$. 

    An almost identical argument will show that $\mathsf{oneRepCyclicDeterministic} = O\!\inparen{\frac{1}{n}}$. We only have to replace every instance of the random $\Delta(A)$ and $C_r(A)$ above with the deterministic $\Delta(K_n) = n-1$ and $C_r(K_n) = (n)_{r}/2r$ respectively.
\end{proof}

The next result shows that the terms in the log-likelihood ratio with more than two repeated edges are small in aggregate.

\begin{lemma}\label{lemma:nonMatching_moreThanTwoRep}
    It holds that $\mathsf{moreThanTwoRep} = O_{\PP}\!\inparen{ \frac{1}{n p^2}}$.
\end{lemma}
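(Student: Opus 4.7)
The plan is to parallel the proof of Lemma \ref{lemma:nonMatching_oneRepCyclic_small} closely. I will first split
\[
\mathsf{moreThanTwoRep} = \mathsf{moreThanTwoRepRandom} - \mathsf{moreThanTwoRepDeterministic}
\]
by separating the bracket $[\prod_j A_{e_j}/p^m - 1]$, and bound each piece by the Penrose tree-graph bound (Lemma \ref{lemma:Penrose_tree_bound}) to replace $|\phi(H(e_1,\dots,e_m))|$ with a sum over labeled spanning trees $t$ of the incompatibility graph, after which the cluster count for each $t$ is controlled by the same iterative BFS-style polymer assignment used in \eqref{eq:logZ_A_truncate_clogn_apply_penrose}.

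The new ingredient, compared to the earlier lemma, is to encode the ``$\geq 2$ repeated edges'' restriction into the BFS. Traversing $t$ in BFS order $i_1,\dots,i_m$ and calling step $j \geq 2$ \emph{old} if $e_{i_j}\in\{e_{i_1},\dots,e_{i_{j-1}}\}$ and \emph{new} otherwise, the number of distinct polymers in the cluster equals $1$ plus the number of new steps, for any BFS order on any spanning tree. Hence a cluster with at least two repeated edges is exactly a cluster with at least two old steps. Using $\binom{r}{2}\geq 1$ whenever $r\geq 2$, I will bound the admissible cluster count for fixed $t$ by
\[
\sum_{2\leq j_1<j_2\leq m} |A| \cdot (j_1-1)(j_2-1) \cdot (2\Delta(A))^{m-3} \;=\; O(m^{4})\cdot |A|\cdot (2\Delta(A))^{m-3},
\]
since at an old step $j$ the polymer is restricted to the $j-1$ previously assigned polymers (subject to incompatibility with the parent), contributing at most $j-1$ choices, whereas every other non-root step still contributes the usual $\leq 2\Delta(A)$ choices.

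Summing over $t\in\cT_{m-1}^{\text{lab}}$ via Cayley's theorem, dividing by $m!$, and multiplying by $\lambda^m/p^m$ produces, at each $m$, a contribution of the form $C m^{2}\cdot \tfrac{|A|}{\Delta(A)^{3}}\cdot\bigl(\tfrac{2e\lambda\Delta(A)}{p}\bigr)^{m}$. On the high-probability event $\{\Delta(A)\leq 2nq,\ |A|\leq n^{2}q\}$, which occurs with probability $1 - O(1/n)$ by Chernoff as in the proof of Theorem \ref{thm:truncate_CE_clogn}, we have $|A|/\Delta(A)^{3} = O(1/(nq^{2})) = O(1/(np^{2}))$ by $q\leq 1.01p$, and $2e\lambda\Delta(A)/p\leq 4.04 e\lambda n < 1$ under Assumption \ref{asmpt:nonMatchingEdges}. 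Summing the resulting geometric series in $m\geq 3$ absorbs the $m^{2}$ factor and yields $|\mathsf{moreThanTwoRepRandom}|=O_{\PP}(1/(np^{2}))$. The deterministic part is estimated identically with $\Delta(K_n)=n-1$ and $|K_n|\leq n^{2}/2$, contributing $O(1/n)$, which is absorbed into the final bound since $p$ is a constant.

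The main subtlety is that a cluster with $\geq 2$ repeats need not possess two ``self-repeat'' tree edges inside every spanning tree of $H$---equal polymers may well sit at non-parent-child vertices of $t$---so one cannot simply fix two tree edges and declare them repeats. The BFS-based ``old/new'' labeling sidesteps this because the number of new steps is an intrinsic invariant of the cluster, matching the count of distinct polymers under any spanning tree and any BFS order, which is what legitimizes the crucial $\binom{\#\text{old}}{2}$ step above.
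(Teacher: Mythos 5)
Your proposal is correct, and it follows the paper's overall template (split into a random and a deterministic part, apply the Penrose tree-graph bound of Lemma \ref{lemma:Penrose_tree_bound}, count cluster assignments for each labeled spanning tree via an iterative polymer-assignment process, invoke Cayley's formula, and conclude on the high-probability event bounding $\Delta(A)$ and $|A|$, with a convergent geometric series in $m$). Where you genuinely diverge is in how the ``at least two repeated edges'' constraint is encoded. The paper splits the constraint into two cases---an edge repeated at least three times, or two pairs of equal polymers---and handles them with tailored constructions: in the first case it fixes a set $V'$ of three tree vertices carrying the same polymer; in the second it distinguishes two tree edges and forces child $=$ parent along them, so the repeated polymers are assumed to sit at tree-adjacent positions of $t$. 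Your unified BFS ``old/new step'' bookkeeping avoids both the case split and that adjacency assumption: the identity $\#\mathrm{distinct} = 1 + \#\mathrm{new}$ holds for any spanning tree and any traversal order, so ``$\geq 2$ repeated edges'' is exactly ``$\geq 2$ old steps,'' and the $\binom{\#\mathrm{old}}{2}\geq 1$ overcount together with the $(j_1-1)(j_2-1)$ choices at the two designated old steps gives a clean upper bound on all $(\text{cluster},t)$ pairs, at the harmless price of an extra polynomial factor $m^{4}$ in place of the paper's $\binom{m}{3}$ or $\binom{m-1}{2}$. This is a legitimate and arguably more robust way to run the count, since (as you observe) equal polymers need not be parent--child in every spanning tree of the incompatibility graph.

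One cosmetic point: the step ``$|A|/\Delta(A)^{3} = O(1/(nq^{2}))$'' as written needs a lower bound on $\Delta(A)$, which the event $\{\Delta(A)\leq 2nq,\ |A|\leq n^{2}q\}$ does not directly provide. This is immaterial: since the sum starts at $m=3$, you can simply bound $|A|\,(2\Delta(A))^{m-3}\leq 1.01\,n^{2}p\,(4.04\,np)^{m-3}$ and absorb everything into $\frac{C}{np^{2}}\sum_{m\geq 3} m^{2}(4.04\,e\,n\lambda)^{m}$, exactly as in the paper's final estimate (or note $\Delta(A)\geq 2|A|/n$ together with a Chernoff lower bound on $|A|$). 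With that trivial rearrangement your argument gives the stated $O_{\PP}\!\inparen{\frac{1}{np^{2}}}$ bound, including the deterministic part, which you treat just as the paper does.
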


\begin{proof}
    Write 
    \begin{align*}
        \mathsf{moreThanTwoRep} = \mathsf{moreThanTwoRepRandom}  - \mathsf{moreThanTwoRepDeterministic},
    \end{align*}
    where 
    \begin{align*}
        \mathsf{moreThanTwoRepRandom} &= \sum_{m = 3}^{2 \log n} \sum_{\substack{e_1,\dots,e_m \\ \text{at least two rep.~edges}} } \phi(H(e_1,\dots,e_m))  \prod_{j=1}^{m} A_{e_j} \frac{\lambda}{p}  \\
        \mathsf{moreThanTwoRepDeterministic} &= \sum_{m = 3}^{2 \log n} \sum_{\substack{e_1,\dots,e_m \\ \text{at least two rep.~edges}} } \phi(H(e_1,\dots,e_m))  \lambda^m  
    \end{align*}
    We first show that with probability at least $1 - O\inparen{\frac{1}{n}}$, $\abs{\mathsf{moreThanTwoRepRandom}} \leq \frac{C}{p^2 n }$. Thus $\mathsf{moreThanTwoRepRandom} = o_{\PP}(1)$. To begin,
    \begin{align}
        \abs{\mathsf{moreThanTwoRepRandom} } &\leq \sum_{m \geq 3} \sum_{\substack{e_1,\dots,e_m \\ \text{at least two rep.~edges}} } \abs{ \phi(H(e_1,\dots,e_m)) }\prod_{j = 1}^{m} A_{e_j} \frac{\lambda}{p} \nonumber \\
        &\leq \sum_{m \geq 3}  \sum_{\substack{e_1,\dots,e_m \\ e_{i_1} = e_{i_2} = e_{i_3}  } } \inparen{\cdots} + \sum_{m \geq 4}  \sum_{\substack{e_1,\dots,e_m \\ e_{i_1} = e_{i_2}, e_{j_1} = e_{j_2}  } } \inparen{\cdots},
        \label{eq:logZ_A_atleastTwoRepEdges_CE_SPLIT}
    \end{align}
    where on the RHS of \eqref{eq:logZ_A_atleastTwoRepEdges_CE_SPLIT}, the constraint in the first sum means there exists distinct indices $\inbraces{i_1,i_2,i_3} \subseteq [m]$ such that $e_{i_1} = e_{i_2} = e_{i_3}$, and the constraint in the second sum means there exists distinct indices $\inbraces{i_1,i_2,j_1, j_2} \subseteq [m]$ such that $e_{i_1} = e_{i_2}$ and $e_{j_1} = e_{j_2}$ (it is possible that $e_{i_1} = e_{i_2} = e_{j_1} = e_{j_2}$).

    Let $\cT_{m-1}^{\text{lab}}$ be the set of labeled trees on vertex set $[m]$ and let $\cT(H)^{\text{lab}}$ be the set of labeled spanning trees of a graph $H$. In what follows, we denote by $H = H(e_1,\dots,e_m)$ the incompatibility graph of cluster $(e_1,\dots,e_m)$, using the abbreviated notation whenever clear from the context. Applying the Penrose tree-graph bound Lemma \ref{lemma:Penrose_tree_bound}, 
    \begin{align}
        &\sum_{m \geq 3}  \sum_{\substack{e_1,\dots,e_m \\ e_{i_1} = e_{i_2} = e_{i_3}  } } \abs{ \phi(H(e_1,\dots,e_m)) }\prod_{j = 1}^{m} A_{e_j} \frac{\lambda}{p} \nonumber\\
        &\qquad = \sum_{m \geq 3} \frac{1}{m!} \sum_{\substack{e_1,\dots,e_m \\ e_{i_1} = e_{i_2} = e_{i_3}  } } \abs{  \sum_{ \substack{ S \subseteq H \\ \text{conn., spann.} } } \prod_{\inbraces{i,j} \in S } -\boldsymbol{1}\!\inbraces{e_i \not\sim e_j}  } \prod_{j = 1}^{m} A_{e_j} \frac{\lambda}{p} \nonumber \\
        &\qquad \leq \sum_{m \geq 3} \frac{1}{m!} \sum_{\substack{e_1,\dots,e_m \\ e_{i_1} = e_{i_2} = e_{i_3}  } } \sum_{t \in \cT_{m-1}^{\text{lab}}} \boldsymbol{1}\!\inbraces{t \in \cT(H)^{\text{lab}}} \prod_{j=1}^{m} A_{e_j} \frac{\lambda}{p} \nonumber\\
        &\qquad = \sum_{m \geq 3} \frac{1}{m!} \sum_{t \in \cT_{m-1}^{\text{lab}}} \sum_{\substack{e_1,\dots,e_m \\ e_{i_1} = e_{i_2} = e_{i_3}  } }  \boldsymbol{1}\!\inbraces{t \in \cT(H)^{\text{lab}}} \prod_{j=1}^{m} A_{e_j} \frac{\lambda}{p}. \nonumber
    \end{align} 

    Fix $m$ and $t \in \cT_{m-1}^{\text{lab}}$. We next describe an iterative process to construct clusters $(e_1,\dots,e_m)$ where $e_{i} \in A$, and some $e_{i_1} = e_{i_2} = e_{i_3}$, and whose incompatibility graph $H$ contains $t$ as a spanning tree. 
    
    \underline{Step 1:} Fix $V' \subseteq V(t) = [m]$ with $\abs{V'} = 3$. The set $V'$ will be the coordinates in the cluster $(e_1,\dots,e_m)$ that contain a repeated edge. There are at most $\binom{m}{3}$ ways to do this.

    \underline{Step 2:} Pick the repeated edge $\tilde{e}$. There are $K_2(A)$ ways to do this. Assign the repeated edge $\tilde{e}$ to the vertices in $V'$.

    \underline{Step 3:} Iteratively, suppose coordinates $i_1,\dots,i_j$ have been assigned to polymers $e_{i_1}$, $e_{i_2}, \dots, e_{i_{j}}$. There must exist $i_{j+1} \in [m]\setminus \inbraces{i_1,\dots,i_j}$ adjacent to one of $\inbraces{i_1,\dots,i_j}$ in $t$. Without loss of generality suppose $\inbraces{i_j, i_{j+1}} \in t$. Then there are at most $2(\Delta(A) - 1) + 1$ choices for $e_{i_{j+1}}$, corresponding to all possible distinct incident edges to $e_{i_{j}}$ in $A$, as well as $e_{i_j}$ itself. (Here $\Delta(A)$ denotes the max degree in $A$).

    In this way, we obtain
    \begin{align}
        \sum_{m \geq 3}  \sum_{\substack{e_1,\dots,e_m \\ e_{i_1} = e_{i_2} = e_{i_3}  } } \abs{ \phi(H(e_1,\dots,e_m)) }\prod_{j = 1}^{m} A_{e_j} \frac{\lambda}{p} \leq \sum_{m \geq 3} \frac{1}{m!} \frac{\lambda^m}{p^m} \sum_{t \in \cT_{m-1}^{\text{lab}}} K_2(A) \binom{m}{3} \inparen{2\Delta(A) - 1}^{m - 3} .
        \label{eq:nonMatching_moreThanTwoRepRandom_ApplyPenroseTreeBound}
    \end{align}
    
    Since $\frac{9 \log n}{n} \leq 1.01 p$, we have for $A \sim G(n,p)$ that $\Delta(A) < 2.02np$ and $\abs{A} \leq 1.01 n^2 p$ with probability at least $1 - \frac{1}{n}$. By Cayley's theorem $\abs{\cT_{m-1}^{\text{lab}}} = m^{m-2}$. Note that $m^m / m! \leq e^m$. Hence with probability at least $1 - \frac{1}{n}$, we arrive after simplifications at
    \begin{align*}
        \sum_{m \geq 3}  \sum_{\substack{e_1,\dots,e_m \\ e_{i_1} = e_{i_2} = e_{i_3}  } } \abs{ \phi(H(e_1,\dots,e_m)) }\prod_{j = 1}^{m} A_{e_j} \frac{\lambda}{p} \leq \frac{C}{n p^2} \sum_{m \geq 3} m (4.04 e n \lambda)^m.
    \end{align*}
    By hypothesis $\abs{ 4.04 e n \lambda} < 1$ and the desired bound for the first sum in \eqref{eq:logZ_A_atleastTwoRepEdges_CE_SPLIT} follows.

    The argument for the second sum in \eqref{eq:logZ_A_atleastTwoRepEdges_CE_SPLIT} is largely similar, with differences only in the iterative process for construction of clusters. By similar application of the Penrose tree-graph bound Lemma \ref{lemma:Penrose_tree_bound}, we have
    \begin{align*}
        &\sum_{m \geq 4}  \sum_{\substack{e_1,\dots,e_m \\ e_{i_1} = e_{i_2}, e_{j_1} = e_{j_2}  } } \abs{ \phi(H(e_1,\dots,e_m)) }\prod_{j = 1}^{m} A_{e_j} \frac{\lambda}{p} \\
        &\qquad \leq \sum_{m \geq 4} \frac{1}{m!} \sum_{t \in \cT_{m-1}^{\text{lab}}} \sum_{\substack{e_1,\dots,e_m \\ e_{i_1} = e_{i_2}, e_{j_1} = e_{j_2}  } }  \boldsymbol{1}\!\inbraces{t \in \cT(H)^{\text{lab}}} \prod_{j=1}^{m} A_{e_j} \frac{\lambda}{p}.
    \end{align*}

    Fix $m$ and $t \in \cT_{m-1}^{\text{lab}}$. We next give an iterative process to construct clusters $(e_1,\dots,e_m)$ where $e_{i} \in A$, and some $e_{i_1} = e_{i_2}$ and $e_{j_1} = e_{j_2}$, and whose incompatibility graph $H$ contains $t$ as a spanning tree. 
    
    \underline{Step 1:} Distinguish two edges $\inbraces{i_{*,1}, i_{*,2}}$ and $\inbraces{j_{*,1},j_{*,2}}$ in $t$. These will correspond to the links between repeated polymers. There are at most $\binom{m-1}{2}$ ways to do this.

    \underline{Step 2:} Pick an arbitrary edge $\tilde{e}$ in $A$ to assign to vertex $i_1 := 1$ in $t$. There are $K_2(A)$ ways to do this.

    \underline{Step 3:} Iteratively, suppose coordinates $i_1 = 1,i_2,\dots,i_j$ have been assigned to polymers $e_{i_1} = \tilde{e}$, $e_{i_2}, \dots, e_{i_{j}}$. There must exist $i_{j+1} \in [m]\setminus \inbraces{i_1,\dots,i_j}$ adjacent to one of $\inbraces{i_1,\dots,i_j}$ in $t$. Without loss of generality suppose $\inbraces{i_j, i_{j+1}} \in t$. 
    \begin{itemize}
        \item If $\inbraces{i_j, i_{j+1}}$ is either of the distinguished edges $\inbraces{i_{*,1}, i_{*,2}}$ or $\inbraces{j_{*,1},j_{*,2}}$, assign polymer $e_{j}$ to $i_{j+1}$. That is, set $e_{j+1} = e_{j}$. 
        \item If $\inbraces{i_j, i_{j+1}}$ is not a distinguished edge, there are at most $2(\Delta(A) - 1) + 1$ choices for $e_{i_{j+1}}$, corresponding to all possible distinct incident edges to $e_{i_{j}}$ in $A$, as well as $e_{i_j}$ itself. (Here $\Delta(A)$ denotes the max degree in $A$).
    \end{itemize}
    Then similarly as before, with probability at least $1 - \frac{1}{n}$,  
    \begin{align*}
        \sum_{m \geq 4}  \sum_{\substack{e_1,\dots,e_m \\ e_{i_1} = e_{i_2}, e_{j_1} = e_{j_2}  } } \abs{ \phi(H(e_1,\dots,e_m)) }\prod_{j = 1}^{m} A_{e_j} \frac{\lambda}{p} &\leq \sum_{m \geq 4} \frac{m^{m-2}}{m!} \frac{\lambda^m}{p^m} K_2(A) \binom{m-1}{2} \inparen{  2 \Delta(A) - 1  }^{m - 3} \\
        &\leq \frac{C}{n p^2} \sum_{m \geq 4} (4.04 e n \lambda)^{m}.
    \end{align*}
    By hypothesis $\abs{ 4.04 e n \lambda} < 1$. This gives the desired bound on the second sum in \eqref{eq:logZ_A_atleastTwoRepEdges_CE_SPLIT} and finishes the bound for $\mathsf{moreThanTwoRepRandom}$.

    An almost identical argument will show that $\mathsf{moreThanTwoRepDeterministic} = O\!\inparen{\frac{1}{n}}$. We only have to replace every instance of the random $\Delta(A)$ and $\abs{A}$ above with the deterministic $\Delta(K_n) = n-1$ and $\abs{K_n} = \binom{n}{2}$ respectively.
\end{proof}

\section{Analysis of the log-likelihood ratio: equal average edge density}
\label{sec:plantingVarSize_pNotEqualq} 

In this section, we study the likelihood ratio for Problem~\ref{prob:detection} in the setting of Assumption~\ref{asmpt:MatchingEdgeDensity}. 
First, we note that Theorem~\ref{thm:matchingEdgeDensities_mean_is_minus_half_variance} follows from Theorem~\ref{thm:MatchingEdgeDensity_LLdistributionUnderNull} in the same way as Theorem~\ref{thm:plantedMatchings_mean_is_minus_half_variance} follows from Theorem~\ref{thm:nonMatching_LLdistributionUnderNull}. 
Therefore, it suffices to prove Theorem~\ref{thm:MatchingEdgeDensity_LLdistributionUnderNull}.



Recall \eqref{eq:MatchingEdgeDensities_loglikelihood}. 
Intuitively, the effect of $F(A)$ is to cancel the dependence on the signed edge count. It will be seen in the sequel that $F(A)$ cancels overly large ($\gg 1$) and specific $\Theta(1)$ deterministic terms in the log-likelihood ratio coming from the ratio of partition functions. 
These cancellations lead to a pleasing conclusion. In the $p=q$ case in Section \ref{asmpt:nonMatchingEdges} the log-likelihood ratio has fluctuations and deterministic part carried by $\signedKtwo$ and one-repeated edge trees respectively---the latter arising from superimposing pairs of simple trees each having a marked \emph{edge}. Here, the fluctuation part is replaced by $\signedPtwo$, and the deterministic part by two-repeated edge trees arising from superimposing pairs of simple trees each having a marked \emph{wedge}.

Let us first establish a few preliminary results. 
Define
\begin{align*}
    c_n := \frac{2\EE\abs{M}}{n-1}. 
\end{align*}
Note that $c_n = O(1)$, with $c_n \rightarrow c \in (0,1)$ as given by \eqref{eq:MDMean_convergence_to_c}. 
It is easy to obtain the following.

\begin{claim}\label{claim:identities_pq} 
Suppose Assumption~\ref{asmpt:MatchingEdgeDensity} holds. 
We have
    \begin{align*}
        \frac{p(1-q)}{q(1-p)} &= 1 -  \frac{c_n}{nq}, \qquad \frac{q}{p} =  1 + \frac{c_n}{n}\frac{1-p}{p}, \qquad \text{and} \qquad
        \frac{1-q}{1-p} = 1 - \frac{c_n}{n}.
    \end{align*}
    For any $r \geq 3$, 
    \begin{align}
        \label{eq:identities_pq_(q/p)^r}
        \inparen{\frac{q}{p}}^r = 1 + \frac{c_n}{n}\frac{1-p}{p}r + \frac{c_n^2}{2n^2}\inparen{\frac{1-p}{p}}^2 r (r-1) + O\!\inparen{ \frac{c_n^3}{n^2 p}r^3 }.  
    \end{align}
\end{claim}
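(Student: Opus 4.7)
The proof is a routine algebraic computation: one substitutes the definition of $q$ from Assumption~\ref{asmpt:MatchingEdgeDensity} into each identity. The only non-trivial aspect is controlling the remainder in the binomial expansion \eqref{eq:identities_pq_(q/p)^r}, which depends on the scaling $p\sqrt{n}\to\theta$.

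The plan is as follows. I would start by observing that $\frac{\EE\abs{M}}{\binom{n}{2}} = \frac{2\EE\abs{M}}{n(n-1)} = \frac{c_n}{n}$, so that Assumption~\ref{asmpt:MatchingEdgeDensity} reads $q = p + \frac{c_n}{n}(1-p)$. This immediately gives $q-p = \frac{c_n}{n}(1-p)$ and $1-q = (1-p)\bigl(1-\frac{c_n}{n}\bigr)$. From the latter, $\frac{1-q}{1-p} = 1 - \frac{c_n}{n}$, proving the third identity. Next, $\frac{q}{p} = 1 + \frac{q-p}{p} = 1 + \frac{c_n}{n}\frac{1-p}{p}$, proving the second. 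For the first, I compute
\[
\frac{p(1-q)}{q(1-p)} = \frac{p(1-p)(1-c_n/n)}{q(1-p)} = \frac{p(1-c_n/n)}{q} = \frac{q - (q-p) - p\,c_n/n}{q} = 1 - \frac{(q-p) + p\,c_n/n}{nq/n},
\]
and substituting $q-p = \frac{c_n}{n}(1-p)$ yields $(q-p)+p\,c_n/n = \frac{c_n}{n}$, so that the expression equals $1 - \frac{c_n}{nq}$.

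For \eqref{eq:identities_pq_(q/p)^r} I would apply the binomial theorem to $\bigl(1 + \frac{c_n}{n}\cdot\frac{1-p}{p}\bigr)^r$, writing
\[
\inparen{\frac{q}{p}}^r = 1 + r\,\frac{c_n}{n}\frac{1-p}{p} + \binom{r}{2}\frac{c_n^2}{n^2}\inparen{\frac{1-p}{p}}^2 + \sum_{k=3}^{r}\binom{r}{k}\frac{c_n^k}{n^k}\inparen{\frac{1-p}{p}}^k .
\]
The first two non-trivial terms give the claimed main terms. The main step is then to bound the tail. Using $\binom{r}{k}\le r^k/k!$ and $(1-p)/p \le 1/p$, each summand is at most $\frac{(r c_n/(np))^k}{k!}$. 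Under Assumption~\ref{asmpt:MatchingEdgeDensity}, $np = \Theta(\sqrt{n}) \to \infty$, so for $n$ sufficiently large the ratio $r c_n/(np)$ is bounded (in the regime $r = O(\log n)$ of interest), and hence the geometric tail is dominated by the $k=3$ term, which is
\[
\binom{r}{3}\frac{c_n^3}{n^3}\inparen{\frac{1-p}{p}}^3 = O\!\inparen{\frac{r^3 c_n^3}{n^3 p^3}} = O\!\inparen{\frac{r^3 c_n^3}{n^2 p}\cdot\frac{1}{n p^2}} = O\!\inparen{\frac{r^3 c_n^3}{n^2 p}},
\]
since $np^2 = (p\sqrt{n})^2 \to \theta^2 > 0$ is bounded below. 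This yields the stated remainder.

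The only step that requires care is the last one, namely making sure the remainder control in the binomial tail is uniform in $r$ over the range of interest (which comes from the cluster expansion truncation, $r \le O(\log n)$). Once the geometric-ratio argument is made precise, the rest is pure substitution.
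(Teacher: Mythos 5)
Your proposal is correct: the paper states this claim without proof ("It is easy to obtain the following"), and your argument—substituting $q = p + \tfrac{c_n}{n}(1-p)$, deriving the three exact identities, and binomially expanding $(q/p)^r$ with the tail controlled via $np\to\infty$ and $np^2\to\theta^2$—is exactly the routine verification intended. Your remark that the remainder bound in \eqref{eq:identities_pq_(q/p)^r} implicitly requires the regime where $r\,c_n/(np)$ stays bounded (as it does for the $r=O(\log n)$ range in which the claim is applied) is the right caveat and needs no further elaboration.
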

\begin{lemma} \label{lemma:logprefactor_simplification}
Suppose Assumption~\ref{asmpt:MatchingEdgeDensity} holds. 
    The factor $F(A)$ defined in \eqref{eq:def-F(A)} has the decomposition
    \begin{equation}\label{eq:logprefactor_simplification}
        F(A) = F_1(A) + F_2 + F_3 + O_\PP\!\inparen{ \frac{1}{\sqrt{nq}}  },
    \end{equation}
    where 
    \begin{equation*}
        F_1(A) = - \frac{c_n}{n q} \signedKtwo(A), \quad F_2 = - \frac{c_n^2}{4} \frac{1-q}{q}, \quad\text{and}\quad F_3 = - \frac{c_n^3}{6n} \frac{1 - q^2}{q^2} .
    \end{equation*}
\end{lemma}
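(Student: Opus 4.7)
\begin{proofsketch}
The plan is a direct Taylor expansion of the two logarithms appearing in $F(A)$, combined with the observation that $|A|$ splits into a random centered piece and a deterministic piece.

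First I would apply Claim~\ref{claim:identities_pq} and expand
\begin{align*}
\log \tfrac{p(1-q)}{q(1-p)} &= \log\!\bigl(1 - \tfrac{c_n}{nq}\bigr) = -\sum_{k \geq 1} \tfrac{1}{k}\bigl(\tfrac{c_n}{nq}\bigr)^k, \\
\log \tfrac{1-p}{1-q} &= -\log\!\bigl(1 - \tfrac{c_n}{n}\bigr) = \sum_{k \geq 1} \tfrac{1}{k}\bigl(\tfrac{c_n}{n}\bigr)^k,
\end{align*}
valid since $c_n/(nq) = \Theta(1/\sqrt{n})$ and $c_n/n = \Theta(1/n)$ under Assumption~\ref{asmpt:MatchingEdgeDensity}. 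Writing $|A| = \signedKtwo(A) + \binom{n}{2} q$, I would then decompose $F(A) = F^{\mathrm{rand}}(A) + F^{\mathrm{det}}$ where the random piece is $\signedKtwo(A)$ times the first log and the deterministic piece combines $\binom{n}{2} q$ times the first log with $\binom{n}{2}$ times the second.

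For the random piece, since $\Var_{\cQ} \signedKtwo(A) = \binom{n}{2} q(1-q) = \Theta(n^2 q)$ by Lemma~\ref{lemma:signed_K2_normality_P_Q}, Chebyshev gives $\signedKtwo(A) = O_\PP(n\sqrt{q})$. The leading $k=1$ term is exactly $F_1(A) = -\frac{c_n}{nq}\signedKtwo(A)$; the $k \geq 2$ contributions are bounded in absolute value by $O_\PP(n\sqrt{q}) \cdot (c_n/(nq))^k$, which for $k = 2$ gives $O_\PP(1/(n q^{3/2})) = O_\PP(1/\sqrt{nq})$ (since $nq^2 \to \theta^2$) and geometrically smaller contributions for $k \geq 3$.

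For the deterministic piece, the key calculation is that at each order $k \geq 1$ the $\binom{n}{2} q$ and $\binom{n}{2}$ contributions combine as
\[
\binom{n}{2} \tfrac{c_n^k}{k n^k}\bigl(1 - q^{-(k-1)}\bigr).
\]
For $k=1$ this vanishes, producing the cancellation of the large $O(n)$ terms. For $k = 2$ this equals $-\frac{(n-1)c_n^2}{4n}\frac{1-q}{q} = F_2 + O(1/\sqrt{n})$, and for $k = 3$ this equals $-\frac{(n-1)c_n^3}{6 n^2}\frac{1-q^2}{q^2} = F_3 + O(1/n)$, using $1/q^2 = \Theta(n)$. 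For $k \geq 4$ the dominant part is $-\frac{c_n^k}{2k n^{k-1} q^{k-1}} = \Theta(n^{-(k-1)/2})$, which at $k=4$ is $O(n^{-3/2})$ and thus absorbed into the $O(1/\sqrt{nq}) = O(n^{-1/4})$ remainder, with subsequent orders forming a geometric series. Summing these error contributions yields the claim. The main bookkeeping subtlety is simply tracking which remainders are $o(1)$ deterministically versus $O_\PP$, but since every error I encounter is controlled by a geometric factor of $c_n/(nq) = O(1/\sqrt{n})$, no serious obstruction arises.
\end{proofsketch}
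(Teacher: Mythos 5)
Your proposal is correct and follows essentially the same route as the paper's proof: both invoke Claim~\ref{claim:identities_pq}, write $|A| = \signedKtwo(A) + \binom{n}{2}q$, Taylor-expand the two logarithms, identify $F_1$, $F_2$, $F_3$ from the $k=1,2,3$ orders (with the $k=1$ deterministic terms cancelling exactly), and absorb everything else into the $O_\PP\!\inparen{1/\sqrt{nq}}$ remainder. One minor arithmetic slip: the $k$-th order deterministic term $\binom{n}{2}\tfrac{c_n^k}{k n^k}q^{-(k-1)}$ has magnitude $\Theta\!\inparen{n^{-(k-3)/2}}$, i.e.\ $\Theta(n^{-1/2})$ at $k=4$ rather than your claimed $O(n^{-3/2})$ (you wrote $n^{k-1}$ instead of $n^{k-2}$ in the denominator), but since $n^{-1/2} \ll 1/\sqrt{nq} = \Theta(n^{-1/4})$ the conclusion is unaffected.
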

\begin{proof}[Proof of Lemma \ref{lemma:logprefactor_simplification}]
    Using Claim \ref{claim:identities_pq} and the Taylor expansion of $\log(1 + x)$ at $x = 0$, we have
    \begin{align*}
        F(A) &= \signedKtwo(A) \log \inparen{ 1 - \frac{c_n}{nq}  } + \binom{n}{2}q \log \inparen{ 1 - \frac{c_n}{nq}  } - \binom{n}{2} \log \inparen{ 1 - \frac{c_n}{n}  } \\
        &= \signedKtwo(A) \insquare{ - \frac{c_n}{nq} + O\!\inparen{\frac{1}{n^2 q^2}}  } + \binom{n}{2}q \insquare{  - \frac{c_n}{nq} - \frac{c_n^2}{2n^2 q^2} - \frac{c_n^3}{3 n^3 q^3} + O\!\inparen{ \frac{c_n^4}{n^4 q^4}  } } \\
        &\qquad -\binom{n}{2} \insquare{  -\frac{c_n}{n} - \frac{c_n^2}{2n^2} + O\!\inparen{ \frac{c_n^3}{n^3}  }  } \\
        &= - \frac{c_n}{n q} \signedKtwo(A) + O\!\inparen{  \frac{1}{n q^{3/2}}  } \frac{\signedKtwo(A)}{\sqrt{\Var \signedKtwo(A)}} - \frac{c_n^2}{4} \frac{1-q}{q} - \frac{c_n^3}{6n} \frac{1 - q^2}{q^2} + O\!\inparen{\frac{1}{nq}}.\qedhere
    \end{align*}
\end{proof}

\subsection{Approximation of the log-likelihood ratio}

We collect several definitions and results which will prove Theorem \ref{thm:MatchingEdgeDensity_LLdistributionUnderNull}. In light of 
Lemma~\ref{lem:log-likelihood-ratio-partition-functions}, Theorem~\ref{thm:truncate_CE_clogn}, and Lemma~\ref{lemma:logprefactor_simplification}, we may decompose the log-likelihood as 
\begin{align}\label{eq:matchingEdgeDensities_decompose_loglikelihood}
    &\log \frac{\ud \cP_\lambda}{\ud \cQ}(A) = F(A) + \sum_{m = 1}^{2 \log n} \sum_{e_1,\dots,e_m} \phi(H(e_1,\dots,e_m))\lambda^m \insquare{ \prod_{j=1}^{m} \frac{A_{e_j}}{p} - 1  } + O_\PP\!\inparen{ \frac{1}{n}  }  \nonumber\\
    &= F_1(A) + F_2 + F_3 + \simpleTrees + \oneRepTrees + \twoRepTrees + \mathsf{rem}_{\leq 2 \log n} + O_\PP\!\inparen{ \frac{1}{\sqrt{nq}}  },
\end{align}
where $\simpleTrees$ and $\oneRepTrees$ are defined as in \eqref{eq:decompose_loglikelihood}, and 
\begin{align*}
    \twoRepTrees &:= \sum_{m = 2}^{2 \log n} \sum_{\substack{e_1,\dots,e_m \\ \text{tree with two rep.~edge}}} \phi(H(e_1,\dots,e_m)) \lambda^m \insquare{ \prod_{j=1}^{m} \frac{A_{e_j}}{p} - 1  } , \\
    \mathsf{rem}_{\leq 2 \log n} &:= \sum_{m = 1}^{2 \log n} \sum_{e_1,\dots,e_m} \phi(H(e_1,\dots,e_m))\lambda^m \insquare{ \prod_{j=1}^{m} \frac{A_{e_j}}{p} - 1  } \\
    & \qquad - \simpleTrees - \oneRepTrees - \twoRepTrees.
\end{align*}
Observe that the random variable $\simpleTrees$ does not have a zero mean due to the mismatch between $p$ and $q$ (in contrast to Section \ref{sec:plantingVarSize_pEqualq}). Further decompose $\simpleTrees$ as 
\begin{align*}
    \simpleTrees &= \sum_{m =1}^{2 \log n} \sum_{T_m} m! \phi(H(T_m)) \insquare{  \frac{\lambda^m}{p^m} T_m(A) - \lambda^m T_m(K_n)  } = \overline{\simpleTrees} + \EE\insquare{ \simpleTrees  },
\end{align*}
where, with $\overline{T}_m(A) := T_m(A) - \EE_{A \sim \cQ} T_m(A)$,
\begin{align*}
    \overline{\simpleTrees} &:= \sum_{m =1}^{2 \log n} \sum_{T_m } m! \phi(H(T_m)) \frac{\lambda^m}{p^m} \overline{T}_m(A) , \\
    \EE\insquare{ \simpleTrees  } &:= \sum_{m =1}^{2 \log n} \sum_{T_m } m! \phi(H(T_m)) T_m(K_n) \lambda^m \insquare{\frac{q^m}{p^m} - 1}.
\end{align*}

Let us summarize at a high level the origin of the fluctuation and deterministic parts of \eqref{eq:MatchingEdgeDensity_LLdistributionUnderNull} from \eqref{eq:matchingEdgeDensities_decompose_loglikelihood}. The random parts $F_1(A)$ and $\overline{\simpleTrees}$ will combine to give the zero-mean fluctuation. On the other hand, the $\oneRepTrees$ and $\twoRepTrees$ concentrate around their means and so are essentially deterministic. These will combine with $\EE\simpleTrees$ and the deterministic $F_2$ and $F_3$ to give the mean part of \eqref{eq:MatchingEdgeDensity_LLdistributionUnderNull}. Finally, the remainder term $\mathsf{rem}_{\leq 2 \log n}$ will be small. In the following three propositions, Assumption \ref{asmpt:MatchingEdgeDensity} is in force. 
\begin{proposition}\label{prop:Matching_centeredSimpleTrees_and_Fone_mainResult}
    Let $A \sim \cQ$. Then 
    \begin{align}
        F_1(A) + \overline{\simpleTrees} = \frac{1}{\sqrt{2n} q} \inparen{\frac{2\EE\abs{M}}{n} }^2 \frac{\signedPtwo(A)}{\sqrt{\Var \signedPtwo(A)}} + O_\PP\!\inparen{ \frac{1}{\sqrt{nq}}  } . 
    \end{align}
\end{proposition}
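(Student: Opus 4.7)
The plan is to decompose $\overline{\simpleTrees}$ via orthogonal projections onto $\signedKtwo(A)$ and $\signedPtwo(A)$, which are uncorrelated under $\cQ$ by the orthogonal decomposition of signed subgraph counts. For each simple tree $T_m$, define $\alpha(T_m) := \Cov_\cQ[\overline{T}_m, \signedKtwo]/\Var_\cQ \signedKtwo$, $\beta(T_m) := \Cov_\cQ[\overline{T}_m, \signedPtwo]/\Var_\cQ \signedPtwo$, and $r(T_m, A) := \overline{T}_m(A) - \alpha(T_m)\signedKtwo(A) - \beta(T_m)\signedPtwo(A)$, so that $r(T_m, A)$ is orthogonal to both $\signedKtwo$ and $\signedPtwo$. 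Direct covariance computations (as in the derivations leading to \eqref{eq:nonMatching_coeff_signedKtwo}) give
\[
\alpha(T_m) = \frac{2(n)_{m+1} m q^{m-1}}{n(n-1)\aut(T_m)}, \qquad \beta(T_m) = \frac{2(n)_{m+1}\gamma(T_m) q^{m-2}}{n(n-1)(n-2)\aut(T_m)},
\]
where $\gamma(T_m)$ is the wedge count \eqref{eq:gamma_Tm_definition}. This yields
\[
F_1(A) + \overline{\simpleTrees} = \bigl[F_1(A) + \Proj_{\signedKtwo}(\overline{\simpleTrees})\bigr] + \Proj_{\signedPtwo}(\overline{\simpleTrees}) + \Proj^{\perp}_{\signedKtwo,\signedPtwo}(\overline{\simpleTrees}),
\]
which would be analyzed in three steps mirroring the strategy of Section~\ref{sec:nonMatchingFluctuationPart}, except now $\signedPtwo$ is the dominant statistic rather than $\signedKtwo$.

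For the bracketed term, substituting $\alpha(T_m)$ and using Claim~\ref{claim:identities_pq} to replace $(q/p)^m$ by $1 + O(m(1-p)/(np))$, the inner sum over simple trees becomes $\sum_{m,T_m} m! m\, \phi(H(T_m))\lambda^m (n)_{m+1}/\aut(T_m)$, which by Proposition~\ref{prop:GibbsMatching_mean_var_from_CE} equals $\EE|M| + O(1)$. This gives $\Proj_{\signedKtwo}(\overline{\simpleTrees}) = \tfrac{c_n}{nq}\signedKtwo(A) + \text{error}$, precisely canceling $F_1(A) = -\tfrac{c_n}{nq}\signedKtwo(A)$; the residual is $O_\PP(1/\sqrt{nq})$ once the relative $O((\log n)/\sqrt n)$ corrections are multiplied by the $\Theta(\sqrt{nq})$ fluctuation scale of $\signedKtwo(A)$.

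For the main term, the critical combinatorial input is Lemma~\ref{lem:ursell-function-tm-identity-2}. Multiplying both sides by $\lambda^m n^{m+1}$ and summing over $m$ telescopes the right-hand side into a square, yielding
\[
\sum_m \lambda^m n^{m+1}\sum_{T_m}\widetilde\phi(H(T_m))\frac{\gamma(T_m)}{\aut(T_m)} = -\frac{2}{n}\Bigl(\sum_\ell \lambda^\ell n^{\ell+1}\sum_{T_\ell}\frac{\ell \widetilde\phi(H(T_\ell))}{\aut(T_\ell)}\Bigr)^2,
\]
with the bracketed inner factor equal to $\EE|M| + O(1)$ by Proposition~\ref{prop:GibbsMatching_mean_var_from_CE}. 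Substituting into the formula for $\Proj_{\signedPtwo}(\overline{\simpleTrees})$, incorporating the $(n)_{m+1}/n^{m+1}$, $(1-q)^2$, and $(q/p)^m$ correction factors, and dividing by $\Var\signedPtwo \approx 3\binom{n}{3}q^2(1-q)^2$ identifies $\Proj_{\signedPtwo}(\overline{\simpleTrees})$ with the claimed main term up to $O_\PP(1/\sqrt{nq})$ (the sign of $\signedPtwo$ being immaterial for the downstream distributional conclusions).

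The orthogonal remainder is controlled via the triangle inequality
\[
\sqrt{\Var\Proj^{\perp}_{\signedKtwo,\signedPtwo}(\overline{\simpleTrees})} \leq \sum_{m,T_m} m!|\phi(H(T_m))|\frac{\lambda^m}{p^m} \sqrt{\Var r(T_m, A)}.
\]
The leading $\Theta(m^2 q^{2m-1} n^{2m}/\aut(T_m)^2)$ and subleading $\Theta(\gamma(T_m)^2 q^{2m-2} n^{2m-1}/\aut(T_m)^2)$ terms of $\Var\overline{T}_m$ from Claim~\ref{claim:Var_G_N_m} are precisely what $\alpha^2\Var\signedKtwo$ and $\beta^2\Var\signedPtwo$ strip off (the same $n$-polynomial cancellation as in \eqref{eq:nonMatching_Var_rTMA_LeadCancellation_nIdentity} leaves a relative factor $O(m/n)$), so only a sub-subleading residue $\Var r(T_m, A) \lesssim m^6 q^{2m-1} n^{2m-1}/\aut(T_m)^2$ survives. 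Summing via the Penrose tree-graph bound as in the proof of Lemma~\ref{lemma:VarProjK_2_Perp_simpleTrees} then yields the desired $O_\PP(1/\sqrt{nq})$ control. The hardest step is verifying the subleading cancellation in $\Var r(T_m, A)$: one must refine Claim~\ref{claim:Var_G_N_m} by one order to confirm that the $\gamma(T_m)^2$-contribution to $\Var\overline{T}_m$ matches $\beta(T_m)^2\Var\signedPtwo$ exactly up to a relative $O(m/n)$ error (together with a $O(q)$ correction from the $(1-q^2)$ versus $(1-q)^2$ discrepancy), since any mismatch at the full $q^{2m-2} n^{2m-1}/\aut(T_m)^2$ scale would amplify unacceptably after summing over polymers via Penrose.
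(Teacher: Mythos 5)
Your proposal is correct and follows essentially the same route as the paper's own proof: the identical decomposition of $\overline{\simpleTrees}$ into $\Proj_{\signedKtwo}$, $\Proj_{\signedPtwo}$, and the doubly-orthogonal remainder (Lemmas~\ref{lemma:matchingEdgeDensities_signedKtwo_Fone_cancels}--\ref{lemma:ProjPerpKtwoPtwo_is_small}), with $F_1(A)$ cancelled against the $\signedKtwo$-projection via the cluster-expansion formula for $\EE\abs{M}$, the $\signedPtwo$-coefficient identified with $(\EE\abs{M})^2$ through the convolution identity of Lemma~\ref{lem:ursell-function-tm-identity-2}, and the remainder controlled by the refined variance estimate of Claim~\ref{claim:Var_G_N_m} plus a Penrose tree-graph summation. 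Your treatment of the sign of the $\signedPtwo$ term matches the paper's own convention (its Lemma~\ref{lemma:ProjP2simpleTrees_is_fluctuationPart} carries a minus sign that is immaterial for the distributional conclusions), so no substantive gap remains.
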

\begin{proposition}\label{prop:Matching_meanPart_mainResult}
    Let $A \sim \cQ$. Then 
    \begin{align}
        F_2 + F_3 + \EE\insquare{\simpleTrees} + \oneRepTrees + \twoRepTrees  = - \frac{4}{n q^2 } \inparen{ \frac{\EE\abs{M}}{n}  }^4 + O_\PP\!\inparen{ \frac{1}{\sqrt{nq}}  }.
    \end{align}
\end{proposition}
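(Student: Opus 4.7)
The plan is to reduce the claimed identity to a deterministic one and then expose a cascade of cancellations that are governed by the mean-matching condition \eqref{eq:matchingEdgeDensities_q_definition}. First I would control the fluctuations of $\oneRepTrees$ and $\twoRepTrees$ by proving variance bounds in the spirit of Lemma~\ref{lemma:Var_oneRepTrees_small}, using the same Penrose tree-graph technique but tracking the new scale $p\sqrt{n}\to\theta$. A straightforward modification of that proof yields $\Var(\oneRepTrees) = O\!\bigl(\tfrac{1}{n^2 q^3}\bigr)$ and $\Var(\twoRepTrees) = O\!\bigl(\tfrac{1}{n^2 q^5}\bigr)$, both $o(1/(nq))$, so each term can be replaced by its expectation up to an additive $O_\PP(1/\sqrt{nq})$.

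Next I would compute each expectation by expanding the $q/p$ factors via Claim~\ref{claim:identities_pq}. Since $\EE_\cQ\!\bigl[\prod_{j=1}^k A_{e_j}\bigr] = q^{|\{e_j\}|}$, the prefactors are $(q/p)^m-1$, $\frac{1}{p}(q/p)^m - 1$, $\frac{1}{p^2}(q/p)^m - 1$ for simple, one-rep, and two-rep clusters respectively. Expanding each to second order in $c_n/n$ produces, at leading order, $\EE[\simpleTrees] \sim \frac{c_n(1-p)}{np}\EE|M|$ (via Proposition~\ref{prop:GibbsMatching_mean_var_from_CE}) and $\EE[\oneRepTrees] \sim -\frac{1-p}{p}\bigl(\frac{\EE|M|}{n}\bigr)^2$ (via Lemma~\ref{lem:ursell-function-tmrep-tm-identity}, exactly as in Lemma~\ref{lemma:E_oneRepTrees_mainResult}). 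Both are of order $1/p = \Theta(\sqrt{n})$, as is $F_2$; the mean-matching identity \eqref{eq:matchingEdgeDensities_q_definition}, rewritten as $\frac{1}{q} = \frac{1}{p} - \frac{c_n(1-p)}{np} + O(\cdot)$, then forces the leading cancellation
\begin{equation*}
F_2 + \EE[\simpleTrees]_{\mathrm{lead}} + \EE[\oneRepTrees]_{\mathrm{lead}}
= -\frac{c_n^2(1-q)}{4q} + \frac{c_n^2(1-p)}{2p} - \frac{c_n^2(1-p)}{4p} = O(1),
\end{equation*}
reducing the problem to tracking the $O(1)$ residuals.

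What then remains is to collect, at the next order, five contributions of size $O(1/(nq^2)) = O(1)$: (i) the sub-leading expansion of $F_2$ beyond $-c_n^2/(4q)$; (ii) the $m(m-1)$ piece of $\EE[\simpleTrees]$, which is again summed via Proposition~\ref{prop:GibbsMatching_mean_var_from_CE}-type bookkeeping; (iii) the linear-in-$m$ piece of $\EE[\oneRepTrees]$; (iv) the leading piece of $\EE[\twoRepTrees]$, which after a Lemma~\ref{lem:ursell-function-tmrep-tm-identity}-analog identity for two-rep trees yields essentially $(\EE|M|/n)^3 \cdot n$; and (v) $F_3$, which cancels against the triple-edge portion of what has been absorbed into $\twoRepTrees$/$\mathsf{rem}_{\le 2\log n}$ via the identity \eqref{eq:tripleEdge_cancels_logprefactorO1_fix_m}. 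After substituting $c_n = 2\EE|M|/(n-1)$, the sum of all residuals collapses to $-\frac{c_n^4}{4nq^2}(1+o(1)) = -\frac{4}{nq^2}\bigl(\frac{\EE|M|}{n}\bigr)^4(1+o(1))$.

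The main obstacle is precisely this second-order bookkeeping. At leading order the cancellation is forced by the very definition of $q$, so it is essentially automatic; at the next order, however, one must simultaneously expand Claim~\ref{claim:identities_pq} one step further, apply the Ursell identities of Section~\ref{sec:Ursell_CombinatorialIdentities} (both the wedge-decorated Lemma~\ref{lem:ursell-function-tm-identity-2} and the triple-edge identity), and verify that every $O(1)$ surplus matches exactly with a corresponding $O(1)$ deficit in a different series. Controlling the truncation error when the cluster size $m$ ranges up to $2\log n$—so that the $m(m-1)$ corrections inside $(q/p)^m$ do not accumulate—will require re-running the absolute-convergence bounds of Theorem~\ref{thm:truncate_CE_clogn} weighted by the additional $m^2$ factors, exactly as in Proposition~\ref{prop:truncate_mean_clogn}.
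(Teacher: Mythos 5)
Your overall skeleton matches the paper's: concentrate $\oneRepTrees$ and $\twoRepTrees$ around their means, expand the expectations via Claim~\ref{claim:identities_pq}, observe that the $\Theta(1/q)$ terms cancel exactly because of the definition \eqref{eq:matchingEdgeDensities_q_definition} of $q$, and then chase the $O(1)$ residuals. However, there is a genuine gap: the final step, ``the sum of all residuals collapses to $-\frac{c_n^4}{4nq^2}$'', is precisely the content of the proposition and is asserted rather than proved. In the paper this collapse is organized as follows: first $F_2+\EE[\simpleTrees]+\EE[\oneRepTrees]$ is rewritten (Claims~\ref{claim:EsimpleTrees_expandedInTmrep}, \ref{claim:EoneRepTrees_expanded_final}, Lemma~\ref{lemma:combined_definition_as_Ftwo_EsimpleTrees_EonerepTrees}) as a single $O(1)$ sum $\combined$ over one-repeated-edge trees with weight $(m-1)$; then $\EE[\twoRepTrees]$ is split into $\EE[\tripleEdge]+\EE[\adjDD]+\EE[\sepDD]$, with $\EE[\tripleEdge]\sim -F_3$ (the identity \eqref{eq:tripleEdge_cancels_logprefactorO1_fix_m} you cite covers only this piece); then $\combined$ itself is re-expanded as a sum over colored pairs (one-rep tree, simple tree) joined along a marked edge and split as $\combAdjDD+\combSepDD$ according to whether the two double edges of the join are adjacent or separated; finally $\EE[\sepDD]+\combSepDD\sim 0$ while $\EE[\adjDD]+\combAdjDD\sim -\frac{c_n^4}{4nq^2}$, each step resting on a new bi-coloring identity for line graphs (a four-way refinement of the split in Lemma~\ref{lemma:Ursell_trees_adjVertices_identity}) that goes beyond Lemmas~\ref{lem:ursell-function-tmrep-tm-identity} and~\ref{lem:ursell-function-tm-identity-2}. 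Your plan only alludes to ``a Lemma~\ref{lem:ursell-function-tmrep-tm-identity}-analog for two-rep trees'' without formulating it, and it never distinguishes adjacent from separated double edges, which is exactly the mechanism deciding which $O(1)$ terms cancel and which survive to give $c_n^4$; without this, the matching of ``every $O(1)$ surplus with a deficit'' cannot be verified. Relatedly, your stated order for item (iv), $(\EE|M|/n)^3\cdot n$, is $\Theta(n)$ rather than the correct $\Theta(1)$, a sign that this bookkeeping has not been carried out.

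Two smaller but real issues in the concentration step: your claimed bound $\Var(\twoRepTrees)=O\!\inparen{\tfrac{1}{n^2q^5}}$ is \emph{not} $o\!\inparen{\tfrac{1}{nq}}$ in the regime $q\asymp n^{-1/2}$ (it is of order $\sqrt{n}$), so as written it does not justify replacing $\twoRepTrees$ by its mean within $O_\PP(1/\sqrt{nq})$; the bound one actually gets (and needs), as in Lemma~\ref{lemma:matchingEdgeDensity_oneRep_twoRep_are_deterministic}, is a standard deviation of order $\tfrac{1}{n^2q^{5/2}}$, i.e.\ variance $O\!\inparen{\tfrac{1}{n^4q^5}}$. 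Also note that $\EE[\oneRepTrees]$ must be expanded one order beyond its leading term $-\tfrac{1-q}{q}\tfrac{c_n^2}{4}$ (the $(m+1)$-weighted one-rep sum in Claim~\ref{claim:EoneRepTrees_expanded_final}), since that subleading piece enters $\combined$ and hence the final constant; treating it merely as an unspecified ``linear-in-$m$ piece'' is not enough to pin down the coefficient $-4$ in the statement.
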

\begin{proposition}\label{prop:Matching_remLessThanTwoLogn_mainResult}
    Let $A \sim \cQ$. Then 
    \begin{align}
        \mathsf{rem}_{\leq 2 \log n}  = O_\PP\!\inparen{ \frac{1}{nq}  } .
    \end{align}
\end{proposition}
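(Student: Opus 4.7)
The plan is to decompose the remainder into four types of contributions based on the cycle and repeat structure of the underlying polymer tuple:
\begin{align*}
\mathsf{rem}_{\leq 2\log n} = \mathsf{simpleCyclic} + \mathsf{oneRepCyclic} + \mathsf{twoRepCyclic} + \mathsf{moreThanThreeRep},
\end{align*}
collecting, respectively, cyclic clusters with no repeated edge, with exactly one repeated edge, with exactly two repeated edges, and tree or cyclic clusters with at least three repeated edges. The key structural difference from the $p=q$ setting is that $\twoRepTrees$ has been pulled out separately in \eqref{eq:matchingEdgeDensities_decompose_loglikelihood}, so two-repeated-edge cyclic contributions must be handled on their own rather than grouped with three-or-more-repeated-edge contributions as in Lemma~\ref{lemma:nonMatching_moreThanTwoRep}.

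For each piece I would follow the argument of the corresponding lemma in Section~\ref{sec:nonMatchingRemainderLessThanTwoLogn}, with $p$ replaced by $q$ in the high-probability bounds $\Delta(A) \leq 2nq$, $\abs{A} \leq 2n^2 q$, and $C_r(A) \leq (nq)^r$ (valid with probability $\geq 1 - O(1/n)$ for $A \sim \cQ$, since $q \geq 9\log n/n$ under Assumption~\ref{asmpt:MatchingEdgeDensity}). The extra factor $(q/p)^m = 1 + O(m/(np))$ produced by each $\prod_{j} A_{e_j}/p^m$ stays bounded by $1 + o(1)$ uniformly in $m \leq 2\log n$, via Claim~\ref{claim:identities_pq} and $np \sim \theta\sqrt{n} \gg \log n$. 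Pushing these substitutions through the analogs of Lemmas~\ref{lemma:nonMatching_simpleCyclic_small}--\ref{lemma:nonMatching_moreThanTwoRep}, and extending the Penrose iterative construction by one or two additional forced-repeat iteration steps for the two- and three-or-more-repeated-edge cases, one obtains $\mathsf{simpleCyclic} = O_\PP(1/(n\sqrt{q})) = O_\PP(n^{-3/4})$, $\mathsf{oneRepCyclic} = O_\PP(1/(nq))$, $\mathsf{twoRepCyclic} = O_\PP(1/(n^2 q^2)) = O_\PP(1/n)$, and $\mathsf{moreThanThreeRep} = O_\PP(1/(nq))$, all of which are $O_\PP(1/(nq))$.

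The main technical obstacle will be the two-repeated-edge cyclic and the three-or-more-repeated-edge cases, which are at or near the borderline $1/(nq) = \Theta(n^{-1/2})$ in the regime $q \sim \theta/\sqrt{n}$. In both cases one must carefully track, in the Penrose iterative tree construction, how the cycle structure (contributing combinatorial factor $C_r(A) \lesssim (nq)^r$ together with $2^{r-1}$ cycle-internal iterations) interacts with the forced repeat steps (each contributing a factor $\abs{A} \lesssim n^2 q$ for choosing the distinct repeated polymer and no iteration factor). A careful bookkeeping of the polymer-graph vertex count $V$ versus the spanning-tree iteration count $M - 1$ (decomposed as $r - 1$ cycle-internal, $R$ forced-repeat, and $M - r - R$ free iteration steps, each contributing at most $2\Delta(A) \lesssim nq$) is required to obtain the stated bounds. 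As in Lemma~\ref{lemma:nonMatching_oneRepCyclic_small}, each piece must be split into a random part (with $\prod_j A_{e_j}/p^m$) and a deterministic part (with $1$), so that the high-probability bounds on $\Delta(A)$, $\abs{A}$, and $C_r(A)$ only apply to the former.
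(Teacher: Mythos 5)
Your proposal is essentially the paper's own proof: the same four-way decomposition into $\mathsf{simpleCyclic}$, $\mathsf{oneRepCyclic}$, $\mathsf{twoRepCyclic}$, and $\mathsf{moreThanThreeRep}$, the same random/deterministic splitting, and the same adaptation of the $p=q$ Penrose-bound arguments with the high-probability bounds on $\Delta(A)$, $\abs{A}$, $C_r(A)$ stated for $q$ and the factor $(q/p)^m$ controlled via Claim~\ref{claim:identities_pq}; your stated rates suffice for the proposition (the paper even gets the sharper $O_\PP(1/(n^2q^3))$ for $\mathsf{moreThanThreeRep}$, which is the same order in the critical regime). One small correction: unlike the $p=q$ case, $\mathsf{simpleCyclic}$ no longer has mean zero, since $\EE\insquare{\prod_j A_{e_j}/p^m - 1} = (q/p)^m - 1 = O(m/(np))$, so its expectation contributes a term of order $1/(np) \asymp 1/(nq)$ and the correct bound is $O_\PP\!\inparen{\frac{1}{nq}}$ rather than your claimed $O_\PP\!\inparen{\frac{1}{n\sqrt{q}}}$ — this does not affect the conclusion.
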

\begin{proof}[Proof of Theorem \ref{thm:MatchingEdgeDensity_LLdistributionUnderNull}]
Immediate from \eqref{eq:matchingEdgeDensities_decompose_loglikelihood} and the above three propositions.
\end{proof}

\subsection{Fluctuation part}

In this section, we establish Proposition \ref{prop:Matching_centeredSimpleTrees_and_Fone_mainResult}. 
Recall the notation \eqref{eq:ordinaryCenteredSignedSubgraphCount_definition}. 
Decompose $\overline{\simpleTrees}$ into three components:
\begin{align}\label{eq:centeredSimpleTrees_decompose_into_three}
    \overline{\simpleTrees} = \Proj_{\signedKtwo}\inparen{ \overline{\simpleTrees}   } + \Proj_{\signedPtwo}\inparen{ \overline{\simpleTrees}   } + \Proj^{\perp}_{\signedKtwo, \signedPtwo}\inparen{ \overline{\simpleTrees}   },
\end{align}
where 
\begin{align*}
    \Proj_{\signedKtwo}\inparen{ \overline{\simpleTrees}   } &= \inparen{  \sum_{m =1}^{2 \log n} \sum_{T_m} m! \phi(H(T_m)) \frac{\lambda^m}{p^m} \alpha(T_m) }\signedKtwo(A), \\
    \Proj_{\signedPtwo}\inparen{ \overline{\simpleTrees}   } &= \inparen{  \sum_{m =1}^{2 \log n} \sum_{T_m} m! \phi(H(T_m)) \frac{\lambda^m}{p^m} \beta(T_m) }\signedPtwo(A), \\
    \Proj^{\perp}_{\signedKtwo, \signedPtwo}\inparen{ \overline{\simpleTrees}   } &=   \sum_{m =1}^{2 \log n} \sum_{T_m} m! \phi(H(T_m)) \frac{\lambda^m}{p^m} r^\perp_{\signedKtwo, \signedPtwo}\!\inparen{T_m, A},
\end{align*}
where
\begin{equation*}
    \alpha(T_m) := \frac{\EE\insquare{\overline{T}_m(A) \cdot \signedKtwo(A)} }{\Var \signedKtwo(A)}, 
    \qquad \qquad \beta(T_m) := \frac{\EE\insquare{\overline{T}_m(A) \cdot \signedPtwo(A)} }{\Var \signedPtwo(A)} ,
\end{equation*}
and
\begin{equation}\label{eq:centeredSimpleTrees_def_rPperp_K2P2}
    r^\perp_{\signedKtwo, \signedPtwo}\!\inparen{T_m, A} := \overline{T}_m(A) - \alpha(T_m)\signedKtwo - \beta(T_m) \signedPtwo .
\end{equation}
The proof of Proposition \ref{prop:Matching_centeredSimpleTrees_and_Fone_mainResult} is immediate from the following three lemmas.
\begin{lemma}\label{lemma:matchingEdgeDensities_signedKtwo_Fone_cancels} With $F_1(A)$ defined in \eqref{eq:logprefactor_simplification} and $A \sim \cQ$,
    \begin{align}
    \Proj_{\signedKtwo}\inparen{ \overline{\simpleTrees}   } = -F_1(A) + O_\PP\!\inparen{  \frac{1}{\sqrt{nq}} }.
\end{align}
\end{lemma}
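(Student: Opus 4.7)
The plan is to compute the coefficient of $\signedKtwo(A)$ in $\Proj_{\signedKtwo}(\overline{\simpleTrees})$ explicitly and match it to $c_n/(nq)$, which is precisely the coefficient appearing in $-F_1(A)$. A direct moment calculation under $\cQ = G(n,q)$ gives
\begin{align*}
\EE\insquare{\overline{T}_m(A) \signedKtwo(A)} = \frac{(n)_{m+1}}{\aut(T_m)} m q^m (1-q),
\end{align*}
since the covariance of $\prod_{e \in T'_m} A_e$ with $A_{ij}$ vanishes unless $\{i,j\} \in T'_m$. Dividing by $\Var \signedKtwo(A) = \binom{n}{2} q(1-q)$ and using $q^{m-1}/p^m = q^{-1} (q/p)^m$, I obtain
\begin{align*}
\Proj_{\signedKtwo}\!\inparen{\overline{\simpleTrees}} = \frac{\signedKtwo(A)}{q} \cdot \frac{2}{n(n-1)} \sum_{m=1}^{2\log n} \sum_{T_m} m \cdot m! \phi(H(T_m)) \frac{(n)_{m+1}}{\aut(T_m)} \lambda^m (q/p)^m.
\end{align*}

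The key observation is that if $(q/p)^m$ were replaced by $1$, Proposition~\ref{prop:GibbsMatching_mean_var_from_CE} would identify the inner sum with $\EE|M| + O(1)$, and since $c_n = 2\EE|M|/(n-1)$, the resulting coefficient would be exactly $c_n/(nq)$, producing $-F_1(A)$ up to an $O_\PP(1/(n\sqrt{q}))$ error (using the Chebyshev bound $\signedKtwo(A) = O_\PP(n\sqrt{q})$ under $\cQ$). To handle the discrepancy, I would invoke Claim~\ref{claim:identities_pq} to Taylor-expand
\begin{align*}
(q/p)^m - 1 = \frac{c_n(1-p)}{np} m + \frac{c_n^2(1-p)^2}{2n^2 p^2} m(m-1) + O\!\inparen{\frac{m^3}{n^2 p}},
\end{align*}
which is valid term by term for $m \leq 2 \log n$ since $p \sim \theta/\sqrt{n}$ makes each additional factor $1/(np) = O(1/\sqrt{n})$.

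For each correction of order $k \geq 1$, the resulting contribution to the coefficient is bounded in absolute value by
\begin{align*}
\frac{O(1)}{n^2 q (np)^k} \sum_{m \geq 1} \sum_{T_m} m^{k+1} \cdot m! \abs{\phi(H(T_m))} \frac{(n)_{m+1}}{\aut(T_m)} \lambda^m.
\end{align*}
Rewriting the inner double sum as a sum over simple-tree polymer tuples and applying the Penrose tree-graph bound as in \eqref{eq:logZ_Kn_truncate_clogn_fixed_m_bound}, each such sum is $O(n)$, because the polynomial factor $m^{k+1}$ is absorbed by the geometric convergence $e\lambda\Delta < 1/e$ ensured by Assumption~\ref{asmpt:MatchingEdgeDensity}. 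The dominant error thus comes from $k=1$, which gives an $O(1/(n^2 pq)) = O(1/n)$ error at the coefficient level; multiplying by $\signedKtwo(A) = O_\PP(n\sqrt{q})$ produces exactly the target bound $O_\PP(1/\sqrt{nq})$. Higher-order terms and the $O(1)$ truncation from Proposition~\ref{prop:GibbsMatching_mean_var_from_CE} are strictly smaller. The main technical step will be verifying the Penrose-type estimate for sums with polynomial-in-$m$ weights, but this is a routine variant of the proof of Theorem~\ref{thm:truncate_CE_clogn}, since $\sum_m m^{k-1} (e\lambda\Delta)^m$ converges for every $k \geq 0$.
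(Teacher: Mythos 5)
Your proposal is correct and follows essentially the same route as the paper: compute $\alpha(T_m)$ explicitly, pull out $(q/p)^m = 1 + O(m c_n/(np))$ via Claim~\ref{claim:identities_pq}, identify the leading sum with $\EE\abs{M}$ through Proposition~\ref{prop:GibbsMatching_mean_var_from_CE} so that the coefficient becomes $c_n/(nq)$, bound the $m$-weighted correction sums by $O(n)$ with the Penrose tree-graph device, and convert to the $O_\PP(1/\sqrt{nq})$ bound using $\signedKtwo(A) = O_\PP(n\sqrt{q})$. The only difference is that you carry the Taylor expansion of $(q/p)^m$ to second order, whereas the paper's first-order expansion already suffices; this is a cosmetic rather than substantive deviation.
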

The main result of this section is the following.
\begin{lemma}\label{lemma:ProjP2simpleTrees_is_fluctuationPart} Let $A \sim \cQ$. Then
    \begin{align}
        \Proj_{\signedPtwo}\inparen{ \overline{\simpleTrees}   } = - \frac{1}{\sqrt{2n} q} \inparen{\frac{2\EE\abs{M}}{n}}^2 \frac{\signedPtwo(A)}{\sqrt{\Var \signedPtwo(A)}} + O_\PP\!\inparen{  \frac{1}{nq} } . 
    \end{align}
\end{lemma}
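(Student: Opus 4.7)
The plan is to compute $\beta(T_m)$ exactly, substitute into $\Proj_{\signedPtwo}(\overline{\simpleTrees})$, and invoke the combinatorial identity Lemma~\ref{lem:ursell-function-tm-identity-2} to recognize the coefficient series as essentially the square of the $\EE\abs{M}$ series from Proposition~\ref{prop:GibbsMatching_mean_var_from_CE}. This is the equal-average-edge-density analogue of how Lemma~\ref{lem:ursell-function-tmrep-tm-identity} produced $-\tfrac{1-p}{p}(\EE\abs{M}/n)^2$ inside $\EE[\oneRepTrees]$ during the proof of Lemma~\ref{lemma:E_oneRepTrees_mainResult}; the twist here is that the marked substructure on each tree is now a wedge rather than a single edge, which is precisely what $\gamma(T_m)$ counts and what Lemma~\ref{lem:ursell-function-tm-identity-2} is designed to handle.

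First I compute $\beta(T_m)$ in closed form. For any labeled copy $T'$ of $T_m$ and any wedge $(j,\{i,k\})$, the expectation $\EE[\prod_{e \in T'} A_e \cdot (A_{ij}-q)(A_{jk}-q)]$ vanishes unless both $\{i,j\},\{j,k\} \in T'$, in which case the idempotence $A_e(A_e-q) = (1-q)A_e$ yields $(1-q)^2 q^m$. Since each labeled $T'$ contains $\gamma(T_m)$ wedges and there are $(n)_{m+1}/\aut(T_m)$ labeled copies in $K_n$, and using $\Var \signedPtwo(A) = \tfrac{(n)_3}{2} q^2(1-q)^2$ from Lemma~\ref{lemma:signed_P2_normality_P_Q}, I obtain the exact formula $\beta(T_m) = 2(n)_{m+1}\gamma(T_m) q^{m-2}/[(n)_3 \aut(T_m)]$. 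Writing $\widetilde{\phi} := m!\phi$, the coefficient of $\signedPtwo(A)$ in $\Proj_{\signedPtwo}(\overline{\simpleTrees})$ becomes
\[
C = \frac{2}{(n)_3\, q^2} \sum_{m=2}^{2\log n} \lambda^m \Big(\frac{q}{p}\Big)^m \sum_{T_m} \frac{\widetilde{\phi}(H(T_m))\, \gamma(T_m)\, (n)_{m+1}}{\aut(T_m)}.
\]
Applying Lemma~\ref{lem:ursell-function-tm-identity-2} rewrites the inner sum as the convolution $-2 \sum_{\ell=1}^{m-1} \sum_{(T_\ell, T_{m-\ell})} \ell(m-\ell) \widetilde{\phi}(H(T_\ell)) \widetilde{\phi}(H(T_{m-\ell})) / [\aut(T_\ell)\aut(T_{m-\ell})]$. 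Using $(n)_{m+1} = (n)_{\ell+1}(n)_{m-\ell+1}/n \cdot (1 + O((\log n)^2/n))$, replacing $(q/p)^m$ by $1$, and reindexing the double sum by $(\ell,\ell')$ with $\ell+\ell' = m$, the sum factorizes as
\[
C = -\frac{4}{n(n)_3\, q^2} \bigg( \sum_{\ell \geq 1} \ell\, \lambda^\ell\, (n)_{\ell+1} \sum_{T_\ell} \frac{\widetilde{\phi}(H(T_\ell))}{\aut(T_\ell)} \bigg)^{\!2} + (\text{errors}),
\]
and Proposition~\ref{prop:GibbsMatching_mean_var_from_CE} identifies the inner bracket as $\EE\abs{M} + O(1)$. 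A short algebraic check using $(n)_3 = 2\cdot 3\binom{n}{3}$ and $\sqrt{\Var \signedPtwo(A)} = \sqrt{3\binom{n}{3}}\, q(1-q)$ then confirms that the main term agrees, up to a multiplicative $1/(1-q) = 1 + O(q)$ factor, with $-\tfrac{1}{\sqrt{2n}\, q}(2\EE\abs{M}/n)^2 \cdot \signedPtwo(A)/\sqrt{\Var \signedPtwo(A)}$; the $O(q)$ discrepancy is absorbed into the remainder.

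The main obstacle will be tracking the various error contributions and showing they fit within $O_\PP(1/(nq))$. The missing off-diagonal tail $\{(\ell,\ell') : \ell + \ell' > 2\log n\}$ in the truncated factorization is handled in exact parallel with Claim~(iii) in the proof of Lemma~\ref{lemma:E_oneRepTrees_mainResult}: the Penrose tree-graph bound together with $\lambda \leq \tfrac{1}{60n}$ and a $(e\lambda\Delta)^{-\ell}$ trade-off on one factor reduces this tail's contribution to $C$ to a negligible order. The $(q/p)^m$ approximation costs $O(m(q-p)/p) = O(m/(nq))$ per term; summed against the main series and multiplied by the overall size $C = \Theta(1/n)$, this produces a coefficient error of order $1/(n^2 q)$, and multiplying by $\signedPtwo(A) = O_\PP(\sqrt{\Var \signedPtwo(A)}) = O_\PP(n^{3/2} q)$ yields the advertised $O_\PP(1/(nq))$ remainder. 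Notably, no Gaussianity of $\signedPtwo(A)$ is invoked anywhere.
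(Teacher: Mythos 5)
Your proposal is correct and follows essentially the same route as the paper: the same covariance computation giving $\beta(T_m)\propto\gamma(T_m)(n)_{m+1}q^{m-2}/\aut(T_m)$, the same key identity (Lemma~\ref{lem:ursell-function-tm-identity-2}) linking the $\gamma$-weighted tree series to the convolution of the $\EE\abs{M}$ series from Proposition~\ref{prop:GibbsMatching_mean_var_from_CE}, and the same tail and $(q/p)^m$ error controls as in Claim~(iii) of Lemma~\ref{lemma:E_oneRepTrees_mainResult}. The only difference is cosmetic—you resum the convolution forward into $(\EE\abs{M})^2$, whereas the paper expands $(2\EE\abs{M}/n)^2$ into the convolution and matches it term by term with the coefficient $W$.
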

\begin{lemma}\label{lemma:ProjPerpKtwoPtwo_is_small}
    Let $A \sim \cQ$. Then
    \begin{align*}
        \Proj^{\perp}_{\signedKtwo, \signedPtwo}\inparen{ \overline{\simpleTrees}   } = O_\PP\!\inparen{\frac{1}{\sqrt{nq}}}.
    \end{align*}
\end{lemma}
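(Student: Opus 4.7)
The plan is to adapt the strategy of Lemma~\ref{lemma:VarProjK_2_Perp_simpleTrees}, now projecting out both $\signedKtwo$ and $\signedPtwo$ rather than $\signedKtwo$ alone. Each summand $r^{\perp}_{\signedKtwo,\signedPtwo}(T_m,A)$ of $\Proj^{\perp}_{\signedKtwo,\signedPtwo}(\overline{\simpleTrees})$ has mean zero under $\cQ$, so by Chebyshev it suffices to show $\Var \Proj^{\perp} = O(1/(nq))$. Applying the triangle inequality to standard deviations reduces this to a per-tree bound:
\begin{equation*}
\sqrt{\Var \Proj^{\perp}_{\signedKtwo,\signedPtwo}(\overline{\simpleTrees})} \leq \sum_{m=1}^{2 \log n} \sum_{T_m} m!\, |\phi(H(T_m))|\, \frac{\lambda^m}{p^m} \sqrt{\Var r^{\perp}_{\signedKtwo,\signedPtwo}(T_m,A)}.
\end{equation*}

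A direct check shows $\Cov_{\cQ}(\signedKtwo,\signedPtwo)=0$ exactly: whenever a factor $A_{ij}-q$ from $\signedKtwo$ overlaps with one of the two factors in a $P_2$ summand of $\signedPtwo$, the remaining centered factor is independent and has mean zero. Consequently,
\begin{equation*}
\Var r^{\perp}_{\signedKtwo,\signedPtwo}(T_m,A) = \Var \overline{T}_m(A) - \alpha(T_m)^2 \Var\signedKtwo - \beta(T_m)^2 \Var\signedPtwo,
\end{equation*}
with $\alpha(T_m)=2m q^{m-1}(n)_{m+1}/[n(n-1)\aut(T_m)]$ (analogue of \eqref{eq:nonMatching_coeff_signedKtwo}) and $\beta(T_m)=\gamma(T_m) q^{m-2}(n)_{m+1}/[3\binom{n}{3}\aut(T_m)]$. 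Using the two-term expansion of $\Var \overline{T}_m(A)$ from \eqref{eq:Var_Tm_LeadAndSublead}, I will verify: (i) the leading one-edge-overlap term cancels against $\alpha(T_m)^2 \Var\signedKtwo$ up to $O(m^3 q^{2m-1} n^{2m-1}/\aut(T_m)^2)$ via \eqref{eq:nonMatching_Var_rTMA_LeadCancellation_nIdentity}; (ii) the subleading adjacent-$P_2$-overlap term cancels against $\beta(T_m)^2 \Var\signedPtwo$ up to $O(m^4 q^{2m-1} n^{2m-1}/\aut(T_m)^2)$, by writing $(n)_{m+1}/[3\binom{n}{3}]=2(n-3)_{m-2}$, expanding the ratio $(n)_{m-2}/(n-3)_{m-2}=1+O(m/n)$, and invoking the pointwise identity $(1-q^2)-(1-q)^2=2q(1-q)$; (iii) the three-or-more-overlap remainder from Claim~\ref{claim:Var_G_N_m} is $O(m^6 n^{2m-2} q^{2m-3}/\aut(T_m)^2)$, and in the regime $nq^2=\Theta(1)$ this term dominates both of the above leftovers by factors of $nq^2/m^3$ and $m^2/(nq^2)$ respectively. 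Hence $\sqrt{\Var r^{\perp}_{\signedKtwo,\signedPtwo}(T_m,A)} = O(m^3 n^{m-1} q^{m-3/2}/\aut(T_m))$.

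Substituting this bound, reorganizing via $\sum_{T_m} m!/\aut(T_m) = (n)_{m+1}^{-1}\sum_{\text{tuples}\cong T_m}$, and using $(q/p)^m=1+o(1)$ for $m\le 2\log n$ (since $q/p-1=O(1/(np))$) together with $n^{m-1}/(n)_{m+1}=n^{-2}(1+o(1))$ gives
\begin{equation*}
\sqrt{\Var \Proj^{\perp}_{\signedKtwo,\signedPtwo}(\overline{\simpleTrees})} \leq \frac{C}{n^2 q^{3/2}} \sum_{m=1}^{2 \log n} m^3 \lambda^m \!\!\sum_{\substack{e_1,\dots,e_m \\ \text{simple tree}}}\!\! |\phi(H(e_1,\dots,e_m))|.
\end{equation*}
Applying the Penrose tree-graph bound exactly as in \eqref{eq:logZ_Kn_truncate_clogn_fixed_m_bound} (with $\Delta=2n-3$) absorbs an $m^2$ factor, leaving $\sum_{m\ge 1} m (e\lambda\Delta)^m = O(1)$ under Assumption~\ref{asmpt:MatchingEdgeDensity}; this produces $\sqrt{\Var \Proj^{\perp}} = O(1/(nq^{3/2}))$. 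Since Assumption~\ref{asmpt:MatchingEdgeDensity} gives $nq^{3/2}\sim \theta\sqrt{nq}$, this is $O(1/\sqrt{nq})$, and Chebyshev concludes. The principal obstacle is step (ii): unlike the purely combinatorial $K_2$-cancellation, the $P_2$-cancellation requires simultaneously reconciling the falling-factorial discrepancy $(n)_{m-2}$ versus $(n-3)_{m-2}$ and the pointwise covariance discrepancy $(1-q^2)$ versus $(1-q)^2$, each contributing nonzero corrections that must be tracked carefully to ensure they lie below the three-overlap remainder in the critical regime $nq^2=\Theta(1)$.
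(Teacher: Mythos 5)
Your proposal is correct and follows essentially the same route as the paper's proof: mean zero plus a variance bound via the triangle inequality, the exact variance decomposition $\Var r^{\perp}=\Var\overline{T}_m-\alpha^2\Var\signedKtwo-\beta^2\Var\signedPtwo$ (the paper uses the orthogonality of $\signedKtwo$ and $\signedPtwo$ implicitly where you state it explicitly), cancellation of the leading and subleading terms of \eqref{eq:Var_Tm_LeadAndSublead} against the two projections using \eqref{eq:nonMatching_Var_rTMA_LeadCancellation_nIdentity} and the analogous falling-factorial estimate, domination by the $m^6 n^{2m-2}q^{2m-3}$ remainder in the regime $nq^2=\Theta(1)$, and the Penrose tree-graph bound to sum over trees. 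Your slightly different algebraic split in step (ii) via $(1-q^2)-(1-q)^2=2q(1-q)$ is an equivalent reorganization of the paper's bound and yields the same final estimate $O_\PP(1/\sqrt{nq})$.
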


\begin{proof}[Proof of Lemma \ref{lemma:matchingEdgeDensities_signedKtwo_Fone_cancels}] Similar to \eqref{eq:nonMatching_coeff_signedKtwo}, we compute 
    \begin{align*}
        \alpha(T_m) = \left. \frac{(n)_{m+1}}{\aut(T_m)} m q^m \middle/   \binom{n}{2}q  \right. .
    \end{align*}
    From the Taylor expansion \eqref{eq:identities_pq_(q/p)^r} giving $(q/p)^{m} = 1 + O(m c_n /np)$, we have 
    \begin{align*}
        &\Proj_{\signedKtwo}\inparen{ \overline{\simpleTrees}   } \\
        &= \inparen{  \sum_{m =1}^{2 \log n} \sum_{T_m} m! \phi(H(T_m)) m \lambda^m \frac{(n)_{m+1}}{\aut(T_m)}   }\frac{ \signedKtwo(A) }{\binom{n}{2}q} + O\!\inparen{  \frac{c_n}{np} \sum_{m =1}^{2 \log n} \sum_{T_m} m! \phi(H(T_m)) m^2 \lambda^m \frac{(n)_{m+1}}{\aut(T_m)}  }\frac{ \signedKtwo(A) }{\binom{n}{2}q} \\
        &= \underbrace{\frac{\EE\abs{M}}{\binom{n}{2}q}\signedKtwo(A)}_{ = - F_1(A)} + \underbrace{O\!\inparen{ 1 }\frac{ \signedKtwo(A) }{\binom{n}{2}q}}_{= O_\PP\!\inparen{ \frac{1}{n\sqrt{q}}  }} + O\!\inparen{ \frac{1}{n^2q^{3/2}} } \underbrace{ \sum_{m =1}^{2 \log n} \sum_{T_m} m! \phi(H(T_m)) m^2 \lambda^m \frac{(n)_{m+1}}{\aut(T_m)}  }_{= O(n)} \frac{ \signedKtwo(A) }{\sqrt{\Var \signedKtwo(A)}},
    \end{align*}
    where in the last line, we have used Proposition \ref{prop:GibbsMatching_mean_var_from_CE} to express the first term using $\EE|M|$, and used \eqref{eq:MD_Var_CE_formal} together with the proof of Proposition \ref{prop:Var|M|_Cn_bound} to bound the third term by $O(n)$, yielding that this term is $O_\PP\!\inparen{\frac{1}{\sqrt{nq}}}$. This completes the proof. 
\end{proof}

\begin{proof}[Proof of Lemma \ref{lemma:ProjPerpKtwoPtwo_is_small}]
    Note that $\Proj^{\perp}_{\signedKtwo, \signedPtwo}\inparen{ \overline{\simpleTrees}   }$ has mean zero. Therefore, it suffices to show that 
    \begin{equation*}
        \Var \Proj^{\perp}_{\signedKtwo, \signedPtwo}\inparen{ \overline{\simpleTrees}   } = O \left(\frac{1}{n q}\right) . 
    \end{equation*}
    By the triangle inequality,
    \begin{align}\label{eq:VarProjK2P2_Perp_simpleTrees_triangleInequality}
        \sqrt{\Var \Proj^{\perp}_{\signedKtwo, \signedPtwo}\inparen{ \overline{\simpleTrees}   } } \leq \sum_{m =1}^{2 \log n} \sum_{T_m} m! |\phi(H(T_m))| \frac{\lambda^m}{p^m} \sqrt{ \Var r^\perp_{\signedKtwo, \signedPtwo}\!\inparen{T_m, A} }.
    \end{align}
    Recall the definition of $\gamma(\cdot)$ in \eqref{eq:gamma_Tm_definition}. Compute 
    \begin{align*}
        \EE\insquare{  \overline{T}_m \signedKtwo(A)  } &= \frac{(n)_{m+1}m q^m (1-q)}{\aut(T_m)} \qquad\text{and}\qquad \EE\insquare{  \overline{T}_m \signedPtwo(A)  } = \frac{(n)_{m+1} \gamma(T_m) q^m (1-q)^2}{\aut(T_m)},
    \end{align*}
    as well as 
    \begin{align*}
        \Var \signedKtwo(A) = \binom{n}{2}q (1-q) \qquad\text{and}\qquad \Var \signedPtwo(A) = \binom{n}{3}\cdot 3 \cdot q^2 (1-q)^2. 
    \end{align*}
    From \eqref{eq:centeredSimpleTrees_def_rPperp_K2P2} we have 
    \begin{align}\label{eq:Var_rPerp_K2P2_cancelLeadSubLead}
        \Var r^\perp_{\signedKtwo, \signedPtwo}\!\inparen{T_m, A} &= \Var \overline{T}_m(A) - \frac{\EE\insquare{  \overline{T}_m \signedKtwo(A)  }^2 }{\Var \signedKtwo(A)}  - \frac{\EE\insquare{  \overline{T}_m \signedPtwo(A)  }^2 }{\Var \signedPtwo(A)}.
    \end{align}
    Using the variance estimate in \eqref{eq:Var_Tm_LeadAndSublead}, we find that the leading order term in $\Var \overline{T}_m(A)$ combines with the second term on the RHS of \eqref{eq:Var_rPerp_K2P2_cancelLeadSubLead} as 
    \begin{align*}
        \frac{2 m^2 (1-q)  q^{2m - 1} (n)_{m+1} (n)_{m-1} } {\aut(T_m)^2} - \frac{\inparen{\frac{(n)_{m+1}m q^m (1-q)}{\aut(T_m)} }^2}{ \Var \signedKtwo(A) } \leq C \frac{m^3 n^{2m-1}q^{2m-1}}{\aut(T_m)^2},
    \end{align*}
    where we used the inequality \eqref{eq:nonMatching_Var_rTMA_LeadCancellation_nIdentity}. The subleading order term in $\Var \overline{T}_m(A)$ in \eqref{eq:Var_Tm_LeadAndSublead} combines with the third term on the RHS of \eqref{eq:Var_rPerp_K2P2_cancelLeadSubLead} as 
    \begin{align*}
        &\frac{2 \gamma(T_m)^2 (1-q^2)  q^{2m - 2} (n)_{m+1} (n)_{m-2} } {\aut(T_m)^2} - \frac{\inparen{\frac{(n)_{m+1} \gamma(T_m) q^m (1-q)^2}{\aut(T_m)}}^2}{\binom{n}{3}\cdot 3 \cdot q^2 (1-q)^2} \\
        &\leq \frac{2q^{2m-2} \gamma(T_m)^2}{\aut(T_m)^2} (n)_{m+1} \insquare{  (n)_{m-2} - \frac{(n)_{m+1}}{n(n-1)(n-2)}   } + \frac{4(n)_{m+1} \gamma(T_m)^2 q^{2m-1}}{n(n-1)(n-2) \aut(T_m)^2} \\
        &\leq \frac{C  \gamma(T_m)^2 \cdot m \cdot n^{2m-2} q^{2m-2}}{\aut(T_m)^2} + \frac{C \gamma(T_m)^2 n^{2m-1} q^{2m-1}}{\aut(T_m)^2} \\
        &\leq \frac{C \gamma(T_m)^2 n^{2m-1} q^{2m-1}}{\aut(T_m)^2},
    \end{align*}
    where in the first inequality we bounded $1-q^2 \leq 1$ and $-(1 - q)^2 \leq -1 - 2q $, and in the second inequality we have used the inequality 
    \begin{align*}
        (n)_{m+1} \insquare{  (n)_{m-2} - \frac{(n)_{m+1}}{n(n-1)(n-2)}   } \leq C m n^{m-3},
    \end{align*}
    and also the approximation \eqref{eq:fallingfac_to_power_approx}. Thus, the dominant order of the RHS \eqref{eq:Var_rPerp_K2P2_cancelLeadSubLead} is contributed by the combined subleading term and the remainder term in \eqref{eq:Var_Tm_LeadAndSublead} to give 
    \begin{align*}
        \Var r^\perp_{\signedKtwo, \signedPtwo}\!\inparen{T_m, A} \leq C\frac{m^6 n^{2m-1} q^{2m-1}}{ \aut (T_m)^2},
    \end{align*}
    where we have used the bounds $\gamma(T_m) \leq m^2$ and $n q^2 = \Theta(1)$. The result is proved by plugging in this upper bound into \eqref{eq:VarProjK2P2_Perp_simpleTrees_triangleInequality} and following similar steps as in \eqref{eq:nonMatching_VarProjK_2_Perp_simpleTrees_finalBound}. Here there is a factor of $m^3$ instead of $m^2$ as in \eqref{eq:nonMatching_VarProjK_2_Perp_simpleTrees_finalBound}. Nevertheless, following the steps as in \eqref{eq:logZ_Kn_truncate_clogn_fixed_m_bound_sum_over_m}, we will obtain a derivative of a geometric series $\frac{n}{2} \sum_{m \geq 1} m (e\lambda \Delta)^m$ which is similarly bounded by $Cn$.
\end{proof}
\begin{proof}[Proof of Lemma \ref{lemma:ProjP2simpleTrees_is_fluctuationPart}]
Write $\Proj_{\signedPtwo}\inparen{ \overline{\simpleTrees}   } = \mathsf{coeff}(\signedPtwo) \cdot \frac{\signedPtwo(A)}{\sqrt{\Var \signedPtwo(A)}} $, where 
\begin{align*}
    \mathsf{coeff}(\signedPtwo) &=  \sum_{m =2}^{2 \log n} \sum_{T_m} m! \phi(H(T_m)) \frac{\lambda^m}{p^m} \frac{\EE\insquare{\overline{T}_m(A)\cdot  \signedPtwo(A)}}{\sqrt{ \Var \signedPtwo(A)  }}.
\end{align*} 
It suffices to show that 
\begin{align}
    \label{eq:coeff_P2_goal}
    \mathsf{coeff}(\signedPtwo) = - \frac{1-q}{\sqrt{2n} q} \inparen{\frac{2\EE\abs{M}}{n}}^2 + O\!\inparen{  \frac{1}{nq} }. 
\end{align}

With $\gamma(\cdot)$ defined in \eqref{eq:gamma_Tm_definition}, compute
\begin{align*}
    \Cov\insquare{\overline{T}_m(A), \signedPtwo(A)} &= \frac{(n)_{m+1}}{\aut (T_m)} \gamma(T_m) q^m (1-q)^2, \quad\text{and}\quad
    \Var \signedPtwo(A) = 3 \binom{n}{3} q^2 (1-q)^2.
\end{align*}
Using \eqref{eq:fallingfac_to_power_approx}, we find that 
\begin{align}
    \label{eq:fallingfac_to_power_approx_coeff_P2}
    \frac{(n)_{m+1}}{\sqrt{(n)_3}} = n^{m - 1/2} \insquare{ 1 + O\!\inparen{  \frac{(\log n)^2}{n} }  }.
\end{align}
Applying \eqref{eq:fallingfac_to_power_approx_coeff_P2} followed by \eqref{eq:identities_pq_(q/p)^r} (zero-th order Taylor expansion) in the first and second lines respectively, we have
\begin{align*}
    \mathsf{coeff}(\signedPtwo) &= \insquare{1 + O\!\inparen{  \frac{(\log n)^2}{n} }}  \frac{\sqrt{2}(1-q)}{\sqrt{n}q}  \sum_{m =2}^{2 \log n} \sum_{T_m} m! \phi(H(T_m)) \gamma(T_m)\lambda^m \frac{q^m}{p^m} \frac{n^m}{\aut(T_m)} \\
    &= \insquare{1 + O\!\inparen{  \frac{(\log n)^2}{n} }}  \inparen{ W + O\!\inparen{ \frac{c_n(1-p)}{np} }W'  } \\
    &= W + O\!\inparen{  \frac{(\log n)^2}{n} } W + O\!\inparen{ \frac{1}{np} }W',
\end{align*}
where 
\begin{align*}
    W &:= \frac{\sqrt{2}(1-q)}{\sqrt{n}q}  \sum_{m =2}^{2 \log n} (n\lambda)^m \sum_{T_m} m! \phi(H(T_m)) \frac{\gamma(T_m)}{\aut(T_m)} , \\
    W' &:= \frac{\sqrt{2}(1-q)}{\sqrt{n}q}  \sum_{m =2}^{2 \log n} (n\lambda)^m \sum_{T_m} m! \phi(H(T_m)) m \frac{\gamma(T_m)}{\aut(T_m)} .
\end{align*}

On the other hand, using \eqref{eq:fallingfac_to_power_approx} and expanding the first term on the RHS of \eqref{eq:coeff_P2_goal} similarly as in \eqref{eq:nonMatching_EEoneRepTree_RHS_expand_in_X}, we obtain
\begin{align}
    - \frac{1-q}{\sqrt{2n} q} \inparen{\frac{2\EE\abs{M}}{n}}^2 &= X_{\leq 2 \log n} + X_{> 2 \log n } + O\!\inparen{  \frac{(\log n)^2}{n} }X + O\!\inparen{\frac{1}{n}},
\end{align}
where, with $\widetilde{\phi}(H) = m! \phi(H)$ denoting the unnormalized Ursell function for $H$ on $m$ vertices,
\begin{align*}
    X &:= - \frac{2\sqrt{2}(1-q)}{n^{5/2}q} \inparen{  \sum_{m=1}^{2 \log n} \sum_{T_m} m! m \phi(H(T_m)) \lambda^m \frac{n^{m+1} }{\aut(T_m)}   }^2 ,\\
    X_{\leq 2 \log n} &:= - \frac{2\sqrt{2}(1-q)}{\sqrt{n}q} \sum_{m = 2}^{2 \log n} (\lambda n)^m \sum_{\ell = 1}^{m-1} \sum_{(T_\ell, T_{m-\ell})} \frac{\ell (m-\ell) \widetilde{\phi}(H(T_\ell)) \widetilde{\phi}(H(T_{m-\ell})) }{\aut(T_\ell) \aut(T_{m - \ell})} ,\\
    X_{> 2 \log n} &:= - \frac{2\sqrt{2}(1-q)}{\sqrt{n}q} \sum_{m = 2\log n + 1}^{4 \log n} (\lambda n)^m \sum_{\ell = m - 2 \log n}^{2 \log n} \sum_{(T_\ell, T_{m-\ell})} \frac{\ell (m-\ell) \widetilde{\phi}(H(T_\ell)) \widetilde{\phi}(H(T_{m-\ell})) }{\aut(T_\ell) \aut(T_{m - \ell})}.
\end{align*}
We claim the following:
\begin{enumerate}[(i)]
    \item $O\!\inparen{\frac{(\log n)^2}{n}} W = O\!\inparen{\frac{(\log n)^2}{n}}$, and  $O\!\inparen{\frac{1}{np}} W' = O\!\inparen{\frac{1}{np}}$.
    \item $O\!\inparen{\frac{(\log n)^2}{n}} X = O\!\inparen{\frac{(\log n)^2}{n}}$.
    \item $X_{> 2 \log n} = O\!\inparen{ \frac{\log n}{n^2}  }$.
    \item $W = X_{\leq 2 \log n}$. 
\end{enumerate}
Combining the above will yield \eqref{eq:coeff_P2_goal}. It only remains to prove the claims. The proofs of Claims (ii) and (iii) are very similar to those in the proof of Lemma \ref{lemma:E_oneRepTrees_mainResult} and will not be repeated.

\smallskip
\noindent
\underline{\textbf{Proof of Claim (i):}} We have the bound $\gamma(T_m) \leq m^2$. Using \eqref{eq:fallingfac_to_power_approx} and then rewriting in terms of polymers, we have 
\begin{align*}
    \abs{W'} &\leq \frac{C}{n} \sum_{m = 2}^{2 \log n} \sum_{T_m} m! m^3 \abs{\phi(H(T_m))} \lambda^m \frac{(n)_{m+1}}{\aut(T_m)} \frac{n^{m+1}}{(n)_{m+1}} \\
    &= \frac{C}{n}\exp\inparen{ O\!\inparen{\frac{(\log n)^2}{n}}  } \sum_{m = 2}^{2 \log n} \sum_{ \substack{ e_1,\dots,e_m \\ \text{simple tree}} } m^3 \abs{\phi(H(e_1,\dots,e_m))} \lambda^m.
\end{align*}
Arguing as in \eqref{eq:logZ_Kn_truncate_clogn_fixed_m_bound_sum_over_m} using the Penrose tree-graph bound, we obtain
\begin{align*}
    &\sum_{m = 2}^{2 \log n} \sum_{ \substack{ e_1,\dots,e_m \\ \text{simple tree}} } m^3 \abs{\phi(H(e_1,\dots,e_m))} \lambda^m \leq \frac{n}{2} \sum_{m = 2}^{2 \log n} m (e\lambda \Delta)^m 
    \leq \frac{n}{2} \sum_{m \geq 2} m (e\lambda\Delta)^m \\
    &= \frac{n}{2} (e\lambda\Delta) \frac{\ud}{\ud \rho} \inparen{ \sum_{m \geq 2} \rho^{m+1} } \Bigg\rvert_{\rho = e\lambda\Delta} 
    = \frac{n}{2} (e\lambda\Delta) \frac{\ud}{\ud \rho} \inparen{\frac{\rho^3}{1-\rho} } \Bigg\rvert_{\rho = e\lambda\Delta} 
    \leq C n.
\end{align*}
This establishes that $W' = O(1)$. A similar argument will show that $W = O(1)$.

\smallskip
\noindent
\underline{\textbf{Proof of Claim (iv):}} Comparing the expressions for $X_{\leq 2 \log n}$ and $W$, we see that it is equivalent to show, for every $2 \leq m \leq 2\log n$, that \eqref{eq:MatchingEdgeDensity_fluctuation_fix_m} holds. Therefore, the proof is complete. 
\end{proof}

\subsection{Mean part}

In this section, we establish Proposition \ref{prop:Matching_meanPart_mainResult}. Let us first show that $\oneRepTrees$ and $\twoRepTrees$ in \eqref{eq:matchingEdgeDensities_decompose_loglikelihood} concentrate around their respective expectations, so that they are essentially deterministic. 
\begin{lemma}\label{lemma:matchingEdgeDensity_oneRep_twoRep_are_deterministic}
    Let $A \sim \cQ$. Then the following holds
    \begin{align*}
        \oneRepTrees &= \EE\insquare{\oneRepTrees} + O_\PP\!\inparen{ \frac{1}{n q^{3/2}} } \\
        \twoRepTrees &= \EE\insquare{\twoRepTrees} + O_\PP\!\inparen{ \frac{1}{n^2 q^{5/2}} }
    \end{align*}
\end{lemma}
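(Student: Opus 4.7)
The plan is to bound $\Var(\oneRepTrees)$ and $\Var(\twoRepTrees)$ directly, with Chebyshev's inequality then delivering the claimed $O_\PP$ rates. Both quantities are finite linear combinations of subgraph counts $T_m(A)$ of the simple trees obtained by collapsing the repeated edges in each polymer cluster, so Claim \ref{claim:Var_G_N_m} controls $\Var T_m(A)$. Under Assumption \ref{asmpt:MatchingEdgeDensity}, $p \sim q \sim \theta/\sqrt{n}$, and Claim \ref{claim:identities_pq} gives $(q/p)^m = 1 + o(1)$ uniformly for $m \le 2 \log n$; combined with Claim \ref{claim:Var_G_N_m}, this yields
\[
\sqrt{\Var(T_m(A)/p^{m+k})} \lesssim \frac{m\, n^m}{\aut(T_m)\, q^{k+1/2}} \quad \text{for } k \in \{1,2\}.
\]

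For $\oneRepTrees$, I would start from the identity \eqref{eq:one-rep-tree-identity}, apply the triangle inequality with the above estimate at $k=1$, rewrite the resulting sum over unlabeled $T_m^{\text{rep}}$'s in terms of ordered polymer clusters of length $m+1$, and use $n^m/(n)_{m+1} = (1+o(1))/n$. After re-indexing the polymer count, one arrives at
\[
\sqrt{\Var(\oneRepTrees)} \lesssim \frac{1}{n\, q^{3/2}} \sum_{m \ge 2} \sum_{\substack{e_1,\dots,e_m \\ \text{one rep.\ edge tree}}} m\, \lambda^m\, |\phi(H(e_1,\dots,e_m))|,
\]
and the right-hand sum is $O(1)$ by Proposition \ref{prop:MDMean_cycles_O(1)}. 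Chebyshev then delivers the $O_\PP(1/(n q^{3/2}))$ rate.

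For $\twoRepTrees$, a wholly analogous identity expresses the quantity as a sum over unlabeled two-repeated-edge trees $T_m^{=,=}$ (including the triple-edge, adjacent-double-double, and separated-double-double types) with one more polymer and hence one extra factor of $\lambda/p$. The triangle inequality with $k=2$ produces the factor $1/q^{5/2}$, and the target rate then amounts to showing
\[
\sum_{m \ge 3} \sum_{\substack{e_1,\dots,e_m \\ \text{two rep.\ edges tree}}} m\, \lambda^m\, |\phi(H(e_1,\dots,e_m))| = O(1/n),
\]
which is the deterministic analog of the estimate used in Lemma \ref{lemma:nonMatching_moreThanTwoRep}. I would verify it by applying the Penrose tree-graph bound (Lemma \ref{lemma:Penrose_tree_bound}), then in the iterative polymer-assignment scheme distinguishing two ``repeat-link'' edges of the spanning tree $t$ (there are $\binom{m-1}{2}$ such choices), each forcing the next polymer to equal the previously assigned one and thereby replacing a factor of $2\Delta - 1$ by $1$. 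Invoking Cayley's theorem and the assumption $2 e \lambda n < 1$ from Assumption \ref{asmpt:MatchingEdgeDensity} then yields the $O(1/n)$ bound, whence $\sqrt{\Var(\twoRepTrees)} = O(1/(n^2 q^{5/2}))$ and Chebyshev completes the argument.

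The main technical obstacle is precisely this aggregate Ursell bound for two-repeated-edge trees: Proposition \ref{prop:MDMean_cycles_O(1)} only gives the weaker $O(1)$ estimate summed over all repeat-or-cyclic clusters, so one must redo the Penrose argument accounting for two repeat constraints simultaneously in order to extract the extra $1/n$ smallness that is required for the target rate.
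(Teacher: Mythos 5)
Your argument is correct and follows essentially the same route as the paper: the paper proves both statements by Chebyshev after bounding the variances exactly as in Lemma~\ref{lemma:Var_oneRepTrees_small}, absorbing the extra $(q/p)^m$ factor via the first-order expansion $(q/p)^m \le 1 + Cm/(np)$, and disposes of the two-repeated-edge case with ``similar straightforward modifications.'' The detail you supply for that case---reducing to the aggregate bound $\sum_{m}\sum_{\text{two-rep.\ tree clusters}} m\,\lambda^m\,\abs{\phi(H)} = O(1/n)$ and proving it by a Penrose count with two forced repeat-links---is precisely the modification the paper intends (it is the deterministic counterpart of the counting in Lemma~\ref{lemma:nonMatching_moreThanTwoRep}), so your write-up is, if anything, more explicit than the paper's own proof.
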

\begin{proof}[Proof of Lemma \ref{lemma:matchingEdgeDensity_oneRep_twoRep_are_deterministic}]
    The first statement follows by straightforward modifications of the proof of Lemma \ref{lemma:Var_oneRepTrees_small}. Here $\Var \overline{T}_m(A)$ is bounded in terms of $q$ instead of $p$ and this leads to an additional $(q/p)^m$ term. However, this does not lead to any additional difficulty as we can just expand $(q/p)^m \leq 1 + C m/(np)$. At this scale the lower order term can be absorbed into the dominant term. This leads to the same variance bound as in Lemma \ref{lemma:Var_oneRepTrees_small}. 

    The second statement follows by similar straightforward modifications.
\end{proof}
As a consequence of Lemma \ref{lemma:matchingEdgeDensity_oneRep_twoRep_are_deterministic}, we will not deal with any randomness in the remainder of this section. We first show that $\EE\insquare{\simpleTrees}$ is the sum of an $O(1/q)$ term and an $O(1)$ term that can be written as a sum over trees with one repeated edge. 
In this section, we write $\sim$ to mean equality to leading orders, hiding an at most $O\!\inparen{  \frac{(\log n)^2}{np}  }$ additive term. 
\begin{claim}
    \label{claim:EsimpleTrees_expandedInTmrep}
    We have
    \begin{align}
    \EE\insquare{ \simpleTrees  } &\sim \frac{c_n^2}{2} \frac{1-q}{q}  - \frac{c_n}{2nq^2} \sum_{m=1}^{2 \log n - 1} \sum_{\TmRep} \frac{(m+1)!}{2} \phi(H(\TmRep)) \frac{(n)_{m+1}}{\aut(\TmRep)} \lambda^{m+1} (m+3).
    \label{eq:EsimpleTrees_expandedInTmrep}
\end{align}
\end{claim}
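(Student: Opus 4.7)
My plan is to start from the exact expression
\[
\EE[\simpleTrees] = \sum_{m=1}^{2\log n} \sum_{T_m} m!\,\phi(H(T_m))\, T_m(K_n)\, \lambda^m \left[(q/p)^m - 1\right]
\]
and Taylor-expand the prefactor $(q/p)^m - 1$ using Claim \ref{claim:identities_pq}. The cubic remainder $O(c_n^3 m^3/(n^2 p))$, when summed against the CE with an extra $m^3$ weight (which remains absolutely convergent by a minor adaptation of the proof of Theorem \ref{thm:truncate_CE_clogn}), contributes only $O(c_n^3/(np)) = O(1/\sqrt{n})$ globally, which is absorbed into the $\sim$ error. I am thus left with a first-order contribution $\tfrac{c_n(1-p)}{np}\Lambda_m$ and a second-order contribution $\tfrac{c_n^2(1-p)^2}{2n^2 p^2}\Lambda_{m(m-1)}$, where $\Lambda_w := \sum_m \sum_{T_m} w(m)\cdot m!\phi(H(T_m)) T_m(K_n) \lambda^m$.

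For the first-order piece, Proposition \ref{prop:GibbsMatching_mean_var_from_CE} gives $\Lambda_m = \EE|M| + O(1)$. I would then expand $\tfrac{1-p}{p} = \tfrac{1-q}{q} + \tfrac{c_n(1-q)}{n q^2} + O(c_n^2/(n^2 q^3))$ (a direct consequence of Claim \ref{claim:identities_pq}) and substitute $\EE|M| = c_n(n-1)/2$. This separates the result into the leading $O(1/q)$ term $\tfrac{c_n^2(1-q)}{2q}$---exactly the first term of the claim---plus a deterministic $O(1)$ residual of size $\tfrac{c_n^3(1-q)}{2 n q^2}$ that must later be absorbed into the $\TmRep$-sum on the right-hand side.

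For the second-order piece, I would replace $\tfrac{(1-p)^2}{p^2}$ by $\tfrac{(1-q)^2}{q^2}$ at negligible cost, and then interpret $\Lambda_{m(m-1)} = \Var|M| - \EE|M| + O(1)$ via the standard identities $\EE|M| = \lambda(\log Z_{K_n})'$ and $\Var|M| = \lambda(\EE|M|)'$ restricted to simple-tree CE contributions (the cyclic and multi-repeat errors being $O(1)$ by Proposition \ref{prop:Var|M|_Cn_bound} and a straightforward adaptation of Proposition \ref{prop:MDMean_cycles_O(1)}). Independently, I would evaluate the weighted $\TmRep$-sum on the right-hand side of the claim by splitting $(m+3) = (m+1) + 2$ and applying Lemma \ref{lem:ursell-function-tmrep-tm-identity} termwise to rewrite each fixed-$m$ $\TmRep$-sum as a convolution over pairs of simple trees. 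The ``$+2$'' piece is then the basic $\TmRep$-sum, which by Lemma \ref{lemma:E_oneRepTrees_mainResult} evaluates to $-2(\EE|M|/n)^2$; the ``$(m+1)$'' piece, after writing $(m+1) = \ell + (m+1-\ell)$ inside the convolution, evaluates to $-2\EE|M|\Var|M|/n^2$, since the extra factor of $\ell$ in one of the two tree-sums upgrades it from $\EE|M|/n$ to $\Var|M|/n$. Multiplying the combined $\TmRep$-sum by $-\tfrac{c_n}{2nq^2}$ and substituting $\EE|M| \sim c_n n/2$ recovers precisely the sum of the first-order residual from the previous paragraph and the second-order $\tfrac{c_n^2(1-q)^2}{2n^2 q^2}\Lambda_{m(m-1)}$, closing the identification.

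The main obstacle will be the simultaneous $O(1)$ bookkeeping on the two sides. Specifically, the $\tfrac{c_n^3(1-q)}{2 n q^2}$ residual from the first-order Taylor must combine with the $-\EE|M|$ piece of $\Lambda_{m(m-1)}$ in such a way that the net identification produces exactly the $(m+3)$ weight on the $\TmRep$-sum rather than, say, $(m+1)$ or $(m+5)$; getting this numerical coefficient right is the delicate step. A secondary challenge is justifying uniformly in $m \leq 2\log n$ that truncating the Taylor expansion of $(q/p)^m$, approximating $(n)_{m+1}$ by $n^{m+1}$, substituting $(1-p)^k/p^k$ by $(1-q)^k/q^k$ for $k=1,2$, and dropping cyclic and multiply-repeated-edge cluster contributions all produce errors collectible into the $O((\log n)^2/(np))$ tolerance hidden by $\sim$.
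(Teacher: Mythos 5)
Your proposal is correct, and its first half (Taylor-expanding $(q/p)^m-1$ via Claim~\ref{claim:identities_pq}, extracting the leading $\tfrac{c_n^2}{2}\tfrac{1-q}{q}$ from the first-order term, and carrying the $\tfrac{c_n^3}{2nq^2}$ residual and the $m(m-1)$-weighted tree sum forward as the $O(1)$ part) is exactly what the paper does. Where you diverge is in how the $O(1)$ part is matched to the $(m+3)$-weighted $\TmRep$-sum: the paper writes that combined term as $\tfrac{c_n}{2nq^2}\,U$ with $U \approx \tfrac{1}{\lambda}\tfrac{\ud}{\ud\lambda}\bigl[(\lambda S(\lambda)/n)^2\bigr]$, uses the Ursell convolution identity (Lemma~\ref{lem:ursell-function-tmrep-tm-identity}, via Claim (iv) in the proof of Lemma~\ref{lemma:E_oneRepTrees_mainResult}) to replace $S(\lambda)^2/n^2$ by the one-repeated-edge tree series, and then lets the differentiation of $\lambda^2(n\lambda)^{m+1}$ produce the factor $(m+3)$ automatically; you instead split $(m+3)=(m+1)+2$, distribute $(m+1)=\ell+(m+1-\ell)$ through the same convolution identity, and identify the resulting weighted tree series with $\Var|M|/n$ and $\EE|M|/n$. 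These are Leibniz-rule mirror images of the same computation, both resting on the same key lemma, and your coefficient bookkeeping does close: the left side collapses to $\tfrac{c_n^2}{2n^2q^2}(\Var|M|+\EE|M|)$ after converting the $c_n^3$ residual via $c_n n\approx 2\EE|M|$, matching the right side, so your worry about landing on $(m+3)$ rather than $(m+1)$ or $(m+5)$ resolves favorably. What your route buys is a more transparent probabilistic reading of the $O(1)$ term ($\EE|M|$ plus $\Var|M|$) at the cost of two extra series identifications whose errors you must control: (i) the identification $\Lambda_{m^2}=\Var|M|+O(1)$ needs the truncation and cycle/repeated-edge bounds with an $m^2$ weight (the spare $1/m^2$ and $1/(m+r)$ factors in the proofs of Theorem~\ref{thm:truncate_CE_clogn} and Proposition~\ref{prop:MDMean_cycles_O(1)} make this routine, as you indicate), and (ii) when you convert the termwise convolution, summed over $m\le 2\log n$, into a product of the two truncated series, you must bound the pairs of trees with total size exceeding $2\log n$ --- the analogue of the $X_{>2\log n}$ estimate handled in Claim (iii) of the paper's proof of Lemma~\ref{lemma:E_oneRepTrees_mainResult} --- which you do not name explicitly but which fits within the error budget you describe.
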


\begin{proof}[Proof of Claim \ref{claim:EsimpleTrees_expandedInTmrep}]
Using \eqref{eq:identities_pq_(q/p)^r},
\begin{align*}
    &\EE\insquare{ \simpleTrees  } = \sum_{m =1}^{2 \log n} \sum_{T_m} m! \phi(H(T_m)) T_m(K_n) \lambda^m \insquare{  \frac{c_n}{n}\frac{1-p}{p}m + \frac{c_n^2}{2n^2}\inparen{\frac{1-p}{p}}^2 m (m-1) + O\!\inparen{ \frac{c_n^3}{n^2 p}m^3  } } \\
    &= \frac{c_n}{n}\frac{1-p}{p} \EE\abs{M} + \frac{c_n^2}{2n^2}\frac{(1-p)^2}{p^2} \sum_{m =1}^{2 \log n} \sum_{T_m} m! \phi(H(T_m)) T_m(K_n) \lambda^m    m (m-1)   + O\!\inparen{ \frac{1}{n p}  },
\end{align*} 
where the bound on the remainder term follows by a straightforward modification of the proof of Lemma \ref{prop:Var|M|_Cn_bound}. Furthermore, 
\begin{align*}
    \frac{c_n}{n} \frac{1-p}{p} \EE \abs{M} = \frac{c_n^2}{2} \frac{1-q}{q} + \frac{c_n^3}{2n} \frac{1-p}{pq} + O\!\inparen{\frac{1}{np}}.
\end{align*}
Thus 
\begin{align}
    \EE\insquare{ \simpleTrees  } &= \frac{c_n^2}{2} \frac{1-q}{q}  + \frac{c_n^3}{2n} \frac{1-p}{pq} \nonumber \\
    &\quad + \frac{c_n^2}{2n^2}\frac{(1-p)^2}{pq} \sum_{m =1}^{2 \log n} \sum_{T_m} m! \phi(H(T_m)) T_m(K_n) \lambda^m    m (m-1) + O\!\inparen{ \frac{1}{n p}  }.
    \label{eq:EsimpleTrees_expandedStillInTm} 
\end{align}

We now write all the $O(1)$ terms as a sum over repeated edge trees. We note that 
    \begin{align*}
        \frac{c_n^3}{2n} \frac{1-p}{pq} \sim \frac{c_n^2}{n^2q^2} \EE\abs{M} \sim \frac{c_n^2}{n^2q^2} \sum_{m =1}^{2 \log n} \sum_{T_m} m! \phi(H(T_m)) T_m(K_n) m \lambda^m.
    \end{align*}
    Hence the $O(1)$ terms in \eqref{eq:EsimpleTrees_expandedStillInTm} combine as
\begin{align}
    &\frac{c_n^3}{2n} \frac{1-p}{pq} + \frac{c_n^2}{2n^2}\frac{(1-p)^2}{pq} \sum_{m =1}^{2 \log n} \sum_{T_m} m! \phi(H(T_m)) T_m(K_n) \lambda^m    m (m-1) \nonumber \\
    &\quad\sim \frac{c_n^2}{2n^2 q^2} \sum_{m =1}^{2 \log n} \sum_{T_m } m! \phi(H(T_m)) T_m(K_n) \lambda^m (m+1)m \nonumber \\
    &\quad\sim \frac{c_n}{2 n q^2} \cdot \underbrace{ \frac{2\EE\abs{M}}{n^2} \sum_{m =1}^{2 \log n} \sum_{T_m} m! \phi(H(T_m)) T_m(K_n) \lambda^m    m (m+1) }_{ =: U}.
    \label{eq:EsimpleTrees_expanded_O1_parts}
\end{align}
Therefore, to prove \eqref{eq:EsimpleTrees_expandedInTmrep}, it suffices to show 
\begin{equation}
    \label{eq:EsimpleTrees_expandedInTmrep_proofGoal}
    U \sim - \sum_{m=1}^{2 \log n - 1} \sum_{\TmRep} \frac{(m+1)!}{2} \phi(H(\TmRep)) \frac{(n)_{m+1}}{\aut(\TmRep)}  \lambda^{m+1} (m+3).
\end{equation}

Note that Proposition \ref{prop:GibbsMatching_mean_var_from_CE} gives  
\begin{equation*}
    \frac{2\EE\abs{M}}{n^2} = \frac{2S(\lambda)}{n^2} + O\inparen{\frac{1}{n^2}}, \quad\text{where} \quad S(\lambda) := \sum_{m =1}^{2 \log n} \sum_{T_m} m! \phi(H(T_m)) T_m(K_n) \lambda^m    m.
\end{equation*}
We have the identity,
\begin{align}
    \frac{1}{\lambda} \frac{\ud}{\ud \lambda} \insquare{ \inparen{\frac{\lambda S(\lambda)}{n} }^2} &= \frac{2 S(\lambda)}{n^2} \frac{\ud}{\ud \lambda} \insquare{\lambda S(\lambda) } 
    = \frac{2 S(\lambda)}{n^2} \sum_{m =1}^{2 \log n} \sum_{T_m} m! \phi(H(T_m)) T_m(K_n) \lambda^m    m (m+1) \nonumber\\
    &= U + O\!\inparen{\frac{1}{n^2}}\sum_{m =1}^{2 \log n} \sum_{T_m} m! \phi(H(T_m)) T_m(K_n) \lambda^m    m (m+1) .
    \label{eq:EsimpleTrees_expandedInTmrep_partA}
\end{align}
On the other hand, by the approximation \eqref{eq:fallingfac_to_power_approx}, we have
\begin{align}
    \frac{S(\lambda)^2}{n^2} = X(\lambda) + O\!\inparen{ \frac{(\log n)^2}{n^3} } X(\lambda)
    \label{eq:EsimpleTrees_expandedInTmrep_partB} 
\end{align}
where 
\begin{align*}
    X(\lambda) := \frac{1}{n^2} \inparen{  \sum_{m=1}^{2 \log n} \sum_{T_m} m! m \phi(H(T_m)) \lambda^m \frac{n^{m+1} }{\aut(T_m)}   }^2.
\end{align*}
Importantly, we note that the factor $O\!\inparen{ \frac{(\log n)^2}{n^3} }$ is independent of $\lambda$. Further decompose 
\begin{align}
    X(\lambda) = X_{\leq 2 \log n}(\lambda) + X_{> 2 \log n}(\lambda)
    \label{eq:EsimpleTrees_expandedInTmrep_partC}
\end{align}
where 
\begin{align*}
    X_{\leq 2 \log n}(\lambda) &:=  \sum_{m = 1}^{2 \log n - 1} (n\lambda)^{m+1}  \sum_{\ell = 1}^{m} \sum_{(T_\ell, T_{m+1 -\ell})} \frac{  \ell \widetilde{\phi}(H(T_\ell))}{\aut (T_\ell)}  \frac{(m+1-\ell) \widetilde{\phi}(H(T_{m+1-\ell}))}{\aut(T_{m+1-\ell})}\\
    X_{> 2 \log n}(\lambda) &:= \sum_{m = 2 \log n}^{4 \log n - 1} (n\lambda)^{m+1} \sum_{\ell = m+1 - 2\log n}^{2 \log n} \sum_{(T_\ell, T_{m+1 -\ell})} \frac{  \ell \widetilde{\phi}(H(T_\ell))}{\aut (T_\ell)}  \frac{ (m+1-\ell) \widetilde{\phi}(H(T_{m+1-\ell}))}{\aut(T_{m+1-\ell})}.
\end{align*} 
Collecting equations \eqref{eq:EsimpleTrees_expandedInTmrep_partA}, \eqref{eq:EsimpleTrees_expandedInTmrep_partB}, and \eqref{eq:EsimpleTrees_expandedInTmrep_partC}, we have 
\begin{align*}
    U = \text{I} + \text{II} + \text{III} + \text{IV},
\end{align*}
where 
\begin{alignat*}{2}
    & \text{I} = \frac{1}{\lambda} \frac{\ud}{\ud \lambda} \insquare{\lambda^2 X_{\leq 2 \log n}(\lambda)}, 
    \quad&&\quad \text{II} = \frac{1}{\lambda} \frac{\ud}{\ud \lambda} \insquare{\lambda^2 X_{> 2 \log n}(\lambda)}, \\
    & \text{III} = O\!\inparen{ \frac{(\log n)^2}{n^3} } \frac{1}{\lambda} \frac{\ud}{\ud \lambda} \insquare{\lambda^2   X(\lambda)  },
    \quad&&\quad
    \text{IV} = O\!\inparen{\frac{1}{n^2}}\sum_{m =1}^{2 \log n} \sum_{T_m} m! \phi(H(T_m)) T_m(K_n) \lambda^m    m (m+1).
\end{alignat*}
We state the following claims:
\begin{align*}
    &\text{(i)}  \quad \text{I} \sim \text{RHS of } \eqref{eq:EsimpleTrees_expandedInTmrep_proofGoal}, \qquad\qquad \text{(ii)} \quad \text{II} = O\!\inparen{\frac{(\log n)^2}{n^2}} ,\\
    &\text{(iii)} \quad \text{III} = O\!\inparen{\frac{(\log n)^3}{n }} ,\qquad\quad \text{(iv)}\quad \text{IV} = O\!\inparen{\frac{\log n}{n}}. 
\end{align*}
These claims will establish \eqref{eq:EsimpleTrees_expandedInTmrep_proofGoal}. It remains only to prove them.

\smallskip
\noindent
\underline{\textbf{Proof of Claim (i):}} From the proof of Lemma \ref{lemma:E_oneRepTrees_mainResult} (in particular Claim (iv) there) we have 
\begin{align*}
    X_{\leq 2 \log n}(\lambda) = -  \sum_{m = 1}^{2 \log n - 1} (n \lambda)^{m+1} \sum_{\TmRep} \frac{(m+1)!}{2 \aut (\TmRep)} \phi\inparen{H(\TmRep)}.
\end{align*}
The result is immediate by taking the derivative with respect to $\lambda$ in $\text{I}$ and then using the approximation \eqref{eq:fallingfac_to_power_approx}.

\smallskip
\noindent
\underline{\textbf{Proofs of Claims (ii) and (iii):}} These follow from straightforward modifications of the proof of Claims (iii) and (ii) respectively in the proof of Lemma \ref{lemma:E_oneRepTrees_mainResult}. The derivative with respect to $\lambda$ introduces an additional factor of $(m + \text{constant})$ which can be bounded in magnitude by $O(\log n)$.

\smallskip
\noindent
\underline{\textbf{Proof of Claims (iv):}} This follows by bounding the sum as $O(n)$ by similar arguments as in Claim (ii) in the proof of Lemma \ref{lemma:E_oneRepTrees_mainResult}.
\end{proof}

In light of Lemma \ref{lemma:matchingEdgeDensity_oneRep_twoRep_are_deterministic} we only need consider the mean part of $\oneRepTrees$:
\begin{align*}
    \EE\insquare{\oneRepTrees} = \sum_{m =1}^{2 \log n - 1} \sum_{\TmRep} \frac{(m+1)!}{2} \phi(H(\TmRep)) \frac{(n)_{m+1}}{\aut(\TmRep)}  \lambda^{m+1}  \frac{1}{q} \insquare{ \inparen{ \frac{q}{p} }^{m+1} - q  }.
\end{align*}
We similarly decompose $\EE\insquare{\oneRepTrees}$ into the sum of an $O(1/q)$ and an $O(1)$ term.
\begin{claim}
    \label{claim:EoneRepTrees_expanded_final}
    We have
    \begin{align}
    \EE\insquare{\oneRepTrees} \nonumber 
    &= -\frac{1-q}{q} \frac{c_n^2}{4}   + O\!\inparen{\frac{(\log n)^2}{nq}} \nonumber \\
    &\qquad + \frac{c_n}{n} \frac{1-p}{pq} \sum_{m =1}^{2 \log n-1} \sum_{\TmRep} \frac{(m+1)!}{2} \phi(H(\TmRep))  \frac{(n)_{m+1}}{\aut(\TmRep)} \lambda^{m+1} (m+1) . 
    \label{eq:EoneRepTrees_expanded_final}
\end{align}
\end{claim}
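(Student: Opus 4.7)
The plan is to start from the formula for $\EE[\oneRepTrees]$ displayed immediately above the claim, Taylor-expand the factor $\tfrac{1}{q}\bigl[(q/p)^{m+1} - q\bigr]$ using Claim~\ref{claim:identities_pq}, and then analyze the resulting three sums separately. Applying \eqref{eq:identities_pq_(q/p)^r} with $r = m+1$ gives the identity
\begin{align*}
\frac{1}{q}\Bigl[(q/p)^{m+1} - q\Bigr]
&= \frac{1-q}{q} + \frac{c_n(m+1)(1-p)}{npq} + \frac{c_n^2(m+1)m(1-p)^2}{2n^2 p^2 q} + O\!\Bigl(\tfrac{c_n^3 (m+1)^3}{n^2 p q}\Bigr).
\end{align*}
Inserting this expansion into $\EE[\oneRepTrees]$ produces a decomposition $\frac{1-q}{q}\,W_0 + \frac{c_n(1-p)}{npq}\,W_1 + \frac{c_n^2(1-p)^2}{2n^2 p^2 q}\,W_2 + \mathsf{Err}$, where $W_j := \sum_{m=1}^{2\log n - 1}\sum_{\TmRep} \frac{(m+1)!}{2}\,\phi(H(\TmRep))\,\frac{(n)_{m+1}}{\aut(\TmRep)}\,\lambda^{m+1} f_j(m)$ with $f_0(m) = 1$, $f_1(m) = m+1$, $f_2(m) = (m+1)m$, and $\mathsf{Err}$ collects the cubic Taylor remainder.

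The leading piece $\tfrac{1-q}{q}W_0$ accounts for the first term of \eqref{eq:EoneRepTrees_expanded_final}. Indeed, $W_0$ is exactly the sum appearing in the proof of Lemma~\ref{lemma:E_oneRepTrees_mainResult} (up to the external prefactor $(1-p)/p$ stripped there). That proof only uses the convolution identity \eqref{eq:convolutionOfUrsell_overTrees}, the falling-factorial approximation \eqref{eq:fallingfac_to_power_approx}, and the cluster expansion truncations from Theorem~\ref{thm:truncate_CE_clogn}, none of which depend on whether $p = q$. Extracting the $-c_n^2/4$ identified in that proof and dividing out the $(1-p)/p$ factor yields $W_0 = -c_n^2/4 + O((\log n)^2/n)$, hence $\tfrac{1-q}{q}W_0 = -\tfrac{1-q}{q}\tfrac{c_n^2}{4} + O((\log n)^2/(nq))$. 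The $W_1$ piece is, by inspection, exactly the second line of \eqref{eq:EoneRepTrees_expanded_final} once we rewrite $\tfrac{c_n(1-p)}{npq} = \tfrac{c_n}{n}\cdot\tfrac{1-p}{pq}$.

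It remains to absorb the $W_2$ contribution and $\mathsf{Err}$ into the error term. For both, we apply the Penrose tree-graph bound precisely as in the proof of Proposition~\ref{prop:MDMean_cycles_O(1)}: the extra polynomial factors $(m+1)m$ and $(m+1)^3$ are dominated by the geometric decay $(e\lambda\Delta)^{m+1}$ coming from \eqref{eq:MDMean_cycles_O(1)_afterCayley}, and so $|W_2|, |\mathsf{Err}/(c_n^3 \cdots)| = O(1)$. Since Assumption~\ref{asmpt:MatchingEdgeDensity} forces $np^2 = \Theta(1)$, the prefactor $\tfrac{c_n^2(1-p)^2}{2n^2 p^2 q}$ is $O(1/(nq))$ and hence $\tfrac{c_n^2(1-p)^2}{2n^2 p^2 q}W_2 = O(1/(nq))$, while the cubic remainder yields an even smaller $O(1/(n^{3/2}q))$ contribution. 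Combined with the $O((\log n)^2/(nq))$ error from the leading piece, these remainders fit within the claimed $O((\log n)^2/(nq))$ budget. The main obstacle is purely bookkeeping: tracking how the polynomial-in-$m$ factors interact with the Penrose geometric series while keeping the error estimates crisply below $(\log n)^2/(nq)$; no new combinatorial identity is needed beyond what is already established in Sections~\ref{sec:cluster-expansion-convergence} and~\ref{sec:nonMatchingMeanPart}.
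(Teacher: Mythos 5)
Your proposal is correct and follows essentially the same route as the paper's own proof: expand $\tfrac{1}{q}\bigl[(q/p)^{m+1}-q\bigr]$ via \eqref{eq:identities_pq_(q/p)^r}, identify the zeroth-order (weight-$1$) sum over $\TmRep$ with $-(\EE|M|/n)^2 = -c_n^2/4 + O((\log n)^2/n)$ through \eqref{eq:EoneRepTrees_is_meanSquare}, read off the first-order term as the displayed $(m+1)$-weighted sum, and push the quadratic and cubic pieces into the error. The only difference is that you spell out the Penrose-bound bookkeeping for the higher-order remainders, which the paper leaves implicit in its one-line ``expand to first order'' argument.
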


\begin{proof}[Proof of Claim \ref{claim:EoneRepTrees_expanded_final}]
    From \eqref{eq:EoneRepTrees_is_meanSquare}, we have
\begin{align}\label{eq:TmRep_meanMDSquare_Relation}
    \sum_{m =1}^{2 \log n - 1} \sum_{\TmRep} \frac{(m+1)!}{2} \phi(H(\TmRep))  \frac{(n)_{m+1}}{\aut(\TmRep)} \lambda^{m+1} &= - \inparen{  \frac{\EE\abs{M}}{n} }^2 + O\!\inparen{\frac{(\log n)^2}{np}}.
\end{align}
The result follows by expanding LHS of \eqref{eq:EoneRepTrees_expanded_final} using \eqref{eq:identities_pq_(q/p)^r} (to first order). 
\end{proof}
Combining the $F_2$ term in \eqref{eq:logprefactor_simplification} with the expanded $\EE\insquare{ \simpleTrees  }$ in \eqref{eq:EsimpleTrees_expandedInTmrep} and the expanded $\EE\insquare{ \oneRepTrees  }$ in \eqref{eq:EoneRepTrees_expanded_final}, we see that the higher order $O(1/q)$ terms cancel; we are left with an $O(1)$ term which is called $\combined$. We record this as a lemma.
\begin{lemma}\label{lemma:combined_definition_as_Ftwo_EsimpleTrees_EonerepTrees}
We have
    \begin{align}
    &F_2 + \EE\insquare{ \simpleTrees  } + \EE\insquare{ \oneRepTrees  } \sim \combined,
    \label{eq:combine_secondLogPrefactor_Esimp_EoneRep}
    \end{align}
    where 
    \begin{align*}
        \combined \sim \frac{c_n}{2nq^2} \sum_{m =1}^{2 \log n-1} \sum_{\TmRep} \frac{(m+1)!}{2} \phi(H(\TmRep))  \frac{(n)_{m+1}}{\aut(\TmRep)} \lambda^{m+1} (m-1).
    \end{align*}
\end{lemma}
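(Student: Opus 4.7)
The plan is a direct algebraic combination of the three quantities using the explicit expressions from Lemma \ref{lemma:logprefactor_simplification}, Claim \ref{claim:EsimpleTrees_expandedInTmrep}, and Claim \ref{claim:EoneRepTrees_expanded_final}. After substitution, two types of terms appear: the singular $O(1/q)$ deterministic pieces and the $O(1)$ tree sums indexed by $\TmRep$. The key point is that the singular pieces cancel identically, and the remaining coefficients of the tree sums combine to the desired form.

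For the singular pieces, $F_2 = -\tfrac{c_n^2(1-q)}{4q}$, the leading term of $\EE[\simpleTrees]$ is $\tfrac{c_n^2(1-q)}{2q}$, and the leading term of $\EE[\oneRepTrees]$ is $-\tfrac{c_n^2(1-q)}{4q}$, which sum to exactly $0$. The remaining contributions are the two tree sums with a common weight $\Psi(\TmRep) := \frac{(m+1)!}{2}\phi(H(\TmRep))\frac{(n)_{m+1}}{\aut(\TmRep)}\lambda^{m+1}$, contributing the coefficient
\begin{equation*}
\frac{c_n(1-p)(m+1)}{n p q} \;-\; \frac{c_n(m+3)}{2 n q^2}
\end{equation*}
per term. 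By Claim \ref{claim:identities_pq}, $\frac{q}{p} = 1 + \frac{c_n(1-p)}{np} = 1 + O(1/\sqrt{n})$, and since $p = O(1/\sqrt{n})$, we get $\frac{q(1-p)}{p} = 1 + O(1/\sqrt{n})$. Hence
\begin{equation*}
\frac{c_n(1-p)}{n p q} \;=\; \frac{c_n}{n q^2} \cdot \frac{q(1-p)}{p} \;=\; \frac{c_n}{n q^2} + O\Big(\tfrac{1}{\sqrt{n}}\Big),
\end{equation*}
which, when combined with the second summand, gives $\frac{c_n(m+1)}{nq^2} - \frac{c_n(m+3)}{2nq^2} + O\big(\tfrac{m+1}{\sqrt{n}}\big) = \frac{c_n(m-1)}{2nq^2} + O\big(\tfrac{m+1}{\sqrt{n}}\big)$, matching the form claimed for $\combined$.

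The remaining step is to verify that the aggregate error from the $O\big(\tfrac{m+1}{\sqrt{n}}\big)$ coefficient correction, summed over $m$ and over $\TmRep$ against $|\Psi(\TmRep)|$, fits inside the $O((\log n)^2/(np))$ tolerance implicit in $\sim$. I would invoke the Penrose tree-graph bound specialized to $\TmRep$ polymers (the $r=2$ case of Proposition \ref{prop:MDMean_cycles_O(1)}), combined with the derivative-of-geometric-series trick used in the proof of Lemma \ref{lemma:ProjP2simpleTrees_is_fluctuationPart} to absorb the extra factor of $m+1$. This yields $\sum_m \sum_{\TmRep} (m+1) |\Psi(\TmRep)| = O(1)$, so the aggregate error is $O(1/\sqrt{n})$, comfortably within $O((\log n)^2/\sqrt{n}) = O((\log n)^2/(np))$.

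The main obstacle is not conceptual but rather the careful bookkeeping: ensuring that the higher-order pieces of $\frac{q}{p}-1$, of $\frac{1-p}{pq} - \frac{1}{q^2}$, and of the various expansions used in Claims \ref{claim:EsimpleTrees_expandedInTmrep} and \ref{claim:EoneRepTrees_expanded_final} are each dominated by the $\sim$ tolerance once summed against the absolutely convergent $\TmRep$ expansion. No new combinatorial identities are required beyond those already used for Lemma \ref{lemma:E_oneRepTrees_mainResult} and Proposition \ref{prop:MDMean_cycles_O(1)}.
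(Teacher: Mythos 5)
Your proposal is correct and follows essentially the same route as the paper: Lemma \ref{lemma:combined_definition_as_Ftwo_EsimpleTrees_EonerepTrees} is recorded there precisely as the bookkeeping combination of $F_2$ from Lemma \ref{lemma:logprefactor_simplification} with the expansions in Claims \ref{claim:EsimpleTrees_expandedInTmrep} and \ref{claim:EoneRepTrees_expanded_final}, where the $O(1/q)$ terms cancel and the two $\TmRep$-sums merge via $\frac{1-p}{pq} = \frac{1}{q^2}\big(1+O(1/\sqrt{n})\big)$ into the coefficient $\frac{c_n(m-1)}{2nq^2}$. Your error accounting, using the absolute convergence of the one-repeated-edge tree sums (Proposition \ref{prop:MDMean_cycles_O(1)}) to keep the aggregate correction at $O(1/\sqrt n)$, is exactly the control the paper's $\sim$ convention requires.
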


In light of Lemma \ref{lemma:matchingEdgeDensity_oneRep_twoRep_are_deterministic} we only need consider the mean part of $\twoRepTrees$ in \eqref{eq:matchingEdgeDensities_decompose_loglikelihood}, which we further decompose as follows:
\begin{equation}\label{eq:matchingEdgeDensities_EtwoRepTree_decomposition}
    \EE\insquare{\twoRepTrees} = \EE\insquare{\tripleEdge} + \EE\insquare{\adjDD} + \EE\insquare{\sepDD},
\end{equation}
where 
\begin{align*}
    \EE\insquare{\tripleEdge} &= \frac{1}{q^2} \sum_{m=1}^{2\log n - 2} \sum_{\TmTripleEdge} \frac{(m+2)!}{3!} \phi(H(\TmTripleEdge)) \lambda^{m+2} \insquare{ \frac{q^{m+2}}{p^{m+2}}  - q^2 } \frac{(n)_{m+1}}{\aut(\TmTripleEdge)} ,\\
    \EE\insquare{\adjDD} &= \frac{1}{q^2} \sum_{m=2}^{2\log n - 2} \sum_{\TmAdjDD} \frac{(m+2)!}{2!2!} \phi(H(\TmAdjDD)) \lambda^{m+2} \insquare{ \frac{q^{m+2}}{p^{m+2}}  - q^2 } \frac{(n)_{m+1}}{\aut(\TmAdjDD)} ,\\
    \EE\insquare{\sepDD} &= \frac{1}{q^2} \sum_{m=3}^{2\log n - 2} \sum_{\TmSepDD} \frac{(m+2)!}{2!2!} \phi(H(\TmSepDD)) \lambda^{m+2} \insquare{ \frac{q^{m+2}}{p^{m+2}}  - q^2 } \frac{(n)_{m+1}}{\aut(\TmSepDD)},
\end{align*}
where the $T_m^{\#}$'s are each unlabeled trees with $(m + 2)$ edges and $(m+1)$ vertices with the superscript $\#$ representing trees with:
\begin{center}
\begin{tabular}{lll}
    $\equiv$ & \text{exactly one edge repeated three times,} \\
    $==$ & \text{exactly two twice repeated edges that are incident,} \\
    $=\cdots =$ & \text{exactly two twice repeated edges that are not incident.}
\end{tabular}
\end{center}

We clarify the purpose of the $O(1)$ term $F_3$ in \eqref{eq:logprefactor_simplification}: to cancel the triple edge tree terms.
\begin{lemma}
    \label{lemma:tripleEdge_cancels_logprefactorO1}
    We have
    \begin{equation}
     \EE\insquare{\tripleEdge} \sim - F_3 .
     \label{eq:tripleEdge_cancels_logprefactorO1}
    \end{equation}
\end{lemma}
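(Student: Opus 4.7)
The plan is to exploit the combinatorial identity \eqref{eq:tripleEdge_cancels_logprefactorO1_fix_m} to reduce the sum defining $\EE[\tripleEdge]$ to a product of the series for $\EE|M|/n$ and $(\EE|M|/n)^2$ already identified in Proposition~\ref{prop:GibbsMatching_mean_var_from_CE} and equation~\eqref{eq:TmRep_meanMDSquare_Relation}. First, using Claim~\ref{claim:identities_pq} I would write
\[
\frac{q^{m+2}}{p^{m+2}} - q^2 = (1-q^2) + \bigl((q/p)^{m+2} - 1\bigr),
\]
where the bracketed remainder is $O((m+2)c_n/(np))$; after multiplication by $\lambda^{m+2}(n)_{m+1}$ and summation over $T_m^{\equiv}$, this remainder contributes $O((\log n)^2/(np))$ by a Penrose-tree-bound computation of the same type as in the proof of Proposition~\ref{prop:MDMean_cycles_O(1)}. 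Replacing $(n)_{m+1}$ by $n^{m+1}$ via \eqref{eq:fallingfac_to_power_approx} at relative cost $O((\log n)^2/n)$, the expression for $\EE[\tripleEdge]$ then reduces to $\frac{1-q^2}{nq^2}\,\Sigma_{\equiv}$, where
\[
\Sigma_{\equiv} := \sum_{m=1}^{2\log n-2} (n\lambda)^{m+2} \sum_{T_m^{\equiv}} \frac{\widetilde{\phi}(H(T_m^{\equiv}))}{3!\,\aut(T_m^{\equiv})}.
\]

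Next, I would substitute the identity \eqref{eq:tripleEdge_cancels_logprefactorO1_fix_m} at each fixed $m$ and reindex $(k,j) = (\ell, m+1-\ell)$ so that
\[
\Sigma_{\equiv} = -\frac{2}{3}(n\lambda) \sum_{\substack{k, j \ge 1\\ k+j \le 2\log n - 1}} \bigg(\sum_{T_k^{\mathrm{rep}}} \frac{\widetilde{\phi}}{\aut}(n\lambda)^k\bigg)\bigg(\sum_{T_j} \frac{j\,\widetilde{\phi}}{\aut}(n\lambda)^j\bigg).
\]
Extending the constrained double sum to the full tensor product of single sums introduces a tail which I would bound by the Penrose-tree argument of Claim~(iii) of Lemma~\ref{lemma:E_oneRepTrees_mainResult}, showing it is $O((\log n)/n)$. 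The resulting product factorizes as $-\tfrac{4}{3}\Sigma_{T^{\mathrm{rep}}}\Sigma_T$, where $\Sigma_{T^{\mathrm{rep}}}$ and $\Sigma_T$ are the $n$-rescaled Gibbs-matching series already identified: $\Sigma_T \sim \EE|M|/n \sim c_n/2$ by Proposition~\ref{prop:GibbsMatching_mean_var_from_CE}, and $\Sigma_{T^{\mathrm{rep}}} \sim -(\EE|M|/n)^2 \sim -c_n^2/4$ by \eqref{eq:TmRep_meanMDSquare_Relation}. Therefore $\Sigma_{\equiv} \sim \tfrac{4}{3}(c_n/2)^3 = c_n^3/6$, so $\EE[\tripleEdge] \sim \frac{c_n^3(1-q^2)}{6nq^2} = -F_3$, as required.

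The main obstacle is the simultaneous bookkeeping of the three series $\Sigma_T$, $\Sigma_{T^{\mathrm{rep}}}$, $\Sigma_{\equiv}$, which are truncated at lengths $2\log n$, $2\log n-1$, and $2\log n-2$ respectively: the Ursell identity holds only at a fixed total size $m$, while the decoupling into a product requires one to sum over pairs $(k,j)$ with $k + j \le 2\log n - 1$, and the subsequent replacement by the tensor product of unconstrained series introduces a controlled but non-trivial tail. Once this tail and the Penrose-type bounds are in hand—mirroring step-by-step the arguments used to prove Claims~(ii)--(iv) of Lemma~\ref{lemma:E_oneRepTrees_mainResult}—the remaining cascade through the known identifications of $\Sigma_T$ and $\Sigma_{T^{\mathrm{rep}}}$ is essentially mechanical.
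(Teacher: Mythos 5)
Your proposal is correct and follows essentially the same route as the paper: both reduce to the fixed-$m$ Ursell identity \eqref{eq:tripleEdge_cancels_logprefactorO1_fix_m}, use the approximations \eqref{eq:identities_pq_(q/p)^r} and \eqref{eq:fallingfac_to_power_approx} with Penrose-type tail bounds for the truncation mismatch, and identify the resulting series with $\EE|M|/n$ and $-(\EE|M|/n)^2$ via Proposition~\ref{prop:GibbsMatching_mean_var_from_CE} and \eqref{eq:TmRep_meanMDSquare_Relation}; the only difference is that you expand $\Sigma_{\equiv}$ forward into the convolution while the paper expands $-F_3$ backward, and your normalization of $\Sigma_{T^{\mathrm{rep}}}$ (the stray factor $n\lambda$ and the $\tfrac12$) is slightly inconsistent between the display and the identification, though the final arithmetic comes out right.
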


\paragraph{Double-double edge terms.}

It remains to deal with the remaining $\combined$ \eqref{eq:combine_secondLogPrefactor_Esimp_EoneRep} and $\EE\insquare{\adjDD}$ and $\EE\insquare{\sepDD}$ terms. While not immediately apparent, $\combined$ can be interpreted as a sum over double-double repeated edge colored trees. To see this, split $\combined$ as follows. Define, with notation to be explained subsequently,
\begin{equation}\label{eq:combAdjDD_definition}
    \combAdjDD = \frac{1}{2nq^2} \sum_{m=2}^{2 \log n - 2} (n\lambda)^{m+2} \sum_{ \substack{   \inparen{ \widetilde{T}_{\text{red}}^{\text{rep}}(v_*), \widetilde{T}_{\text{blue}}(v_{**})    }  \\ \in \,  \adjDD }} \frac{1}{2(m+1)!} \widetilde{\phi}(H(\widetilde{T}_{\text{red}}^{\text{rep}}) ) \widetilde{\phi}(H(\widetilde{T}_{\text{blue}} ) ) 
\end{equation}
and 
\begin{equation}\label{eq:combSepDD_definition}
    \combSepDD = \frac{1}{2nq^2} \sum_{m=3}^{2 \log n - 2} (n\lambda)^{m+2} \sum_{ \substack{   \inparen{ \widetilde{T}_{\text{red}}^{\text{rep}}(v_*), \widetilde{T}_{\text{blue}}(v_{**})    }  \\ \in \,  \sepDD }} \frac{1}{2(m+1)!} \widetilde{\phi}(H(\widetilde{T}_{\text{red}}^{\text{rep}}) ) \widetilde{\phi}(H(\widetilde{T}_{\text{blue}} ) ).
\end{equation}
Here, for each $m$, the sum is over
\begin{equation}\label{eq:combined_genericPair}
    \inparen{ \widetilde{T}_{\text{red}}^{\text{rep}}(v_*), \widetilde{T}_{\text{blue}}(v_{**}) }
\end{equation}
satisfying:
\begin{itemize}
    \item $\widetilde{T}_{\text{red}}^{\text{rep}}(v_*)$ is a red colored vertex-labeled tree with exactly one repeated edge and one distinguished non-repeated edge $v_*$.
    \item $\widetilde{T}_{\text{blue}}(v_{**})$ is a blue colored vertex-labeled simple tree with one distinguished edge $v_{**}$.
    \item The label set of the two vertices incident to $v_*$ must coincide with that for $v_{**}$.
    \item Joining the trees by superimposing the distinguished edges $v_*$ and $v_{**}$ (matching their vertex labels) gives a labeled tree of size $m+2$ with $m+1$ vertices with two (twice) repeated edges. The vertices are labeled in $[m+1]$.

    \item
    The sets $\adjDD$ and $\sepDD$ collect the pairs of trees such that their joined trees have, respectively, adjacent double-double edges and separated double-double edges.
\end{itemize}
An example of a joined tree represented by the tuple \eqref{eq:combined_genericPair} in $\adjDD$ and $\sepDD$ is given on the left in Figures \ref{fig:equalAverageED_CombAdjDD} and \ref{fig:equalAverageED_sepDD} respectively (ignoring the other annotations of $v'$, $w_*$, $w_{**}$ for the moment). 
\begin{claim}\label{claim:comb_split_adjDD_sepDD}With $\combAdjDD$ and $\combSepDD$ defined in \eqref{eq:combAdjDD_definition} and \eqref{eq:combSepDD_definition} respectively, 
    \begin{equation*}
    \combined \sim \combAdjDD + \combSepDD.
\end{equation*}
\end{claim}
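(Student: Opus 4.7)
The plan is to expand the prefactor $c_n$ in the expression for $\combined$ from Lemma~\ref{lemma:combined_definition_as_Ftwo_EsimpleTrees_EonerepTrees} via the identity $c_n \sim \frac{2\EE\abs{M}}{n}$ combined with the series expansion of $\EE\abs{M}$ from Proposition~\ref{prop:GibbsMatching_mean_var_from_CE}, thereby converting $\combined$ into a product of two series---one summing over trees $\TmRep$ with one repeated edge, the other over simple trees $T_k$. I will then reinterpret this product as a sum over labeled colored pairs of trees that join to form double-double edge trees, which is exactly the structure underlying $\combAdjDD + \combSepDD$.

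Concretely, using $\widetilde{\phi}(H) = m!\,\phi(H)$ and $(n)_{j+1} \sim n^{j+1}$ from \eqref{eq:fallingfac_to_power_approx}, substitution of the $\EE\abs{M}$ series into the expression for $\combined$ yields
\begin{align*}
\combined \sim \frac{1}{2nq^2} \sum_{m \geq 2} (n\lambda)^{m+2} \sum_{\substack{m_1 + k = m+1 \\ m_1 \geq 2,\, k \geq 1}} \sum_{(\TmRep, T_k)} \frac{(m_1-1)\, k \,\widetilde{\phi}(H(\TmRep))\, \widetilde{\phi}(H(T_k))}{\aut(\TmRep)\, \aut(T_k)}.
\end{align*}
The combinatorial factors $(m_1-1)$ and $k$ admit a natural interpretation as choices of a distinguished non-repeated edge $v_*$ in $\TmRep$ and a distinguished edge $v_{**}$ in $T_k$. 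Following the labeling-and-coloring scheme employed in the proof of Lemma~\ref{lem:ursell-function-tmrep-tm-identity}, a single unlabeled pair $(\TmRep, T_k)$ generates exactly $\frac{2(m+1)!(m_1-1)k}{\aut(\TmRep)\aut(T_k)}$ labeled colored tuples $(\widetilde{T}_{\text{red}}^{\text{rep}}(v_*), \widetilde{T}_{\text{blue}}(v_{**}))$, where the factor of $2$ arises from matching the two endpoints of $v_*$ with those of $v_{**}$. Substituting this multiplicity and absorbing the prefactor $\frac{1}{2(m+1)!}$ appearing in \eqref{eq:combAdjDD_definition} and \eqref{eq:combSepDD_definition} recasts the double series as a sum over labeled colored pairs. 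Partitioning the pairs according to whether the two double edges of the joined tree are adjacent or separated then recovers $\combAdjDD + \combSepDD$.

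The main obstacle is the combinatorial bookkeeping of the symmetry factor $\frac{1}{2}$ in \eqref{eq:combAdjDD_definition} and \eqref{eq:combSepDD_definition}. Each joined double-double tree admits two valid decompositions as a labeled colored pair, corresponding to which of its two double edges plays the role of the superimposed $v_*=v_{**}$ versus which remains as the already-doubled red edge in $\widetilde{T}_{\text{red}}^{\text{rep}}$; the factor of $\frac{1}{2}$ precisely corrects for this overcounting. Verifying this correspondence rigorously---particularly in the adjacent case, where the two double edges share a vertex and the split is more delicate---requires careful case analysis. A secondary technical point is to ensure the $\sim$ approximations (suppressing $O((\log n)^2/(np))$ multiplicative errors) propagate cleanly through the double sum without amplification, which follows from Proposition~\ref{prop:MDMean_cycles_O(1)} and the $O(1)$ bound on each constituent series.
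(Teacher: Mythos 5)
Your proposal follows essentially the same route as the paper: expand $c_n$ via the $\EE\abs{M}$ series of Proposition~\ref{prop:GibbsMatching_mean_var_from_CE}, form the product of the $\TmRep$-series and the simple-tree series, pass to labeled colored pairs using the multiplicity $\frac{2(m+1)!(\ell-1)(m+1-\ell)}{\aut(T_\ell^{\text{rep}})\aut(T_{m+1-\ell})}$, and then split according to whether the two double edges of the joined tree are adjacent or separated. One correction to your ``main obstacle'': the $\frac{1}{2}$ in the weight $\frac{1}{2(m+1)!}$ is exactly the endpoint-alignment factor $2$ you already identified in the multiplicity count, not a correction for a joined tree admitting two decompositions --- the sums defining $\combAdjDD$ and $\combSepDD$ run over colored \emph{pairs}, and the (generally more than two) ways a fixed joined tree arises are deliberately retained and only analyzed later, in Lemmas~\ref{lemma:sepDD_combSepDD_cancel} and~\ref{lemma:adjDD_combAdjDD_give_meanPart}, so no case analysis is needed here. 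Also note that the product of the two truncated series produces cross terms of total size exceeding $2\log n$; the paper isolates these as a separate sum and bounds them by $O\!\inparen{\frac{\log n}{n^2}}$ as in Claim (iii) of Lemma~\ref{lemma:E_oneRepTrees_mainResult}, which is the step your brief error-propagation remark should make explicit.
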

We remark that it is possible to extract out the relevant Ursell combinatorial identity involved in the double-double edge case as in Section \ref{sec:Ursell_CombinatorialIdentities}. However, this can only be done cleanly without splitting $\combined$ as above, and would obscure the different roles that $\combAdjDD$ and $\combSepDD$ play. The origin of the deterministic part of RHS of \eqref{eq:MatchingEdgeDensity_LLdistributionUnderNull} will instead be clearer from the below lemmas. The first lemma shows that we can forget about the separated double-double edge terms.
\begin{lemma}\label{lemma:sepDD_combSepDD_cancel}
    With $\EE\insquare{\sepDD}$ and $\combSepDD$ defined in \eqref{eq:matchingEdgeDensities_EtwoRepTree_decomposition} and \eqref{eq:combSepDD_definition} respectively, we have
    \begin{align*}
        \EE\insquare{\sepDD} + \combSepDD \sim 0.
    \end{align*}
\end{lemma}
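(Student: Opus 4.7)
The plan is to mimic the strategy used in the one-repeated edge cancellation of Lemma~\ref{lem:ursell-function-tmrep-tm-identity}: after absorbing all analytic prefactors via Taylor expansion, reduce the cancellation to a purely combinatorial Ursell identity, and then prove that identity using Lemma~\ref{lemma:Ursell_trees_adjVertices_identity}.

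First, I will expand $\tfrac{q^{m+2}}{p^{m+2}} - q^2 = (1-q^2) + O\!\bigl(\tfrac{m}{np}\bigr)$ via Claim~\ref{claim:identities_pq}. The leading factor $1-q^2 \sim 1$ dominates; the subleading contribution, when bounded by the triangle inequality together with the Penrose tree-graph argument from the proof of Theorem~\ref{thm:truncate_CE_clogn}, contributes only to the $\sim$ tolerance. Applying $(n)_{m+1} \sim n^{m+1}$ from \eqref{eq:fallingfac_to_power_approx} and converting to labeled trees via $\sum_{\TmSepDD} f(\TmSepDD)/\aut(\TmSepDD) = \sum_{\widetilde{\TmSepDD}} f(\widetilde{\TmSepDD})/(m+1)!$, I obtain
\[
\EE[\sepDD] \sim \frac{1}{4q^2} \sum_{m=3}^{2\log n - 2} \frac{(n\lambda)^{m+2}}{n\,(m+1)!} \sum_{\widetilde{\TmSepDD}} \widetilde{\phi}(H(\widetilde{\TmSepDD})),
\]
while $\combSepDD$ in \eqref{eq:combSepDD_definition}, after an analogous rewrite, has exactly the same form with $\sum_{\widetilde{\TmSepDD}} \widetilde{\phi}(H(\widetilde{\TmSepDD}))$ replaced by $\sum_{(\widetilde{T}_{\text{red}}^{\text{rep}}, \widetilde{T}_{\text{blue}}) \in \sepDD} \widetilde{\phi}(H(\widetilde{T}_{\text{red}}^{\text{rep}}))\widetilde{\phi}(H(\widetilde{T}_{\text{blue}}))$. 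The cancellation then reduces, for each fixed $m$, to the Ursell identity
\[
\sum_{\widetilde{\TmSepDD}} \widetilde{\phi}(H(\widetilde{\TmSepDD})) = - \sum_{(\widetilde{T}_{\text{red}}^{\text{rep}}(v_*), \widetilde{T}_{\text{blue}}(v_{**})) \in \sepDD} \widetilde{\phi}(H(\widetilde{T}_{\text{red}}^{\text{rep}}))\,\widetilde{\phi}(H(\widetilde{T}_{\text{blue}})).
\]

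To prove this identity, I fix a labeled $\widetilde{\TmSepDD}$ with double edges $e_a$ and $e_b$, and apply Lemma~\ref{lemma:Ursell_trees_adjVertices_identity} with $v_*, v_{**}$ taken as the two $H$-vertices corresponding to $e_a$. The key structural observation is that, because $e_a$ and $e_b$ are non-adjacent in the tree, the $H$-vertices $\{b_1, b_2\}$ of $e_b$ share no neighbor in $H$ with $\{v_*, v_{**}\}$; consequently, the connectivity requirement on $H[V_{\text{red}}]$ and $H[V_{\text{blue}}]$ forces $b_1, b_2$ to lie entirely on the same side of the color partition, ruling out any "crossed" bi-colorings. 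Every valid bi-coloring is thus either case (a), where $b_1, b_2 \in V_{\text{red}}$ and the resulting colored pair is exactly a $\combSepDD$ pair $(\widetilde{T}_{\text{red}}^{\text{rep}}, \widetilde{T}_{\text{blue}})$ with $e_a$ as the join edge, or case (b), where $b_1, b_2 \in V_{\text{blue}}$ and the colored pair corresponds by color-swap to the same $\combSepDD$ pair. These two cases contribute equally, giving $\widetilde{\phi}(H(\widetilde{\TmSepDD})) = -2\,\widetilde{\phi}_{A}$, where $\widetilde{\phi}_A$ is the product Ursell value of the unique pair obtained by splitting along $e_a$. Applying the same argument with the roles of $e_a$ and $e_b$ reversed gives $\widetilde{\phi}(H(\widetilde{\TmSepDD})) = -2\widetilde{\phi}_{B}$, so $\widetilde{\phi}_A + \widetilde{\phi}_B = -\widetilde{\phi}(H(\widetilde{\TmSepDD}))$. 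Summing over labeled $\widetilde{\TmSepDD}$ and observing that each such tree contributes precisely two $\combSepDD$ pairs (one for each choice of join edge), the identity follows.

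The main obstacle will be the careful combinatorial bookkeeping: verifying rigorously that the crossed bi-colorings are genuinely excluded by the connectivity constraint, and that the factor of $2$ arising from the two bi-coloring cases (a) and (b) balances precisely against the factor of $2$ from the two choices of which double edge to designate as the join (together with the color convention $\tfrac{1}{2}$ built into the definition of $\combSepDD$). As in the analogous arguments for Lemma~\ref{lem:ursell-function-tmrep-tm-identity} and Lemma~\ref{lem:ursell-function-tm-identity-2}, this requires passing all sums into labeled/colored tree form where automorphism and labeling multiplicities can be tracked without ambiguity.
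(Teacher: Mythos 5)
Your overall route is the paper's: after absorbing the analytic prefactors you reduce the cancellation to a per-tree Ursell identity and attack it with Lemma~\ref{lemma:Ursell_trees_adjVertices_identity}; the prefactor matching ($\tfrac{1}{4q^2}\cdot\tfrac{(n\lambda)^{m+2}}{n(m+1)!}$ on both sides) is correct and is exactly the reduction the paper performs via \eqref{eq:sepDD_combSepDD_cancel_LHSRewrite}. However, the combinatorial core of your argument contains two genuine errors. First, your stated reason for excluding crossed bi-colorings is false: it is not true that the $H$-vertices $\{b_1,b_2\}$ of $e_b$ share no $H$-neighbor with $\{v_*,v_{**}\}$ whenever $e_a$ and $e_b$ are non-adjacent. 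If the two double edges are connected by a single tree edge $f$ (e.g.\ the tree $1\!-\!2$ doubled, $2\!-\!3$, $3\!-\!4$ doubled), then $f$ is adjacent in $H$ to all four of these vertices. The exclusion of crossed colorings is still true, but the correct reason is the one the paper exploits through the edge $v'$: every path in $H$ from a copy of $e_b$ to a copy of $e_a$ must pass through the simple edges on the tree path between the two double edges (in particular through $v'$), and a single $H$-vertex cannot be both red and blue; so a red connection $b_1\to v_*$ and a blue connection $b_2\to v_{**}$ cannot coexist.

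Second, and more seriously, your claim that splitting $\widetilde{\TmSepDD}$ along $e_a$ yields a \emph{unique} pair, so that $\widetilde{\phi}(H(\widetilde{\TmSepDD}))=-2\widetilde{\phi}_A$ with $\widetilde{\phi}_A$ a single Ursell product and ``each tree contributes precisely two $\combSepDD$ pairs,'' is false. Both the red and the blue tree contain both endpoints of the join edge, so every branch of $\widetilde{\TmSepDD}$ hanging off those endpoints (other than the edges forced into red, namely $b_1,b_2$ and the path connecting $e_b$ to $e_a$) may be assigned to either color; in general there are exponentially many $\combSepDD$ pairs with join at $e_a$, in bijection with the bi-colorings in $\cC(H;v'\text{~red})$. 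The identity you need is $\tfrac12\widetilde{\phi}(H(\widetilde{\TmSepDD}))=-\sum_{\text{pairs with join }e_a}\widetilde{\phi}(H(\widetilde{T}_{\text{red}}^{\text{rep}}))\widetilde{\phi}(H(\widetilde{T}_{\text{blue}}))$, i.e.\ \eqref{eq:sepDD_combSepDD_cancel_vStar}, with the \emph{sum} over all such pairs; with that correction (and the analogous statement for $e_b$) your factor-of-two bookkeeping between the color-swap symmetry, the two choices of join edge, and the $\tfrac12$ in \eqref{eq:combSepDD_definition} does close up, exactly as in the paper's proof.
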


\begin{lemma}
    \label{lemma:adjDD_combAdjDD_give_meanPart} With $\EE\insquare{\adjDD}$ and $\combAdjDD$ defined in \eqref{eq:matchingEdgeDensities_EtwoRepTree_decomposition} and \eqref{eq:combAdjDD_definition} respectively, we have
    \begin{align*}
        \EE\insquare{\adjDD} + \combAdjDD \sim -\frac{c_n^4}{4nq^2} \qquad \text{(deterministic part of RHS \eqref{eq:MatchingEdgeDensity_LLdistributionUnderNull})}.
    \end{align*}
\end{lemma}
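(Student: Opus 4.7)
The strategy mirrors the mean-part analysis for the $p=q$ setting (Proposition \ref{prop:nonMatching_oneRepTrees_mainResult} and Claim \ref{claim:EoneRepTrees_expanded_final}), extended to accommodate the adjacent double-double edge structure. First, I would expand $\EE[\adjDD]$ via Claim \ref{claim:identities_pq}: the factor $\tfrac{1}{q^2}\bigl[(q/p)^{m+2} - q^2\bigr]$ has leading term $(1-q^2)/q^2 \sim 1/q^2$ and subleading Taylor corrections of relative size $O(m/(np))$. Since the sum over $\TmAdjDD$ can be shown to have total magnitude $\Theta(q^2) = \Theta(1/n)$ via a two-fold application of the Penrose tree-graph bound (adapting Proposition \ref{prop:MDMean_cycles_O(1)} to simultaneously track two adjacent repeated edges), only the leading Taylor term contributes at $O(1)$, reducing the problem to
$$\EE[\adjDD] \sim \frac{1}{q^2}\sum_{m=2}^{2\log n - 2}\sum_{\TmAdjDD}\frac{(m+2)!}{4}\phi(H(\TmAdjDD))\frac{(n)_{m+1}}{\aut(\TmAdjDD)}\lambda^{m+2}.$$

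Second, I would recast $\combAdjDD$ from its colored-pair definition \eqref{eq:combAdjDD_definition} into a sum over unlabeled pairs $(T_\ell^{\text{rep}}, T_{m+1-\ell})$ by the label-and-color counting scheme used in the proof of Lemma \ref{lem:ursell-function-tmrep-tm-identity}. For each unlabeled pair, the number of labeled colored pairs $(\widetilde{T}_{\text{red}}^{\text{rep}}(v_*), \widetilde{T}_{\text{blue}}(v_{**}))$ whose join lies in $\adjDD$ can be computed explicitly, subject to the constraint that $v_*$ is adjacent to the repeated edge of $T_\ell^{\text{rep}}$. This puts $\combAdjDD$ into a form directly comparable with the simplified $\EE[\adjDD]$.

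Third (the central step), I would establish an Ursell-function identity analogous to \eqref{eq:convolutionOfUrsell_overTrees}: for each labeled $\widetilde{T}_m^{==}$ with DD edges $e_A, e_B$ sharing a common vertex, designate $e_B$ as the ``join'' edge and apply Lemma \ref{lemma:Ursell_trees_adjVertices_identity} to the incompatibility graph $H(\widetilde{T}_m^{==})$ with distinguished adjacent vertices $v_*, v_{**}$ being the two copies of $e_B$. Among the bicolorings thus produced, those placing both copies of $e_A$ in $V_{\text{red}}$ correspond exactly to $(\widetilde{T}_{\text{red}}^{\text{rep}}, \widetilde{T}_{\text{blue}})$-splittings (with $e_A$ as the red doubled edge). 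The ``mixed'' bicolorings, where the two $e_A$-vertices lie on opposite sides, would be shown to cancel via a local involution exploiting that $\{v_A^1, v_A^2\}$ is an edge of weight $-1$ in $H$, so spanning-connected subgraphs containing or not containing this edge pair up with opposite signs. Summing over the two symmetric choices of which DD edge plays the role of $e_B$ introduces the overall factor $2$ that matches the $1/2$ normalization in \eqref{eq:combAdjDD_definition}.

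Combining these three steps, the leading portion of $\EE[\adjDD]$ cancels against the corresponding portion of $\combAdjDD$, leaving a residue that factorizes as a product of two series. By \eqref{eq:TmRep_meanMDSquare_Relation} together with Proposition \ref{prop:GibbsMatching_mean_var_from_CE}, this residue evaluates at leading order to a product proportional to $(\EE|M|/n)^4$, which via $2\EE|M|/n \sim c_n$ and the $1/(nq^2)$ prefactor gives the claimed $-c_n^4/(4nq^2)$. The main obstacle will be Step 3: unlike the single-repeated-edge case of Lemma \ref{lem:ursell-function-tmrep-tm-identity}, the bicolorings produced by Lemma \ref{lemma:Ursell_trees_adjVertices_identity} here include mixed cases that do not directly correspond to any colored pair counted by $\combAdjDD$, and verifying the sign-flipping involution that cancels these contributions requires careful case analysis of the local structure of $H$ near the two DD edges.
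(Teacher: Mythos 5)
Your Steps 1 and 2 match the paper's reductions, but the central Step 3 goes wrong in a way that breaks the result. After applying Lemma \ref{lemma:Ursell_trees_adjVertices_identity} at the join edge $e_B$ (the two copies being $v_*, v_{**}$), the bi-colorings in $\cC(H; v_*, v_{**})$ split according to how the two copies of the \emph{other} doubled edge $e_A$ (i.e., $w_*, w_{**}$) are colored. The monochromatic cases (both red or both blue) are indeed in bijection with the $(\widetilde{T}_{\text{red}}^{\text{rep}}, \widetilde{T}_{\text{blue}})$-splittings counted by $\combAdjDD$. But the mixed cases do \emph{not} cancel, and must not: if they did, your argument would yield $\EE\insquare{\adjDD} + \combAdjDD \sim 0$, contradicting the lemma (and leaving the $-\frac{4}{nq^2}(\EE|M|/n)^4$ term in Proposition \ref{prop:Matching_meanPart_mainResult} unaccounted for). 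The involution you propose also does not exist: in a mixed bi-coloring the edge $\{w_*, w_{**}\}$ of $H$ straddles $V_{\text{red}}$ and $V_{\text{blue}}$, so it is not available in either restricted sum over spanning connected subgraphs of $H[V_{\text{red}}]$ or $H[V_{\text{blue}}]$, and there is no edge-toggling pairing with opposite signs to exploit. Your final paragraph is then internally inconsistent -- it claims the mixed terms cancel yet also claims a residue proportional to $(\EE|M|/n)^4$, without identifying where that residue comes from.

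The paper's proof turns the mixed bi-colorings into the answer rather than discarding them: a mixed coloring corresponds to splitting $\widetilde{\TmAdjDD}$ into two labeled \emph{simple} trees, each carrying a marked wedge ($P_2$ decoration) formed by one copy of $e_A$ and one copy of $e_B$, i.e., the pairs $\inparen{\TmLabRed(u_r^{(1)}, u_r^{(2)}), \TmLabBlue(u_b^{(1)}, u_b^{(2)})}$. Summed over all $\widetilde{\TmAdjDD}$, these pairs reassemble (after the label/color counting) into the square of the wedge-weighted simple-tree series $\sum_{T_m} m!\,\phi(H(T_m))\gamma(T_m)/\aut(T_m)$, which by Lemma \ref{lem:ursell-function-tm-identity-2} (equivalently, the fluctuation-part identity in the proof of Lemma \ref{lemma:ProjP2simpleTrees_is_fluctuationPart}) equals $-2(\EE|M|/n)^2$ to leading order for each factor; with the $-\frac{1}{nq^2}$ prefactor this is exactly $-\frac{c_n^4}{4nq^2}$. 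The fixed-tree identity the paper actually proves, \eqref{eq:adjDD_combAdjDD_give_meanPart_Goal_fix_m_fix_TmAdjDD}, therefore has three terms on the right (wedge pairs with coefficient $-2$, plus the two families of repeated-edge splittings), established by the symmetric case analysis over the colorings of $w_*, w_{**}$ and a final appeal to Lemma \ref{lemma:Ursell_trees_adjVertices_identity}. To repair your proposal you would need to replace the claimed cancellation by this bijection and the identification of the mixed-term aggregate with $c_n^4$.
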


\begin{proof}[Proof of Proposition \ref{prop:Matching_meanPart_mainResult}]
    The result follows from Lemmas \ref{lemma:matchingEdgeDensity_oneRep_twoRep_are_deterministic} and \ref{lemma:combined_definition_as_Ftwo_EsimpleTrees_EonerepTrees}--\ref{lemma:sepDD_combSepDD_cancel}.
\end{proof}

Before embarking on the proofs of the above lemmas, we give a visual depiction in Figure \ref{fig:adjDDCartoon} for what cancellations to expect. (The $w_*$ and $w_{**}$ notation will be defined in the proofs.)
\begin{figure}[htp]\centering
\scalebox{.95}{
\begin{tikzpicture}
\hspace{-0.5cm}
\begin{scope}[scale=0.75, every node/.style={font=\small}, xshift=0cm, yshift=0cm] 
    \node (4) at (1,3) {};
    \node (6) at (2.5,2) {};
    \node (8) at (0,4) {};
    \node (1) at (0,2.2) {};
    \node (7) at (3.5,1) {};
    \node (2) at (4.7,1) {};
    \node (9) at (3.8,3) {};
    \node (5) at (5,2.2) {};
    \node (3) at (5,4) {};
    
    \draw[ultra thick, red] ($(4.center) + (0,1mm)$) -- (8);
    \draw[ultra thick, blue] (1) -- ($(4.center) + (0,-1mm)$);
    \draw[ultra thick, blue] (6) -- (7);
    \draw[ultra thick, blue] (2) -- ($(7.center) + (0,1mm)$);
    \draw[ultra thick, red] (3) -- ($(9.center) + (0,1mm)$);
    \draw[ultra thick, red] (5) -- ($(9.center) + (0,-1mm)$);

    \draw[ultra thick, transform canvas={xshift=0.15em, yshift=0.2em}, red] (4) -- node[label={[label distance=-0.3em]90:$v_{*}$}] {} (6);
    \draw[ultra thick, transform canvas={xshift=-0em, yshift=-0.2em}, blue] (4) -- node[label=below:$v_{**}$, xshift=-0.15cm, yshift=0.1cm] {} (6);
    
    \draw[ultra thick, transform canvas={xshift=-0.15em, yshift=0.21em}, red] (6) -- node[label={[label distance=-0.3em]90:$w_{*}$}] {} (9);
    \draw[ultra thick, transform canvas={xshift=0.15em, yshift=-0.21em}, red] (6) -- node[label=below:$w_{**}$,xshift=0.15cm, yshift=0.1cm] {} (9);
\end{scope}

\begin{scope}[scale=0.75, every node/.style={font=\small}, xshift=6cm, yshift=0cm] 
    \node (4) at (1,3) {};
    \node (6) at (2.5,2) {};
    \node (8) at (0,4) {};
    \node (1) at (0,2.2) {};
    \node (7) at (3.5,1) {};
    \node (2) at (4.7,1) {};
    \node (9) at (3.8,3) {};
    \node (5) at (5,2.2) {};
    \node (3) at (5,4) {};
    
    \draw[ultra thick, red] ($(4.center) + (0,1mm)$) -- (8);
    \draw[ultra thick, blue] (1) -- ($(4.center) + (0,-1mm)$);
    \draw[ultra thick, blue] (6) -- (7);
    \draw[ultra thick, blue] (2) -- ($(7.center) + (0,1mm)$);
    \draw[ultra thick, blue] (3) -- ($(9.center) + (0,1mm)$);
    \draw[ultra thick, blue] (5) -- ($(9.center) + (0,-1mm)$);

    \draw[ultra thick, transform canvas={xshift=0.15em, yshift=0.2em}, red] (4) -- node[label={[label distance=-0.3em]90:$v_{*}$}] {} (6);
    \draw[ultra thick, transform canvas={xshift=-0em, yshift=-0.2em}, blue] (4) -- node[label=below:$v_{**}$, xshift=-0.15cm, yshift=0.1cm] {} (6);
    
    \draw[ultra thick, transform canvas={xshift=-0.15em, yshift=0.21em}, blue] (6) -- node[label={[label distance=-0.3em]90:$w_{*}$}] {} (9);
    \draw[ultra thick, transform canvas={xshift=0.15em, yshift=-0.21em}, blue] (6) -- node[label=below:$w_{**}$,xshift=0.15cm, yshift=0.1cm] {} (9);
\end{scope}

\begin{scope}[scale=0.75, every node/.style={font=\small}, xshift=12cm, yshift=0cm] 
    \node (4) at (1,3) {};
    \node (6) at (2.5,2) {};
    \node (8) at (0,4) {};
    \node (1) at (0,2.2) {};
    \node (7) at (3.5,1) {};
    \node (2) at (4.7,1) {};
    \node (9) at (3.8,3) {};
    \node (5) at (5,2.2) {};
    \node (3) at (5,4) {};
    
    \draw[ultra thick, red] ($(4.center) + (0,1mm)$) -- (8);
    \draw[ultra thick, blue] (1) -- ($(4.center) + (0,-1mm)$);
    \draw[ultra thick, blue] (6) -- (7);
    \draw[ultra thick, blue] (2) -- ($(7.center) + (0,1mm)$);
    \draw[ultra thick, red] (3) -- ($(9.center) + (0,1mm)$);
    \draw[ultra thick, blue] (5) -- ($(9.center) + (0,-1mm)$);

    \draw[ultra thick, transform canvas={xshift=0.15em, yshift=0.2em}, red] (4) -- node[label={[label distance=-0.3em]90:$v_{*}$}] {} (6);
    \draw[ultra thick, transform canvas={xshift=-0em, yshift=-0.2em}, blue] (4) -- node[label=below:$v_{**}$, xshift=-0.15cm, yshift=0.1cm] {} (6);
    
    \draw[ultra thick, transform canvas={xshift=-0.15em, yshift=0.21em}, red] (6) -- node[label={[label distance=-0.3em]90:$w_{*}$}] {} (9);
    \draw[ultra thick, transform canvas={xshift=0.15em, yshift=-0.21em}, blue] (6) -- node[label=below:$w_{**}$,xshift=0.15cm, yshift=0.1cm] {} (9);
\end{scope}

\begin{scope}[scale=0.75, every node/.style={font=\small}, xshift=18cm, yshift=0cm] 
    \node (4) at (1,3) {};
    \node (6) at (2.5,2) {};
    \node (8) at (0,4) {};
    \node (1) at (0,2.2) {};
    \node (7) at (3.5,1) {};
    \node (2) at (4.7,1) {};
    \node (9) at (3.8,3) {};
    \node (5) at (5,2.2) {};
    \node (3) at (5,4) {};
    
    \draw[ultra thick, red] ($(4.center) + (0,1mm)$) -- (8);
    \draw[ultra thick, blue] (1) -- ($(4.center) + (0,-1mm)$);
    \draw[ultra thick, blue] (6) -- (7);
    \draw[ultra thick, blue] (2) -- ($(7.center) + (0,1mm)$);
    \draw[ultra thick, red] (3) -- ($(9.center) + (0,1mm)$);
    \draw[ultra thick, blue] (5) -- ($(9.center) + (0,-1mm)$);

    \draw[ultra thick, transform canvas={xshift=0.15em, yshift=0.2em}, red] (4) -- node[label={[label distance=-0.3em]90:$v_{*}$}] {} (6);
    \draw[ultra thick, transform canvas={xshift=-0em, yshift=-0.2em}, blue] (4) -- node[label=below:$v_{**}$, xshift=-0.15cm, yshift=0.1cm] {} (6);
    
    \draw[ultra thick, transform canvas={xshift=-0.15em, yshift=0.21em}, blue] (6) -- node[label={[label distance=-0.3em]90:$w_{*}$}] {} (9);
    \draw[ultra thick, transform canvas={xshift=0.15em, yshift=-0.21em}, red] (6) -- node[label=below:$w_{**}$,xshift=0.15cm, yshift=0.1cm] {} (9);
\end{scope}

\begin{scope}[scale=0.75, every node/.style={font=\small}, xshift=8cm, yshift=4cm] 

\node (4) at (1,3) {};
\node (6) at (2.5,2) {};
\node (8) at (0,4) {};
\node (1) at (0,2.2) {};
\node (7) at (3.5,1) {};
\node (2) at (4.7,1) {};
\node (9) at (3.8,3) {};
\node (5) at (5,2.2) {};
\node (3) at (5,4) {};

\draw[ultra thick, black] ($(4.center) + (0,1mm)$) -- (8);
\draw[ultra thick, black] (1) -- ($(4.center) + (0,-1mm)$);
\draw[ultra thick, black] (6) -- (7);
\draw[ultra thick, black] (2) -- ($(7.center) + (0,1mm)$);
\draw[ultra thick, black] (3) -- ($(9.center) + (0,1mm)$);
\draw[ultra thick, black] (5) -- ($(9.center) + (0,-1mm)$);

\draw[ultra thick, transform canvas={xshift=0.15em, yshift=0.2em}, black] (4) -- node {} (6);
\draw[ultra thick, transform canvas={xshift=-0em, yshift=-0.2em}, black] (4) -- node {} (6);

\draw[ultra thick, transform canvas={xshift=-0.15em, yshift=0.21em}, black] (6) -- node {} (9);
\draw[ultra thick, transform canvas={xshift=0.15em, yshift=-0.21em}, black] (6) -- node {} (9);
\end{scope}

\node (adjDD) at (4.7,5) {$\EE\insquare{\mathsf{adjDD}}$};

\draw[->] (5,4.2) to [bend right=16] (2,3);
\draw[->] (7.2,4) to [bend right=20] (6.2,3);
\draw[->] (10,4) to [bend left=20] (11,3);
\draw[->] (12.5,4.2) to [bend left=16] (15,3);

\coordinate (combAdjLeft) at (0,0);
\coordinate (combAdjRight) at (8,0);

\draw [decorate,decoration={brace,mirror,amplitude=6pt}, thick] (combAdjLeft) -- (combAdjRight)
    node [midway,below=1em] {Cancelled by \texttt{combAdjDD}};

\coordinate (DetermPartLeft) at (9,0);
\coordinate (DetermPartRight) at (17,0);

\draw [decorate,decoration={brace,mirror,amplitude=6pt}, forestgreen, ultra thick] (DetermPartLeft) -- (DetermPartRight)
    node [midway,below=1em] {\textcolor{forestgreen}{\textbf{Deterministic part of RHS of \eqref{eq:MatchingEdgeDensity_LLdistributionUnderNull}!}}};
\end{tikzpicture}
}
\caption{A cartoon of the bi-colorings in \eqref{eq:adjDD_final_cancellations}. The above depicts one representative summand of each of the four sums. Here, $v_*$ and $v_{**}$ are always fixed to be red and blue respectively. The surviving contribution to the RHS of \eqref{eq:MatchingEdgeDensity_LLdistributionUnderNull} consists of those terms with a ``repeated wedge'' formed by superimposing two simple trees as indicated in the bottom right of the figure.}
\label{fig:adjDDCartoon}
\end{figure}
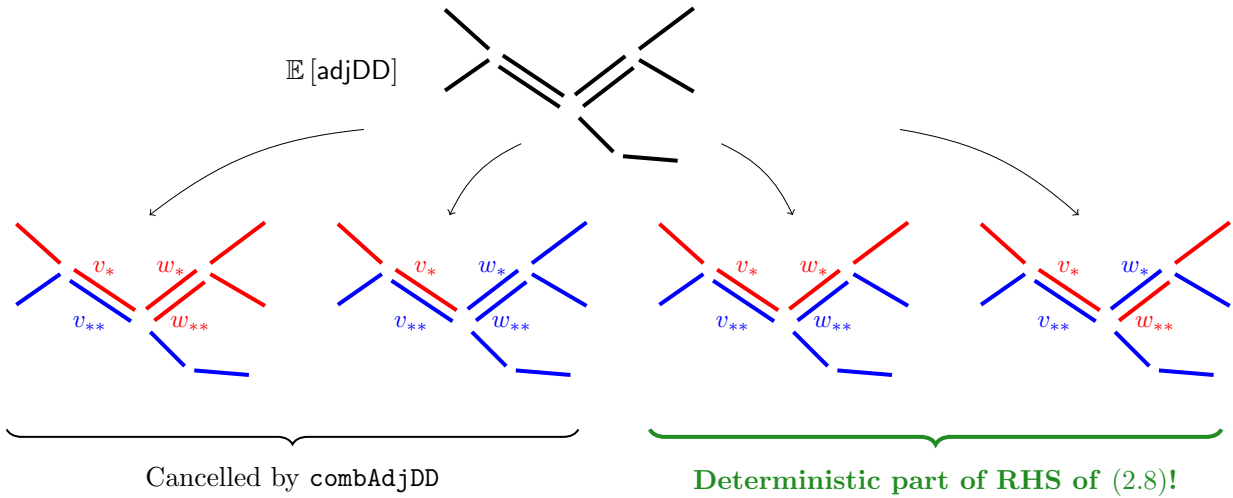


\begin{proof}[Proof of Lemma \ref{lemma:tripleEdge_cancels_logprefactorO1}]
    Using \eqref{eq:fallingfac_to_power_approx} and arguing similar to the proof of Lemma \ref{lemma:ProjP2simpleTrees_is_fluctuationPart}, we have 
    \begin{align}
        \EE\insquare{\tripleEdge} \sim \frac{1}{nq^2} \sum_{m = 1}^{2\log n -2} (n\lambda)^{m+2}  \sum_{\TmTripleEdge} \frac{(m+2)!}{3!} \frac{\phi(H(\TmTripleEdge)) }{\aut(\TmTripleEdge)} .
        \label{eq:tripleEdge_cancels_logprefactorO1_LHS_simp}
    \end{align}
    On the other hand, with \eqref{eq:TmRep_meanMDSquare_Relation} to substitute $(\EE\abs{M})^2$ and Proposition~\ref{prop:GibbsMatching_mean_var_from_CE}, \eqref{eq:EoneRepTrees_is_meanSquare} to substitute $\EE\abs{M}$, and similar approximations as in the proof of Lemma \ref{lemma:ProjP2simpleTrees_is_fluctuationPart}, we obtain 
    \begin{align}
        &-F_3 \sim \frac{1}{6n q^2} \inparen{ \frac{2 \EE\abs{M}}{n}  }^3  \sim \frac{4}{3n^4q^2} \inparen{\EE\abs{M}}^2 \inparen{\EE\abs{M}} \nonumber \\
        &\, \sim \frac{4}{3n^4 q^2} \inparen{ -n^2  \sum_{m=1}^{2 \log n -1} (n\lambda)^{m+1}  \sum_{\TmRep} \frac{(m+1)!}{2} \frac{\phi(H(\TmRep))}{\aut(\TmRep)}  } \inparen{ n \sum_{m=1}^{2 \log n} (n\lambda)^{m}  \sum_{T_m} m! m  \frac{\phi(H(T_m))}{\aut(T_m)}  } \nonumber \\
        &\, \sim -\frac{2}{3nq^2} \sum_{m = 1}^{2 \log n - 2} (n\lambda)^{m+2} \sum_{\ell = 1}^{m} \sum_{\inparen{T_\ell^{\text{rep}}, T_{m+1-\ell}} }  (m+1-\ell) \frac{\widetilde{\phi}(H(T_\ell^{\text{rep}})) \widetilde{\phi}(H(T_{m+1-\ell})) }{ \aut(T_\ell^{\text{rep}}) \aut(T_{m+1-\ell}) }.
        \label{eq:tripleEdge_cancels_logprefactorO1_RHS_simp}
    \end{align}
    Comparing \eqref{eq:tripleEdge_cancels_logprefactorO1_LHS_simp} and \eqref{eq:tripleEdge_cancels_logprefactorO1_RHS_simp}, we see that to prove \eqref{eq:tripleEdge_cancels_logprefactorO1}, it suffices to show, for every $1 \leq m \leq 2 \log n - 2$, that \eqref{eq:tripleEdge_cancels_logprefactorO1_fix_m} holds. 
    Therefore, the proof is complete.
\end{proof}

\begin{proof}[Proof of Claim \ref{claim:comb_split_adjDD_sepDD}]
    From Proposition \ref{prop:GibbsMatching_mean_var_from_CE} we have 
    \begin{align*}
        c_n \sim \frac{2}{n} \sum_{m=1}^{2 \log n} \sum_{T_m} m! \phi(H(T_m)) \frac{(n)_{m+1}}{\aut(T_m)} \lambda^m m + O\!\inparen{\frac{1}{n}}.
    \end{align*}
    Plugging this into $\combined$ \eqref{eq:combine_secondLogPrefactor_Esimp_EoneRep}, we have, hiding the lower order terms and expanding the sum:
    \begin{align*}
        &\combined \\
        &\sim \frac{1}{n^2 q^2} \inparen{ \sum_{m=1}^{2 \log n} \sum_{T_m} m! \phi(H(T_m)) \frac{(n)_{m+1}}{\aut(T_m)} \lambda^m m } \inparen{ \sum_{\TmRep} \frac{(m+1)!}{2} \phi(H(\TmRep))  \frac{(n)_{m+1}}{\aut(\TmRep)} \lambda^{m+1} (m-1)   } \\
        &= X^{\mathsf{comb}}_{\leq 2 \log n} + X^{\mathsf{comb}}_{> 2 \log n},
    \end{align*}
    where 
    \begin{align*}
        X^{\mathsf{comb}}_{\leq 2 \log n} &:= \frac{1}{2nq^2} \sum_{m=2}^{2 \log n - 2} (n\lambda)^{m+2} \sum_{\ell = 2}^{2 \log n - 1} \sum_{(T_\ell^{\text{rep}}, T_{m+1-\ell})} \frac{(\ell - 1)\widetilde{\phi}(H(T_\ell^{\text{rep}}))}{\aut(T_\ell^{\text{rep}})}  \frac{(m + 1 -\ell)\widetilde{\phi}(H(T_{m+1 -\ell}))}{\aut(T_{m+1-\ell})}, \\
        X^{\mathsf{comb}}_{> 2 \log n} &:= \frac{1}{2nq^2} \sum_{m=2 \log n - 1}^{4 \log n - 2} (n\lambda)^{m+2} \sum_{\ell = 2 \vee m+2 - 2 \log n}^{m \wedge 2 \log n} \sum_{(T_\ell^{\text{rep}}, T_{m+1-\ell})} \frac{(\ell - 1)\widetilde{\phi}(H(T_\ell^{\text{rep}}))}{\aut(T_\ell^{\text{rep}})}  \frac{(m + 1 -\ell)\widetilde{\phi}(H(T_{m+1 -\ell}))}{\aut(T_{m+1-\ell})}.
    \end{align*}
    We claim that 
    \begin{equation*}
        X^{\mathsf{comb}}_{> 2 \log n} = O\!\inparen{  \frac{C(\log n) \exp O\!\inparen{\frac{(\log n)^2}{n}}}{n^3 q^2}  } =  O\!\inparen{ \frac{\log n}{n^2}  }.
    \end{equation*}
    This follows by straightforward modifications of the arguments of Claim (iii) in the proof of Lemma \ref{lemma:E_oneRepTrees_mainResult}. On the other hand, rewrite the innermost sum in $X^{\mathsf{comb}}_{\leq 2 \log n}$ as a sum over colored, labeled pairs $\inparen{ \widetilde{T}_{\text{red}}^{\text{rep}}(v_*), \widetilde{T}_{\text{blue}}(v_{**}) }$ as in \eqref{eq:combined_genericPair}. The number of such pairs that can be generated from a single unlabeled, uncolored pair $(T_\ell^{\text{rep}}, T_{m+1-\ell})$ is 
    \begin{align*}
        \binom{m+1}{\ell + 1} \frac{(\ell + 1)!}{\aut(T_\ell^{\text{rep}})} (\ell - 1) (m+1-\ell) \cdot 2 \cdot \frac{(m-\ell)!}{\aut(T_{m+1-\ell})}
= \frac{2 (\ell - 1) (m+1-\ell) \cdot (m + 1)!}{\aut(T_\ell^{\text{rep}}) \aut(T_{m+1-\ell})} .
    \end{align*}
    Scaling $X^{\mathsf{comb}}_{\leq 2 \log n}$ appropriately by this combinatorial factor, we obtain 
    \begin{align*}
        \combined \sim \frac{1}{2nq^2} \sum_{m=2}^{2 \log n - 2} (n\lambda)^{m+2} \sum_{    \inparen{ \widetilde{T}_{\text{red}}^{\text{rep}}(v_*), \widetilde{T}_{\text{blue}}(v_{**})    }  } \frac{1}{2(m+1)!} \widetilde{\phi}(H(\widetilde{T}_{\text{red}}^{\text{rep}}) ) \widetilde{\phi}(H(\widetilde{T}_{\text{blue}} ) ).
    \end{align*}
    It remains to organize the sum into two terms: one collecting the pairs $\inparen{ \widetilde{T}_{\text{red}}^{\text{rep}}(v_*), \widetilde{T}_{\text{blue}}(v_{**}) }$ whose join gives a tree with adjacent twice repeated edges, and the other with separated twice repeated edges.
\end{proof}

\begin{proof}[Proof of Lemma \ref{lemma:sepDD_combSepDD_cancel}] 
    We will show that 
    \begin{align}\label{eq:sepDD_combSepDD_cancel_goal}
        \EE\insquare{\sepDD} \sim - \combSepDD.
    \end{align}
    Similar to \eqref{eq:adjDD_combAdjDD_give_meanPart_LHSRewrite}, we can rewrite the LHS of \eqref{eq:sepDD_combSepDD_cancel_goal} as 
    \begin{align}\label{eq:sepDD_combSepDD_cancel_LHSRewrite}
        \EE\insquare{\sepDD} \sim \frac{1}{nq^2} \sum_{m=3}^{2 \log n - 2} (n\lambda)^{m+2} \sum_{\widetilde{\TmSepDD}} \frac{m+2}{4} \phi(H(\widetilde{\TmSepDD})),
    \end{align}
    where the sum ranges over vertex-labeled (labels in $[m+1]$) trees $\widetilde{\TmSepDD}$ which have exactly two separated twice repeated edges. 
    
    Comparing \eqref{eq:sepDD_combSepDD_cancel_LHSRewrite} and \eqref{eq:combSepDD_definition}, we see that it suffices to show the following. For fixed $3 \leq m \leq 2 \log n - 2$, for fixed $\widetilde{\TmSepDD}$, let the two sets of repeated edges be $(v_*, v_{**})$ and $(w_{*}, w_{**})$. Then showing \eqref{eq:sepDD_combSepDD_cancel_goal} is equivalent to showing 
    \begin{align}
        \widetilde{\phi}(H(\widetilde{\TmSepDD})) &= - \sum_{ \substack{   \inparen{ \widetilde{T}_{\text{red}}^{\text{rep}}(v_*), \widetilde{T}_{\text{blue}}(v_{**})    }  \\ \cong \widetilde{\TmSepDD} } } \widetilde{\phi}(H(\widetilde{T}_{\text{red}}^{\text{rep}}) ) \widetilde{\phi}(H(\widetilde{T}_{\text{blue}} ) ) \nonumber \\
        &\qquad\qquad   - \sum_{ \substack{   \inparen{ \widetilde{T}_{\text{red}}^{\text{rep}}(w_*), \widetilde{T}_{\text{blue}}(w_{**})    }  \\ \cong   \widetilde{\TmSepDD} } } \widetilde{\phi}(H(\widetilde{T}_{\text{red}}^{\text{rep}}) ) \widetilde{\phi}(H(\widetilde{T}_{\text{blue}} ) ) ,
        \label{eq:sepDD_combSepDD_cancel_goal_fixm_fixTmSepDD}
    \end{align}
    where the sum constraint means that an uncolored version of the join of $\inparen{ \widetilde{T}_{\text{red}}^{\text{rep}}(\cdot), \widetilde{T}_{\text{blue}}(\cdot)    }$ by superimposing on their distinguished edges is isomorphic to $\widetilde{\TmSepDD}$. We claim that 
    \begin{equation}\label{eq:sepDD_combSepDD_cancel_vStar}\leqnomode\tag{\text{Case: }$v_* \text{ red}, v_{**} \text{ blue}$}
        \begin{split}
        \frac{1}{2} \widetilde{\phi}(H(\widetilde{\TmSepDD})) &= - \sum_{ \substack{   \inparen{ \widetilde{T}_{\text{red}}^{\text{rep}}(v_*), \widetilde{T}_{\text{blue}}(v_{**})    }  \\ \cong \widetilde{\TmSepDD} } } \widetilde{\phi}(H(\widetilde{T}_{\text{red}}^{\text{rep}}) ) \widetilde{\phi}(H(\widetilde{T}_{\text{blue}} ) ) ,
        \end{split}
    \end{equation}
    and 
    \begin{equation}\label{eq:sepDD_combSepDD_cancel_wStar}\leqnomode\tag{\text{Case: }$w_* \text{ red}, w_{**} \text{ blue}$}
        \begin{split}
        \frac{1}{2} \widetilde{\phi}(H(\widetilde{\TmSepDD})) &= - \sum_{ \substack{   \inparen{ \widetilde{T}_{\text{red}}^{\text{rep}}(w_*), \widetilde{T}_{\text{blue}}(w_{**})    }  \\ \cong \widetilde{\TmSepDD} } } \widetilde{\phi}(H(\widetilde{T}_{\text{red}}^{\text{rep}}) ) \widetilde{\phi}(H(\widetilde{T}_{\text{blue}} ) ). 
        \end{split}
    \end{equation}
    These will prove \eqref{eq:sepDD_combSepDD_cancel_goal_fixm_fixTmSepDD} which will finish the proof.

\smallskip
\noindent
    \textbf{Proof of Equation \eqref{eq:sepDD_combSepDD_cancel_vStar}} 
    Let $v'$ be the unique edge incident to $v_{*}$ (and $v_{**}$) in $\widetilde{\TmSepDD}$ which connects 
    $v_*$ to $w_*$ 
    (e.g., Figure \ref{fig:equalAverageED_sepDD} (Left)). In what follows, fix $H = H(\widetilde{\TmSepDD})$ the incompatibility graph of $\widetilde{\TmSepDD}$. Define the following subset of bi-colorings of $V(H)$:
    \begin{align*}
        \cC(H; v'\text{~red}) := \inbraces{  (V_{r}, V_{b}) : \, 
        \begin{aligned}
            & V_{r} \cup V_{b} = V(H) \text{ disjoint}, \, V_{r} \ni v_*, v', \, V_{b} \ni v_{**},\\
            & H[V_{r}] \text{ and } H[V_{b}] \text{ are each connected subgraphs}
        \end{aligned}
        }.
    \end{align*}
    Define $\cC(H; v'\text{~blue})$ analogously. There is a bijection between the sets 
    \begin{align*}
        \cC(H; v'\text{~red}) \qquad & \text{and}  \qquad \inbraces{\inparen{ \widetilde{T}_{\text{red}}^{\text{rep}}(v_*), \widetilde{T}_{\text{blue}}(v_{**})    }  \cong \widetilde{\TmSepDD} }.
    \end{align*}
    An example of such a bi-coloring in $\cC(H; v'\text{~red})$ is given in Figure \ref{fig:equalAverageED_sepDD} (Right). With $\cC(H; v_*, v_{**})$ defined in \eqref{eq:def-bi-coloring}, Equation \eqref{eq:sepDD_combSepDD_cancel_vStar} reduces to 
    \begin{align}
        \sum_{S \subseteq H(\text{conn.,spann.})} (-1)^{\abs{S}} &=  2 \sum_{(V_r, V_b) \in \cC(H; v'\text{~red})} \sum_{\substack{ S_r \subseteq H[V_r](\text{conn., spann.}) \\ S_b \subseteq H[V_b](\text{conn., spann.})  } } (-1)^{\abs{S_r} + \abs{S_b} + 1} \nonumber \\
        &= \sum_{(V_r, V_b) \in \cC(H;v'\text{~red})} (\cdots)  + \sum_{(V_r, V_b) \in \cC(H;v'\text{~blue})} (\cdots)\nonumber \\
        &= \sum_{(V_r, V_b) \in \cC(H; v_*, v_{**})} \sum_{\substack{ S_r \subseteq H[V_r](\text{conn., spann.}) \\ S_b \subseteq H[V_b](\text{conn., spann.})  } } (-1)^{\abs{S_r} + \abs{S_b} + 1} ,
    \end{align}
where for the second equality we have used symmetry. Equation \eqref{eq:sepDD_combSepDD_cancel_vStar} is then true by Lemma \ref{lemma:Ursell_trees_adjVertices_identity}.

\smallskip
\noindent
    \textbf{Proof of Equation \eqref{eq:sepDD_combSepDD_cancel_wStar}.} The argument is entirely analogous; we only have to switch the roles of $v_{*}$ and $v_{**}$ with those of $w_{*}$ and $w_{**}$. 
\end{proof}

\begin{figure}[htp]\centering
\begin{tikzpicture}
\tikzset{%
glow/.style={%
preaction={#1, draw, line join=round, line width=0.5pt, opacity=0.04,
preaction={#1, draw, line join=round, line width=1.0pt, opacity=0.04,
preaction={#1, draw, line join=round, line width=1.5pt, opacity=0.04,
preaction={#1, draw, line join=round, line width=2.0pt, opacity=0.04,
preaction={#1, draw, line join=round, line width=2.5pt, opacity=0.04,
preaction={#1, draw, line join=round, line width=3.0pt, opacity=0.04,
preaction={#1, draw, line join=round, line width=3.5pt, opacity=0.04,
preaction={#1, draw, line join=round, line width=4.0pt, opacity=0.04,
preaction={#1, draw, line join=round, line width=4.5pt, opacity=0.04,
preaction={#1, draw, line join=round, line width=5.0pt, opacity=0.04,
preaction={#1, draw, line join=round, line width=5.5pt, opacity=0.04,
preaction={#1, draw, line join=round, line width=6.0pt, opacity=0.04,
}}}}}}}}}}}}}}

\draw[yellow, opacity=0.4, line width=0.15cm] (2.65,2.12) -- (3.65,2.87);

\node (4) at (1,3) {$4$};
\node (6) at (2.5,2) {$6$};
\node (10) at (0,4) {$10$};
\node (1) at (0,1.5) {$1$};
\node (2) at (3.7,1.2) {$2$};
\node (9) at (3.8,3) {$9$};
\node (3) at (5,2) {$3$};
\node (7) at (6.5,3) {$7$};
\node (8) at (7.2,4.5) {$8$};
\node (5) at (7.2,2) {$5$};

\draw[thick, red] (4) -- (10);
\draw[thick, red] (6) -- node[label=below:$v'$, xshift=0.15cm, yshift=0.3cm] {} (9);
\draw[thick, red] (3) -- (9);
\draw[thick, red] (5) -- (7);
\draw[thick, red] (7) -- (8);
\draw[thick, blue] (1) -- (4);
\draw[thick, blue] (2) -- (6);

\draw[thick, transform canvas={xshift=0.15em, yshift=0.2em}, red] (4) -- node[label={[label distance=-0.3em]90:$v_{*}$}] {} (6);
\draw[thick, transform canvas={xshift=-0em, yshift=-0.2em}, blue] (4) -- node[label=below:$v_{**}$, xshift=-0.15cm, yshift=0.1cm] {} (6);
\draw[thick, transform canvas={xshift=-0.15em, yshift=0.23em}, red] (3) -- node[label={[label distance=-0.3em]90:$w_{*}$}] {} (7);
\draw[thick, transform canvas={xshift=0.15em, yshift=-0.23em}, red] (3) -- node[label=below:$w_{**}$,xshift=0.15cm, yshift=0.1cm] {} (7);
\end{tikzpicture}
\qquad 
\begin{tikzpicture}[every node/.style={font=\small}, redCirc/.style={circle,fill=red, minimum size=5pt, inner sep=0pt}, blueCirc/.style={circle,fill=blue, minimum size=5pt, inner sep=0pt}]
\tikzset{%
glow/.style={%
preaction={#1, draw, line join=round, line width=0.5pt, opacity=0.04,
preaction={#1, draw, line join=round, line width=1.0pt, opacity=0.04,
preaction={#1, draw, line join=round, line width=1.5pt, opacity=0.04,
preaction={#1, draw, line join=round, line width=2.0pt, opacity=0.04,
preaction={#1, draw, line join=round, line width=2.5pt, opacity=0.04,
preaction={#1, draw, line join=round, line width=3.0pt, opacity=0.04,
preaction={#1, draw, line join=round, line width=3.5pt, opacity=0.04,
preaction={#1, draw, line join=round, line width=4.0pt, opacity=0.04,
preaction={#1, draw, line join=round, line width=4.5pt, opacity=0.04,
preaction={#1, draw, line join=round, line width=5.0pt, opacity=0.04,
preaction={#1, draw, line join=round, line width=5.5pt, opacity=0.04,
preaction={#1, draw, line join=round, line width=6.0pt, opacity=0.04,
}}}}}}}}}}}}}}

\node[circle, fill=yellow, opacity=0.4, minimum size=12pt,inner sep=0pt] at (3.5,3) {};

\node[redCirc,label=above:{\textcolor{red}{$v_*$}}] (vStar) at (2,3.5) {};
\node[blueCirc,label=below:{\textcolor{blue}{$v_{**}$}}] (vStarStar) at (2,0.5) {};

\node[blueCirc, label=below:{$(1,4)$}] (1-4) at (0.5,1) {};
\node[blueCirc, label=below:{$(2,6)$}] (2-6) at (3.5,1) {};
\node[redCirc, label=above:{$(4,10)$}] (4-10) at (0.5,3) {};
\node[redCirc, label=above:{\textcolor{red}{$v'$}}] (vPrime) at (3.5,3) {};
\node[redCirc, label=below:{$(3,9)$}] (3-9) at (4.5,1.5) {};

\node[redCirc, label=above:{\textcolor{red}{$w_*$}}] (wStar) at (5.5,3) {};
\node[redCirc, label=below:{\textcolor{red}{$w_{**}$}}] (wStarStar) at (5.5,1) {};
\node[redCirc, label=above:{$(7,8)$}] (7-8) at (7,3) {};
\node[redCirc, label=below:{$(5,7)$}] (5-7) at (7,1) {};

\draw (vPrime) -- (vStar);
\draw (vPrime) -- (vStarStar);
\draw (vStar) -- (vStarStar);

\draw (1-4) -- (4-10);
\draw (1-4) -- (vStar);
\draw (1-4) -- (vStarStar);
\draw (4-10) -- (vStar);
\draw (4-10) -- (vStarStar);
\draw (2-6) -- (vStar);
\draw (2-6) -- (vStarStar);
\draw (2-6) -- (vPrime);
\draw (3-9) -- (vPrime);
\draw (3-9) -- (wStar);
\draw (3-9) -- (wStarStar);
\draw (wStar) -- (wStarStar);
\draw (7-8) -- (wStar);
\draw (7-8) -- (wStarStar);
\draw (5-7) -- (wStar);
\draw (5-7) -- (wStarStar);
\draw (5-7) -- (7-8);
\end{tikzpicture}
\caption{(Left) A joined tree represented by the tuple \eqref{eq:combined_genericPair} that is in the set $\sepDD$. The unique edge $v'$ that is adjacent to both $v_*$ and $v_{**}$ that connects between the separated repeated edges is highlighted in yellow. (Right) The corresponding incompatibility graph $H$.}
\label{fig:equalAverageED_sepDD}
\end{figure}

\begin{proof}[Proof of Lemma \ref{lemma:adjDD_combAdjDD_give_meanPart}] 
    We will show that 
    \begin{align}
        \EE\insquare{\adjDD} \sim -\combAdjDD -\frac{c_n^4}{4nq^2}.
        \label{eq:adjDD_combAdjDD_give_meanPart_Goal}
    \end{align}
    By Taylor expanding $(q/p)^{m+2}$ in the LHS of \eqref{eq:adjDD_combAdjDD_give_meanPart_Goal} as in \eqref{eq:identities_pq_(q/p)^r}, approximating $(n)_{m+1} \simeq n^{m+1}$ as in \eqref{eq:fallingfac_to_power_approx}, and rewriting the sum over vertex-labeled  trees $\widetilde{\TmAdjDD}$ (with labels in $[m+1]$) as in \eqref{eq:tripleEdge_cancels_logprefactorO1_fix_m}, we have 
    \begin{equation}\label{eq:adjDD_combAdjDD_give_meanPart_LHSRewrite}
        \EE\insquare{\adjDD} \sim \frac{1}{nq^2} \sum_{m=2}^{2 \log n - 2} (n\lambda)^{m+2} \sum_{\widetilde{\TmAdjDD}} \frac{m+2}{4} \phi(H(\widetilde{\TmAdjDD})).
    \end{equation}
    On the other hand, by the proof of Lemma \ref{lemma:ProjP2simpleTrees_is_fluctuationPart}, we have the identity
    \begin{align*}
        - \frac{2}{n^2} \inparen{\EE \abs{M}}^2 \sim \sum_{m=2}^{2 \log n} (n\lambda)^m \sum_{T_m} m! \phi(H(T_m)) \frac{\gamma(T_m)}{\aut(T_m)}.
    \end{align*}
    Therefore, by expanding the square, 
    we obtain 
    \begin{align}
        - \frac{c_n^4}{ 4 nq^2} \sim - \frac{1}{n q^2} \inparen{ \sum_{m=2}^{2 \log n} (n\lambda)^m \sum_{T_m} m! \phi(H(T_m)) \frac{\gamma(T_m)}{\aut(T_m)}  }^2 := X^{\textcypr{\Cpi}}_{\leq 2 \log n} + X^{\textcypr{\Cpi}}_{> 2 \log n},
        \label{eq:adjDD_combAdjDD_give_meanPart_Goal_cn4ExpandSquare}
    \end{align}
    where 
    \begin{align*}
        X^{\textcypr{\Cpi}}_{\leq 2 \log n} &:= - \frac{1}{nq^2} \sum_{m=2}^{2 \log n - 2} (n\lambda)^{m+2} \sum_{\ell = 2}^{m} \sum_{(T_\ell, T_{m+2-\ell})} \frac{\widetilde{\phi}(H(T_{\ell})) \gamma(T_{\ell}) }{\aut(T_{\ell})}  \frac{\widetilde{\phi}(H(T_{m + 2 - \ell})) \gamma(T_{m + 2 - \ell}) }{\aut(T_{m + 2 - \ell})} , \\
        X^{\textcypr{\Cpi}}_{> 2 \log n} &:= - \frac{1}{nq^2} \sum_{m=2 \log n - 1}^{4 \log n - 2} (n\lambda)^{m+2} \sum_{\ell = 2 \vee m + 2 - 2 \log n}^{m \wedge 2 \log n} \sum_{(T_\ell, T_{m+2-\ell})} \frac{\widetilde{\phi}(H(T_{\ell})) \gamma(T_{\ell}) }{\aut(T_{\ell})}  \frac{\widetilde{\phi}(H(T_{m + 2 - \ell})) \gamma(T_{m + 2 - \ell}) }{\aut(T_{m + 2 - \ell})}.
    \end{align*}
    We claim that
    \begin{equation}
        X^{\textcypr{\Cpi}}_{> 2 \log n} = O\!\inparen{  \frac{(\log n)^2 \exp O\!\inparen{\frac{(\log n)^2}{n}}}{n^3 q^2}  } =  O\!\inparen{ \frac{(\log n)^2}{n^2}  }. \label{eq:adjDD_combAdjDD_give_meanPart_Goal_XBigger2logn_small}
    \end{equation}
    This follows by a straightforward modification of the arguments of Claim (iii) in the proof of Lemma \ref{lemma:E_oneRepTrees_mainResult}. Here we additionally use the bound $\gamma(T_m) \leq \binom{m}{2}$.

    For each $m$, rewrite the sum over pairs of unlabeled trees in $X^{\textcypr{\Cpi}}_{\leq 2 \log n}$ as a sum over pairs of trees generically denoted by
    \begin{equation}\label{eq:equalAverageED_determPart_genericTuple}
        \inparen{  \TmLabRed(u_r^{(1)}, u_r^{(2)} ), \, \TmLabBlue(u_b^{(1)}, u_b^{(2)} ) },
    \end{equation}
    satisfying the following: 
    \begin{itemize}
        \item $\TmLabRed(u_r^{(1)}, u_r^{(2)} )$ and $\TmLabBlue(u_b^{(1)}, u_b^{(2)} )$ are vertex-labeled, colored trees with two distinguished vertices indicated in parentheses. 
        \item The (unique) paths between the distinguished vertices $(u_r^{(1)}, u_r^{(2)})$ and $(u_b^{(1)}, u_b^{(2)})$ each form a $P_2$ in $\TmLabRed(u_r^{(1)}, u_r^{(2)} )$ and $\TmLabBlue(u_b^{(1)}, u_b^{(2)} )$ respectively. These are referred to as $P_2$ decorations.
        \item Say $u_r^{(1)} \text{---} u_r^{\text{join}} \text{---} u_r^{(2)}$ and $u_b^{(1)} \text{---} u_b^{\text{join}} \text{---} u_b^{(2)}$ are the $P_2$ decorations. Then the labels of $u_r^{\text{join}}$ and $u_b^{\text{join}}$ must coincide. The label sets of $(u_r^{(1)}, u_r^{(2)})$ and $(u_b^{(1)}, u_b^{(2)})$ must also coincide.
        \item Joining the two trees by superimposing on $P_2$ decorations (matching the vertex labels) gives a vertex-labeled tree with two adjacent twice repeated edges, with vertex labels in $[m+1]$. 
    \end{itemize}
    Figure \ref{fig:equalAverageED_DeterministicPartAdjDD} (left) gives an example of such a tuple \eqref{eq:equalAverageED_determPart_genericTuple}. The number of such pairs \eqref{eq:equalAverageED_determPart_genericTuple} that can be generated from a single $(T_\ell, T_{m+2-\ell})$ pair is 
    \begin{align*}
        \binom{m+1}{\ell + 1} \frac{(\ell+1)!}{\aut(T_\ell)} \gamma(T_\ell) \cdot  \gamma(T_{m+2-\ell}) \cdot 2 \cdot \frac{(m-\ell)!}{\aut(T_{m+2-\ell})}
        = \frac{2 \cdot (m+1)! \gamma(T_\ell) \gamma(T_{m+2-\ell})}{\aut(T_\ell) \aut(T_{m+2-\ell})} .
    \end{align*}
    The factor 2 arises because there are two ways to align the labels of $(u_r^{(1)}, u_r^{(2)} )$ and $(u_b^{(1)}, u_b^{(2)} )$. Therefore, scaling $X^{\textcypr{\Cpi}}_{\leq 2 \log n}$ by this combinatorial factor, and using \eqref{eq:adjDD_combAdjDD_give_meanPart_Goal_XBigger2logn_small}, we have from \eqref{eq:adjDD_combAdjDD_give_meanPart_Goal_cn4ExpandSquare} that  
    \begin{align}\label{eq:adjDD_combAdjDD_give_meanPart_Goal_cn4ExpandSquare_simplified}
        - \frac{c_n^4}{ 4 nq^2} \sim - \frac{1}{nq^2} \sum_{m=2}^{2 \log n - 2} (n\lambda)^{m+2}  \sum_{\inparen{  \TmLabRed(u_r^{(1)}, u_r^{(2)} ), \, \TmLabBlue(u_b^{(1)}, u_b^{(2)} ) }} \frac{1}{2(m+1)!} \widetilde{\phi}(H(\TmLabRed)) \widetilde{\phi}(H(\TmLabBlue)),
    \end{align}
    where we have suppressed mention of the distinguished vertices whenever clear from context.

    From \eqref{eq:adjDD_combAdjDD_give_meanPart_LHSRewrite}, \eqref{eq:combAdjDD_definition}, and \eqref{eq:adjDD_combAdjDD_give_meanPart_Goal_cn4ExpandSquare_simplified}, we see that to prove \eqref{eq:adjDD_combAdjDD_give_meanPart_Goal}, it suffices to show the following. Fix an $2 \leq m \leq 2 \log n - 2$ and fix a $\widetilde{\TmAdjDD}$. Denote the two sets of repeated edges in $\widetilde{\TmAdjDD}$ by $(v_{*}, v_{**})$ and $(w_{*}, w_{**})$. Then showing \eqref{eq:adjDD_combAdjDD_give_meanPart_Goal} is equivalent to showing
    \begin{align}
        \widetilde{\phi}(H(\widetilde{\TmAdjDD})) &= - 2 \sum_{\substack{ \inparen{  \TmLabRed(u_r^{(1)}, u_r^{(2)} ), \, \TmLabBlue(u_b^{(1)}, u_b^{(2)} ) } \\ \cong \widetilde{\TmAdjDD} }  } \widetilde{\phi}(H(\TmLabRed)) \widetilde{\phi}(H(\TmLabBlue)) \nonumber \\
        &\qquad\qquad\qquad\qquad\qquad   - \sum_{ \substack{   \inparen{ \widetilde{T}_{\text{red}}^{\text{rep}}(v_*), \widetilde{T}_{\text{blue}}(v_{**})    }  \\ \cong \widetilde{\TmAdjDD} } } \widetilde{\phi}(H(\widetilde{T}_{\text{red}}^{\text{rep}}) ) \widetilde{\phi}(H(\widetilde{T}_{\text{blue}} ) ) \nonumber \\
        &\qquad\qquad\qquad\qquad\qquad   - \sum_{ \substack{   \inparen{ \widetilde{T}_{\text{red}}^{\text{rep}}(w_*), \widetilde{T}_{\text{blue}}(w_{**})    }  \\ \cong   \widetilde{\TmAdjDD} } } \widetilde{\phi}(H(\widetilde{T}_{\text{red}}^{\text{rep}}) ) \widetilde{\phi}(H(\widetilde{T}_{\text{blue}} ) ) .
        \label{eq:adjDD_combAdjDD_give_meanPart_Goal_fix_m_fix_TmAdjDD}
    \end{align}
    To clarify, the sum constraint in the first term on the RHS of \eqref{eq:adjDD_combAdjDD_give_meanPart_Goal_fix_m_fix_TmAdjDD} means that an uncolored version of the join of $\inparen{  \TmLabRed(u_r^{(1)}, u_r^{(2)} ), \, \TmLabBlue(u_b^{(1)}, u_b^{(2)} ) }$ formed by superimposing their $P_2$ decorations is isomorphic to $\widetilde{\TmAdjDD}$. The sum constraint in the second term on the RHS of \eqref{eq:adjDD_combAdjDD_give_meanPart_Goal_fix_m_fix_TmAdjDD} means that an uncolored version of the join of $\inparen{ \widetilde{T}_{\text{red}}^{\text{rep}}(v_*), \widetilde{T}_{\text{blue}}(v_{**})    }$ formed by superimposing the distinguished edges $v_{*}$ and $v_{**}$ is isomorphic to $\widetilde{\TmAdjDD}$. The last term is analogously defined. We will show 
    \begin{equation}\label{eq:adjDD_combAdjDD_give_meanPart_vStar}\leqnomode\tag{\text{Case: }$v_* \text{ red}, v_{**} \text{ blue}$}
        \begin{split}
        \frac{1}{2} \widetilde{\phi}(H(\widetilde{\TmAdjDD})) &= - \sum_{\substack{ \inparen{  \TmLabRed(u_r^{(1)}, u_r^{(2)} ), \, \TmLabBlue(u_b^{(1)}, u_b^{(2)} ) } \\ \cong \widetilde{\TmAdjDD} }  } \widetilde{\phi}(H(\TmLabRed)) \widetilde{\phi}(H(\TmLabBlue))  \\
        &\qquad    - \sum_{ \substack{   \inparen{ \widetilde{T}_{\text{red}}^{\text{rep}}(v_*), \widetilde{T}_{\text{blue}}(v_{**})    }  \\ \cong \widetilde{\TmAdjDD} } } \widetilde{\phi}(H(\widetilde{T}_{\text{red}}^{\text{rep}}) ) \widetilde{\phi}(H(\widetilde{T}_{\text{blue}} ) ) ,
        \end{split}
    \end{equation}
    and 
    \begin{equation}\label{eq:adjDD_combAdjDD_give_meanPart_wStar}\leqnomode\tag{\text{Case: }$w_* \text{ red}, w_{**} \text{ blue}$}
        \begin{split}
        \frac{1}{2} \widetilde{\phi}(H(\widetilde{\TmAdjDD})) &= - \sum_{\substack{ \inparen{  \TmLabRed(u_r^{(1)}, u_r^{(2)} ), \, \TmLabBlue(u_b^{(1)}, u_b^{(2)} ) } \\ \cong \widetilde{\TmAdjDD} }  } \widetilde{\phi}(H(\TmLabRed)) \widetilde{\phi}(H(\TmLabBlue))  \\
        &\qquad    - \sum_{ \substack{   \inparen{ \widetilde{T}_{\text{red}}^{\text{rep}}(w_*), \widetilde{T}_{\text{blue}}(w_{**})    }  \\ \cong \widetilde{\TmAdjDD} } } \widetilde{\phi}(H(\widetilde{T}_{\text{red}}^{\text{rep}}) ) \widetilde{\phi}(H(\widetilde{T}_{\text{blue}} ) ). 
        \end{split}
    \end{equation}
    These will prove \eqref{eq:adjDD_combAdjDD_give_meanPart_Goal_fix_m_fix_TmAdjDD}, which will finish the proof.\\
    \textbf{Proof of equation \eqref{eq:adjDD_combAdjDD_give_meanPart_vStar}}. In what follows, fix $H = H(\widetilde{\TmAdjDD})$ the incompatibility graph of $\widetilde{\TmAdjDD}$.   Define the following subset of bi-colorings of $V(H)$:
    \begin{align*}
        \cC(H; w_*\text{~red}, w_{**}\text{~blue}) := \inbraces{  (V_{r}, V_{b}) : \, 
        \begin{aligned}
            & V_{r} \cup V_{b} = V(H) \text{ disjoint}, \, V_{r} \ni v_*, w_*, \, V_{b} \ni v_{**}, w_{**},\\
            & H[V_{r}] \text{ and } H[V_{b}] \text{ are each connected subgraphs}
        \end{aligned}
        }.
    \end{align*}
    Define $\cC(H; w_*\text{~blue}, w_{**} \text{~red})$, $\cC(H; w_* \text{~red}, w_{**} \text{~red})$, and $\cC(H; w_* \text{ ~blue}, w_{**} \text{~blue})$ analogously. Without loss of generality, suppose the $P_2$ decorations correspond to $v_*, v_{**}, w_*, w_{**}$ in the following way
    \begin{align}\label{eq:equalAverageED_determPart_identifyvStarvStarStar}
        \rlap{$\underbrace{\phantom{ u_r^{(1)} \text{---} u_r^{\text{join}}  } }_{= v_*}$} u_r^{(1)} \text{---} \overbrace{u_r^{\text{join}} \text{---} u_r^{(2)} }^{= w_*} \qquad \text{ and } \qquad \rlap{$\underbrace{\phantom{ u_b^{(1)} \text{---} u_b^{\text{join}}  } }_{= v_{**}}$} u_b^{(1)} \text{---} \overbrace{u_b^{\text{join}} \text{---} u_b^{(2)} }^{= w_{**}}.
    \end{align}
    There is a bijection between the sets 
    \begin{align*}
        \cC(H; w_*\text{~red}, w_{**}\text{~blue}) \qquad & \text{and}  \qquad \inbraces{\inparen{  \TmLabRed(u_r^{(1)}, u_r^{(2)} ), \, \TmLabBlue(u_b^{(1)}, u_b^{(2)} ) } \cong \widetilde{\TmAdjDD} }, \\
        \cC(H; w_* \text{~red}, w_{**} \text{~red}) \qquad & \text{and}  \qquad \inbraces{\inparen{ \widetilde{T}_{\text{red}}^{\text{rep}}(v_*), \widetilde{T}_{\text{blue}}(v_{**})    }  \cong \widetilde{\TmAdjDD} }.
    \end{align*}
    We refer to Figures \ref{fig:equalAverageED_CombAdjDD} and \ref{fig:equalAverageED_DeterministicPartAdjDD} for examples of such bijections. With $\cC(H; v_*, v_{**})$ defined in \eqref{eq:def-bi-coloring}, Equation \eqref{eq:adjDD_combAdjDD_give_meanPart_vStar} is equivalent to 
    \begin{align}
        \sum_{S \subseteq H(\text{conn.,spann.})} (-1)^{\abs{S}} &= 2 \sum_{(V_r, V_b) \in \cC(H; w_* \text{~red}, w_{**} \text{~blue})} \sum_{\substack{ S_r \subseteq H[V_r](\text{conn., spann.}) \\ S_b \subseteq H[V_b](\text{conn., spann.})  } } (-1)^{\abs{S_r} + \abs{S_b} + 1} \nonumber \\
        &\quad  + 2 \sum_{(V_r, V_b) \in \cC(H; w_* \text{~red}, w_{**} \text{~red})} \sum_{\substack{ S_r \subseteq H[V_r](\text{conn., spann.}) \\ S_b \subseteq H[V_b](\text{conn., spann.})  } } (-1)^{\abs{S_r} + \abs{S_b} + 1} \nonumber \\
        &= \sum_{(V_r, V_b) \in \cC(H; w_* \text{~red}, w_{**} \text{~blue})} (\cdots) + \sum_{(V_r, V_b) \in \cC(H; w_* \text{~blue}, w_{**} \text{~red})} (\cdots) \nonumber \\
        &\qquad \sum_{(V_r, V_b) \in \cC(H; w_* \text{~red}, w_{**} \text{~red})} (\cdots) + \sum_{(V_r, V_b) \in \cC(H; w_* \text{~blue}, w_{**} \text{~blue})} (\cdots) \label{eq:adjDD_final_cancellations}\\
        &= \sum_{(V_r, V_b) \in \cC(H; v_*, v_{**})} \sum_{\substack{ S_r \subseteq H[V_r](\text{conn., spann.}) \\ S_b \subseteq H[V_b](\text{conn., spann.})  } } (-1)^{\abs{S_r} + \abs{S_b} + 1} ,
        \nonumber
    \end{align}
where for the second equality we have used symmetry, recalling that $w_*$ and $w_{**}$ are the same repeated edge. Figure \ref{fig:adjDDCartoon} gives a cartoon of the different bi-colorings in \eqref{eq:adjDD_final_cancellations}. Then Equation \eqref{eq:adjDD_combAdjDD_give_meanPart_vStar} is true by Lemma \ref{lemma:Ursell_trees_adjVertices_identity}.
    
\smallskip
\noindent
   \textbf{Proof of equation \eqref{eq:adjDD_combAdjDD_give_meanPart_wStar}}. This argument is entirely analogous; we only have to swap the roles of $v_{*}$ and $v_{**}$ with those of $w_{*}$ and $w_{**}$. 
\end{proof}

\begin{figure}[htp]\centering
\begin{tikzpicture}

\draw[yellow, opacity=0.4, line width=0.15cm] (2.65,2.25) -- (3.55,2.95);
\draw[yellow, opacity=0.4, line width=0.15cm] (2.75,2.05) -- (3.7,2.8);

\node (4) at (1,3) {$4$};
\node (6) at (2.5,2) {$6$};
\node (8) at (0,4) {$8$};
\node (1) at (0,2.2) {$1$};
\node (7) at (3.5,1) {$7$};
\node (2) at (4.7,1) {$2$};
\node (9) at (3.8,3) {$9$};
\node (5) at (5,2.2) {$5$};
\node (3) at (5,4) {$3$};

\draw[thick, red] (4) -- (8);
\draw[thick, blue] (1) -- (4);
\draw[thick, blue] (6) -- (7);
\draw[thick, blue] (2) -- (7);
\draw[thick, red] (3) -- (9);
\draw[thick, red] (5) -- (9);

\draw[thick, transform canvas={xshift=0.15em, yshift=0.2em}, red] (4) -- node[label={[label distance=-0.3em]90:$v_{*}$}] {} (6);
\draw[thick, transform canvas={xshift=-0em, yshift=-0.2em}, blue] (4) -- node[label=below:$v_{**}$, xshift=-0.15cm, yshift=0.1cm] {} (6);

\draw[thick, transform canvas={xshift=-0.15em, yshift=0.23em}, red] (6) -- node[label={[label distance=-0.3em]90:$w_{*}$}] {} (9);
\draw[thick, transform canvas={xshift=0.15em, yshift=-0.23em}, red] (6) -- node[label=below:$w_{**}$,xshift=0.15cm, yshift=0.1cm] {} (9);

\end{tikzpicture}
\qquad\qquad 
\begin{tikzpicture}[every node/.style={font=\small}, redCirc/.style={circle,fill=red, minimum size=5pt, inner sep=0pt}, blueCirc/.style={circle,fill=blue, minimum size=5pt, inner sep=0pt}]
\node[circle, fill=yellow, opacity=0.4, minimum size=12pt,inner sep=0pt] at (3.5,3.5) {};
\node[circle, fill=yellow, opacity=0.4, minimum size=12pt,inner sep=0pt] at (3.5,0.5) {};

\node[redCirc,label=above:{\textcolor{red}{$v_*$}}] (vStar) at (2,3.5) {};
\node[blueCirc,label=below:{\textcolor{blue}{$v_{**}$}}] (vStarStar) at (2,0.5) {};
\node[blueCirc, label=left:{$(1,4)$}] (1-4) at (1,1) {};
\node[redCirc, label=below:{\textcolor{red}{$w_{**}$}}] (wStarStar) at (3.5,0.5) {};
\node[redCirc, label=left:{$(4,8)$}] (4-8) at (1,3) {};
\node[redCirc, label=above:{\textcolor{red}{$w_{*}$}}] (wStar) at (3.5,3.5) {};
\node[blueCirc, label=below:{$(6,7)$}] (6-7) at (4.7,1.2) {};

\node[redCirc, label=right:{$(3,9)$}] (3-9) at (4.7,3) {};
\node[redCirc, label=right:{$(5,9)$}] (5-9) at (4.7,2) {};
\node[blueCirc, label=right:{$(2,7)$}] (2-7) at (5.7,1) {};

\draw (wStar) -- (vStar);
\draw (wStar) -- (vStarStar);
\draw (wStarStar) -- (vStar);
\draw (wStarStar) -- (vStarStar);
\draw (vStar) -- (vStarStar);
\draw (wStar) -- (wStarStar);

\draw (1-4) -- (4-8);
\draw (1-4) -- (vStar);
\draw (1-4) -- (vStarStar);
\draw (4-8) -- (vStar);
\draw (4-8) -- (vStarStar);
\draw (6-7) -- (vStar);
\draw (6-7) -- (vStarStar);
\draw (6-7) -- (wStar);
\draw (6-7) -- (wStarStar);
\draw (3-9) -- (wStar);
\draw (3-9) -- (wStarStar);
\draw (5-9) -- (wStar);
\draw (5-9) -- (wStarStar);
\draw (3-9) -- (5-9);
\draw (6-7) -- (2-7);

\end{tikzpicture}
\caption{(Left) A joined tree represented by the tuple \eqref{eq:combined_genericPair} that is in the set $\adjDD$. (Right) The corresponding incompatibility graph $H$.}
\label{fig:equalAverageED_CombAdjDD}
\end{figure}
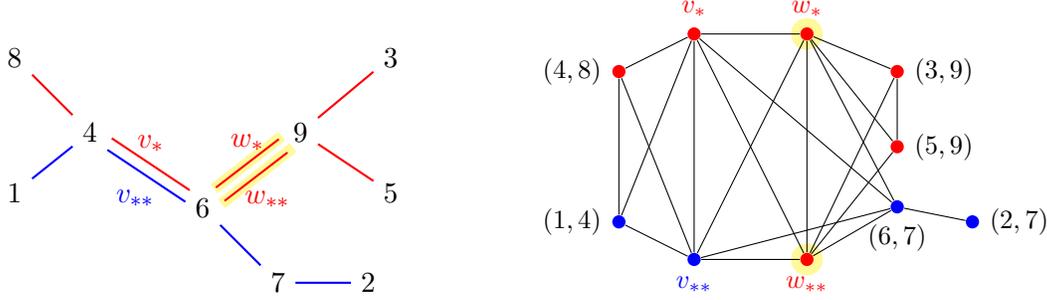

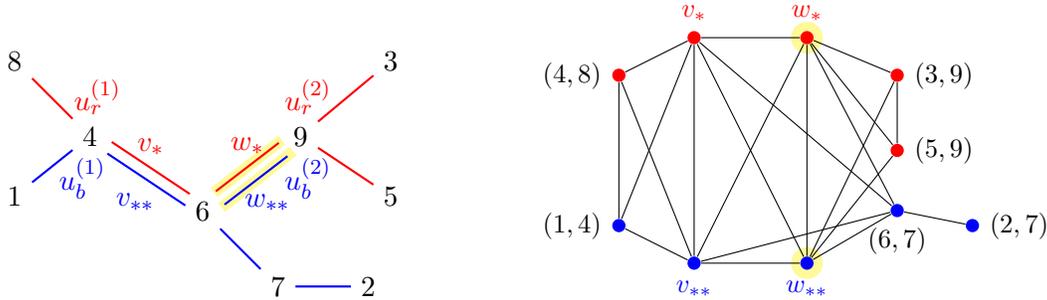
\begin{figure}[htp]\centering
\begin{tikzpicture}

\draw[yellow, opacity=0.4, line width=0.15cm] (2.65,2.25) -- (3.55,2.95);
\draw[yellow, opacity=0.4, line width=0.15cm] (2.75,2.05) -- (3.7,2.8);

\node (4) at (1,3) {$4$};
\node (6) at (2.5,2) {$6$};
\node (8) at (0,4) {$8$};
\node (1) at (0,2.2) {$1$};
\node (7) at (3.5,1) {$7$};
\node (2) at (4.7,1) {$2$};
\node (9) at (3.8,3) {$9$};
\node (5) at (5,2.2) {$5$};
\node (3) at (5,4) {$3$};

\node [above=of 4, xshift=0.1cm, yshift=-1.1cm] {$\textcolor{red}{u_r^{(1)}}$};
\node [below=of 4, xshift=-0.1cm, yshift=1.1cm] {$\textcolor{blue}{u_b^{(1)}}$};
\node [above=of 9, xshift=0.1cm, yshift=-1.1cm] {$\textcolor{red}{u_r^{(2)}}$};
\node [below=of 9, xshift=0.1cm, yshift=1.1cm] {$\textcolor{blue}{u_b^{(2)}}$};

\draw[thick, red] (4) -- (8);
\draw[thick, blue] (1) -- (4);
\draw[thick, blue] (6) -- (7);
\draw[thick, blue] (2) -- (7);
\draw[thick, red] (3) -- (9);
\draw[thick, red] (5) -- (9);

\draw[thick, transform canvas={xshift=0.15em, yshift=0.2em}, red] (4) -- node[label={[label distance=-0.3em]90:$v_{*}$}] {} (6);
\draw[thick, transform canvas={xshift=-0em, yshift=-0.2em}, blue] (4) -- node[label=below:$v_{**}$, xshift=-0.15cm, yshift=0.1cm] {} (6);

\draw[thick, transform canvas={xshift=-0.15em, yshift=0.23em}, red] (6) -- node[label={[label distance=-0.3em]90:$w_{*}$}] {} (9);
\draw[thick, transform canvas={xshift=0.15em, yshift=-0.23em}, blue] (6) -- node[label=below:$w_{**}$,xshift=0.15cm, yshift=0.1cm] {} (9);

\end{tikzpicture}
\qquad\qquad 
\begin{tikzpicture}[every node/.style={font=\small}, redCirc/.style={circle,fill=red, minimum size=5pt, inner sep=0pt}, blueCirc/.style={circle,fill=blue, minimum size=5pt, inner sep=0pt}]
\node[circle, fill=yellow, opacity=0.4, minimum size=12pt,inner sep=0pt] at (3.5,3.5) {};
\node[circle, fill=yellow, opacity=0.4, minimum size=12pt,inner sep=0pt] at (3.5,0.5) {};


\node[redCirc,label=above:{\textcolor{red}{$v_*$}}] (vStar) at (2,3.5) {};
\node[blueCirc,label=below:{\textcolor{blue}{$v_{**}$}}] (vStarStar) at (2,0.5) {};
\node[blueCirc, label=left:{$(1,4)$}] (1-4) at (1,1) {};
\node[blueCirc, label=below:{\textcolor{blue}{$w_{**}$}}] (wStarStar) at (3.5,0.5) {};
\node[redCirc, label=left:{$(4,8)$}] (4-8) at (1,3) {};
\node[redCirc, label=above:{\textcolor{red}{$w_{*}$}}] (wStar) at (3.5,3.5) {};
\node[blueCirc, label=below:{$(6,7)$}] (6-7) at (4.7,1.2) {};

\node[redCirc, label=right:{$(3,9)$}] (3-9) at (4.7,3) {};
\node[redCirc, label=right:{$(5,9)$}] (5-9) at (4.7,2) {};
\node[blueCirc, label=right:{$(2,7)$}] (2-7) at (5.7,1) {};

\draw (wStar) -- (vStar);
\draw (wStar) -- (vStarStar);
\draw (wStarStar) -- (vStar);
\draw (wStarStar) -- (vStarStar);
\draw (vStar) -- (vStarStar);
\draw (wStar) -- (wStarStar);

\draw (1-4) -- (4-8);
\draw (1-4) -- (vStar);
\draw (1-4) -- (vStarStar);
\draw (4-8) -- (vStar);
\draw (4-8) -- (vStarStar);
\draw (6-7) -- (vStar);
\draw (6-7) -- (vStarStar);
\draw (6-7) -- (wStar);
\draw (6-7) -- (wStarStar);
\draw (3-9) -- (wStar);
\draw (3-9) -- (wStarStar);
\draw (5-9) -- (wStar);
\draw (5-9) -- (wStarStar);
\draw (3-9) -- (5-9);
\draw (6-7) -- (2-7);

\end{tikzpicture}
\caption{(Left) A joined tree represented by the tuple \eqref{eq:equalAverageED_determPart_genericTuple}. The repeated edges are identified as in \eqref{eq:equalAverageED_determPart_identifyvStarvStarStar}. (Right) The incompatibility graph $H$.}
\label{fig:equalAverageED_DeterministicPartAdjDD}
\end{figure}

\subsection{\texorpdfstring{Dropping cycles and $\geq 3$ repeated edge subgraphs}{Dropping cycles and >= 3 repeated edge subgraphs}}
In this section we establish Proposition \ref{prop:Matching_remLessThanTwoLogn_mainResult}. This will follow from a series of lemmas: Lemmas \ref{lemma:moreThanThreeRep_small}, \ref{lemma:matchingEdgeDensity_simpleCyclic_small}, and \ref{lemma:matchingEdgeDensity_oneRep_andtwoRep_Cyclic_small} that bound the sub-sums of $\mathsf{rem}_{\leq 2 \log n}$ defined by 
\begin{align}
    \mathsf{rem}_{\leq 2 \log n} = \mathsf{simpleCyclic} + \mathsf{oneRepCyclic} + \mathsf{twoRepCyclic} + \mathsf{moreThanThreeRep},
\end{align}
where $\mathsf{simpleCyclic}$ and $\mathsf{oneRepCyclic}$ are defined exactly as in \eqref{eq:nonMatching_remainderLessThanTwoLogn_decomposition}, and 
\begin{align*}
    \mathsf{twoRepCyclic} &:= \sum_{m = 4}^{2 \log n} \sum_{ \substack{e_1,\dots,e_m \\ \text{only two rep.~edge,} \\ \text{contains cycle}  } } \phi(H(e_1,\dots,e_m)) \lambda^m \insquare{ \prod_{j=1}^{m} \frac{A_{e_j}}{p} - 1  }, \\
    \mathsf{moreThanThreeRep} &:= \sum_{m = 4}^{2 \log n} \sum_{\substack{ e_1,\dots,e_m \\ \geq 3 \text{ repeated edges}  }} \phi(H(e_1,\dots,e_m)) \lambda^m \insquare{ \prod_{j=1}^{m} \frac{A_{e_j}}{p} - 1  } .
\end{align*}

\begin{lemma}\label{lemma:moreThanThreeRep_small}
We have 
    $\mathsf{moreThanThreeRep} = O_\PP\!\inparen{\frac{1}{n^2q^3}}$.
\end{lemma}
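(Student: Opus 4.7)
\medskip
\noindent
The plan is to mirror the strategy of Lemma~\ref{lemma:nonMatching_moreThanTwoRep}, now enumerating cluster patterns with at least three excess repetitions rather than two. First I would split
\[
\mathsf{moreThanThreeRep} = \mathsf{moreThanThreeRepRandom} - \mathsf{moreThanThreeRepDeterministic},
\]
where the first term collects the $\prod_j A_{e_j}/p^m$ factor and the second collects the $-1$ inside the brackets in the definition of $\mathsf{moreThanThreeRep}$. The deterministic part is handled by the same iterative argument below with $K_2(K_n) = \binom{n}{2}$ and $\Delta(K_n) = n-1$ in place of the random quantities; it yields a smaller $O(1/n^2)$ bound, which is absorbed into $O(1/(n^2 q^3))$ since $q = \Theta(1/\sqrt{n})$ under Assumption~\ref{asmpt:MatchingEdgeDensity}. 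Thus the focus is on the random part.

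Any cluster $(e_1,\dots,e_m)$ with at least three excess repetitions falls into at least one of three cases: (a) some polymer appears at least four times, so there exist distinct $i_1,i_2,i_3,i_4 \in [m]$ with $e_{i_1} = e_{i_2} = e_{i_3} = e_{i_4}$; (b) some polymer appears exactly three times and a distinct polymer appears at least twice; or (c) there exist three pairwise disjoint pairs $\{i_1,i_2\},\{j_1,j_2\},\{k_1,k_2\}$ with $e_{i_1} = e_{i_2}$, $e_{j_1} = e_{j_2}$, $e_{k_1} = e_{k_2}$ (and no higher multiplicities). For each case, I would apply the Penrose tree-graph bound (Lemma~\ref{lemma:Penrose_tree_bound}) to the Ursell function, then enumerate clusters by an iterative construction along a spanning tree $t$ of the incompatibility graph, as in Lemma~\ref{lemma:nonMatching_moreThanTwoRep}. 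The construction fixes (i) the subset of indices or tree-edges marking the forced repetitions, contributing a polynomial factor like $\binom{m}{4}$, $\binom{m}{3}(m-1)$, or $\binom{m-1}{3}$; (ii) one free choice of the repeated polymer, contributing a factor of $K_2(A)$; and (iii) each remaining tree-edge, contributing at most $2\Delta(A) - 1$ choices.

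Under the high-probability event $\Delta(A) \leq 2.02\, nq$ and $|A| \leq 1.01\, n^2 q$ (valid under $A \sim \cQ$ since $q \geq 9(\log n)/n$), Cayley's theorem $|\mathcal{T}_{m-1}^{\text{lab}}| = m^{m-2}$ and the standard bound $m^m/m! \leq e^m$ combine to give, in each of cases (a), (b), (c), a bound of the form
\[
\frac{C\, K_2(A)}{p^4}\, \lambda^4 \sum_{m \geq 4} m^\alpha\, \bigl(4.04\, e\, \lambda n \cdot q/p\bigr)^{m-4}
\;\lesssim\; \frac{n^2 q\, \lambda^4}{p^4}\, O(1)
\;=\; O\!\left(\frac{1}{n^2 p^3}\right) = O\!\left(\frac{1}{n^2 q^3}\right),
\]
where $\alpha \leq 2$ and the geometric series converges because $q \leq 1.01\,p$ and $4.04\,e\,\lambda n < 1$ under Assumption~\ref{asmpt:MatchingEdgeDensity}, and the final equality uses $p \sim q$. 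Summing the three cases and combining with the deterministic bound then gives $\mathsf{moreThanThreeRep} = O_{\PP}(1/(n^2 q^3))$.

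The main obstacle is the bookkeeping in case (b), where a triple-repeated polymer coexists with a disjoint double-repeated polymer; the iterative construction must simultaneously track the three indices assigned to the triple and a distinguished edge in $t$ linking the double-repetition. However, because we only need an upper bound on $|\mathsf{moreThanThreeRep}|$, any overcounting of clusters which belong to more than one case is harmless, and the scaling $O(1/(n^2 q^3))$ is dictated in every case by the same $K_2(A)\lambda^4/p^4$ factor corresponding to fixing one edge and paying four factors of $\lambda/p$ for the three excess copies plus the seed polymer.
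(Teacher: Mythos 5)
Your proposal follows essentially the same route as the paper: the same split into random and deterministic parts, the same three-case decomposition of ``at least three excess repetitions'' (a quadruple edge, a triple plus a double, or three disjoint doubles), the same Penrose tree-graph bound with an iterative cluster construction along a labeled spanning tree, and the same high-probability bounds on $\Delta(A)$ and $K_2(A)$ leading to a convergent geometric series and the $O_\PP(1/(n^2q^3))$ rate. The argument is correct as outlined.
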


\begin{proof}[Proof of Lemma \ref{lemma:moreThanThreeRep_small}]
    Decompose $\mathsf{moreThanThreeRep}$ into the sum of $\mathsf{moreThanThreeRepRandom}$ and $\mathsf{moreThanThreeRepDeterministic}$ where 
    \begin{align*}
        \mathsf{moreThanThreeRepRandom} := \sum_{m \geq 4} \sum_{\substack{ e_1,\dots,e_m \\ \geq 3 \text{ repeated edges}  }} \phi(H(e_1,\dots,e_m)) \prod_{j=1}^{m} \frac{\lambda}{p} A_{e_j} , \\
        \mathsf{moreThanThreeRepDeterministic} := \sum_{m \geq 4} \sum_{\substack{ e_1,\dots,e_m \\ \geq 3 \text{ repeated edges}  }} \phi(H(e_1,\dots,e_m)) \lambda^m.
    \end{align*}
    We bound 
    \begin{align}
        &\abs{ \mathsf{moreThanThreeRepRandom} } \nonumber \\
        & \leq  \sum_{m \geq 4} \sum_{\substack{ e_1,\dots,e_m \\ e_{i_1} = e_{i_2} = e_{i_3} = e_{i_4}  }} \abs{\phi(H(e_1,\dots,e_m))} \prod_{j=1}^{m} \frac{\lambda}{p} A_{e_j} + \sum_{m \geq 4} \sum_{\substack{ e_1,\dots,e_m \\ e_{i_1} = e_{i_2} = e_{i_3}, \\ e_{j_1} = e_{j_2}  }} (\cdots) + \sum_{m \geq 4} \sum_{\substack{ e_1,\dots,e_m \\ e_{i_1} = e_{i_2}, \\ e_{j_1} = e_{j_2}, e_{k_1} = e_{k_2} }} (\cdots),
        \label{eq:moreThanThreeRepRandom_decomposition}
    \end{align}
    where the sum constraint in the first term on RHS means there is a same edge appearing four times. In the second term there is an edge repeated three times and another edge repeated two times. Similarly for the last term. The summand $(\cdots)$ is the same for all terms.

    Apply the Penrose tree-graph bound similarly as in \eqref{eq:nonMatching_moreThanTwoRepRandom_ApplyPenroseTreeBound}, modifying the argument to have $\abs{V'} = 4$ instead of $\abs{V'} = 3$. We obtain 
    \begin{align*}
        &\sum_{m \geq 4} \sum_{\substack{ e_1,\dots,e_m \\ e_{i_1} = e_{i_2} = e_{i_3} = e_{i_4}  }} \abs{\phi(H(e_1,\dots,e_m))} \prod_{j=1}^{m} \frac{\lambda}{p} A_{e_j} \leq \sum_{m \geq 4} \frac{1}{m!} \frac{\lambda^m}{p^m} \sum_{t \in \cT_{m-1}^{\text{lab}}} K_2(A) \binom{m}{4} \inparen{2\Delta(A) - 1}^{m - 4} .
    \end{align*}
    The slight difference now from the argument in Lemma \ref{lemma:nonMatching_moreThanTwoRep} is that now $A \sim G(n,q)$ instead of $A \sim G(n,p)$. This does not present much additional difficulty since by hypothesis $1.01p \geq q \geq \frac{9 \log n}{n}$. Thus, similarly with probability at least $1 - \inparen{\frac{1}{n}}$, we have
    \begin{align*}
        \sum_{m \geq 4} \sum_{\substack{ e_1,\dots,e_m \\ e_{i_1} = e_{i_2} = e_{i_3} = e_{i_4}  }} \abs{\phi(H(e_1,\dots,e_m))} \prod_{j=1}^{m} \frac{\lambda}{p} A_{e_j} &\leq \frac{C}{n^2 q^3} \sum_{m=4}^{2 \log n} \inparen{\frac{q}{p}}^m m^2 (4.04e n \lambda)^m \\
        &\leq \frac{C}{n^2 q^3} \insquare{  \sum_{m=4}^{2 \log n}  m^2 (4.04 e n  \lambda)^m  + \frac{C}{np} \sum_{m=4}^{2 \log n}  m^3 (4.04 e n \lambda)^m }
    \end{align*}
    where the second inequality follows from \eqref{eq:identities_pq_(q/p)^r} which shows that for $4 \leq m \leq 2 \log n$,
    \begin{align}
        \frac{q^m}{p^m} \leq 1 +  C m \frac{c_n}{n} \frac{1-p}{p}
        \label{eq:qmpm_expandToZerothOrder}
    \end{align}
    for some universal constant $C$. By hypothesis, $\abs{4.04e n \lambda} < \frac{1}{e}$ so that the above (derivatives of) geometric series converges.

    The other terms in \eqref{eq:moreThanThreeRepRandom_decomposition} can be bounded by straightforward modifications of the proof in Lemma \ref{lemma:nonMatching_moreThanTwoRep}, with similar modifications for the $(q/p)^m$ factor as above.

    The bound for $\mathsf{moreThanThreeRepDeterministic}$ follows almost identically. We only have to replace every instance of the random $\Delta(A)$ and $\abs{A}$ with the deterministic $\Delta(K_n) = n-1$ and $\abs{K_n} = \binom{n}{2}$ respectively.  
\end{proof}

\begin{lemma}\label{lemma:matchingEdgeDensity_simpleCyclic_small}
We have 
    $\mathsf{simpleCyclic} = O_\PP\!\inparen{\frac{1}{nq}}$.
\end{lemma}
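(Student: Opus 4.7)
The plan is to adapt the strategy of Lemma \ref{lemma:nonMatching_simpleCyclic_small} from the $p=q$ case to the present $p \ne q$ setting. The key difference is that under Assumption \ref{asmpt:MatchingEdgeDensity}, $\EE_\cQ A_{e_j} = q \ne p$, so the summand $\prod_j A_{e_j}/p^m - 1$ is no longer mean-zero. I will therefore split $\mathsf{simpleCyclic}$ into its expectation (deterministic bias) and its centered fluctuation, and bound each separately. Writing the inner sum in terms of cyclic unlabeled template subgraphs $G_{N,m}$ on $N \leq m$ vertices, I decompose
\begin{align*}
\mathsf{simpleCyclic} = \underbrace{\sum_{m=3}^{2\log n}\sum_{G_{N,m}}\!m!\phi(H(G_{N,m}))\frac{\lambda^m}{p^m}\overline{G}_{N,m}(A)}_{=:\,\mathsf{simpleCyclicRandom}} \;+\; \underbrace{\sum_{m=3}^{2\log n}\sum_{G_{N,m}}\!m!\phi(H(G_{N,m}))\lambda^m G_{N,m}(K_n)\!\left[\tfrac{q^m}{p^m}-1\right]}_{=:\,\mathsf{simpleCyclicMean}}.
\end{align*}

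For $\mathsf{simpleCyclicMean}$, the first-order Taylor expansion from Claim \ref{claim:identities_pq} gives $\tfrac{q^m}{p^m}-1 = O\!\left(\tfrac{c_n m}{np}\right)$ uniformly in $m \le 2\log n$. Pulling the $\tfrac{1}{np}$ factor out and reverting from template subgraphs to ordered polymer tuples, the remaining sum is dominated by
\[
\frac{C}{np}\sum_{m\ge 3}\sum_{\substack{e_1,\dots,e_m\\\text{cyclic, distinct}}} m\,\lambda^m |\phi(H(e_1,\dots,e_m))|,
\]
which is $O(1/(np))$ by Proposition \ref{prop:MDMean_cycles_O(1)}. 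Since $p \sim q$ under Assumption \ref{asmpt:MatchingEdgeDensity}, this yields $\mathsf{simpleCyclicMean} = O(1/(nq))$ as required.

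For $\mathsf{simpleCyclicRandom}$, I will follow Lemma \ref{lemma:nonMatching_simpleCyclic_small} nearly verbatim. Since the expectation vanishes, it suffices to bound the variance. By the triangle inequality applied to the standard deviation, and Claim \ref{claim:Var_G_N_m} giving $\sqrt{\Var G_{N,m}(A)} \leq C\, m\, n^{N-1} q^{m-1/2}/\aut(G_{N,m})$ (valid for $m=o(\sqrt{nq})$, which holds for $m\le 2\log n$), I obtain
\[
\sqrt{\Var\,\mathsf{simpleCyclicRandom}} \leq \frac{C}{\sqrt{q}}\sum_{m=3}^{2\log n}\Big(\tfrac{q}{p}\Big)^m\sum_{G_{N,m}}m\cdot m!\,|\phi(H(G_{N,m}))|\,\lambda^m\,\frac{n^{N-1}}{\aut(G_{N,m})}.
\]
The extra factor $(q/p)^m = 1 + O(m/(np))$ (Claim \ref{claim:identities_pq}) is bounded by an absolute constant for $m \le 2\log n$ and can be absorbed into $C$. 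Then the same switch from template-graphs to polymer tuples used in Lemma \ref{lemma:nonMatching_simpleCyclic_small}, followed by the approximation $n^N/(n)_N = 1 + O((\log n)^2/n)$ and Proposition \ref{prop:MDMean_cycles_O(1)}, produces a bound of order $1/(n\sqrt{q})$. Since $\sqrt{q}\ge q$, this is in turn $O(1/(nq))$, so Chebyshev's inequality gives $\mathsf{simpleCyclicRandom} = O_\PP(1/(nq))$. Combining with the bound on $\mathsf{simpleCyclicMean}$ completes the proof.

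The main obstacle I anticipate is the bookkeeping needed to ensure the extra $(q/p)^m$ factors and the looser variance estimate (in $q$ rather than $p$) do not inflate the final bound beyond $1/(nq)$; this is handled by the first-order Taylor expansion and by exploiting that $q \asymp p$ under Assumption \ref{asmpt:MatchingEdgeDensity}, so no new combinatorial identities beyond those developed for the $p=q$ case are needed.
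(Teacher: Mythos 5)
Your proposal follows essentially the same route as the paper: split $\mathsf{simpleCyclic}$ into its expectation and its centered fluctuation, bound the expectation via the first-order expansion $\tfrac{q^m}{p^m}-1=O\!\big(\tfrac{m}{np}\big)$ together with Proposition~\ref{prop:MDMean_cycles_O(1)}, and bound the fluctuation via Claim~\ref{claim:Var_G_N_m} with $q$ in place of $p$, the template-to-polymer conversion, and Chebyshev, using $p\asymp q$ throughout.

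One small imprecision: in the regime $q\asymp n^{-1/2}$ the clean bound $\sqrt{\Var G_{N,m}(A)}\le C\,m\,n^{N-1}q^{m-1/2}/\aut(G_{N,m})$ does not follow from Claim~\ref{claim:Var_G_N_m} as you state it, because the correction factor $O(m^4/(nq^2))$ there is $\Theta(m^4)$ rather than $o(1)$ when $nq^2=\Theta(1)$; for cyclic templates the high-overlap terms can dominate the leading term by up to a polylogarithmic factor. This is why the paper's version of the estimate carries an extra power of $m$ inside the cluster sum. The slip is harmless: since the truncated cluster sums converge geometrically (with ratio bounded away from $1$), any fixed polynomial-in-$m$ factor is absorbed, and the resulting bound is still $O(1/(n\sqrt{q}))$ or better, hence $O_\PP(1/(nq))$ as claimed.
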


\begin{proof}[Proof of Lemma \ref{lemma:matchingEdgeDensity_simpleCyclic_small}]
Recall that $G_{N,m}$ denotes a generic unlabeled connected simple graph with $N$ vertices and $m$ edges. We also write $G_{N,m}(A)$ for the number of copies of $G_{N,m}$ appearing in the graph $A$. We have 
\begin{align*}
    \EE\insquare{\mathsf{simpleCyclic}} = \sum_{N = 3}^{2 \log n} \sum_{m = N}^{\binom{N}{2} \wedge 2 \log n} \sum_{G_{N,m}} m! \phi\inparen{ H(G_{N,m})  } \lambda^m G_{N,m}(K_n) \insquare{ \frac{q^m}{p^m} - 1  }.
\end{align*}
Applying \eqref{eq:qmpm_expandToZerothOrder}, we have 
\begin{align*}
    \abs{ \EE\insquare{\mathsf{simpleCyclic}} } \leq \frac{C}{np} \underbrace{ \sum_{N = 3}^{2 \log n} \sum_{m = N}^{\binom{N}{2} \wedge 2 \log n} \sum_{G_{N,m}} m! \abs{ \phi\inparen{ H(G_{N,m}) } } \lambda^m G_{N,m}(K_n) m}_{\leq C},
\end{align*}
where the sum is bounded by Proposition \ref{prop:MDMean_cycles_O(1)}. Thus $\EE\insquare{\mathsf{simpleCyclic}} = O\inparen{  \frac{1}{np}  } = O\inparen{  \frac{1}{nq}  }$.

On the other hand, by straightforward modifications of the proof of Lemma \ref{lemma:matchingEdgeDensity_simpleCyclic_small}, in particular using $q$ instead of $p$ in Claim \ref{claim:Var_G_N_m}, we will obtain
\begin{align*}
    \sqrt{\Var \mathsf{simpleCyclic}} \leq \frac{C}{n} \sqrt{\frac{1-q}{q}}  \underbrace{\sum_{N = 3}^{2 \log n}  \sum_{m = N}^{ \binom{N}{2} \wedge 2 \log n } \sum_{\substack{e_1,\dots,e_m \\ \text{$N$ vertices, $e_i$'s distinct}}} m^2 \abs{ \phi(H(e_1,\dots,e_m)) } \lambda^m}_{\leq C}.
\end{align*}
The sum is bounded by a slight modification of the proof of Proposition \ref{prop:MDMean_cycles_O(1)}, where we note that in \eqref{eq:MDMean_cycles_O(1)_afterCayley}, there was a ``spare'' factor of $1/(m+r)$ which will handle the additional factor of $m$ in the last line above. This establishes that $\Var\insquare{\mathsf{simpleCyclic}} \leq C/(n^2 q)$. Together with the bound on $\EE\insquare{\mathsf{simpleCyclic}}$, the proof is complete.
\end{proof}

\begin{lemma}\label{lemma:matchingEdgeDensity_oneRep_andtwoRep_Cyclic_small} 
We have $\mathsf{oneRepCyclic} = O_\PP\!\inparen{\frac{1}{nq}}$ and $\mathsf{twoRepCyclic} = O_\PP\!\inparen{\frac{1}{n^2q^2}}$.
\end{lemma}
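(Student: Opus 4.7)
The plan is to adapt the proofs of Lemma \ref{lemma:nonMatching_oneRepCyclic_small} and Lemma \ref{lemma:nonMatching_moreThanTwoRep} (which handled the $p=q$ setting) to the present $p \neq q$ regime under Assumption \ref{asmpt:MatchingEdgeDensity}. For each of $\mathsf{oneRepCyclic}$ and $\mathsf{twoRepCyclic}$, I would first decompose into a random part (with the factor $\prod_j A_{e_j}/p^m$) and a deterministic part (with $1$ in place of the product), bounding each separately. The main new ingredients relative to the $p=q$ case are: (i) the ambient graph is now $A \sim G(n,q)$ rather than $G(n,p)$, so that by Chernoff plus union bound we have with probability at least $1 - 1/n$ the concentration bounds $\Delta(A) < 2.02 nq$ and $C_r(A) \le (n)_r q^r/r$; and (ii) the factors of $(\lambda/p)^m$ generate a stray factor $(q/p)^m$, controlled by $(q/p)^m = 1 + O(m c_n/(np))$ as in \eqref{eq:qmpm_expandToZerothOrder}.

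For $\mathsf{oneRepCyclic}$, I would apply the Penrose tree-graph bound to reduce $|\phi(H(e_1,\dots,e_{m+r}))|$ to a sum over spanning trees $t \in \mathcal{T}_{m+r-1}^{\text{lab}}$, then construct valid clusters iteratively as in the proof of Lemma \ref{lemma:nonMatching_oneRepCyclic_small}: (i) fix the subset $V' \subseteq [m+r]$ of size $r$ carrying the cycle, (ii) choose an $r$-cycle in $A$ (at most $C_r(A)$ ways), (iii) designate one edge of $t$ as the repeated-polymer link, and (iv) build the rest of the cluster via $A$-adjacency, introducing at most $2\Delta(A)-1 \le 4.04 nq$ choices per step. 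Carrying through the analogue of the calculation in \eqref{eq:nonMatching_oneRepCyclicRandom_afterPenroseCayley_beforeFinal}--\eqref{eq:nonMatching_oneRepCyclicRandom_afterPenroseCayley_final}, but with $p$ replaced by $q$ (and the negligible $(q/p)^m$ factor absorbed into the leading geometric series in $4.04 e \lambda n q/p$), yields $|\mathsf{oneRepCyclicRandom}| \le C/(nq) \sum_{\ell \geq 4} (8.08 e \lambda n)^\ell = O(1/(nq))$ with high probability, since $\lambda \le 1/(30n)$. The deterministic part is bounded identically after replacing $\Delta(A), C_r(A)$ by $n-1$ and $(n)_r/(2r)$.

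For $\mathsf{twoRepCyclic}$, the same framework applies but with one additional distinguished edge in the spanning tree $t$ corresponding to a second repeated-polymer link; splitting cases so that the two repetitions appear either as two separate double edges, or as a triple edge plus cycle (handled by also covering the case where three coordinates of $V' \cup \{\text{distinguished edges}\}$ receive the same polymer). Each additional repeated edge removes one free $A$-adjacency choice in step (iv), contributing one fewer factor of $nq$, so the resulting bound is smaller than $\mathsf{oneRepCyclic}$ by an additional factor of $1/(nq)$, yielding $O_\PP(1/(n^2 q^2))$. The main obstacle is the combinatorial bookkeeping for the different ways in which two repetitions can overlap with the cycle structure (separated double edges vs.\ triple edge vs.\ adjacent double edges incident to the cycle), but in every sub-case the Penrose bound plus iterative construction gives the same order, so the collection of finitely many sub-sums all fit under the claimed bound. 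The convergence of the final geometric series in $m$ and $r$ is again guaranteed by $8.08 e \lambda n \le 8.08 e/30 < 1$.
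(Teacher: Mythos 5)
Your proposal is correct and follows essentially the same route as the paper: decompose each sum into random and deterministic parts, rerun the Penrose tree-graph/iterative-cluster-construction argument of Lemma \ref{lemma:nonMatching_oneRepCyclic_small} with the $G(n,q)$ concentration bounds for $\Delta(A)$ and $C_r(A)$ and the $(q/p)^m$ factor controlled via \eqref{eq:qmpm_expandToZerothOrder}, and for $\mathsf{twoRepCyclic}$ add the extra distinguished repeated-edge link (split into the triple-edge and two-double-edge cases), which costs an $O(m+r)$ combinatorial factor absorbed by the spare $1/(m+r)$ while gaining the extra $1/(nq)$.
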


\begin{proof}[Proof of Lemma \ref{lemma:matchingEdgeDensity_oneRep_andtwoRep_Cyclic_small}]
The proof is largely similar to that of Lemma \ref{lemma:nonMatching_oneRepCyclic_small}. We first decompose $\mathsf{oneRepCyclic}$ into the difference of $\mathsf{oneRepCyclicRandom}$ and $\mathsf{oneRepCyclicDeterministic}$.

By similar arguments that lead to \eqref{eq:nonMatching_oneRepCyclicRandom_afterPenroseCayley}, except here $A \sim G(n,q)$ where $q \neq p$, we obtain that with probability at least $1 - \frac{1}{n}$, 
\begin{align}\label{eq:matchingEdgeDensity_oneRepCyclicRandom_bound}
    \abs{\mathsf{oneRepCyclicRandom}} &\leq \frac{1}{6\Delta q} \sum_{m=1}^{2 \log n - 3} \sum_{r=3}^{2 \log n - m}  (4.04 e \lambda n)^{m+r} \underbrace{ \frac{q^{m+r}}{p^{m+r}} }_{\leq C} \binom{m+r}{r} \leq \frac{C}{nq}
\end{align}
where we have used \eqref{eq:qmpm_expandToZerothOrder} to bound $(q/p)^{m+r}$, and where the final inequality follows by similar arguments as in \eqref{eq:nonMatching_oneRepCyclicRandom_afterPenroseCayley_final}. This implies $\abs{\mathsf{oneRepCyclicRandom}} = O_\PP\!\inparen{\frac{1}{nq}}$.

An almost identical argument will show that $\mathsf{oneRepCyclicDeterministic} = O\!\inparen{\frac{1}{n}}$. We only have to replace random quantities with their deterministic counterparts as outlined in the proof of Lemma \ref{lemma:nonMatching_oneRepCyclic_small}.

Only small modifications of the above argument are needed for $\mathsf{twoRepCyclic}$. We similarly decompose into ``random'' and ``deterministic'' parts. In the former we bound 
\begin{align*}
    \abs{ \mathsf{twoRepCyclicRandom} } &\leq \sum_{m = 1}^{2 \log n - 3} \sum_{r = 3}^{2 \log n - m} \sum_{\substack{e_1,\dots,e_{m+r} \\ G \supseteq C_r \\  e_{i_1} = e_{i_2} = e_{i_3}  }  } \abs{ \phi(H(e_1,\dots,e_{m+r})) } \frac{\lambda^{m+r}}{p^{m+r}} \prod_{j=1}^{m+r} A_{e_j} \\
    &\qquad + \sum_{m = 1}^{2 \log n - 3} \sum_{r = 3}^{2 \log n - m} \sum_{\substack{e_1,\dots,e_{m+r} \\ G \supseteq C_r \\  e_{i_1} = e_{i_2}, \, e_{j_1} = e_{j_2}  }  } (\cdots),
\end{align*}
where the summand is the same for both terms. The first term on the RHS is bounded by $C/(n^2 q^2)$ by a small modification of the arguments that led to \eqref{eq:matchingEdgeDensity_oneRepCyclicRandom_bound}. Here, in Step 3 in the proof of Lemma \ref{lemma:nonMatching_oneRepCyclic_small}, we choose two edges in $t$ to correspond to the links between the (in total 3) repeated polymers. There are at most $\binom{m+r - 1}{2}$ ways to do this. Consequently, this introduces an additional factor of at most $m + r - 1$, but this can be handled by the ``spare'' $1/(m+r)$ factor in \eqref{eq:nonMatching_oneRepCyclicRandom_afterPenroseCayley_beforeFinal}. Additionally, we gain a factor of $1/(nq)$ because of the additional repeated edge. Altogether this leads to the claimed bound.

By analogous arguments, the second term on the RHS of above display is also bounded by $C/(n^2q^2)$. The deterministic part of $\mathsf{twoRepCyclic}$ can be shown to be $O\!\inparen{1/n^2}$. 
\end{proof}

\section{Proofs for planted perfect matching}
\label{sec:lambda_infty}

\begin{proof}[Proof of Theorem~\ref{thm:perfect-matching-p=q}]
The proofs of most statements in Theorems~\ref{thm:threshold-edge-count} and~\ref{thm:plantedMatchings_mean_is_minus_half_variance} also work for the case $\lambda = \infty$ and $c=1$. It suffices to show the asymptotic normality of the log-likelihood ratio under $\cQ$. 

The likelihood ratio satisfies 
\begin{align}
    \frac{\ud \cP_\infty}{\ud \cQ}(A) = \EE_{M} \prod_{\inbraces{i,j} \in M} \frac{1}{q^{A_{ij}}} \prod_{\inbraces{i,j} \notin M} \frac{p^{A_{ij}} (1-p)^{1 - A_{ij}} }{ q^{A_{ij}} (1-q)^{1 - A_{ij}} } \boldsymbol{1}\!\inbraces{M \subset A}.
    \label{eq:Pinfty_planted_null_likelihoodRatio}
\end{align}
Setting $p = q$ in \eqref{eq:Pinfty_planted_null_likelihoodRatio}, we obtain 
\begin{equation*}\label{eq:likelihoodratio_perfectMatching_nonMatchingEdges}
    \frac{\ud \cP_\infty}{\ud \cQ}(A) = \frac{ \PP\insquare{M \subset A} }{q^{n/2}} = \frac{M(A)}{M(K_n) \cdot q^{n/2}} ,
\end{equation*}
where $M(G)$ denotes the number of perfect matchings in graph $G$. 
We apply the result of \cite{janson1994numbers} Theorem~4 Equation (1.27) which states that (in our notation): for $A \sim \cQ$,
    \begin{equation*}
        \log \frac{M(A)}{\EE_{A 
        \sim \cQ} M(A) } \overset{d}{\longrightarrow} \cN\inparen{ - \frac{1-p}{4p}, \frac{1-p}{2p}  }.
    \end{equation*} 
The asymptotic normality of $\log \frac{\ud \cP_\infty}{\ud \cQ}(A)$ under $\cQ$ thus follows by combining the above two results. 
\end{proof}

\begin{proof}[Proof of Theorem~\ref{thm:perfect-matching-p-not-equal-q}]
The proofs of most statements in Theorems~\ref{thm:matchingEdgeDensities_compEfficientStat_mainResult} and~\ref{thm:matchingEdgeDensities_mean_is_minus_half_variance} also work for the case $\lambda = \infty$ and $c=1$. It suffices to show the asymptotic normality of the log-likelihood ratio under $\cQ$. 

From \eqref{eq:Pinfty_planted_null_likelihoodRatio}, we can manipulate the likelihood ratio to be
\begin{align}\label{eq:likelihoodratio_MA_form}
    \frac{\ud \cP_\infty}{\ud \cQ}(A) = \left.  M(A) \inparen{ \frac{p(1-q)}{q(1-p)}  }^{K_2(A) - \frac{n}{2}} \middle/ q^{\frac{n}{2}} M(K_n) \inparen{ \frac{1 - p}{1 - q}  }^{\binom{n}{2} - \frac{n}{2}}    \right. .
\end{align}
Let $A \sim \cQ$. On the other hand, Equation (4.29) from \cite{janson1994numbers} states (note that `$c$' there translates into $\theta/2$ for us), 
 \begin{equation*}
    \log \frac{M(A)(1-a)^{K_2(A) - \frac{n}{2}}}{\EE_{A \sim \cQ}\!\insquare{M(A)(1-a)^{K_2(A) - \frac{n}{2}} }} \overset{d}{\longrightarrow} \cN\!\inparen{ -\frac{\tau^2}{4\theta^2}, \frac{\tau^2}{2\theta^2}  },
\end{equation*}
where $a := \frac{n/2}{\binom{n}{2}q}$, and $\tau$ is defined as the limit $ n^2( \kappa(P_2; M) - \kappa(K_2; M)  ) \rightarrow \tau$, where for any fixed (labeled) subgraph $G$, $\kappa(G; M)$ is the ratio of the number of perfect matchings containing $G$ to the number of perfect matchings in $K_n$. One computes that $\kappa(K_2; M) = (n/2)/\binom{n}{2} = 1/(n-1)$ and $\kappa(P_2; M) = 0$, yielding that $\tau = -1$. Furthermore, observe that
\begin{equation*}
    1 - a = \frac{p(1-q)}{q(1-p)}.
\end{equation*}
Thus,
\begin{align*}
    &\EE\!\insquare{M(A)(1-a)^{K_2(A) - \frac{n}{2}} } = \EE\!\insquare{ \inparen{ \sum_{M} \prod_{\inbraces{i,j}\in M} A_{ij}  } \inparen{\frac{p(1-q)}{q(1-p)}}^{K_2(A) - \frac{n}{2}} } \\
    &\quad= \EE\!\insquare{ \inparen{ \sum_{M} \prod_{\inbraces{i,j} \in \alpha} A_{ij} \prod_{\inbraces{i,j} \notin \alpha} \inparen{\frac{p(1-q)}{q(1-p)}}^{A_{ij}}  }  } \\
    &\quad= \sum_{M} \EE\!\insquare{ \prod_{\inbraces{i,j} \in \alpha} A_{ij} \prod_{\inbraces{i,j} \notin \alpha} \inparen{\frac{p(1-q)}{q(1-p)}}^{A_{ij}}   } = M(K_n) q^{n/2} \inparen{ \frac{1-q}{1-p}  }^{\binom{n}{2} - \frac{n}{2}}.
\end{align*}
Combining the above results finishes the proof.
\end{proof}

\end{document}